\documentclass[a4paper]{article}
\usepackage{amsthm}
\usepackage{amsmath}
\usepackage{amssymb}
\usepackage{tikz}
\usepackage{float}
\usepackage{algpseudocode}
\usepackage{setspace} 
\usepackage{booktabs}
\usepackage{geometry}
\usepackage{appendix}

\usepackage{paralist}
\floatstyle{ruled}
	\newfloat{algorithm}{h}{loa}
\floatname{algorithm}{Algorithm}


\DeclareMathOperator{\TV}{TV}

\DeclareMathOperator{\pt}{pt}

\DeclareMathOperator{\BV}{BV}
\DeclareMathOperator{\TGV}{TGV}

\newcommand{\PTTGV}{\text{PT-TGV}^2_\alpha}
\newcommand{\STGV}{\text{S-TGV}^2_\alpha}
\newcommand{\MTGV}{\text{M-TGV}^2_\alpha}
\newcommand{\dpt}{\text{D}_{\text{pt}}}
\newcommand{\dpts}{\text{D}_{\text{pt}}^{\text{sym}}}
\newcommand{\ds}{\text{D}_{\text{S}}}
\newcommand{\dss}{\text{D}_{\text{S}}^{\text{sym}}}
\newcommand{\dsym}{\text{D}^{\text{sym}}}

\newcommand{\TGVat}{\TGV_\alpha ^2}

\newcommand{\uc}{{u_\circ}}
\newcommand{\um}{{u_-}}
\newcommand{\up}{{u_+}}

\newcommand{\ucc}{{u_{\circ,\circ}}}
\newcommand{\ucm}{{u_{\circ,-}}}
\newcommand{\umc}{{u_{-,\circ}}}
\newcommand{\ucp}{{u_{\circ,+}}}
\newcommand{\upc}{{u_{+,\circ}}}

\newcommand{\upm}{{u_{+,-}}}
\newcommand{\ump}{{u_{-,+}}}

\newcommand{\ycco}{{y^1_{\circ,\circ}}}
\newcommand{\ycmo}{{y^1_{\circ,-}}}

\newcommand{\ycct}{{y^2_{\circ,\circ}}}

\newcommand{\ymct}{{y^2_{-,\circ}}}

\DeclareMathOperator{\BD}{BD}

\DeclareMathOperator{\dive}{div}

\DeclareMathOperator{\loc}{loc}




\newcommand{\st}{ \, |\, }


\newcommand{\R}{\mathbb{R}}
\newcommand{\N}{\mathbb{N}}

\newcommand{\Du}{\mathrm{D} u }

\newcommand{\symgrad}{\mathcal{E}}

\newcommand{\Wrt}{\mathrm{D}}

\newcommand{\beq}{\begin{equation}}
\newcommand{\eeq}{\end{equation}}

\newcommand{\M}{\mathcal{M}}

\DeclareMathOperator{\prox}{prox}
\DeclareMathOperator{\argmin}{argmin}

\usepackage{tikz}
\usepackage{pgfplots}
\usetikzlibrary{spy}

\usepackage[font=small,labelfont=bf]{caption}
\usepackage[font=footnotesize]{subcaption}




\setcounter{totalnumber}{2}

\definecolor{y}{rgb}{0.2902,1.0000,0.0788}
\definecolor{cb}{rgb}{0,0.1399,0.9316}
\definecolor{cr}{rgb}{0.7218,0,0}

\definecolor{darkgreen}{rgb}{0,0.6,0}

\newcommand{\deltaSNR}{\mathrm{\Delta SNR}}
\newcommand{\dB}{\mathrm{\,dB}}
\newcommand{\Pos}{\mathrm{Pos}}

\newlength{\pix}
\setlength{\pix}{0.0546875cm} 

\usepackage{url}


\title{Total Generalized Variation for Manifold-valued Data}

\author{K. Bredies\footnote{K. Bredies and M. Holler are with Institute of Mathematics and Scientific Computing of the University of Graz. The institute is a member of NAWI Graz (\texttt{www.nawigraz.at}). K.~B. and M.~H. are members of BioTechMed Graz (\texttt{www.biotechmed.at}); 
email:~\texttt{kristian.bredies@uni-graz.at, martin.holler@uni-graz.at}		.} \and M. Holler\footnotemark[1] \and 
	M. Storath\footnote{Martin Storath is with the Image Analysis and Learning Group, Heidelberg
		Collaboratory for Image Processing, Universit\"at Heidelberg, Germany;
		email:~\texttt{martin.storath@iwr.uni-heidelberg.de}.}
	 \and 
	A. Weinmann\footnote{Andreas Weinmann is with the Institute of Computational Biology, Helmholtz
		Zentrum M\"unchen, and with the Department of Mathematics and Natural
		Sciences, Hochschule Darmstadt, Germany; email:~\texttt{andreas.weinmann@h-da.de}}.}
\date{}
\begin{document}

\newtheorem{prop}{Proposition}[section]
\newtheorem*{prop*}{Proposition}
\newtheorem{rem}[prop]{Remark}
\newtheorem*{rem*}{Remark}
\newtheorem{thm}[prop]{Theorem}
\newtheorem*{thm*}{Theorem}
\newtheorem{defn}[prop]{Definition}
\newtheorem*{defn*}{Definition}
\newtheorem{lem}[prop]{Lemma}
\newtheorem*{lem*}{Lemma}
\newtheorem{cor}[prop]{Corollary}
\newtheorem*{cor*}{Corollary}

\maketitle

\begin{abstract}
In this paper we introduce the notion of second-order total generalized variation (TGV) regularization for manifold-valued data in a discrete setting.
We provide an axiomatic approach to formalize reasonable generalizations of TGV to the manifold setting and present two possible concrete instances that fulfill the proposed axioms. We provide well-posedness results and present algorithms for a numerical realization of these generalizations to the manifold setup. Further, we provide experimental results for synthetic and real data to further underpin the proposed generalization numerically and show its potential for applications with manifold-valued data.
\end{abstract}

\noindent\textbf{Mathematical subject classification:}
94A08,	
68U10,   
90C90   
53B99,  
65K10
.\\[.5em]
\noindent\textbf{Keywords:} Total Generalized Variation, Manifold-valued Data, Denoising, Higher Order Regularization.

\section{Introduction}

In this work we introduce and explore a generalization of second-order total generalized variation (TGV) regularization for manifold-valued data. 
The TGV functional has originally been introduced in \cite{Bredies10} for the vector space setting as generalization of the total variation (TV) functional \cite{rudin1992nonlinear}, which is extensively used for regularization in image processing and beyond. 
The advantage of TV regularization compared to, e.g., classical $H^1$ regularization approaches, is that jump discontinuities can be much better reconstructed. This can be seen in the function space setting since functions of bounded variation, as opposed to Sobolev functions, can have jump discontinuities. It is also reflected in numerical realizations where TV minimization allows to effectively preserve sharp interfaces. A disadvantage of TV regularization, however, is its tendency to produce piecewise constant results even in non-piecewise-constant regions, which is known as the staircasing effect. Employing a regularization with higher order derivatives, such as second-order TV regularization, overcomes this drawback, but again does not allow for jump discontinuities. As a result, a lot of recent research aims at finding suitable extensions of TV that overcome the staircasing effect, but still allow for jumps \cite{Chambolle97,setzer2011infimal,Bergounioux15infcon_analysis}. While infimal-convolution-type approaches can be seen as the first methods to achieve this, the introduction of the TGV functional (of arbitrary order $k$) in 2010 finally provided a complete model for piecewise smooth data with jump discontinuities. This is achieved by an optimal balancing between first and higher order derivatives (up to the order $k$), which is carried out as part of the evaluation of the functional. We refer to \cite{Bredies10} for more motivation and to \cite{BH_inverse} for a detailed analysis of TGV in the context of inverse problems. 
In particular, second-order TGV, which balances between first and second-order derivatives, achieves piecewise linear -- as opposed to piecewise constant -- image reconstructions while it still allows for jumps. This renders second-order TGV a well suited model for piecewise smooth images and can be seen as the motivation of the use of second-order TGV regularization in a plethora of applications \cite{knoll2011tgv2mri,bredies2015decompressiontgv1,bredies2015decompressiontgv2,holler16mri_pet,niu2014sparse}. Up to now, TGV regularization was only available for vector space data and applications are hence limited to this situation.

In various problems of applied sciences, however, the data do not take values in a linear space but in a nonlinear space such as a smooth manifold.
Examples of manifold-valued data are circle and sphere-valued data as appearing in interferometric SAR imaging \cite{massonnet1998radar}, wind directions \cite{schultz1990circular}, orientations of flow fields \cite{adato2011polar, storath2017fastMedian},
and color image processing \cite{chan2001total,vese2002numerical,kimmel2002orientation, lai2014splitting}.
Other examples are data taking values in the special orthogonal group $SO(3)$ expressing vehicle headings, aircraft orientations or camera positions \cite{rahman2005multiscale},
Euclidean motion group-valued data \cite{rosman2012group} as well as shape-space data \cite{Michor07}.
Another prominent manifold is the space of positive (definite) matrices
endowed with the Fisher-Rao metric  \cite{radhakrishna1945information}. 
This space is the data space in DTI  \cite{pennec2006riemannian}.
It is a Cartan-Hadamard manifold and as such it has particularly nice differential-geometric properties. 
DTI allows to quantify the diffusional characteristics of a specimen non-invasively \cite{basser1994mr,johansen2009diffusion}
which is helpful in the context of neurodegenerative pathologies such as schizophrenia \cite{foong2000neuropathological} or autism 
\cite{alexander2007diffusion}. 
Because of the natural appearance of these nonlinear data spaces quite a lot of recent work deals with them. 
Examples are wavelet-type multiscale transforms for manifold-valued data 
\cite{rahman2005multiscale,grohs2009interpolatory, weinmannConstrApprox, wallner2011convergence, weinmann2012interpolatory}
as well as manifold-valued partial differential equations  
\cite{tschumperle2001diffusion, chefd2004regularizing, grohs2013optimal}.
Work on statistics on Riemannian manifolds can be found in 
\cite{oller1995intrinsic,bhattacharya2003large,fletcher2004principal,%
bhattacharya2005large,pennec2006intrinsic,fletcher2007riemannian}.
Optimization problems for manifold-valued data are for example the topic of \cite{absil2009optimization,absil2004riemannian},
of \cite{grohs2016varepsilon} 
and of \cite{hawe2013separable} with a view towards learning in manifolds. We also mention related work on optimization in Hadamard spaces \cite{bavcak2016second, bacak2014convex, bavcak2013proximal} and on the Potts and Mumford-Shah models for manifold-valued data  \cite{weinmann2015mumford,storath2017jump}.

TV functionals for manifold-valued data have been considered from the analytical side in   
~\cite{GM06,GM07,GMS93}; in particular, the existence of minimizers of certain TV-type energies has been shown. 
A convex relaxation based algorithm for TV regularization for $\mathbb S^1$-valued data was considered in~\cite{SC11,CS13}. 
Approaches for TV regularization for manifold-valued data are considered in \cite{LSKC13}
which proposes a reformulation as multi-label optimization problem and a convex relaxation,
in \cite{grohs2016total} which proposes iteratively reweighted minimization, 
and in \cite{weinmann2014total} which proposes cyclic and parallel proximal point algorithms.
An exact solver for the TV problem for circle-valued signals has been proposed in \cite{storath2016exact}.
Furthermore, \cite{steidl16half_quadratic} considers half-quadratic minimization approaches, which may be seen as an extension of \cite{grohs2016total}, and \cite{Steidl16dr_manifold} considers an extension of the Douglas-Rachford algorithm for manifold-valued data.
Applications of TV regularization to shape spaces may be found in \cite{baust2015total, stefanoiu2016joint}.
TV regularization with a data term involving an imaging type operator has been considered in \cite{baust2016combined}
where a forward-backward type algorithm is proposed to solve the corresponding inverse problem involving manifold-valued data.

As with vector space data, TV regularization for manifold-valued data
has a tendency to produce piecewise constant results in regions where
the data is smooth.  As an alternative which prevents this,
second-order TV type functionals for circle-valued data have been
considered in \cite{bergmann2014second,bergmann2016second} and, for
general manifolds, in \cite{bavcak2016second}.  However, similar to
the vector space situation, regularization with second-order TV type
functionals tends towards producing solutions which do not preserve
the edges as desired.  To address this drawback, the vector space
situation guides us to considering TGV models and models based on
infimal convolution to address this issue.  The most difficult part in
this respect is to define suitable notions in the manifold setup which
have both reasonable analytic properties on the one hand and which are
algorithmically realizable on the other hand.  
  Concerning infimal-convolution type functionals such as
  $\TV$--$\TV^2$ infimal convolutions, a first effort towards a
  generalization to the manifold setting has been made in the recent
  conference proceeding \cite{Steidl17_infcon_manifold} which has
  later been extended in an \emph{arXiv} preprint
  \cite{bergmann2017priors}. The present manuscript
  \cite{bredies2017total}, which was submitted to \emph{arXiv} at the
  same time as \cite{bergmann2017priors}, emerged independently.  
  In this work, we focus on TGV and aim at providing a thorough study of
  reasonable generalizations of TGV for piecewise smooth,
  manifold-valued data by first investigating crucially required
  properties of generalizations of TGV to the manifold setting.  Then
we propose suitable extensions that fulfill these properties and which
are, in addition, computationally feasible.
In this respect, it is important to note that due to the cascadic nature of TGV (as opposed to infimal-convolution-type functionals), its generalization to manifolds requires a conceptually new approach. For this reason, both the techniques we propose for a generalization of TGV as well as our algorithmic setting are rather different from the one of \cite{Steidl17_infcon_manifold}, which uses a mid-point approach to generalize $\TV$--$\TV^2$ infimal convolutions and gradient descent for numerical minimization.

\subsection{Contributions}

The contributions of the paper are as follows: \emph{(i)}
we lay the foundations for -- and provide concrete realizations of -- a suitable variational model for second-order total generalized variation for manifold-valued data,
\emph{(ii)} we provide algorithms for the proposed models,
\emph{(iii)} we show the potential of the proposed algorithms by applying them to synthetic and real data.
Concerning \emph{(i)}, we use an axiomatic approach.
We first formalize reasonable fundamental properties of vector-valued TGV which should be conserved in the manifold setting. 
Then we propose two concrete realizations which we show to fulfill the axioms. The one is based on parallel transport and the other is motivated by the Schild's approximation of parallel transport. We obtain well-posedness results for TGV-based denoising of manifold valued data for both variants.
Concerning \emph{(ii)} we provide the details for a numerical realization of variational regularization for general manifolds using either of the two proposed generalizations of TGV. We build on the well-established concept of cyclic proximal point algorithms (using the inexact variant). The challenging part and the main contribution consists in the computation of the proximal mappings of the TGV atoms involved.
In particular, for the class of symmetric spaces, we obtain closed form expressions of the related derivatives in terms of Jacobi fields for the Schild variant;
for the parallel transport variant, we obtain closed form expressions for the related derivatives for general symmetric spaces up to a derivative 
of a certain parallel transport term for which we can still analytically compute the derivative for the class of manifolds considered in the paper.
Concerning \emph{(iii)}, we provide a detailed numerical investigation of the proposed generalization and its fundamental properties.
Furthermore, we provide experiments with real and synthetic data and a comparison to existing methods.

\subsection{Outline of the paper}

The paper is organized as follows. 
The topic of Section~\ref{sec:defModel} is the derivation of suitable TGV models for manifold-valued data. We start out with a detailed discussion of fundamental properties expected by a reasonable TGV version for manifold-valued data.
To this end, we first reconsider vector-space TGV in a form suitable for our purposes in Subsection~\ref{sec:TGVdefvecSpace}.
Then we derive an axiomatic extension of $\TGVat$ to the manifold setting where we -- for a better understanding -- first consider the univariate case in  Subsection~\ref{sec:TGVdefuni}. 
Next we suggest two realizations which we call the Schild variant of TGV and the parallel transport variant of TGV  and show that these realizations indeed fulfill all desired properties.
Then, in Subsection~\ref{sec:axomatic_extension_bivariate}, we extend the axiomatic framework to the bivariate setting and provide bivariate versions of 
the Schild variant of TGV and the parallel transport variant of TGV, respectively, that we show to fulfill the required axioms.
In Section \ref{sec:existence_results} we then provide existence and lower semi-continuity results for the proposed variants of TGV from which the existence of minimizers for the TGV-regularized denoising problems of manifold-valued data follows.
The topic of Section~\ref{sec:algorithm} is the algorithmic realization of the proposed models.
We start out recalling the concept of a cyclic proximal point algorithm (CPPA). 
Then, in Subsection~\ref{sec:AlgBasicUni}, we consider the implementation of the CPPA for manifold-valued TGV.
We identify certain TGV atoms whose proximal mappings are challenging to compute. 
The necessary derivations needed for their computation are provided
for the Schild variant of TGV in Subsection~\ref{subsec:AlgSchildUniv}, 
and, for the parallel transport variant of TGV, in Subsection~\ref{subsec:AlgParUniv}.
The topic of Section~\ref{sec:numericalResults} is the numerical evaluation of the proposed schemes.
There, we first carry out a detailed numerical evaluation of the proposed model for synthetic data. We test extreme parameter choices and ensure consistency of the results of our numerical method with a reference implementation for vector spaces. Then we consider various application scenarios and compare to existing methods.
Finally, we draw conclusions in Section~\ref{sec:Conclusion}.

\subsection{Basic Notation and Concepts from Differential Geometry}\label{sec:notation}

Throughout this work, $\M \subset \R ^K$ will always denote a complete manifold with a Riemannian structure
(with its canonical metric connection, its Levi-Civita connection). Let us explain these notions more precisely.
We consider a manifold $\M$ with a Riemannian metric which is a smoothly varying inner product
	 $\langle \cdot,\cdot \rangle_p$
	  in the tangent space of each point $p$. 
	  (We note that any paracompact manifold can be equipped with a Riemannian structure.) 
	  As usual, we will frequently omit the dependence on $p$ in the notation in the following.
 According to the Hopf-Rinow theorem, the complete manifold $\M$ is geodesically complete in the sense
that any geodesic can be prolongated arbitrarily. Here a geodesic is a curve $\gamma$ of zero acceleration, i.e.,
$\frac{D}{dt} \frac{d}{dt} \gamma = 0,$ where $\frac{D}{dt}$ denotes the covariant derivative along the curve $\gamma$ (see below for details on the covariant derivative). Geodesics are invariant under affine reparametrizations and usually identified with their image in $\M$.
(We parametrize them on $[0,1]$ or, more generally, on $[t_0,t_1]$ with $t_0<t_1$, depending on the context.) 
We always denote by $d: \M\times \M \rightarrow [0,\infty)$ the distance on $\M$ which is induced by the Riemannian structure and note that, since $\M$ is complete, for any $a,b \in \M$ there is always a geodesic from $a$ to $b$ whose length equals $d(a,b)$. Such geodesics are called length minimizing geodesics between $a$ and $b$. By $T_a\M$ we denote the tangent space of $\M$ at $a \in \M$ and by $T\M$ the tangent bundle of $\M$. 
Further, we need the notion of parallel transport. The parallel transport of a vector $v \in T_a \M$ with $a \in \M$ to a point $b\in \M$ along a curve $\gamma:[0,1] \rightarrow \M$ such that $\gamma(0) = a$, $\gamma(1) = b$ is the vector $V_1 = V(1) \in T_b \M$, where $V:[0,1] \rightarrow T\M$ is given as the solution of the ODE $\frac{D}{dt}V(t) = 0$ on $[0,1]$  with the initial condition $V(0)=v,$ where the covariant derivative $\frac{D}{dt}$ is taken along the curve $\gamma$.
The covariant derivative $\frac{D}{dt}$ is induced by the Levi-Civita connection $\nabla_X Y$ of the Riemannian manifold $\M$;
a connection $\nabla_X Y$ is a $C^\infty$-linear mapping w.r.t.\ the vector field $X$ and a derivation w.r.t.\ the vector field $Y.$  (Connections are needed since, in a general manifold $\M$, there is no a priori canonical way to define the directional derivative in direction $X$ of a vector field $Y$.) 
For the covariant derivative $\frac{D}{dt}Y$ along a curve $\gamma$, a direction $X$ along the curve $\gamma$ is given by $\frac{d}{dt}\gamma$ and the vector field $Y$ is given along the curve; hence, if $\frac{d}{dt}\gamma \neq 0,$
$\frac{D}{dt}Y$ is just given by $\nabla_{\frac{d}{dt}\gamma} Y.$
In a Riemannian manifold  $\M,$ there is a (uniquely determined) canonical metric connection, its Levi-Civita connection.
The Levi-Civita connection is the only connection which is symmetric (i.e.,$\nabla_X Y = \nabla_Y X$ for any vector fields $X,Y$ which commute) and which is compatible with the metric (i.e., in terms of the induced covariant derivative which we need later on, $\frac{d}{dt} \langle X,Y \rangle =$ $\langle\frac{D}{dt}X,Y\rangle$ + $\langle X, \frac{D}{dt}Y\rangle$ for any two vector fields $X,Y$ along a curve $\gamma,$ along which the covariant derivatives are taken.) 
For an account on Riemannian geometry we refer to the books \cite{spivak1975differential,do1992riemannian} or to
\cite{kobayashi1969foundations}.

For later use, we fix some further notation. To this end, let
	 $a,b \in \M,$  $v \in T_a \M,$ and $\gamma:[0,L] \rightarrow \M$ be a curve connecting $a$ and $b$ such that $\gamma(t_0) = a$, $\gamma(t_1) = b$ with $t_0<t_1$, $t_0,t_1 \in [0,L].$ We define
	\begin{enumerate}
		\item $\exp_a(v) = \psi(1)$ where $\psi:[0,1]\rightarrow \M$ is the unique geodesic such that $\psi(0) = a$, $\frac{d}{dt}\psi(0) = v$,
		\item $\exp(v) = \exp_a(v)$ when it is clear from the context that $v \in T_a\M$,
		\item $\log_a(b) = \{ z \in T_a\M \st  \exp(z) = b \text{ and } [0,1] \ni t \mapsto \exp(tz) \text{ is a length-minimizing geodesic}\}$,
		
		\item in case $\gamma$ is a geodesic and length-minimizing between $a$ and $b$ on $[t_0,t_1]$, we denote 
		$\log^\gamma_a(b) = (t_1 - t_0)\frac{d}{dt}\gamma(t_0),$
		i.e., the vector in $\log_a(b)$ that is parallel to $\frac{d}{dt}\gamma(t_0)$ and such that $\exp_a(\log^\gamma_a(b)) = b$,
		\item $\pt^\gamma_{a,b}(v) = z \in T_b M$, where $z$ is the vector resulting from the parallel transport of $v$ from $a$ to $b$ along the curve $\psi$ that reparametrizes $[t_0,t_1]$ to $[0,1]$ in an affine way, i.e., $\psi(t) = \gamma(t_0 + t(t_1 - t_0))$ such that $\psi(0) = a$, $\psi(1) = b$,
		\item $\pt_{a,b}(v) = \{ \pt^\psi_{a,b}(v)  \st \psi \text{ is a length-minimizing geodesic connecting } a \text{ and } b\}$,
		\item $\pt_b(v) = \pt_{a,b}(v)$, $\pt^\gamma_{a,b}(v) = \pt^\gamma_b(v)$ when it is clear from the context that $v \in T_a\M$,
		\item for $t \in [0,1]$,  \[[a,b]_t = \{ \psi (t) \,|\, \psi:  [0,1] \rightarrow \M \text{ is a length-minimizing geodesic with } \psi(0) = a, \, \psi(1) = b \}.\] 
		We extend this definition for $t \in \R\setminus [0,1]$ by extending the corresponding geodesic.
	\end{enumerate}
We also note that throughout the paper we identify sets having only one element with the corresponding element.

We further need the concept of geodesic variations for the existence results in Section~\ref{sec:existence_results} and the derivation of the algorithms in Section~\ref{sec:algorithm}. A variation of a curve $\gamma:I \to \M$ defined on an interval $I$ is a smooth mapping $V:I \times J \to \M,$ $J$ an interval containing $0$, such that $V(s,0) = \gamma(s)$ for all $s \in I.$ A variation is called geodesic, if all curves $s \mapsto V(s,t)$ are geodesics for any $t$ in $J.$

\section{Definition of TGV for manifold-valued data}\label{sec:defModel}
The goal of this section is to define a discrete total generalized variation (TGV) functional of second order for manifold-valued data. To this end, we first state some fundamental properties of the TGV functional in infinite and finite dimensional vector spaces. The definition of a generalization of TGV to the manifold setting will then be driven by the goal of preserving these fundamental properties of vector-space TGV.

\subsection{TGV on vector spaces}\label{sec:TGVdefvecSpace}    
We first recall the definition of TGV on infinite-dimensional vector spaces via its minimum representation which is, according to results in \cite{Bredies11_inverse,Valkonen12,BH_inverse,bredies2015decompressiontgv1} covering the second- and general order case as well as the scalar and vectorial case, equivalent to the original definition as provided in \cite{Bredies10}. 
Then we present a discretization which is slightly different from the one typically used.

\begin{defn}[Minimum representation of TGV in vector spaces]\label{def:tgv_continuous} For $\alpha_0,\alpha_1 \in (0,\infty)$,  $u \in L^1_{\loc}(\Omega)^K$ with $\Omega \subset \R^d$ a bounded Lipschitz domain, we define 
\begin{equation} \label{eq:tgv_continuous_setting}
\TGVat (u) = \min _{w \in \BD(\Omega,\R^{d})^K} \alpha _1 \|\Wrt u - w\|_{\M} + \alpha_0 \|\symgrad w\|_{\M} .
\end{equation}
Here , $\|\cdot \|_{\M}$ denotes the Radon norm in the space of Radon measures $\M(\Omega,X)^K:= (C_0(\Omega,X)^K)^*$ with $X \in \{ \R^{d},S^{d\times d}\}$, $S^{d\times d}$ the space of symmetric matrices and further $\BD(\Omega,\R^d)^K:= \{ w \in L^1(\Omega,\R^d)^K \st \symgrad w \in \M(\Omega,S^{d\times d})^K\}$. The derivatives $\Wrt u \in \mathcal{D}(\Omega,\R^{d})^K$ and $\symgrad w \in \mathcal{D}(\Omega,S^{d\times d })^K$ are defined in the weak sense by 
\begin{align*}
\langle \Wrt u ,\varphi \rangle &= -\langle  u , \dive \varphi \rangle , \quad \varphi  \in C^\infty _c (\Omega,\R^{d})^K, \\ 
\langle \symgrad w ,\varphi \rangle &= -\langle  w , \dive \varphi \rangle , \quad \varphi  \in C^\infty _c (\Omega,S^{d\times d})^K ,
\end{align*}
with $\dive \varphi = ( \dive \varphi^1,\ldots,\dive \varphi^K)\in C^\infty _c (\Omega)^K$ for $\varphi = (\varphi^1,\ldots,\varphi^K) \in C^\infty _c (\Omega,\R^{d})^K$ and, for $\varphi = (\varphi^ 1,\ldots,\varphi^ K) \in C^\infty _c (\Omega,S^{d\times d})^K$ with $\varphi^ i  = (\varphi^ i_1,¸\ldots,\varphi^ i_d) \in C^\infty _c (\Omega,S^{d\times d})$, we denote $\dive\varphi = (\dive \varphi^1,\ldots ,\dive \varphi^K) \in C^\infty _c (\Omega,\R^d)^K$ with $\dive \varphi^i = (\dive \varphi^i_1,\ldots,\dive \varphi^i_d)\in C^\infty _c (\Omega,\R^d)$. See \cite{BH_inverse,bredies2015decompressiontgv1} for details.
\end{defn}
Note that $\TGVat(u)$ is finite if and only if $u\in \BV(\Omega)^K$ and, in this case, the minimum is actually obtained \cite{BH_inverse}. Hence the term $\min$ in the above definition is justified.

For a discretization and generalization later on, it is convenient to list some of the main properties of second-order $\TGV$ in function space (see \cite{BH_inverse,bredies2015decompressiontgv1}):
\begin{itemize}
\item[\textbf{(P1)}] If the minimum in \eqref{eq:tgv_continuous_setting} is obtained at $w=0$, then $\TGVat(u) = \alpha_1 \TV(u)$,
\item[\textbf{(P2)}] If the minimum in \eqref{eq:tgv_continuous_setting} is obtained at $w=\Wrt u$, then $\TGVat(u) = \alpha_0 \TV^ 2(u)$,
\item[\textbf{(P3)}] $\TGVat(u) = 0$ if and only if $u$ is affine.
\end{itemize}
Here, $\TV^ 2$ denotes a second-order TV functional which can be defined as $\TV^ 2(u)  = \|\Wrt ^ 2 u\|_{\M} $
where $\Wrt ^ 2u$ denotes the second-order distributional derivative of $u$ and the above quantities are finite if and only if $\Wrt^ 2u \in \M(\Omega,S^ {d\times d})^K$. We note that $w=0$ and $w=\Du$ can trivially be obtained when $u$ is constant and affine, respectively, but $w=0$ is for instance also obtained under some symmetry conditions when $u$ is piecewise constant or when the ratio $\alpha_0/\alpha_1$ is large enough \cite{Bredies10,Papafitsoros2015}.

Using the minimum representation above, a discretization of $\TGVat$ in vector spaces on two-dimensionals grids can be given as follows.

\begin{defn}[Discrete isotropic and anisotropic TGV in vector spaces]\label{def:discrete_tgv_vector_space} Set $U = \R^K$. For $u=(u_{i,j})_{i,j} \in U^{N\times M}$ with $u_{i,j} \in U$, we define the discrete second-order TGV functional as
\begin{equation} \label{eq:tgv_discrete_vector_space}
 \TGVat(u) = \min _{w \in U^{(N-1)\times M }  \times U^{N\times (M-1)} }\alpha_1 \|\Wrt u - w\|_1 + \alpha_0 \|\symgrad w\|_1 
 \end{equation}
where 
\[
\Wrt u := (\delta_{x+} u,\delta_{y+} u) \in U^{(N-1)\times M }  \times U^{N\times (M-1) }\]
and 
\[
\symgrad w := \symgrad (w^1,w^2):= ( \delta_{x-} w^1,\delta_{y-} w^2,\tfrac{\delta_{y-} w^1 + \delta_{x-} w^2 }{2}) \in U^{(N-2)\times M } \times U^{N\times (M-2) } \times U^{(N-1)\times (M-1) }
\]
with $\delta_{x+}$, $\delta_{y+}$ and $\delta_{x-}$, $\delta_{y-}$ being standard forward and backward differences, respectively. Introducing a parameter $p \in [1,\infty)$, the one-norms are given as
\[ \|w\|_1 =  \sum _{i,j} |(w_{i,j}^1,w_{i,j}^2)|_p \text{ for } w \in U^{(N-1)\times M }  \times U^{N\times (M-1)} \]
and 
\[ \|z\|_1 =  \sum _{i,j} \left |\left( \begin{matrix}
z_{i,j}^1 & z_{i,j}^3 \\ z_{i,j}^3 & z_{i,j}^2
\end{matrix}\right) \right|_p \text{ for } z \in U^{(N-2)\times M } \times U^{N\times (M-2) } \times U^{(N-1)\times (M-1)}, \]
where we extend the signals by zero to have the same size. Here, $|(w^1,w^2)|_p:= \left( |w^1|^p + |w^2|^p\right)^{1/p}$ and $\left|\left(\begin{matrix}
z^1 & z^3 \\ z^3 & z^2
\end{matrix}\right)\right|_p:= \left( |z^1|^p + |z^2|^p + 2|z^3|^p\right)^{1/p}$ with $|\cdot| $ the Euclidean norm on $U=\R^K$.
\end{defn}

Note that we incorporate a pointwise $\ell^p$-norm with $p \in [1,\infty)$ in the definition of $\TGVat$. In the function space setting of Definition \ref{def:tgv_continuous}, this corresponds to choosing the appropriate dual norm on $X \in \{\R^d,S^{d\times d}\}$. There, any choice of point-wise norm (also beyond $\ell^p$-norms) yields equivalent functionals and function spaces, with the Euclidean norm being used in the first paper on TGV \cite{Bredies10} and the most popular choices being $\ell^p$ norms with $p \in \{1,2\}$. The focus of this paper is the case $p=1$ since this is more frequently used in the manifold context because of being more amenable to numerical realization. In the theory part we include the case of $p \in [1,\infty)$ since it can be done without additional effort.

We remark that the above discretization of $\TGVat$ is slightly different from the standard one as provided, e.g., in \cite{Bredies10}. The purpose of this is to achieve consistency of the zero set of both the continuous and discrete version as follows. Also, note that, notation-wise, we do not distinguish between the continuous and the discrete version of TGV (and of TV and $\TV^2$). In the following, we will always refer to discrete versions.
\begin{prop}[Zero set of $\TGVat$ in vector spaces]\label{prop:discrete_vector_space_kernel} For $u\in U^{N\times M}$ we have that $\TGVat(u) = 0$ if and only if $u$ is affine, i.e., there exist $a,b,c \in U$ such that $u_{i,j} = ai + bj + c $.
\begin{proof}
We provide only the basic steps: Setting $w = \Wrt u$ we get that 
\[\symgrad w = \symgrad \Wrt u = \left(
\delta_{x-}\delta_{x+} u ,  \delta_{y-}\delta_{y+} u , \tfrac{ \delta_{y-}\delta_{x+} u + \delta_{x-}\delta_{y+} u }{2} 
\right).
\]
Now it is easy to see that if $u$ is linear as above, $\symgrad \Wrt u = 0$ and hence the choice $w = \Wrt u$ renders $\TGVat(u) $ to be zero. Conversely, $\TGVat(u) = 0$ implies $w = \Wrt u$ and hence $\symgrad \Wrt u = 0$. It is easy to see that $\delta_{x-}\delta_{x+} u =   \delta_{y-}\delta_{y+} u = 0$ implies that $u$ is of the form 
$ u_{i,j} = rij + ai + bj + c $
with $r,a,b,c \in U$. But in this case, the last component of $\symgrad \Wrt u$ being zero implies that $ r = \delta_{y-}\delta_{x+} u = -\delta_{x-}\delta_{y+} u = -r $, hence $r=0$.
\end{proof}
\end{prop}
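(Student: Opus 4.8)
The plan is to exploit the minimum representation directly. Since $\alpha_0,\alpha_1>0$ and both $\|\cdot\|_1$-terms in \eqref{eq:tgv_discrete_vector_space} are nonnegative, $\TGVat(u)=0$ holds if and only if there is a $w$ with $\Wrt u - w = 0$ and $\symgrad w = 0$ simultaneously, i.e.\ if and only if $\symgrad \Wrt u = 0$. Thus the whole statement reduces to identifying the kernel of the composite finite-difference operator $\symgrad \Wrt$ and showing it consists exactly of the affine $u$.

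For the \emph{if} direction I would take $w = \Wrt u$ as competitor, so that $\|\Wrt u - w\|_1 = 0$, and compute
\[
\symgrad \Wrt u = \bigl(\delta_{x-}\delta_{x+}u,\ \delta_{y-}\delta_{y+}u,\ \tfrac12(\delta_{y-}\delta_{x+}u + \delta_{x-}\delta_{y+}u)\bigr).
\]
If $u_{i,j} = ai+bj+c$, then $\delta_{x+}u$ and $\delta_{y+}u$ are constant in the respective directions, so every entry above vanishes; hence $w=\Wrt u$ is admissible with objective value $0$, forcing $\TGVat(u)=0$.

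For the \emph{only if} direction, the reduction above shows the minimizing $w$ equals $\Wrt u$ and satisfies $\symgrad \Wrt u = 0$, i.e.\ the three scalar identities $\delta_{x-}\delta_{x+}u = 0$, $\delta_{y-}\delta_{y+}u = 0$ and $\delta_{y-}\delta_{x+}u + \delta_{x-}\delta_{y+}u = 0$. The first two say that $i\mapsto u_{i,j}$ and $j\mapsto u_{i,j}$ are affine for every fixed value of the other index; expressing $u_{i,j}$ through its value and its $i$-slope (both functions of $j$ only) and using that these are themselves affine in $j$ yields the bilinear normal form $u_{i,j} = rij + ai + bj + c$ with $r,a,b,c\in U$. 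Substituting this into the third identity, one gets $\delta_{y-}\delta_{x+}u \equiv r$ and $\delta_{x-}\delta_{y+}u \equiv r$, so $2r=0$ and hence $r=0$, leaving $u_{i,j}=ai+bj+c$.

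The argument is essentially finite-difference bookkeeping, so I do not expect a genuine obstacle. The one place that needs care is the passage from ``affine in each coordinate direction separately'' to the single bilinear expression $rij+ai+bj+c$, together with correctly tracking forward versus backward differences when evaluating the mixed term; a minor additional check is that degenerate grid sizes ($N\le 2$ or $M\le 2$), for which some of the second-difference arrays are empty, remain consistent with the claimed characterization.
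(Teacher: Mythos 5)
Your proposal is correct and follows essentially the same route as the paper: reduce $\TGVat(u)=0$ to $w=\Wrt u$ and $\symgrad \Wrt u=0$, derive the bilinear normal form $u_{i,j}=rij+ai+bj+c$ from the two pure second differences, and kill $r$ via the symmetrized mixed term (your $2r=0$ is the same computation the paper phrases as $r=-r$). The only additions are cosmetic: you make the reduction explicit up front and flag the degenerate grid sizes, neither of which changes the argument.
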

The latter proposition shows that, also after discretization, the kernel of $\TGVat$ consists exactly of (discrete) affine functions.
In  fact,  this  is one of the fundamental properties of $\TGV$ (corresponding to \textbf{(P3)}) which  should  also  be  transferred  to  an  appropriate  generalization  of TGV for manifolds.
Regarding appropriate counterparts of \textbf{(P1)} and \textbf{(P2)}, we introduce the following definition.

\begin{defn} \label{def:discreete_tv_tv2} Using the notation of Definition \ref{def:discrete_tgv_vector_space}, we define $\Wrt^ 2u = \symgrad Du$ and
\[ \TV(u) = \|\Wrt u\|_1, \quad \text{and} \quad  \TV^ 2(u) = \|\Wrt^ 2u\|_1. \]

\end{defn}
In summary, using the discretized version of $\TV$, $\TV^2$ and $\TGVat$ and the discrete notion of affine functions as in Proposition \ref{prop:discrete_vector_space_kernel}, we can conclude that the properties \textbf{(P1)} to \textbf{(P3)} of TGV in continuous vector spaces also transfer to its discretization.
   
\subsection{Univariate $\TGVat$ on manifolds}\label{sec:TGVdefuni}    
When moving from the vector space to the manifold setting, a main difference is that vectors representing derivatives can no longer be made independent of their location, but are attached to a base point on the manifold. In other words, in the Euclidean setting, all tangent spaces at all locations can be identified, which is not possible for manifolds in general.
Accordingly, when aiming to extend $\TGV$ to the manifold setting, important questions are how to introduce the vector fields $(w_{i,j})_{i,j}$ appearing in the minimum representation, how to define $\symgrad w $ for such elements, 
and, most importantly, how to define a suitable distance-type function between such tangent vectors sitting in different points.

In the following, we describe our main ideas to resolve these questions.
In order to allow the reader to understand the underlying ideas more easily, we consider the univariate setting first.
Let $u = (u_i)_i$ be a finite sequence of points in a manifold $\M$ with distance $d:\M\times \M \rightarrow [0,\infty)$. 
Our first goal is to suitably extend the notion of forward differences $(\delta_{x+}u)_i = u_{i+1} - u_i$ and introduce auxiliary variables $w_i$ which they can be compared to. To this end, the central idea is to identify tangent vectors $w_i$ in the space $T_{u_i}\M$ with point-tuples, i.e., $w_i \simeq [u_i,y_i]$ with $y_i \in \M$. In the vector-space case, this can be done via the correspondence $w_i = y_{i} - u_i$. For manifolds, the correspondence can be established via the exponential and the logarithmic map. That is, any $w_i \in T_{u_i}\M$ can be assigned to a unique point-tuple $[u_i,y_i]$ such that $\exp_{u_i}(w_i) = y_i$. Conversely, any point-tuple $[u_i,y_i]$ can be assigned to (generally multiple) tangent vectors $w_i$ such that $w_i \in \log_{u_i}(y_i)$.
 (Note that the ambiguities in logarithmic map result from non-uniqueness of distance-minimizing geodesics, which is a rather degenerate situation in the sense that it only occurs for a set of points with measure zero \cite{itoh1998dimension,cheeger1975comparison}.) Figure \ref{fig:point_tuple_example_univariate} visualizes the correspondence between tangent vectors $w$ and point-tuples $[u,y]$.

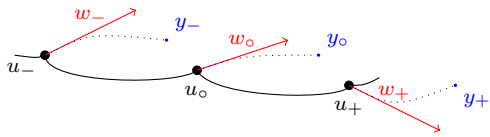
\begin{figure}
\footnotesize
\center
\begin{tikzpicture}[scale=0.2]

\draw (-12,1) .. controls (-10.5,0.8) and (-10.5,0.8) .. (-10,1) .. controls (-10,-1) and (-1,-1) .. (0,0) .. controls (1,-2) and (10,-2) .. (10,-1)  .. controls (11,-1) ..  (12,-0.5);

\def\cw{8pt} 
\filldraw (-10,1) circle (\cw) node[black,below left]{$u_{-}$};
\filldraw (0,0) circle (\cw) node[black,below=0.1cm]{$u_{\circ}$};
\filldraw (10,-1) circle (\cw) node[black,below=0.1cm]{$u_{+}$};

\def\cw{2pt} 
\filldraw[blue] (-2,2) circle (\cw) node[blue,above right]{$y_{-}$};
\filldraw[blue] (8,1) circle (\cw) node[blue,above right]{$y_{\circ}$};
\filldraw[blue] (17,-1) circle (\cw) node[blue,below right]{$y_{+}$};

\draw[red,->] (-10,1) -- (-4,4) node [midway, above] {$w_{-}$};
\draw[red,->] (0,0) -- (6,2) node [midway, above] {$w_{\circ}$};
\draw[red,->] (10,-1) -- (16,-4) node [midway, above] {$w_{+}$};

\draw[blue,dotted] (-10,1) .. controls (-7,2.5) .. (-2,2);
\draw[blue,dotted] (0,0) .. controls (3,1) .. (8,1);
\draw[blue,dotted] (10,-1) .. controls (13,-2.5) .. (17,-1);

\end{tikzpicture}
\caption{A three-point section of a signal $u$ together with tangent vectors $w$ represented by the endpoints $y$. The red vectors show the tangent vectors at the signal points, the blue dotted lines indicate the geodesics $t \mapsto \exp_u(tw)$ and the $y$ are the endpoints $\exp_u(w)$. The black lines indicate geodesic interpolations between the signal points and are for visualization purposes only.\label{fig:point_tuple_example_univariate}}
\end{figure}

Exploiting this correspondence, our approach is to work with a discrete ``tangent bundle'' that is defined as the set of point-tuples $[u,y]$ with $u,y \in \M$, rather than with the continuous one. We refer to the elements in this discrete ``tangent bundle'' as tangent tuples. 
The identification of forward differences with a tangent tuple is then naturally given via $(\delta_{x+}u)_i = [u_i,u_{i+1}]$.

The discretized vector-space version of second-order TGV (for which there is a one-to-one correspondence of tangent vectors and point tuples) can then in the univariate case be rewritten as
\begin{align*}
 \TGVat(u)  
& =  \min _{(w_i)_i } \sum _{i} \alpha _1 \big| (\delta_{x+} u)_i - w_i \big|  + \alpha _0 \big| (\delta_{x-}w)_{i} \big| \\
& =  \min _{ (y_i)_i } \sum _{i} \alpha _1 D([u_i,u_{i+1}],[u_i,y_i])  + \alpha _0 D([u_i,y_i] , [u_{i-1},y_{i-1}]),
 \end{align*}
 where we define $D([x,y],[u,v]) = \big |[x,y] - [u,v]\big | $ and $[x,y] - [u,v] = (y-x) - (v-u)$. Note that, in this context, $(w_i)_i$, $(y_i)_i$ always denote finite sequences of points with their length being the same as the one of $\delta_{x+}u$ and, whenever out of bound indices are accessed in such a summation, we assume the signals to be extended such that the evaluation of $D$ yields zero cost.
 
Hence, in order to extend TGV to the manifold-setting, we are left to appropriately measure the distance of two tangent tuples, i.e., generalize the expression $D([x,y],[u,v]) = |[x,y] - [u,v]|$. In case both tuples have the same base-point on the manifold, a simple and rather natural generalization is to measure their distance by the distance of their endpoints, that is, we set $D([x,y],[x,v]) = d(y,v)$. 
In the other case, i.e., for evaluating $D([u_i,y_i] , [u_{i-1},y_{i-1}])$ for $u_i \neq u_{i-1}$, there is no such simple or even unique generalization since the two tangent tuples belong to different (tangential) spaces.
A quite direct approach to overcome this is to first employ the concept of parallel transport on manifolds to shift both tangent tuples to the same location, and then to measure the distance of their endpoints. 
One possible generalization for TGV proposed in this paper builds on this idea. 
This is not the only possible way of measuring the distance of two tangent tuples.
Alternatively, one can for instance build on a discrete approximation of parallel transport, i.e., on a variant of the Schild approximation,
to measure distance of point tuples at different locations.
This second approach is simpler to realize numerically, and it can be build on simpler differential-geometric tools, while the first one is more straight-forward.
Both approaches have their motivation and, given 
this ambiguity in possible generalizations, we take a more systematic approach to the topic. That is, we first formulate fundamental properties that we require from reasonable generalizations of TGV to the manifold setting.
Then we translate these properties to requirements on admissible distance-type functions for measuring the distance of two tangent tuples. Having established the later, we propose two possible concrete realizations that fulfill the required properties.

\noindent \textbf{Axiomatic extension.} Assuming $D:\M^2 \times \M^2 \rightarrow [0,\infty)$ to be a distance-type function for tangent tuples, we define a generalization of second-order TGV for univariate manifold-valued data as
\begin{equation} \label{eq:tgv_manifold_general}
\MTGV(u)  
 =  \min _{(y_i)_i  } \sum _{i} \alpha _1 d(u_{i+1},y_i)  + \alpha _0 D( [u_i,y_i],[u_{i-1},y_{i-1}]).
\end{equation} In this context, we note that, while with TGV for discrete and continuous vector spaces it is known that the minimum in the above expression is attained, this is not clear a-priori for our generalization. In order not to lose focus, we shift the discussion on existence to Section \ref{sec:existence_results}, noting at this point that for all versions of $\MTGV$ proposed in this work, existence can be shown.

Accounting for the fact that the tangent tuples represent vectors in the tangent space, some basic identifications and requirements for the distance-type function $D$ follow naturally. Zero elements in the discrete ``tangent bundle'' for instance correspond to tangent tuples of the form $[u,u] $ with $u \in \M$ and the distance of $[x,x]$ and $[y,y]$ should be zero.
Also, the distance of two identical elements should also be zero, i.e., $D([x,y],[x,y]) = 0$, and, in the vector space case, the distance function should reduce to the difference of the corresponding tangent vectors.

Our goal is now to further restrict possible choices of distance-type function in order to obtain a meaningful generalization of TGV. To this aim, we want to ensure that appropriate counterparts of the properties \textbf{(P1)} to \textbf{(P3)} are preserved for the resulting version of $\MTGV$ also in the manifold setting. In order to make this more precise, we first have to generalize the involved concepts. We start with the notion of ``affine''.

Given that the geodesics in a manifold play the role of straight lines in vector space, a natural generalization for $u=(u_i)_i $ a finite sequence in $ \M $ to be affine is to require that all points $(u_i)_i$ are on a single geodesic at equal distance. A difficulty that arises in connection with this definition is that, in general, as opposed to the vector space case, a geodesic connecting two points is not necessarily unique. As a consequence, even though all points might be at the same distance when following a joint geodesic, the distance of each single pair of points in the manifold is not necessarily equal. To account for that, we require in addition that the geodesic connecting all points also realizes the shortest connection between all involved points on the geodesic locally.

\begin{defn} \label{def:geodesic_function} Let $u = (u_i)_{i=0}^ n$ be a finite sequence of points on a manifold $\M$. 
We say that $u$ is geodesic if there exists a geodesic $\gamma:[0,L] \rightarrow \M$ parametrized with unit speed such that $\gamma(i L/n) = u_i$ for $i=0,\ldots,n$ and $d(u_i,u_{i+1}) =  L/n$ for $i=0,\ldots,n-1$, i.e., $\gamma |_{[iL/n,(i+1)L/n]}$ is a geodesic of minimal length connecting $u_i$ and $u_{i+1}$ for all $i$. 
\end{defn}

A second issue arising from non-uniqueness of geodesics is the fact that, even though every triplet of points in a signal $(u_i)_i$ might be connected by a geodesic, we cannot conclude that all points are connected by a unique geodesic. Consequently, as a reasonable generalization of TGV will typically act local, in particular will be based on three point stencils, we cannot hope to obtain more than a local assertion for signals in the zero set of TGV. To account for that, we introduce the following notion.
\begin{defn} \label{def:locally_geodesic_function} Let $u = (u_i)_{i=0}^ n$ be a finite sequence of points on a manifold $\M$. We say that  $u$ is locally geodesic if for each $i \in \{2,\ldots,n-1\}$, the sequence $(u_{j})_{j=i-1}^{i+1}$ is geodesic.
\end{defn}

As one might expect, in the situation that subsequent points of a signal are sufficiently close such that they admit a unique connecting geodesic we obtain equivalence of the notions of geodesic and locally geodesic signals.
In this respect, we again note that, in the general non-local situation, the existence of unique minimizing geodesics  
is true for any set of points outside a set of measure zero \cite{itoh1998dimension,cheeger1975comparison}.

\begin{lem} \label{lem:locally_globally_geodesic} If a sequence of points $u = (u_i)_i $ in $ \M$ is such that
the length minimizing geodesic connecting each pair $u_i$, $u_{i'}$ with $|i-i'|\leq 1$ is unique, then $u$ is locally geodesic if and only if it is geodesic.
\begin{proof}
If $u$ is geodesic, it is obviously locally geodesic. Now assume that $u$ is locally geodesic and that
the length minimizing geodesic connecting each pair $u_i$, $u_{i'}$ with $|i-i'|\leq 1$ is unique.
Then there exists a geodesic connecting $ u_0$, $ u_1$, $ u_2$ at equal distance, i.e., the distance between two subsequent points equals the length of the geodesic segment. We proceed recursively: Now assume that $ u_{i-2}$, $ u_{i-1}$, $ u_{i}$ are connected by a geodesic at equal distance. As $u$ is locally geodesic also $ u_{i-1}$, $ u_{i}$, $ u_{i+1}$ are connected by a geodesic. Now by uniqueness, the two geodesics between $u_i$ and $u_{i-1}$ must coincide. Hence all points $u_{i-2}$ till $u_{i+1}$ are on the same geodesic at equal speed. Proceeding iteratively, the result follows.
\end{proof}
\end{lem}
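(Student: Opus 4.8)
The plan is to prove the two implications separately, the forward one being immediate and the reverse one an induction that assembles a single global geodesic from the overlapping three-point stencils.

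\emph{Forward direction.} If $u$ is geodesic, let $\gamma:[0,L]\to\M$ be a unit-speed geodesic with $\gamma(iL/n)=u_i$ and $d(u_i,u_{i+1})=L/n$ as in Definition~\ref{def:geodesic_function}. Then for every admissible $i$ the restriction $\gamma|_{[(i-1)L/n,(i+1)L/n]}$ is a geodesic realizing $d(u_{i-1},u_i)$ and $d(u_i,u_{i+1})$ simultaneously, so each consecutive triple is geodesic and $u$ is locally geodesic; no uniqueness hypothesis is needed here.

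\emph{Reverse direction.} Assume $u$ locally geodesic and the stated uniqueness of length-minimizing geodesics between index-adjacent points. I would induct on $k$, maintaining the statement: there is a unit-speed geodesic $\gamma_k:[0,kh]\to\M$ with $\gamma_k(jh)=u_j$ for $j=0,\dots,k$, where $h:=d(u_0,u_1)$ serves as a common spacing, and each segment $\gamma_k|_{[jh,(j+1)h]}$ is length-minimizing from $u_j$ to $u_{j+1}$ (so $d(u_j,u_{j+1})=h$). The base case $k=2$ is exactly the assertion that the first triple $(u_0,u_1,u_2)$ is geodesic, which holds since $u$ is locally geodesic. For the step, local geodesy of the triple $(u_{k-1},u_k,u_{k+1})$ supplies a unit-speed geodesic $\sigma$ through these three points with equal spacing $h':=d(u_{k-1},u_k)=d(u_k,u_{k+1})$; comparing with the length-minimizing segment $\gamma_k|_{[(k-1)h,kh]}$ forces $h'=h$, so the spacing stays constant. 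Since $u_{k-1},u_k$ are index-adjacent, the uniqueness hypothesis makes the two unit-speed segments joining them coincide, so $\sigma$ and $\gamma_k$ agree on an interval of positive length; hence they share position and velocity at an interior point, and by uniqueness for the geodesic ODE together with geodesic completeness of $\M$ (Hopf-Rinow), the prolongation of $\gamma_k$ past $u_k$ coincides with $\sigma$ and therefore passes through $u_{k+1}$ at arc-length $(k+1)h$, the new segment being length-minimizing because $\sigma$ realizes $d(u_k,u_{k+1})$. Restricting this prolongation to $[0,(k+1)h]$ gives $\gamma_{k+1}$, and taking $k=n$ yields that $u$ is geodesic.

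The only genuinely non-routine point is the step ``two geodesics sharing a segment coincide globally'', which rests on uniqueness of solutions of the geodesic equation for prescribed position and velocity together with geodesic completeness, both available in the present setting. Everything else is bookkeeping of unit-speed parametrizations and arc-lengths, which I would keep implicit; the cases of very few points are handled directly from the definitions.
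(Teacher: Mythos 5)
Your proof is correct and follows essentially the same route as the paper: an induction that uses the uniqueness of the minimizing geodesic between consecutive points to glue the overlapping three-point geodesics into one global geodesic. You merely make explicit the step the paper leaves implicit, namely that two geodesics agreeing on a segment of positive length coincide under prolongation by uniqueness of the geodesic ODE and completeness.
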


In order to investigate the counterparts of properties \textbf{(P1)} and \textbf{(P2)}, we need to define generalizations of the $\TV$ and $\TV^2$ functionals to the manifold setting. For $\TV$, a natural generalization is to set, for $u=(u_i)_i$ in $\M$, 
\begin{equation} \label{eq:tv_manifold_univariate}
\TV(u) = \sum\nolimits_i d(u_{i+1},u_i) ,
\end{equation} 
see also \cite{weinmann2014total}.
For $\TV^2$, a generalization is not that immediate. In \cite{bavcak2016second}, second-order TV was essentially (and up to a constant) defined as $\TV^2(u) = 2\sum_i \inf_{c \in [u_{i-1},u_{i+1}]_{\frac{1}{2}} } d(c,u_{i}), $
which, in the vector space setting reduces to 
\begin{equation} \label{eq:tv2_univariate_vector_space}
\TV^2(u) = 2\sum\nolimits_i d(\tfrac{u_{i-1}+u_{i+1}}{2},u_{i}) = \sum\nolimits_i \big|u_{i-1} -2u_i + u_{i+1} \big|. 
\end{equation} 
However, as can be deduced by considering $\TV^2$ as special case of the different versions of TGV proposed below, this is not the only generalization which fulfills such a property. We will call a function an {\em admissible generalization of $\TV^2$} whenever, in the vector space setting, it reduces to $\TV^ 2$ as in Equation \eqref{eq:tv2_univariate_vector_space} above.
 
Using these prerequisites, we now formulate our requirements on an appropriate generalization of $\TGV$ to the manifold setting of the form \eqref{eq:tgv_manifold_general}. 
\begin{itemize}
\item[\textbf{(M-P1)}] In the vector space setting, $\MTGV$ reduces to the univariate version of \eqref{eq:tgv_discrete_vector_space}.
\item[\textbf{(M-P2)}] If the minimum in \eqref{eq:tgv_manifold_general} is attained at $(y_i)_i = (u_i)_i$, i.e., the tangent tuples $[u_i,y_i]$ all correspond to zero vectors, then $\MTGV(u) = \alpha_1 \TV(u)$ with TV as in \eqref{eq:tv_manifold_univariate}.
\item[\textbf{(M-P3)}] If the minimum in \eqref{eq:tgv_manifold_general} is attained at $(y_i)_i = (u_{i+1})_i$, i.e., the tangent tuples $[u_i,y_i]$ all correspond to $(\delta_{x+} u)_i$, then $\MTGV(u) = \alpha_0 \TV^ 2(u)$ with $\TV^ 2$ an admissible generalization of $\TV^ 2$.
\item[\textbf{(M-P4)}] $\MTGV(u) = 0$ if and only if $u$ is locally geodesic according to Definition \ref{def:locally_geodesic_function}.
\end{itemize}

As the following Proposition shows, the Properties \textbf{(M-P1)} to \textbf{(M-P3)} already follow from basic requirements on the distance function that arise from the intuition that tangent tuples represent tangent vectors. The most restrictive property is \textbf{(M-P4)}, which requires an additional assumption on the distance function.

\begin{prop} \label{prop:mp1_to_mp4_hold_general} Assume that the function $D:\M^2 \times \M^2 \rightarrow [0,\infty)$ is such that
\begin{itemize}
\item $D([x,x],[u,u]) = 0$ for any $x,u \in \M$,
\item $D([x,y],[u,v]) = \big| (y-x) - (v-u)\big|$ in case $\M = \R^K$.
\end{itemize}
Then, for $\MTGV$ as in \eqref{eq:tgv_manifold_general}, the properties \textbf{(M-P1)} to \textbf{(M-P3)} hold. If we further assume that for any geodesic three-point signal $(v_j)_{j=i-1}^{i+1}$ it follows that $D([v_i,v_{i+1}],[v_{i-1},v_i]) = 0$, then $\MTGV(u) = 0$ for any locally geodesic signal $u = (u_i)_i$. Conversely, assume that for any three-point signal $(v_j)_{j=i-1}^{i+1}$, such that the geodesic connecting each pair $v_i$, $v_{i'}$ with $|i-i'|\leq 1 $ is unique, $D([v_i,v_{i+1}],[v_{i-1},v_i]) = 0$ implies that $(v_j)_{j=i-1}^{i+1}$ is geodesic. Then, for any signal $(u_i)_i$ where the geodesic connecting each pair $u_i$, $u_{i'}$ with $|i-i'|\leq 1$ is unique, $\MTGV(u) = 0$ implies that $u$ is geodesic.
\end{prop}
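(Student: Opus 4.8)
The statement splits into three claims: (a) that properties \textbf{(M-P1)}--\textbf{(M-P3)} follow from the two basic requirements on $D$; (b) that the extra hypothesis on geodesic three-point signals gives $\MTGV(u)=0$ for locally geodesic $u$; and (c) the converse, that under uniqueness of short geodesics, $\MTGV(u)=0$ forces $u$ geodesic. For (a) I would argue componentwise. For \textbf{(M-P1)}, in the vector space case the identification $w_i \simeq [u_i,y_i]$ via $y_i = u_i + w_i$ is a bijection, $d(u_{i+1},y_i) = |u_{i+1}-y_i| = |(\delta_{x+}u)_i - w_i|$, and the second requirement on $D$ gives $D([u_i,y_i],[u_{i-1},y_{i-1}]) = |(y_i-u_i)-(y_{i-1}-u_{i-1})| = |w_i - w_{i-1}| = |(\delta_{x-}w)_i|$; substituting into \eqref{eq:tgv_manifold_general} recovers the univariate form of \eqref{eq:tgv_discrete_vector_space}. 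For \textbf{(M-P2)}, plugging $y_i = u_i$ into \eqref{eq:tgv_manifold_general} makes the second summand $D([u_i,u_i],[u_{i-1},u_{i-1}]) = 0$ by the first requirement, leaving $\alpha_1 \sum_i d(u_{i+1},u_i) = \alpha_1\TV(u)$; since this choice is assumed to attain the minimum, the claimed equality holds. For \textbf{(M-P3)}, plugging $y_i = u_{i+1}$ makes the first summand $d(u_{i+1},u_{i+1}) = 0$, leaving $\alpha_0 \sum_i D([u_i,u_{i+1}],[u_{i-1},u_i])$; I would then simply \emph{define} this to be the admissible generalization of $\TV^2$ (checking it reduces to \eqref{eq:tv2_univariate_vector_space} in the vector space case via the second requirement on $D$, where it becomes $\sum_i |(u_{i+1}-u_i)-(u_i-u_{i-1})| = \sum_i|u_{i-1}-2u_i+u_{i+1}|$), so \textbf{(M-P3)} holds tautologically.

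For (b), suppose $u$ is locally geodesic. Choose the competitor $y_i := u_{i+1}$ for all $i$. Then the first summand of \eqref{eq:tgv_manifold_general} vanishes identically, and for each admissible index $i$ the triple $(u_{i-1},u_i,u_{i+1})$ is geodesic by Definition \ref{def:locally_geodesic_function}, so the hypothesis gives $D([u_i,u_{i+1}],[u_{i-1},u_i]) = 0$. Hence every term in the sum is zero and $\MTGV(u) \le 0$, so $\MTGV(u) = 0$ (nonnegativity of $d$ and $D$ gives the reverse inequality). One has to be slightly careful at the boundary indices, but the stated convention that out-of-bound evaluations of $D$ and $d$ yield zero cost handles this cleanly.

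For (c), assume $\MTGV(u) = 0$ and that every length-minimizing geodesic joining points at index-distance $\le 1$ is unique. Let $(y_i)_i$ be a minimizer (existence is deferred to Section \ref{sec:existence_results}, which we may invoke). Every term in the sum must vanish: from $\alpha_1 d(u_{i+1},y_i) = 0$ we get $y_i = u_{i+1}$; substituting, $\alpha_0 D([u_i,u_{i+1}],[u_{i-1},u_i]) = 0$ for each admissible $i$. By the converse hypothesis on three-point signals, each triple $(u_{i-1},u_i,u_{i+1})$ is geodesic, i.e., $u$ is locally geodesic (Definition \ref{def:locally_geodesic_function}). Finally Lemma \ref{lem:locally_globally_geodesic}, whose uniqueness hypothesis is exactly the assumed one, upgrades "locally geodesic" to "geodesic," completing the proof.

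The main obstacle is conceptual rather than computational: it is recognizing that \textbf{(M-P3)} should be read as a \emph{definition} of the induced $\TV^2$ generalization rather than something to be verified against a fixed target, and that the two directions of \textbf{(M-P4)} decouple into the two separate hypotheses on $D$ restricted to three-point geodesic signals — with the global-versus-local gap bridged entirely by the already-proven Lemma \ref{lem:locally_globally_geodesic}. Everything else is bookkeeping: tracking which summand vanishes under which choice of $(y_i)_i$, and invoking the zero-cost boundary convention so that the endpoint terms do not interfere.
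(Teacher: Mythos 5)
Your proposal is correct and follows essentially the same route as the paper's proof: verifying \textbf{(M-P1)}--\textbf{(M-P3)} by direct substitution of the respective minimizers and the vector-space identity for $D$, reading the resulting $\sum_i D([u_i,u_{i+1}],[u_{i-1},u_i])$ as an admissible generalization of $\TV^2$, and handling \textbf{(M-P4)} by the competitor $y_i=u_{i+1}$ in one direction and by forcing $y_i=u_{i+1}$ from term-wise vanishing in the other, followed by Lemma~\ref{lem:locally_globally_geodesic}. Your explicit appeal to the existence results of Section~\ref{sec:existence_results} for the minimizer in the converse direction is, if anything, slightly more careful than the paper's tacit assumption that the minimum is attained.
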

\begin{proof}
In the vector space case we get by our assumptions and since $d(x,u) = |x-u|$, that
\begin{equation} 
\begin{aligned}
\MTGV(u)  
 = \min _{(y_i)_i } &\sum\nolimits _{i} \alpha _1 \big|u_{i+1} - u_i - ( y_i - u_i)\big| + \alpha _0 \big|(y_i - u_i) - (y_{i-1} - u_{i-1})\big| \\
  =\min _{(w_i)_i } &\sum\nolimits _{i} \alpha _1 \big| (\delta_{x+} u)_i - w_i\big| + \alpha _0 \big|(\delta_{x-}w)_i  \big|
 \end{aligned}
\end{equation}
which coincides with the univariate version of $\TGV$ as in \eqref{eq:tgv_discrete_vector_space}, hence \textbf{(M-P1)} holds.

Now in case the minimum in \eqref{eq:tgv_manifold_general} is achieved for $(y_i)_i = (u_i)_i$ we get that
\[ \MTGV(u) = \sum\nolimits_i\alpha_1 d(u_{i+1},u_i) + \alpha_0 D([u_i,u_i],[u_{i-1},u_{i-1}]) = \alpha_1 \TV(u) \]
and \textbf{(M-P2)} holds. Similarly, if the minimum in \eqref{eq:tgv_manifold_general} is achieved for $(y_i)_i = (u_{i+1})_i$ we get that
\begin{equation} \label{eq:properties_proof_case_y_i_u_ip}
 \MTGV(u) = \sum\nolimits_i \alpha_0 D([u_i,u_{i+1}],[u_{i-1},u_{i}]) 
 \end{equation}
which is an admissible generalization of $\TV^2$ since, by assumption, $D([u_i,u_{i+1}],[u_{i-1},u_{i}]) = \big| u_{i+1}-2u_i + u_{i-1}\big|$ in case $\M = \R^K$. Hence \textbf{(M-P3)} holds.

Now in case $(u_i)_i$ is locally geodesic, we can choose $(y_i)_i = (u_{i+1})_i $ to estimate $\MTGV$ from above with the right-hand side in \eqref{eq:properties_proof_case_y_i_u_ip} and obtain that $\MTGV(u) =0 $. Conversely, in case $\MTGV(u)=0$ we get that necessarily $(y_i)_i = (u_{i+1})_i $ and $\MTGV$ reduces to the right hand side in \eqref{eq:properties_proof_case_y_i_u_ip}. By the assumption on $D$, this implies that $u$ is locally geodesic and by Lemma \ref{lem:locally_globally_geodesic} the result follows.
\end{proof}

Guided by the aim of fulfilling the requirements of Proposition \ref{prop:mp1_to_mp4_hold_general}, in the following, we propose two possible choices for $D(\cdot,\cdot)$ which result in two different concrete versions of $\MTGV$.
Both variants follow the general idea of first transporting different tangent tuples to the same location in $\M$ and then measuring the distance there. The main difference between the two variants will be the way the transport is carried out.
The first concept is motivated by the so called Schild's ladder approximation of parallel transport. Its implementation requires only basic differential geometric concepts and is therefore presented first. The second realization, which we call the parallel transport variant, requires more differential geometric concepts 
\cite{spivak1975differential,do1992riemannian} and is therefore presented afterwards. However, the Schild's ladder variant can be seen as an approximation of the parallel transport variant as explained below.

\noindent \textbf{Realization via Schild's-approximation.}
Let $[x,y]$ and $[u,v]$ be two tuples in $\M^ 2$ for which we want to define $D([x,y],[u,v])$ and assume for the moment that distance-minimizing geodesics are unique. 
Motivated by the Schild's ladder approximation of parallel transport \cite{schild72_ladder,kheyfets2000schild}, 
we consider the following construction (see Figure \ref{fig:schilds_ladder}): take $c$ to be the midpoint of the points $v$ and $x$, i.e., $c = [x,v]_{\frac{1}{2}};$ set $y' = [u,c]_2$, i.e., reflect $u$ at $c$. 
Then, we claim that the distance-type function $D([x,y],[u,v]) = d(y,y')$ fulfills the requirements of Proposition \ref{prop:mp1_to_mp4_hold_general}.

The above construction may be motivated as follows. 
To compare the tangent vectors $\log_x(y)$ and $\log_u(v)$, which correspond to the tangent tuples $[x,y]$ and $[u,v]$, we could move them to the same point using parallel transport
and then take the norm induced by the Riemannian metric in the corresponding tangent space. Using the above construction to obtain $y'$ from $x$ and $[u,v]$, the tangent vector $\log_x(y')$ can be seen as an approximation of the parallel transport of $\log_u(v)$ to $x$ (in the limit $x \rightarrow u$) \cite{schild72_ladder,kheyfets2000schild}. The difference of this vector to $\log_x(y)$ is then approximated by $d(y,y')$.
We notice that, as can be easily seen, this Schild's ladder approximation of parallel transport, i.e., the construction of $y'$ and $\log_x(y')$ as above, is exact in the vector space case.

\begin{figure}
\footnotesize
\center
\begin{tikzpicture}[scale=0.2]

\def\cw{8pt}

\filldraw (0,0) circle (\cw) node[left]{$u$};
\filldraw[blue] (20,0) circle (\cw) node[right,blue]{$v$};
\filldraw (0,10) circle (\cw) node[left]{$x$};
\filldraw[blue] (20,13) circle (\cw) node[right,blue]{$y$};

\draw[blue,dotted] (0,0) .. controls (4,-1) and (8,-2) .. (20,0);
\draw[blue,dotted] (0,10) .. controls (4,12) and (8,14) .. (20,13);
\draw (0,0) .. controls (-2,3) and (-2,5) .. (0,10);

\filldraw[darkgreen] (8.5,3.5) circle (\cw) node[darkgreen,above right]{$c$};
\filldraw[darkgreen] (19,8) circle (\cw) node[darkgreen,above right]{$y'$};
\draw[dotted,darkgreen] (0,10) .. controls (4,6) and (8,2) .. (20,0);
\draw[dotted, darkgreen] (0,0) .. controls (6,4) and (8,2) .. (19,8);

\draw[red,->] (0,0) -- (8,-2) node[below] {$w= \log_u(v)$};
\draw[red,->] (0,10) -- (7,8.5) node[above right=0cm] {$\log_x(y') \approx \pt_x(w)$};

\draw[darkgreen,dotted] (0,10) .. controls (5,9) and (7,8) .. (19,8);

\end{tikzpicture}
\caption{Approximate parallel transport of $\log_u(v)$ to $x$ via the Schild's ladder construction. \label{fig:schilds_ladder}}
\end{figure}
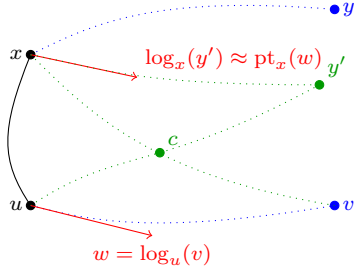

Hence we propose to measure the deviation between the tangent tuples $[x,y]$ and $[u,v]$ by transporting $[u,v]$ to $x$ using the construction above to obtain the tangent tuple $[x,y']$ and then to compare the resulting tangent tuples sitting in the same point $x$ by measuring the distance of their endpoints $y,y'$. Since in general geodesics are not unique, we have to minimize over all possible constructions as above, i.e., consider all midpoints of all length-minimizing geodesics. This yields the following distance-type function
\begin{equation} \label{eq:schilds_distance}
 \ds([x,y],[u,v]) = \min _{y' \in \M} d(y',y) \text{ such that } y' \in [u,c]_2 \text{ with } c \in [x,v]_{\frac{1}{2}}.
\end{equation}
It is immediate that $\ds$ is positive and zero for identical elements. Furthermore if $\ds([x,y],[u,v])= 0$ then $[x,y]$ can be interpreted to be equal to the approximate parallel transport of $[u,v]$ to $x$. Indeed, in the vector space case,  $\ds([x,y],[u,v])= 0$ if and only if $v-u$ is parallel to $y-x$ and has the same length and direction.

Plugging in $\ds$ in \eqref{eq:tgv_manifold_general} we define
\begin{equation} \label{eq:tgv_univariate_schilds}
\begin{aligned}
\STGV(u)  
 = \min _{(y_i)_i } &\sum\nolimits _{i} \alpha _1 d(u_{i+1},y_i) + \alpha _0 \ds\big([u_i,y_{i}],[u_{i-1},y_{i-1}]\big). \\
\end{aligned}
\end{equation}
Regarding the properties \textbf{(M-P1)} to \textbf{(M-P4)}, we then get the following result.

\begin{thm}\label{thm:d_s_properties} The $\STGV$ functional as in \eqref{eq:tgv_univariate_schilds} satisfies \textbf{(M-P1)} and \textbf{(M-P3)}. If a finite sequence of points $u=(u_i)_i$ is locally geodesic, then $\STGV(u) = 0$. Further, if $\STGV(u) = 0$ and 
the geodesic connecting each pair $u_i$, $u_{i'}$ with $|i-i'|\leq 1$ is unique, then $u$ is geodesic.
\begin{proof} We verify the assumptions of Proposition \ref{prop:mp1_to_mp4_hold_general} for $D = \ds$.
It can be easily seen that $\ds([x,x],[u,u]) = \ds([x,y],[x,y]) = 0$ for $x,u,y \in \M$. Also, in the case $\M = \R ^K$, the approximate parallel transport as in the definition of $\ds$ above coincides with the parallel shift of vectors. Hence $\ds([x,y],[u,v]) = \ds( [0,y-x],[0,v-u]) = \big|(y-x) - (v-u)  \big| $ and \textbf{(M-P1)} to \textbf{(M-P3)} holds. 
Now if $v = (v_j)_{j=i-1}^ {i+1}$ is locally geodesic, then $v_i \in [v_{i},v_{i}]_{\frac{1}{2}}$ and $v_{i+1} \in [v_{i-1},v_i]_2$, hence $\ds([v_i,v_{i+1}],[v_{i-1},v_i]) = 0.$
Conversely, $\ds([v_i,v_{i+1}],[v_{i-1},v_i]) = 0$ implies that there exists $y' \in [v_{i-1},v_i]_2$ such that $d(y',v_{i+1}) = 0$. But this implies that $v_{i+1} \in [v_{i-1},v_i]_2$ and, by the assumption on unique geodesics, it follows that $v$ is geodesic.
\end{proof}
\end{thm}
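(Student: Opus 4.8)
The plan is to reduce everything to a single application of Proposition~\ref{prop:mp1_to_mp4_hold_general} with $D=\ds$. That proposition says that \textbf{(M-P1)} to \textbf{(M-P3)} and both zero-set assertions follow as soon as one has checked, for the Schild-type distance \eqref{eq:schilds_distance}, the four facts: (i) $\ds([x,x],[u,u])=0$ for all $x,u\in\M$; (ii) $\ds([x,y],[u,v])=\bigl|(y-x)-(v-u)\bigr|$ when $\M=\R^K$; (iii) $\ds([v_i,v_{i+1}],[v_{i-1},v_i])=0$ for every geodesic three-point signal $(v_j)_{j=i-1}^{i+1}$; and (iv) conversely, for every three-point signal $(v_j)_{j=i-1}^{i+1}$ such that the length-minimizing geodesic connecting each pair $v_i,v_{i'}$ with $|i-i'|\le1$ is unique, $\ds([v_i,v_{i+1}],[v_{i-1},v_i])=0$ implies that $(v_j)_{j=i-1}^{i+1}$ is geodesic. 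Items (i) and (ii) are precisely the two structural hypotheses of the proposition, (iii) is the hypothesis that gives ``$u$ locally geodesic $\Rightarrow\STGV(u)=0$'', and (iv) is the one that gives the converse; so once these are established, the theorem drops out.

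For (i), in the construction defining $\ds([x,x],[u,u])$ the midpoint $c$ ranges over $[x,u]_{1/2}$, hence $y'=[u,c]_2$ is the reflection of $u$ through the midpoint of $x$ and $u$, which is $x$; since the target endpoint is $x$ as well, $d(y',x)=0$. The same substitution gives $\ds([x,y],[x,y])=0$ (there $c\in[x,y]_{1/2}$ and $y'=[x,c]_2=y$), which records that equal tangent tuples and zero tuples have vanishing $\ds$-distance as expected. For (ii), in $\R^K$ length-minimizing geodesics are unique straight segments, $[\cdot,\cdot]_{1/2}$ is the arithmetic midpoint and $[\cdot,\cdot]_2$ the point reflection, so $c=\tfrac{1}{2}(x+v)$ and $y'=2c-u=x+v-u$, whence $\ds([x,y],[u,v])=|y'-y|=\bigl|(y-x)-(v-u)\bigr|$ with the minimization being vacuous. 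With (i) and (ii) in hand, Proposition~\ref{prop:mp1_to_mp4_hold_general} yields \textbf{(M-P1)} to \textbf{(M-P3)}.

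For (iii), let $(v_{i-1},v_i,v_{i+1})$ be geodesic with associated unit-speed geodesic $\gamma$. Evaluating $\ds([v_i,v_{i+1}],[v_{i-1},v_i])$, the midpoint $c$ lies in $[v_i,v_i]_{1/2}=\{v_i\}$, and $[v_{i-1},v_i]_2$ is the endpoint obtained by prolonging a length-minimizing geodesic from $v_{i-1}$ to $v_i$ to parameter $2$; by the defining property of a geodesic signal this prolongation is exactly the reparametrized $\gamma$ restricted to its second segment, whose endpoint is $v_{i+1}$. Choosing $y'=v_{i+1}$ gives $d(y',v_{i+1})=0$, so (iii) holds and Proposition~\ref{prop:mp1_to_mp4_hold_general} gives $\STGV(u)=0$ for every locally geodesic $u$.

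For (iv), $\ds([v_i,v_{i+1}],[v_{i-1},v_i])=0$ forces again $c=v_i$ and the existence of $y'\in[v_{i-1},v_i]_2$ with $d(y',v_{i+1})=0$, i.e. $v_{i+1}\in[v_{i-1},v_i]_2$. Uniqueness of the length-minimizing geodesic $\psi$ from $v_{i-1}$ to $v_i$ then pins the prolongation down, so $v_{i-1},v_i,v_{i+1}$ lie in order on the single geodesic obtained by extending $\psi$, at equal parameter spacing $d(v_{i-1},v_i)$; reparametrizing $\psi$ to unit speed produces the curve required in Definition~\ref{def:geodesic_function}. The delicate point, which I expect to be the main obstacle, is to confirm that this second segment is \emph{length-minimizing}, i.e. that $d(v_i,v_{i+1})$ equals the common spacing rather than being strictly smaller -- a priori a geodesic can stop being minimizing past a cut point. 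I would settle this using the uniqueness hypothesis for the pair $(v_i,v_{i+1})$ together with the prolongation argument, in the same spirit as the recursion in the proof of Lemma~\ref{lem:locally_globally_geodesic}; this bookkeeping around non-uniqueness and minimality of geodesic segments is really the only non-routine ingredient, the remainder being direct substitution into \eqref{eq:schilds_distance} and \eqref{eq:tgv_univariate_schilds}.
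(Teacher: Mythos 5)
Your proposal is correct and follows essentially the same route as the paper: verify the hypotheses of Proposition~\ref{prop:mp1_to_mp4_hold_general} for $D=\ds$ by direct substitution into \eqref{eq:schilds_distance}, with (i)--(ii) giving \textbf{(M-P1)}--\textbf{(M-P3)} and (iii)--(iv) giving the two zero-set assertions. The one step you flag as delicate in (iv) --- that the prolonged segment from $v_i$ to $v_{i+1}$ is itself length-minimizing, so that $(v_j)_{j=i-1}^{i+1}$ satisfies Definition~\ref{def:geodesic_function} --- is passed over without comment in the paper's own proof, which simply asserts that $v_{i+1}\in[v_{i-1},v_i]_2$ together with uniqueness of the connecting geodesics yields that $v$ is geodesic.
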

\begin{rem}
Assuming that, for each $i$, $u_i$ and $y_i$ are sufficiently close such that they are connected by a unique length minimizing geodesic, we get that $[u_i,[u_i,y_i]_{\frac{1}{2}}]_2 = y_i$ and hence $ \ds([u_i,u_{i+1}],[u_i,y_i]) = d(u_{i+1},y_i)$. Consequently, in this situation, an equivalent definition of $\STGV $ can be given as
\begin{equation*} 
\begin{aligned}
\STGV(u)  
 = \min _{(y_i)_i } &\sum\nolimits _{i} \alpha _1 \ds([u_i,u_{i+1}],[u_i,y_i]) + \alpha _0 \ds\big([u_i,y_{i}],[u_{i-1},y_{i-1}]\big) \\
\end{aligned}
\end{equation*}
This definition regards the mapping $(u_i)_i\mapsto ([u_i,u_{i+1}])_i$ as discrete gradient operator, mapping from $\M$ to the discrete tangent space, and exclusively works in the discrete tangent space with $\ds$ the canonical distance-type function.
\end{rem}

\begin{rem} We note that an alternative choice for $\ds$ would have been to transport $[u_{i},y_{i}]$ to $u_{i-1}$ rather than $[u_{i-1},y_{i-1}] $ to $u_i$. However, since the distance $d(u_{i+1},y_i)$ in $\STGV$ as in \eqref{eq:tgv_univariate_schilds} can be interpreted as evaluating the difference of the forward difference $(\delta_{x+}u)_i $ with $[u_i,y_{i}]$ in the center point $u_i$, it seems natural to evaluate also the backward difference of the signal $([u_i,y_i])_i$ at the center point. 
\end{rem}

\begin{rem} We also note that, as opposed to the vector space setting, the distance function $\ds$ is in general not symmetric, i.e., $\ds([x,y],[u,v]) \neq \ds([u,v],[x,y])$. 
 To obtain symmetry, alternative definitions of $\ds$ could be given as
$ \tilde{\ds}([x,y],[u,v]) = \ds([x,y],[u,v]) + \ds([u,v],[x,y]) $
or
\[ \tilde{\ds}([x,y],[u,v]) =   \min _{c_1,c_2} \, d(c_1,c_2) \quad \text{subject to } c_1 \in  [x,v]_{\frac{1}{2}} \text{ and } c_2 \in  [u,y]_{\frac{1}{2}}.\]
For the sake of simplicity and in order to obtain the relation with parallel transport, however, we have defined $\ds$ as in \eqref{eq:schilds_distance}.
\end{rem}

\noindent \textbf{Realization via parallel transport.}
The Schild's ladder construction defined above can be seen as a discrete approximation of parallel transport. 
Alternatively, we can use the identification of point-tuples $[u,v],[x,y]$ with vectors in the tangent space at $u$ and $x,$ respectively, and use the parallel transport directly to transport the respective vectors to a common base point.

That is, assuming -- for the moment -- uniqueness of length-minimizing geodesics, we can identify each $[u,v]$ with $w \in T_u \M $ such that $v = \log_u (w)$ and compare $\pt_x(w)$, the vector resulting from the parallel transport of $w$ to $T_x \M$, to $\log_x(y)\in T_x \M$. This yields a distance-type function for two point-tuples $[u,v]$, $[x,y]$ as
\begin{equation}\label{eq:DefDpt}
	\dpt ([x,y],[u,v]) = \big| \log_x(y) - \pt_x(\log_u(v))\big|_x
\end{equation}
where $\big|\cdot\big|_x$ denotes the norm in $T_x\M$. 
Comparing this to $\ds$ we note that, besides using a different notion of transport, we now measure the distance with the norm in the tangent space rather than with the distance of endpoints. The reason for doing so is that, in this situation, approximating the norm in the tangent space via the distance of endpoints does not yield further simplification since the the parallel transport forces us to work in the tangent space anyway.

Now in general, length minimizing geodesics are not necessarily unique. To deal with this issue, we have defined the $\log$ mapping and the parallel transport to be set-valued (see Section \ref{sec:notation}), i.e., for $u,v \in \M$, not necessarily close, and $w \in T_v\M  $ we define $\pt_u(w) \subset T_u\M $ to be the set of all vectors in $T_u\M$ which can be obtained by parallel-transporting $w$ to $T_u\M$ along a length minimizing geodesic. Note that by isometry of the parallel transport, the length of all such vectors is the same but by holonomy their orientation might be different. 
In order to adapt the above definition of $\dpt{}$ to this situation, we generalize the distance function $\dpt{}$ for tangent tuples $[x,y]$ and $[u,v]$ as
\begin{equation} \label{eq:definition_transport_distance}
\dpt([x,y],[u,v]):= \min_{\substack{z_1 \in \log_x(y) \\ z_2 \in T_x\M} } \big|z_1 - z_2\big|_{x} \text{ such that } z_2 \in \pt_x(w) \text{ with } w \in \log_u(v).
\end{equation}
Using this notation, a parallel-transport-based version of $\MTGV$, denoted by $\PTTGV$, can then be defined for the general situation as
\begin{equation} 
\PTTGV(u)  
 =  \min _{(y_i)_i } \sum\nolimits _{i} \alpha _1 d(u_{i+1},y_i) + \alpha _0 \dpt( [u_i,y_i],[u_{i-1},y_{i-1}]) .
\end{equation}
Similarly to the Schild's-version, we then get the following result.
\begin{thm} \label{thm:pt_properties} The functional $\PTTGV$ satisfies \textbf{(M-P1)} and \textbf{(M-P3)}. If a finite sequence of points $u=(u_i)_i$ is locally geodesic, then $\PTTGV(u) = 0$. Further, if $\PTTGV(u) = 0$ and the geodesic connecting each pair $u_i$, $u_{i'}$ with $|i-i'|\leq 1$ is unique, then $u$ is geodesic.
\proof The proof is similar to the one of Theorem \ref{thm:pt_properties_multi} below considering the bivariate setting.
\end{thm}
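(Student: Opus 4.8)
The plan is to reduce everything to Proposition~\ref{prop:mp1_to_mp4_hold_general} by checking its hypotheses for the distance-type function $D=\dpt$ defined in \eqref{eq:definition_transport_distance}; the assertions of the theorem then drop out, the last one in combination with Lemma~\ref{lem:locally_globally_geodesic}. I would begin with the two elementary requirements. First, $\dpt([x,x],[u,u])=0$, since $\log_x(x)=\{0\}$, $\log_u(u)=\{0\}$, and the parallel transport of the zero vector along any geodesic is $0$, so that $(z_1,z_2)=(0,0)$ is the only admissible pair in \eqref{eq:definition_transport_distance}. Second, in the flat case $\M=\R^K$ every two points are joined by a unique minimizing geodesic, $\log_x(y)=\{y-x\}$, and parallel transport is the canonical identification of tangent spaces, so $\pt_x(v-u)=\{v-u\}$ and \eqref{eq:definition_transport_distance} collapses to $\dpt([x,y],[u,v])=\big|(y-x)-(v-u)\big|$. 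By Proposition~\ref{prop:mp1_to_mp4_hold_general} these two facts already yield properties \textbf{(M-P1)} to \textbf{(M-P3)}.

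Next I would treat the zero-set assertions. For the easy direction, let $(v_j)_{j=i-1}^{i+1}$ be geodesic and let $\gamma\colon[0,L]\to\M$ be the associated unit-speed geodesic, so that $\gamma(0)=v_{i-1}$, $\gamma(L/2)=v_i$, $\gamma(L)=v_{i+1}$ with both halves minimizing. I would pick the representative $w=\tfrac{L}{2}\dot\gamma(0)=\log^{\gamma}_{v_{i-1}}(v_i)\in\log_{v_{i-1}}(v_i)$. Since $\dot\gamma$ is parallel along $\gamma$ (the geodesic equation $\tfrac{D}{dt}\dot\gamma=0$), parallel transport of $w$ along the minimizing segment $\gamma|_{[0,L/2]}$ gives $\tfrac{L}{2}\dot\gamma(L/2)\in\pt_{v_i}(w)$. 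But $\tfrac{L}{2}\dot\gamma(L/2)=\log^{\gamma}_{v_i}(v_{i+1})\in\log_{v_i}(v_{i+1})$, so the two sets over which \eqref{eq:definition_transport_distance} minimizes share this vector and $\dpt([v_i,v_{i+1}],[v_{i-1},v_i])=0$.

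For the converse, suppose $\dpt([v_i,v_{i+1}],[v_{i-1},v_i])=0$ and that the minimizing geodesic joining each pair among the three points is unique. Uniqueness forces $\log_{v_{i-1}}(v_i)=\{w\}$ with $w=\dot\sigma(0)$ for the unique minimizing $\sigma\colon[0,1]\to\M$ from $v_{i-1}$ to $v_i$, and likewise $\pt_{v_i}(w)=\{\dot\sigma(1)\}$; vanishing of \eqref{eq:definition_transport_distance} then produces some $z_1\in\log_{v_i}(v_{i+1})$ with $z_1=\dot\sigma(1)$. Hence $t\mapsto\exp_{v_i}(tz_1)$ is the geodesic prolongation of $\sigma$ beyond $v_i$ and is minimizing on $[0,1]$, so $v_{i-1},v_i,v_{i+1}$ all lie on one geodesic, and by isometry of parallel transport $d(v_{i-1},v_i)=|w|=|\dot\sigma(1)|=|z_1|=d(v_i,v_{i+1})$; reparametrizing by arclength exhibits $(v_j)_{j=i-1}^{i+1}$ as geodesic in the sense of Definition~\ref{def:geodesic_function}. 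With these three properties of $\dpt$ in hand, Proposition~\ref{prop:mp1_to_mp4_hold_general} (together with Lemma~\ref{lem:locally_globally_geodesic} for the global conclusion) gives the theorem; this is precisely the argument carried out in detail for the bivariate analogue in Theorem~\ref{thm:pt_properties_multi}.

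The part I expect to be most delicate is the careful handling of the set-valuedness of $\log$ and $\pt$: in the easy direction one must ensure the chosen representative is genuinely admissible (a minimizing-geodesic logarithm, transported along a minimizing geodesic), and in the converse one must exploit the uniqueness hypothesis to collapse all the relevant sets to singletons so that equality of the optimal $z_1,z_2$ really propagates to a single common geodesic carrying equal consecutive distances. Attainment of the minimum in \eqref{eq:definition_transport_distance} is a routine compactness matter that plays no role in deciding when the value vanishes, and the flat-case reduction and the zero-vector identity are immediate.
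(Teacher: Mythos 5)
Your proposal is correct and follows essentially the same route as the paper: it verifies the hypotheses of Proposition~\ref{prop:mp1_to_mp4_hold_general} for $D=\dpt$, with the key geometric input being that the velocity field of a geodesic is parallel along itself — exactly the content of Lemma~\ref{lem:pt_of_log}, which the paper's bivariate proof (Theorem~\ref{thm:pt_properties_multi}) invokes for the same purpose. Your handling of the set-valuedness of $\log$ and $\pt$ and of the uniqueness hypothesis in the converse direction matches the paper's argument.
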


\begin{rem} Since the expression $d(u_{i+1},y_i)$ can be seen as approximation of $|\log_{u_i}(u_{i+1}) - w_i|_{u_i}$, where $w_i \in \log_{u_i}(y_i)$, an alternative definition of $\PTTGV$ (assuming uniqueness of geodesics for simplicity) only in terms of tangent vectors is given as
\begin{equation} 
\widetilde{\PTTGV}(u)  
 =  \min _{w_i \in T_{u_i}\M} \sum\nolimits _{i} \alpha _1 \big|\log_{u_i}(u_{i+1})  -  w_i \big|_{u_i} + \alpha _0 \big| w_i - \pt_{u_i}(w_{i-1})\big|_{u_i} .
\end{equation}
We believe, however, that the originally proposed version is preferable since the fact that the first term in $\PTTGV$ only involves the standard manifold distance of two points simplifies the numerical realization. That is, with the standard distance we are able to solve certain subproblems (proximal mappings) in the algorithm explicitly whereas the other version would require additional inner loops (see Section \ref{sec:algorithm} for details).
\end{rem}

\subsection{Bivariate $\TGVat$ on manifolds}
\label{sec:axomatic_extension_bivariate}
The goal of this section is to extend $\MTGV$ to the bivariate case. To this aim, we first observe that the bivariate version of $\TGVat$ for vector spaces as in \eqref{eq:tgv_discrete_vector_space} can be written as
\begin{multline} \label{eq:tgv_vector_manifold_general_multi}
 \TGVat(u)  = \min _{(w_{i,j}^1)_{i,j},(w_{i,j}^2)_{i,j}}  \sum\nolimits _{i,j} \alpha_1 \bigg( \big | (\delta_{x+}u)_{i,j}- w_{i,j}^ 1\big |^p + \big | (\delta_{y+}u)_{i,j}- w_{i,j}^ 2\big |^p  \bigg)^{1/p} \\
  +\alpha_0 \bigg( \big | w^1 _{i-1,j} - w^1_{i,j}\big |^p + \big | w^2 _{i,j-1} - w^2_{i,j}\big |^p  + 2^{1-p} \big | (w^1 _{i,j-1} - w^1_{i,j}) + (w^2 _{i-1,j} - w^2_{i,j})\big |^p \bigg) ^{1/p},
\end{multline}
where we set all norms $|\cdot |^p$ to be zero whenever out of bound indices are involved.

The first four summands in the above definition of $\TGVat$ can all be transferred to the manifold setting using the previously introduced distance-type functions for tangent tuples. The additional difficulty of the bivariate situation arises form the fifth term, which first combines both differences and sums of tangent tuples and then measures the norm. Again possible generalizations are not unique and we employ an axiomatic approach to propose reasonable choices.

\noindent \textbf{Axiomatic extension.} Denote by $D(\cdot,\cdot)$ one of the two previously introduced distance functions for tangent tuples and assume that $D^{\text{sym}} : \M^ 2 \times \M^ 2\times \M^ 2\times \M^ 2 \rightarrow \R$ generalizes the fifth term in \eqref{eq:tgv_vector_manifold_general_multi}, which corresponds to the mixed derivatives. A bivariate version of $\MTGV$ can then be given as
\begin{equation} \label{eq:tgv_manifold_general_multi}
\begin{aligned}
\text{M-TGV}_\alpha^ 2(u)  
 = \min _{y^ 1_{i,j},y^ 2_{i,j}}  &\alpha_1 \sum\nolimits _{i,j}  \Big(  d(u_{i+1,j},y^ 1_{i,j})^p + d(u_{i,j+1},y^ 2_{i,j})^p \Big) ^{1/p}\\
  +& \alpha _0 \sum\nolimits_{i,j}  \Big( D\big([u_{i,j},y^1_{i,j}],[u_{i-1,j},y^1_{i-1,j}]\big)^p + D\big([u_{i,j},y^2_{i,j}],[u_{i,j-1},y^2_{i,j-1}]\big)^p  \\
  &\qquad  + 2^{1-p} D^{\text{sym}} ([u_{i,j},y^1_{i,j}],[u_{i,j},y^2_{i,j}],[u_{i,j-1},y^1_{i,j-1}],[u_{i-1,j},y^2_{i-1,j}])^p \Big)^{1/p}.
\end{aligned}
\end{equation}
The basis for this generalization is again the representation of tangent vectors with tangent tuples, only that now for each $u_{i,j}$ we consider two tangent vectors $w^1_{i,j}, w^ 2_{i,j}$ and corresponding points $y^1_{i,j}, y^ 2_{i,j}$ in order to represent horizontal and vertical derivatives, see Figure \ref{fig:point_tuples_bivariate}.
Remember that in this paper, we focus on the case $p=1,$ for which we derive an algorithmic realization later on. However, since it causes no additional effort in this section, we provide a formulation for all
$p \in [1,\infty)$ here.

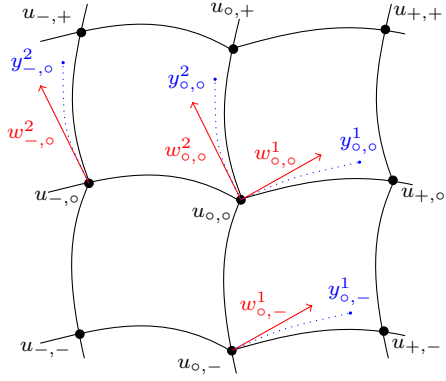
\begin{figure}
\footnotesize
\center
\begin{tikzpicture}[scale=0.2]

\foreach \x [count=\xi] in {-10,0,10}
{
	\pgfmathsetmacro\result{-0.05*abs(\x)}
	\draw (-12+\result,\x-1) ..controls (-8+\result,-3+\x+3) and (-4+\result,-1.8+\x+3) .. (0.5+\result,-1.5+\x);
	\draw (0.5+\result,-1.5+\x) ..controls (4+\result,-3+\x+2.5) and (7.5+\result,-2.5+\x+3) .. (12+\result,\x-0.5);
	\pgfmathsetmacro\result{-0.1*abs(\x)}
	\draw (\x+0.4,-13-\result) ..controls (\x-1,-7-\result) and (\x-0.5,-4-\result) .. (0.8+\x,-1.2-\result) ;
	\draw (\x+0.8,-1.5-\result) ..controls (\x-1,-7-\result+10) and (\x-0.5,-4-\result+10) .. (0.6+\x,10.5-\result);
}

\def\cw{8pt} 
\filldraw (0.7,-1.5) circle (\cw) node[black,below left]{$u_{\circ,\circ}$};
\filldraw (10.7,-0.2) circle (\cw) node[black,below right]{$u_{+,\circ}$};
\filldraw (-9.3,-0.4) circle (\cw) node[black,below left]{$u_{-,\circ}$};

\filldraw[yshift=10cm,xshift=-0.5cm] (0.7,-1.5) circle (\cw) node[black,above=0.3cm]{$u_{\circ,+}$};
\filldraw[yshift=10cm,xshift=-0.5cm] (10.7,-0.2) circle (\cw) node[black,above right]{$u_{+,+}$};
\filldraw[yshift=10cm,xshift=-0.5cm] (-9.3,-0.4) circle (\cw) node[black,above left]{$u_{-,+}$};

\filldraw [yshift=-10cm,xshift=-0.6cm] (0.7,-1.5) circle (\cw) node[black,below left]{$u_{\circ,-}$};
\filldraw [yshift=-10cm,xshift=-0.6cm] (10.7,-0.2) circle (\cw) node[black,below right]{$u_{+,-}$};
\filldraw [yshift=-10cm,xshift=-0.6cm] (-9.3,-0.4) circle (\cw) node[black,below left]{$u_{-,-}$};

\draw[red,->] (0.7,-1.5) -- (6,1.5) node [left=0.2cm] {$w_{\circ,\circ}^1$};
\draw[red,->,yshift=-10cm,xshift=-0.6cm] (0.7,-1.5) -- (6,1.5) node [left=0.2cm] {$w_{\circ,-}^1$};

\draw[red,->] (0.7,-1.5) -- (-2.5,5) node [midway, left] {$w_{\circ,\circ}^2$};
\draw[red,->,yshift=1.1cm,xshift=-10cm] (0.7,-1.5) -- (-2.5,5) node [midway, left] {$w_{-,\circ}^2$};

\def\cw{2pt} 
\filldraw[blue] (8.5,1) circle (\cw) node[blue,above]{$y^1_{\circ,\circ}$};
\draw[blue,dotted] (0.7,-1.5) .. controls (4,0) .. (8.5,1);
\filldraw[blue,yshift=-10cm,xshift=-0.6cm] (8.5,1) circle (\cw) node[blue,above]{$y^1_{\circ,-}$};
\draw[blue,dotted,yshift=-10cm,xshift=-0.6cm] (0.7,-1.5) .. controls (4,0) .. (8.5,1);

\filldraw[blue] (-1,6.5) circle (\cw) node[blue,left]{$y^2_{\circ,\circ}$};
\draw[blue,dotted] (0.7,-1.5) .. controls (-1,2) .. (-1,6.5);
\filldraw[blue,yshift=1.1cm,xshift=-10cm] (-1,6.5) circle (\cw) node[blue,left]{$y^2_{-,\circ}$};
\draw[blue,dotted,yshift=1.1cm,xshift=-10cm] (0.7,-1.5) .. controls (-1,2) .. (-1,6.5);

\end{tikzpicture}
\caption{A three-by-three point section of a bivariate signal $u$ together with tangent vectors $w$ represented by endpoints $y$. The blue, dotted lines indicate the geodesics $t \mapsto \exp(tw)$ connecting the signal points $u$ with the endpoints $y$. The black lines indicate a piecewise geodesic interpolation of the signal points and are for visualization purposes only.
\label{fig:point_tuples_bivariate}}
\end{figure}

We now extend the requirements \textbf{(M-P1)} to \textbf{(M-P4)} to the bivariate case.
The generalization of TV to bivariate manifold-valued data is quite straightforward \cite{weinmann2014total}. For $u = (u_{i,j})_{i,j}$ we define
\begin{equation} \label{eq:tv_manifold_multi}
 \TV(u) = \sum\nolimits_{i,j} \Big( d(u_{i+1,j},u_{i,j})^p  + d(u_{i,j+1},u_{i,j})^p \Big)^{1/p} .
\end{equation} 

A generalization of second-order $\TV$ is again less straightforward and we call any functional $\TV^2$ acting on $u = (u_{i,j})_{i,j}$ an admissible generalization of $\TV^2$ if it reduces, in the vector space setting, to $\TV^2 $ as given in Definition \ref{def:discreete_tv_tv2}.
Note that our version of $\TV^2$ differs from the definition of $\TV^ 2$ as given in \cite{bavcak2016second} since we use a symmetrization of the mixed derivatives. 

A generalization of affine functions for the bivariate manifold setting is given as follows.

\begin{defn} \label{def:locally_globally_geodesic_function_bivariate} Let $u = (u_{i,j})_{i,j}$ be a finite sequence of points on a manifold $\M$. We say that  $u$ is (locally) geodesic if, for each $(i_0,j_0)$, the univariate signals $ (u_{i,j_0})_{i} $, $ (u_{i_0,j})_{j}$ and $(u_{i_0+k,j_0-k})_{k}$ are (locally) geodesic. 
\end{defn}

Note that this indeed generalizes the notion of affine for vector space data. While the first two conditions ensure that $u$ is of the form $u_{i,j} = aij + b i + c j + d$, the last condition ensures that $a = 0$, i.e., no mixed terms occur. In the vector space case the third condition is equivalent to requiring that $(u_{i_0+k,j_0+k})_{k}$ is (locally) geodesic. In the manifold setting, this is not equivalent in general and hence our definition of geodesic is somewhat anisotropic. The reason for using the anti-diagonal ($(i_0 + k,j_0-k)$) and not the diagonal ($(i_0 + k,j_0+k)$) direction is that the former arises naturally from the usage of forward-backward differences, as will become clear in the discussion after Proposition \ref{prop:mp1_to_mp4_hold_general_bivariate}. 
An alternative would also be to require both $(u_{i_0+k,j_0+k})_{k}$ and $(u_{i_0+k,j_0-k})_{k}$ to be geodesic. This, however, seems rather restrictive and for general manifolds such signals might even not exist. Hence we do not impose this additional restriction.

As a direct consequence of the corresponding result in the univariate setting, we obtain equivalence of the notion of locally geodesic and geodesic if neighboring points are sufficiently close.
\begin{lem} \label{lem:locally_globally_geodesic_bivariate} Let $u = (u_{i,j})_{i,j} $ be a finite sequence in $ \M$. If the length minimizing geodesic connecting any two points $u_{i,j}$ and $u_{i',j'}$ with $\max\{|i-i'|,|j-j'|\} \leq 1$ is unique, then $u$ is locally geodesic if and only if it is geodesic.
\end{lem}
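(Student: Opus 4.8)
The plan is to reduce the bivariate statement directly to its univariate counterpart, Lemma~\ref{lem:locally_globally_geodesic}, by inspecting the three families of univariate slices that appear in Definition~\ref{def:locally_globally_geodesic_function_bivariate}: the rows $(u_{i,j_0})_i$, the columns $(u_{i_0,j})_j$, and the anti-diagonals $(u_{i_0+k,j_0-k})_k$.

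The forward implication is immediate: if $u$ is geodesic, then by Definition~\ref{def:locally_globally_geodesic_function_bivariate} each of the above univariate slices is geodesic, hence (as already noted in the proof of Lemma~\ref{lem:locally_globally_geodesic}) locally geodesic, so $u$ is locally geodesic by definition.

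For the converse, suppose $u$ is locally geodesic, so that each of the three families of slices is locally geodesic. I would then verify that the hypothesis of the present lemma -- uniqueness of the length-minimizing geodesic between any two grid points $u_{i,j}$, $u_{i',j'}$ with $\max\{|i-i'|,|j-j'|\}\le 1$ -- specializes, for each such slice, to exactly the uniqueness hypothesis needed to apply Lemma~\ref{lem:locally_globally_geodesic}, namely uniqueness of the connecting geodesic between any two slice points at slice-index distance at most one. For a row or column this is clear, since two such points differ by at most $1$ in one grid coordinate and by $0$ in the other. For the anti-diagonal slice $(u_{i_0+k,j_0-k})_k$, two points at $k$-index distance at most one differ by at most $1$ in \emph{both} grid coordinates, hence again $\max\{|i-i'|,|j-j'|\}\le 1$ and the assumption applies. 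Applying Lemma~\ref{lem:locally_globally_geodesic} to each of the three slices then shows that every such slice is geodesic, which by Definition~\ref{def:locally_globally_geodesic_function_bivariate} means that $u$ is geodesic.

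There is essentially no substantial obstacle here; the only point that requires a moment's care is the index bookkeeping for the anti-diagonal family, where one must observe that consecutive anti-diagonal points are neighbors in the $\max$-norm sense, so that the uniqueness assumption of the lemma genuinely covers them. This is precisely the reason the anti-diagonal (rather than the diagonal) direction was built into Definition~\ref{def:locally_globally_geodesic_function_bivariate}.
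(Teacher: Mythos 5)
Your proof is correct and follows exactly the route the paper intends: the paper states this lemma without proof as ``a direct consequence of the corresponding result in the univariate setting,'' and your slice-by-slice reduction to Lemma~\ref{lem:locally_globally_geodesic}, including the observation that consecutive anti-diagonal points are $\max$-norm neighbours and hence covered by the uniqueness hypothesis, is precisely the omitted argument.
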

With these prerequisites, we now state our requirements for a reasonable generalization of TGV in the bivariate case.
\begin{itemize}
\item[\textbf{(M-P1')}] In the vector space setting, $\MTGV$ reduces to vector-space TGV as in  \eqref{eq:tgv_discrete_vector_space}.
\item[\textbf{(M-P2')}] If the minimum in \eqref{eq:tgv_manifold_general_multi} is attained at $(y^ 1_{i,j})_{i,j} =(y^ 2_{i,j})_{i,j} =  (u_{i,j})_{i,j}$, i.e., the tangent tuples all correspond to zero vectors, then $\MTGV(u) = \alpha_1 \TV(u)$ with TV as in \eqref{eq:tv_manifold_multi}.
\item[\textbf{(M-P3')}] 
If the minimum in \eqref{eq:tgv_manifold_general_multi} is attained at $(y^ 1_{i,j})_{i,j} = (u_{i+1,j})_{i,j}$ and $(y^ 2_{i,j})_{i,j} = (u_{i,j+1})_{i,j}$, i.e., the tangent tuples $[u_{i,j},y^1_{i,j}]$ and $[u_{i,j},y^2_{i,j}]$ all correspond to $(\delta_{x+} u)_{i,j}$ and $(\delta_{y+} u)_{i,j}$, respectively, then $\MTGV(u) = \alpha_0 \TV^ 2(u)$ with $\TV^ 2$ an admissible generalization of $\TV^ 2$.
\item[\textbf{(M-P4')}] $\MTGV(u) = 0$ if and only if $u$ is locally geodesic according to Definition \ref{def:locally_globally_geodesic_function_bivariate}.
\end{itemize}
Those properties translate to requirements for the involved distance-type functions as follows.
\begin{prop} \label{prop:mp1_to_mp4_hold_general_bivariate} Assume that the function $D:\M^2 \times \M^2 \rightarrow [0,\infty)$ satisfies the assumptions of Proposition \ref{prop:mp1_to_mp4_hold_general} and assume that $\dsym$ is such that
$\dsym([x,x],[x,x],[u,u],[u,u]) = 0$ for any $x,u \in \M$ and, 
in case $\M = \R^K$,
\begin{multline*}
\dsym ([\ucc,\ycco],[\ucc,\ycct],[\ucm,\ycmo],[\umc,\ymct])  \\ =  \big | \ycco - \ucc - (\ycmo - \ucm) + \ycct - \ucc - (\ymct - \umc)\big |.
\end{multline*}
Then, for $\MTGV$ as in \eqref{eq:tgv_manifold_general}, the properties \textbf{(M-P1')} to \textbf{(M-P3')} hold. If further 
\[\dsym ([v_{i,j},v_{i+1,j}],[v_{i,j},v_{i,j+1}],[v_{i,j-1},v_{i+1,j-1}],[v_{i-1,j},v_{i-1,j+1}]) = 0,\]
for any geodesic three-by-three signal $v= (v_{i,j})_{i,j}$, then $\MTGV(u) = 0$ for any locally geodesic $u$. 
Conversely, assume that for any three-by-three signal in $v= (v_{i,j})_{i,j}$ where the geodesic connecting each pair $v_{i,j}$, $v_{i',j'}$ are unique and where $(v_{i,j})_i$ and $(v_{i,j})_j$ are geodesic, it holds that
\[\dsym ([v_{i,j},v_{i+1,j}],[v_{i,j},v_{i,j+1}],[v_{i,j-1},v_{i+1,j-1}],[v_{i-1,j},v_{i-1,j+1}]) = 0,\]
implies also $(v_{i+k,j-k})_{k=-1}^1$ being geodesic. Then, for any $u= (u_{i,j})_{i,j}$
such that the geodesic connecting each pair $u_{i,j}$ and $u_{i',j'}$ with $\max\{|i-i'|,|j-j'|\} \leq 2$ is unique, we get that $\MTGV(u) = 0$ implies $u$ being geodesic. 
\proof See Section \ref{sec:proofs_section2} in the Appendix. \qedhere
\end{prop}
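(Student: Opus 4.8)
The plan is to mimic, slice by slice, the univariate argument of Proposition~\ref{prop:mp1_to_mp4_hold_general}: the four summands of \eqref{eq:tgv_manifold_general_multi} that are built from $D$ are handled by that proposition, while the fifth, mixed summand built from $\dsym$ is handled by the separate hypotheses imposed on $\dsym$, and then the pieces are assembled. For \textbf{(M-P1')} I would specialize to $\M=\R^K$, use $d(x,u)=|x-u|$, and substitute $w^1_{i,j}=y^1_{i,j}-u_{i,j}$, $w^2_{i,j}=y^2_{i,j}-u_{i,j}$; by the vector-space form of $D$ from the assumptions of Proposition~\ref{prop:mp1_to_mp4_hold_general} the four $D$-terms become the first four summands of \eqref{eq:tgv_vector_manifold_general_multi}, and by the assumed vector-space identity for $\dsym$ the mixed term becomes the fifth summand, so \eqref{eq:tgv_manifold_general_multi} coincides with \eqref{eq:tgv_vector_manifold_general_multi}. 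Since \textbf{(M-P2')} and \textbf{(M-P3')} are conditional statements, I would simply insert the indicated competitor: taking $y^1=y^2=u$ makes every $D$-term a $D$ of two zero tuples (which vanishes by the first assumption of Proposition~\ref{prop:mp1_to_mp4_hold_general}) and every $\dsym$-term a $\dsym$ of degenerate tuples (which vanishes by the degeneracy hypothesis on $\dsym$), leaving $\alpha_1\TV(u)$ with $\TV$ as in \eqref{eq:tv_manifold_multi}; taking $y^1_{i,j}=u_{i+1,j}$, $y^2_{i,j}=u_{i,j+1}$ makes all $\alpha_1$-terms vanish and leaves a functional of $u$ alone, which in the vector-space case equals $\alpha_0\TV^2(u)$ by the same identities together with Definition~\ref{def:discreete_tv_tv2}, hence is an admissible generalization of $\TV^2$.

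For the second assertion, given $u$ locally geodesic I would test \eqref{eq:tgv_manifold_general_multi} with $y^1_{i,j}=u_{i+1,j}$ and $y^2_{i,j}=u_{i,j+1}$, so that the $\alpha_1$-terms vanish. Each row-type term $D([u_{i,j},u_{i+1,j}],[u_{i-1,j},u_{i,j}])$ vanishes because the triple $(u_{i-1,j},u_{i,j},u_{i+1,j})$ is a three-point sub-stencil of a locally geodesic $i$-slice of $u$ in the sense of Definition~\ref{def:locally_globally_geodesic_function_bivariate}, hence geodesic, and $D$ vanishes on geodesic triples by the hypothesis of Proposition~\ref{prop:mp1_to_mp4_hold_general}; the column-type terms are symmetric. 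For the $\dsym$-terms I would observe that every $3\times3$ block of a locally geodesic signal is itself geodesic, since each of its three rows, three columns and its central anti-diagonal is a three-point sub-stencil of a locally geodesic slice of $u$; the geodesic-$3\times3$ hypothesis on $\dsym$ then makes each mixed term vanish, so $\MTGV(u)\le 0$ and hence $\MTGV(u)=0$.

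For the converse, assume $\MTGV(u)=0$ and that every pair $u_{i,j}$, $u_{i',j'}$ with $\max\{|i-i'|,|j-j'|\}\le 2$ is joined by a unique length-minimizing geodesic. All summands of \eqref{eq:tgv_manifold_general_multi} being nonnegative, a minimizer makes each of them vanish; the $\alpha_1$-terms force $y^1_{i,j}=u_{i+1,j}$ and $y^2_{i,j}=u_{i,j+1}$, and then each $\alpha_0$-summand vanishes. Vanishing of the row-type $D$-terms, via the converse $D$-hypothesis of Proposition~\ref{prop:mp1_to_mp4_hold_general} (applicable since the relevant geodesics are unique), gives that every triple $(u_{i-1,j},u_{i,j},u_{i+1,j})$ is geodesic, so every $i$-row $(u_{i,j})_i$ is locally geodesic and, by Lemma~\ref{lem:locally_globally_geodesic}, geodesic; symmetrically every $j$-column is geodesic. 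Feeding this into the converse $\dsym$-hypothesis applied to each $3\times3$ block — whose pairwise geodesics are unique by the Chebyshev-distance-$2$ assumption and whose rows and columns are geodesic by the previous step — yields that every anti-diagonal stencil $(u_{i+k,j-k})_{k=-1}^1$ is geodesic, so every anti-diagonal $(u_{i_0+k,j_0-k})_k$ is locally geodesic and hence geodesic by Lemma~\ref{lem:locally_globally_geodesic}. With all $i$-rows, $j$-columns and anti-diagonals geodesic, Definition~\ref{def:locally_globally_geodesic_function_bivariate} gives that $u$ is geodesic (cf.\ also Lemma~\ref{lem:locally_globally_geodesic_bivariate}).

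The routine part is this slice-wise reduction to Proposition~\ref{prop:mp1_to_mp4_hold_general} and Lemma~\ref{lem:locally_globally_geodesic}. I expect the main obstacle to be the bookkeeping around the mixed term: checking that the index shifts in the fourth and fifth arguments of $\dsym$ genuinely correspond to the \emph{anti}-diagonal stencil $(u_{i+k,j-k})_k$ (consistent with the forward--backward difference structure discussed after Proposition~\ref{prop:mp1_to_mp4_hold_general_bivariate}) rather than the diagonal one, together with the two $3\times3$-block observations — that a block of a locally geodesic signal is geodesic (needed for the geodesic $\Rightarrow\MTGV=0$ direction of \textbf{(M-P4')}) and that uniqueness of geodesics up to Chebyshev distance $2$ is precisely what makes the converse $\dsym$-hypothesis applicable on each block (needed for the $\MTGV=0\Rightarrow$ geodesic direction).
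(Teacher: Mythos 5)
Your proposal is correct and follows essentially the same route as the paper's proof: plug in the designated competitors for \textbf{(M-P1')}--\textbf{(M-P3')}, reduce the $D$-terms to the univariate Proposition~\ref{prop:mp1_to_mp4_hold_general}, and handle the mixed term via the $\dsym$-hypotheses applied to the $3\times3$ blocks, finishing with Lemma~\ref{lem:locally_globally_geodesic_bivariate}. The only difference is that you spell out two steps the paper leaves implicit — that each $3\times3$ block of a locally geodesic signal is itself geodesic, and that the Chebyshev-distance-$2$ uniqueness assumption is exactly what validates the converse $\dsym$-hypothesis on each block — which is a welcome clarification rather than a deviation.
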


Similar to the univariate case, the most restrictive requirement for $\dsym$ is that, in case of uniqueness of length-minimizing geodesics, for $(u_{i,j})_{i,j}$ being locally geodesic in horizontal and vertical direction, it holds that 
\begin{multline}
\dsym ([u_{i,j},u_{i+1,j}],[u_{i,j},u_{i,j+1}],[u_{i,j-1},u_{i+1,j-1}],[u_{i-1,j},u_{i-1,j+1}]) = 0  \\ 
\Longleftrightarrow(u_{i+k,j-k})_k \text{ is locally geodesic}.
\end{multline}

Let us discuss, again temporarily assuming uniqueness of geodesics, a general strategy to design the function $\dsym$ such that this property holds. 
We consider the situation around a point $\ucc$ and denote by $w^1_{\circ,\circ},w^2_{\circ,\circ},w^1_{\circ,-},w^2_{-,\circ}$ four tangent vectors corresponding to the four tangent tuples $[\ucc,\ycco],$ $[\ucc,\ycct],$ $[\ucm,\ycmo],$ $[\umc,\ymct]$ at which $\dsym $ is evaluated, see Figure \ref{fig:point_tuples_bivariate}. The evaluation of $\dsym$ at these points should, in the vector space case, correspond to one of the equivalent formulations
\[ \big | w^1_{\circ,\circ}- w^1_{\circ,-} + w^2_{\circ,\circ}- w^2_{-,\circ} \big | = \big | \big( w^1_{\circ,\circ} + w^2_{\circ,\circ} \big) - \big( w^1_{\circ,-}  + w^2_{-,\circ}\big) \big | = \big | \big( w^1_{\circ,\circ}- w^1_{\circ,-}\big)  -\big(   w^2_{-,\circ} - w^2_{\circ,\circ}  \big) \big |. \]
Disregarding for the moment the fact that the corresponding tangent tuples live on different locations, the difficulty here is how to define algebraic operations, i.e., sum or difference operations, on two tangent tuples. For the sum, there is no direct way of doing so, 
the difference operation, however, quite naturally transfers to tangent tuples as
\[ [x,y_1] - [x,y_2] = (y_1 - x) - (y_2 - x) = y_1 - y_2  = [y_2,y_1]. \]
Hence, in the situation that all tangent tuples live on the same location, we can define $\dsym$ by first evaluating pair-wise differences of the four point tuples and then measuring the distance of the two resulting tuples by $D(\cdot,\cdot)$. 

Now in the general situation, we first need to transport tangent tuples to the same location. As described in Section \ref{sec:TGVdefuni}, we have means of doing this that build either on parallel transport or its Schild's approximation.
Taking this into account, we note that the evaluation of $\dsym$ in the vector space case can be written purely in terms of distances of tangent-vector-differences in various ways, e.g.,
\begin{equation} \label{eq:reformulations_of_symgrad}
\big | w^1_{\circ,\circ}- w^1_{\circ,-} + w^2_{\circ,\circ}- w^2_{-,\circ} \big | =  \big | \big(w^1_{\circ,\circ}- w^1_{\circ,-}\big) -\big(w^2_{-,\circ}- w^2_{\circ,\circ} \big) \big | = \big | \big(w^1_{\circ,-} - w^2_{\circ,\circ}\big) - \big( w^1_{\circ,\circ}  - w^2_{-,\circ}    \big)   \big |.
\end{equation}
While any order of taking the differences is equivalent in the vector space case, in the manifold setting the order defines how the corresponding tangent tuples need to be transported to the same location. Looking again at Figure \ref{fig:point_tuples_bivariate}, the simplest idea seems to be to transport the tangent tuple $[u_{\circ,-},y_{\circ,-}^1]$ corresponding to $w_{\circ,-}^ 1$ and the tangent tuple $[u_{-,\circ,-},y_{-,\circ}^2]$ corresponding to $w_{-,\circ}^ 2$ to the point $\ucc$ and carry out all operations there.

The drawback of this simple solution can be found when looking at the conditions of Proposition \ref{prop:mp1_to_mp4_hold_general_bivariate}. There, the main difficulty is to ensure that \[\dsym ([\ucc,\upc],[\ucc,\ucp],[\ucm,\upm],[\umc,\ump]) = 0\] allows to conclude that the signal is geodesic in the anti-diagonal direction. 
The situation that is relevant for this condition is when both $(u_{i,j_0})_i$ and $(u_{i_0,j})_j$ are locally geodesic for each $(i_0,j_0)$ and the endpoints $y$ coincide with the respective signal points, i.e., $y_{i,j}^1 = u_{i+1,j}$ and $y_{i,j}^2 = u_{i,j+1}$ (see Figure \ref{fig:dsym_motivation}, left).
In this situation, in order to fulfill the above condition, we need to know something about the transported tangent tuples, e.g., 
if $[u_{\circ,-},y_{\circ,-}^1]$ and $[u_{-,\circ},y_{-,\circ}^2]$ corresponding to $w_{\circ,-}^1$ and $w_{-,\circ}^2$ are both transported to $\ucc$, we would need to know for instance that the tangent tuple $[\ucc,x]$ resulting from the transport of $[u_{\circ,-},y_{\circ,-}^1]$ to $\ucc$ points to $\upc$, i.e., $x = \upc$. Due to holonomy however, even if the transport is carried out along a geodesic, there is, to the best of our knowledge, no way of obtaining such a result. That is, the vector corresponding to the transported tangent tuple $[\ucc,x]$ might be arbitrarily rotated such that $x$ is far away from $\upc$. 

On the other hand, a well-known fact for manifolds is that the parallel transport of a vector that is tangential to a geodesic along the geodesic itself again results in a vector that is tangential to the geodesic. As we will see, this implies an equivalent assertion for both our variants for transporting tangent tuples. That is, the transport of a tangent tuple corresponding to a tangential vector of a geodesic along the geodesic itself again results in a tangent tuple that corresponds to a tangential vector of the geodesic.
In the above-described particular situation (see again Figure \ref{fig:dsym_motivation}, left), this means for example that, since the points $\ucp,\ucc,\ucm$ are on a geodesic and $w^ 2_{\circ,\circ}\simeq [u_{\circ,\circ},u_{\circ,+}]$ corresponds to a tangent vector that is tangential to this geodesic, we know that the transport of $[u_{\circ,\circ},u_{\circ,+}]$ to $\ucm$ will result in the tangent tuple $[\ucm,\ucc]$.
More generally, for the situation that both $(u_{i,j_0})_i$ and $(u_{i_0,j})_j$ are locally geodesic for each $(i_0,j_0)$, it means that we can always transport the tangent tuples corresponding to $w_{\circ,\circ}^1$ and $w_{\circ,-}^1$ in horizontal direction and the tangent tuples corresponding to the $w_{\circ,\circ}^ 2 $ and $w_{-,\circ}^ 2 $ in vertical direction and still know something about the transported tangent tuples.

In view of \textbf{(M-P4')}, a natural approach to define $\dsym$ in the general situation is hence to restrict ourselves to these particular transport directions.
Of course, we also want to define $\dsym$ in an as-simple-as-possible way, meaning that we want to carry out as few transport operations as possible. A quick case study in Figure 
\ref{fig:point_tuples_bivariate}, shows that we have to transport at least three times and that, accounting for the facts that we need to generalize the expression \eqref{eq:reformulations_of_symgrad} and that we can take differences but not sums of tangent tuples in the same location, there is only one possibility to achieve this. As highlighted in Figure \ref{fig:dsym_motivation} on the right for the general situation, our proposed approach is to transport the tangent tuple corresponding to $w^ 2_{\circ,\circ}$ down to $\ucm$ giving $[\ucm, \tilde y_{\circ,\circ}^2]$, the tangent tuple corresponding to $w^1_{\circ,\circ}$ left to $\umc$ giving $[\umc, \tilde y_{\circ,\circ}^1]$ and take the differences, which results in the tangent tuples $[\ycmo,\tilde{y}_{\circ,\circ}^2] \simeq d^2$ and $ [\tilde{y}_{\circ,\circ}^1,\ymct] \simeq d^1$. The distance of the tangent tuples corresponding to $d^1$ and $d^2$ can then be measured with one of our previously defined distance functions, i.e., by evaluating $D([\tilde{y}_{\circ,\circ}^1,\ymct],[\ycmo,\tilde{y}_{\circ,\circ}^2])$, which again requires one transport operation. This corresponds to the reformulation of 
\[\big | w^1_{\circ,\circ}- w^1_{\circ,-} + w^2_{\circ,\circ}- w^2_{-,\circ} \big | = \big | \big( w^2_{\circ,\circ} -w^1_{\circ,-} \big)  - \big(  w^2_{-,\circ} -  w^1_{\circ,\circ}   \big)   \big |. \]
In the particular situation of Figure \ref{fig:dsym_motivation}, left, due to the usage of the particular transport directions, the two tangent tuples resulting from the transport of the tangent tuples corresponding to $w^ 1_{\circ,\circ}$ and $w^ 2_{\circ,\circ}$ and taking the corresponding difference operations will be $[\ucc,\ump]$ and $[\upm,\ucc]$ (corresponding to $d^1 $ and $d^2$), whose distance will then be measured by $D([\ucc,\upm],[\ump,\ucc])$. Using the assumptions as in Proposition \ref{prop:mp1_to_mp4_hold_general}, $D([\ucc,\upm],[\ump,\ucc]) = 0$ then allows us to conclude that $\ump,\ucc,\upm$ are on a distance minimizing geodesic, hence the assumptions for \textbf{(M-P4')} will be fulfilled. 
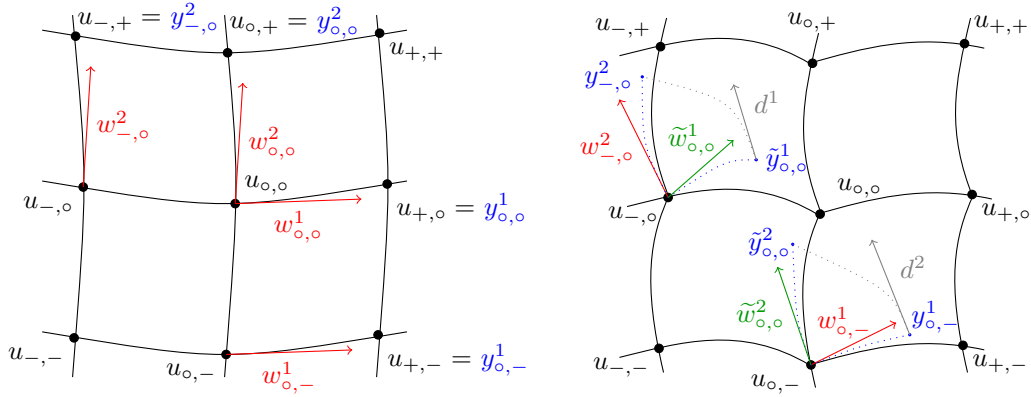
\begin{figure}
\begin{tikzpicture}[scale=0.2]

\foreach \x [count=\xi] in {-10,0,10}
{
	\draw (-12,\x) .. controls (0,-2+\x) .. (12,\x);
	\draw (\x,-13) .. controls (1+\x,0) ..  (\x,11);
}

\def\cw{8pt} 
\filldraw (0.7,-1.5) circle (\cw) node[black,above right]{$u_{\circ,\circ}$};
\filldraw (10.7,-0.2) circle (\cw) node[black,below right]{$u_{+,\circ} =$ \textcolor{blue}{$\ycco$}};
\filldraw (-9.3,-0.4) circle (\cw) node[black,below left]{$u_{-,\circ}$};

\filldraw[yshift=10cm,xshift=-0.5cm] (0.7,-1.5) circle (\cw) node[black,above=0.4cm, right=-0.1cm]{$u_{\circ,+} = $ \textcolor{blue}{$\ycct$}};
\filldraw[yshift=10cm,xshift=-0.5cm] (10.7,-0.2) circle (\cw) node[black,below right]{$u_{+,+}$};
\filldraw[yshift=10cm,xshift=-0.5cm] (-9.3,-0.4) circle (\cw) node[black,above right=-0.1cm]{$u_{-,+}=$  \textcolor{blue}{$\ymct$}};

\filldraw [yshift=-10cm,xshift=-0.6cm] (0.7,-1.5) circle (\cw) node[black,below left]{$u_{\circ,-}$};
\filldraw [yshift=-10cm,xshift=-0.6cm] (10.7,-0.2) circle (\cw) node[black,below right]{$u_{+,-}=$ \textcolor{blue}{$\ycmo$}};
\filldraw [yshift=-10cm,xshift=-0.6cm] (-9.3,-0.4) circle (\cw) node[black,below left]{$u_{-,-}$};

\draw[red,->] (0.7,-1.5) --node [below] {$w_{\circ,\circ}^1$} (9,-1.2) ;
\draw[red,->,yshift=-10cm,xshift=-0.6cm] (0.7,-1.5) --node [below] {$w_{\circ,-}^1$} (9,-1.2);

\draw[red,->] (0.7,-1.5) -- (1.2,6.5) node [midway, right] {$w_{\circ,\circ}^2$};
\draw[red,->,yshift=1.1cm,xshift=-10cm] (0.7,-1.5) -- (1.2,6.5) node [midway, right] {$w_{-,\circ}^2$};

\def\cw{2pt} 

\end{tikzpicture}
\hspace*{0.2cm}
\begin{tikzpicture}[scale=0.2]

\foreach \x [count=\xi] in {-10,0,10}
{
	\pgfmathsetmacro\result{-0.05*abs(\x)}
	\draw (-12+\result,\x-1) ..controls (-8+\result,-3+\x+3) and (-4+\result,-1.8+\x+3) .. (0.5+\result,-1.5+\x);
	\draw (0.5+\result,-1.5+\x) ..controls (4+\result,-3+\x+2.5) and (7.5+\result,-2.5+\x+3) .. (12+\result,\x-0.5);
	\pgfmathsetmacro\result{-0.1*abs(\x)}
	\draw (\x+0.4,-13-\result) ..controls (\x-1,-7-\result) and (\x-0.5,-4-\result) .. (0.8+\x,-1.2-\result) ;
	\draw (\x+0.8,-1.5-\result) ..controls (\x-1,-7-\result+10) and (\x-0.5,-4-\result+10) .. (0.6+\x,10.5-\result);
}

\def\cw{8pt} 
\filldraw (0.7,-1.5) circle (\cw) node[black,above right=0.1cm]{$u_{\circ,\circ}$};
\filldraw (10.7,-0.2) circle (\cw) node[black,below right]{$u_{+,\circ}$};
\filldraw (-9.3,-0.4) circle (\cw) node[black,below left]{$u_{-,\circ}$};

\filldraw[yshift=10cm,xshift=-0.5cm] (0.7,-1.5) circle (\cw) node[black,above=0.3cm]{$u_{\circ,+}$};
\filldraw[yshift=10cm,xshift=-0.5cm] (10.7,-0.2) circle (\cw) node[black,above right]{$u_{+,+}$};
\filldraw[yshift=10cm,xshift=-0.5cm] (-9.3,-0.4) circle (\cw) node[black,above left]{$u_{-,+}$};

\filldraw [yshift=-10cm,xshift=-0.6cm] (0.7,-1.5) circle (\cw) node[black,below left]{$u_{\circ,-}$};
\filldraw [yshift=-10cm,xshift=-0.6cm] (10.7,-0.2) circle (\cw) node[black,below right]{$u_{+,-}$};
\filldraw [yshift=-10cm,xshift=-0.6cm] (-9.3,-0.4) circle (\cw) node[black,below left]{$u_{-,-}$};

\draw[red,->,yshift=-10cm,xshift=-0.6cm] (0.7,-1.5) -- (6.3,1.3) node [left=0.2cm] {$w_{\circ,-}^1$};

\draw[red,->,yshift=1.1cm,xshift=-10cm] (0.7,-1.5) -- (-2.5,5) node [midway, left] {$w_{-,\circ}^2$};

\def\cw{2pt} 
\filldraw[blue,yshift=-10cm,xshift=-0.6cm] (7.2,0.5) circle (\cw) node[blue,above right=-0.1cm]{$y^1_{\circ,-}$};
\draw[blue,dotted,yshift=-10cm,xshift=-0.6cm] (0.7,-1.5) .. controls (4,-0.2) .. (7.2,0.5);
\filldraw[blue,yshift=-10cm,xshift=-0.6cm] (-0.5,6.5) circle (\cw) node[blue,left=-0.1cm]{$\tilde{y}^2_{\circ,\circ}$};
\draw[blue,dotted,yshift=-10cm,xshift=-0.6cm] (0.7,-1.5) .. controls (-0.3,1.5) .. (-0.5,6.5);

\filldraw[blue,yshift=1.1cm,xshift=-10cm] (-1,6.5) circle (\cw) node[blue,left]{$y^2_{-,\circ}$};
\draw[blue,dotted,yshift=1.1cm,xshift=-10cm] (0.7,-1.5) .. controls (-1,2) .. (-1,6.5);
\filldraw[blue,yshift=1.1cm,xshift=-10cm] (6.5,1) circle (\cw) node[blue,right]{$\tilde{y}^1_{\circ,\circ}$};
\draw[blue,dotted,yshift=1.1cm,xshift=-10cm] (0.7,-1.5) .. controls (4.5,1) .. (6.5,1);

\draw[darkgreen,->,yshift=-10cm,xshift=-0.6cm] (0.7,-1.5) -- (-1.5,5) node [midway, left] {$\widetilde{w}_{\circ,\circ}^2$};
\draw[darkgreen,->,yshift=1.1cm,xshift=-10cm] (0.7,-1.5) -- (5,2.3)node [left=0.1cm] {$\widetilde{w}_{\circ,\circ}^1$} ;

\draw[gray,->,yshift=1.1cm,xshift=-10cm] (6.5,1) -- node [above right] {$d^1$} (5,6)  ;
\draw[gray,dotted,yshift=1.1cm,xshift=-10cm] (6.5,1) .. controls (5,5) .. (-1,6.5);
\draw[gray,->,yshift=-10cm,xshift=-0.6cm]  (7.2,0.5)  -- node [above right] {$d^2$} (10.7-6,-0.2+7)  ;
\draw[gray,dotted,yshift=-10cm,xshift=-0.6cm] (7.2,0.5) .. controls (6,3.5) .. (-0.5,6.5);

\end{tikzpicture}

\caption{Two three-by-three point sections of a bivarate signal. Left: Signal points ($u$), tangent vectors (\textcolor{red}{$w$}) and endpoints (\textcolor{blue}{$y$}) in the particular situation that $\MTGV=0$. Right: Signal with tangent vectors (\textcolor{darkgreen}{$\tilde{w}$}) corresponding to transported tangent tuples, transported endpoints (\textcolor{blue}{$\tilde{y}$}) and (\textcolor{gray}{$d$}) corresponding to difference of tangent tuples. 
The dotted lines indicate geodesics that are determinted by tangent vectors and the black lines show a piecewise geodesic interpolation of the signal points. \label{fig:dsym_motivation}}
\end{figure}

The above-described strategy is now used to extend both $\STGV$ and $\PTTGV$ to the bivariate setting. As such, the only difference will be how to carry out the transport operations and which of the two functions $\ds$, $\dpt{}$ is used to measure the distance of tangent tuples.
 
\noindent \textbf{Realization via Schild's-approximation.}
We now realize the axiomatic setting for bivariate TGV for manifold-valued data of
Section~\ref{sec:axomatic_extension_bivariate} using the proposed Schild's approximation.
In particular, we use the Schild's approximation of parallel transport to obtain an instance of the distance-type function $D^{\text{sym}}.$ 

We define the following  bivariate version $\STGV$ of bivariate TGV for manifold-valued data 
using the Schild's approximation by
\begin{equation} \label{eq:tgv_bivariate_schilds}
\begin{aligned}
\text{S-TGV}_\alpha^ 2(u)  
 = \min _{y^ 1_{i,j},y^ 2_{i,j}}  \alpha_1 \sum\nolimits _{i,j} & \Big(   d(u_{i+1,j},y^ 1_{i,j})^p + d(u_{i,j+1},y^ 2_{i,j})^p \Big)^{1/p}\\
  +\alpha _0 \sum\nolimits_{i,j} & \Big( \ds\big([u_{i,j},y^1_{i,j}],[u_{i-1,j},y^1_{i-1,j}]\big)^p + \ds\big([u_{i,j},y^2_{i,j}],[u_{i,j-1},y^2_{i,j-1}]\big)^p  \\
   &+ 2^{1-p}\dss ([u_{i,j},y^1_{i,j}],[u_{i,j},y^2_{i,j}],[u_{i,j-1},y^1_{i,j-1}],[u_{i-1,j},y^2_{i-1,j}])^p \Big)^{1/p}.
\end{aligned}
\end{equation}
Here  $\ds$ is given as in Equation \eqref{eq:schilds_distance} and $\dss$ is defined by
\begin{align}\label{eq:DefSchildSSym}
 \dss ([\ucc,\ycco],[\tilde{u}_{\circ,\circ},\ycct],[\ucm,\ycmo]&,[\umc,\ymct])= \\
 \min\nolimits_{r^ 1,r^2}   \ds([r^ 1,\ymct],[\ycmo,r^ 2]) \quad
& \text{s.t. }  r ^1 \in [\ucc,c^ 1]_2 \text{ with } c^1 \in [\umc,\ycco]_{\frac{1}{2}} \notag \\
& \text{and }r ^2 \in [\tilde{u}_{\circ,\circ},c^ 2]_2 \text{ with } c^2 \in [\ucm,\ycct]_{\frac{1}{2}}. \notag
 \end{align}
We note that, for the sake of a formally correct definition, we have introduced $\tilde{u}_{\circ,\circ}$, but in fact we will always choose $\tilde{u}_{\circ,\circ} = \ucc$ since $\dss$ effectively depends only on seven variables.

Also note that $\dss$ exactly carries out the construction of $\dsym$ as described in Section ~\ref{sec:axomatic_extension_bivariate}, using the Schild's approximation as shown in Figure \ref{fig:schilds_ladder} for transporting tangent tuples and $\ds$ to measure their difference. In the notation of Figure \ref{fig:dsym_motivation}, right, we have that $\tilde{y}_{\circ,\circ}^1 = r^1$ and $\tilde{y}_{\circ,\circ}^2 = r^2$ correspond to the transported endpoints, $[u_{-,\circ},r^1] \simeq \tilde{w}_{\circ,\circ}^1 $ and $[u_{\circ,-},r^2] \simeq \tilde{w}^2_{\circ,\circ}  $ to the transported tangent tuples and $[r^ 1,\ymct] \simeq d^1  $ and $[\ycmo,r^ 2] \simeq d^2 $ to the differences of tangent tuples whose distance is measured with $\ds$.
 Due to our careful design of $\STGV$ we get the following result.

\begin{thm}\label{thm:d_s_properties_multi} The $\STGV$ functional as in \eqref{eq:tgv_bivariate_schilds} satisfies the properties \textbf{(M-P1')} to \textbf{(M-P3')}. If $u = (u_{i,j})_{i,j}$ is locally geodesic, then $\STGV(u) = 0$. If $u$ is such that the geodesic connecting each pair $u_{i,j}$ and $u_{i',j'}$ with $\max\{|i-i'|,|j-j'|\} \leq 2$ is unique and $\STGV(u) = 0$, then $u$ is geodesic.
\proof See Section \ref{sec:proofs_section2} in the Appendix. \qedhere
\end{thm}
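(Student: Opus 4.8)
The plan is to derive the theorem from Proposition~\ref{prop:mp1_to_mp4_hold_general_bivariate} by verifying its hypotheses for the concrete choices $D = \ds$ and $\dsym = \dss$. The three properties of $\ds$ required there — that $\ds([x,x],[u,u]) = 0$, that $\ds$ reduces to the Euclidean difference when $\M = \R^K$, and the geodesic-triple dichotomy — are exactly the ones already established in the proof of Theorem~\ref{thm:d_s_properties}, so it suffices to verify the analogous three statements for $\dss$ from \eqref{eq:DefSchildSSym} (where, as in \eqref{eq:tgv_bivariate_schilds}, we always take $\tilde u_{\circ,\circ} = \ucc$): (i) $\dss$ vanishes on all-constant arguments; (ii) when $\M = \R^K$ it equals the expression prescribed in Proposition~\ref{prop:mp1_to_mp4_hold_general_bivariate}; and (iii) it vanishes at the natural tangent tuples of a geodesic $3\times 3$ block, while conversely — for a block $v$ with $(v_{i,j})_i$, $(v_{i,j})_j$ geodesic and all pertinent length-minimizing geodesics unique — its vanishing forces the anti-diagonal triple to be geodesic. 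Granting (i)--(iii), Proposition~\ref{prop:mp1_to_mp4_hold_general_bivariate} yields \textbf{(M-P1')}--\textbf{(M-P3')}, the implication ``$u$ locally geodesic $\Rightarrow \STGV(u) = 0$'', and — after invoking the existence result of Section~\ref{sec:existence_results}, which guarantees that the minimum in \eqref{eq:tgv_bivariate_schilds} is attained so that ``$\STGV(u) = 0$'' can be unwound termwise — the converse direction.

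For (i): when every argument is of the form $[x,x]$ or $[u,u]$, each midpoint and each reflection appearing in \eqref{eq:DefSchildSSym} is formed from two coinciding points and hence equals that point, so $r^1$ and $r^2$ degenerate and $\dss$ collapses to a value of $\ds$ on two identical degenerate tuples, which is $0$. For (ii): in $\R^K$ the Schild construction is exact, so the only admissible choices are $c^1 = \frac{1}{2}(\umc + \ycco)$, $r^1 = 2c^1 - \ucc = \umc + \ycco - \ucc$, and analogously $r^2 = \ucm + \ycct - \ucc$; substituting into $\ds([r^1,\ymct],[\ycmo,r^2]) = |(\ymct - r^1) - (r^2 - \ycmo)|$ and rearranging gives precisely the prescribed expression, and since the admissible configuration is unique this value equals $\dss$.

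For (iii) the geometric heart is that carrying out the Schild construction along a geodesic sends a tangent tuple tangential to that geodesic to another tangential one. If the $3\times 3$ block $v$ is geodesic, then $v_{-,\circ}, v_{\circ,\circ}, v_{+,\circ}$ lie on a common geodesic at equal distance with each half length-minimizing, so $v_{\circ,\circ} \in [v_{-,\circ}, v_{+,\circ}]_{\frac{1}{2}}$; choosing $c^1 = v_{\circ,\circ}$ forces $r^1 = [v_{\circ,\circ}, v_{\circ,\circ}]_2 = v_{\circ,\circ}$, and the analogous vertical choice gives $r^2 = v_{\circ,\circ}$. Then $\dss$ is bounded by $\ds([v_{\circ,\circ}, v_{-,+}], [v_{+,-}, v_{\circ,\circ}])$, whose relevant midpoint set is the singleton $\{v_{\circ,\circ}\}$, and since the anti-diagonal triple $(v_{+,-}, v_{\circ,\circ}, v_{-,+})$ is geodesic one has $v_{-,+} \in [v_{+,-}, v_{\circ,\circ}]_2$, so this $\ds$ — hence $\dss$ — equals $0$. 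For the converse, assume $\dss(\ldots) = 0$ with the rows and columns geodesic and all pertinent geodesics unique; uniqueness forces $[v_{-,\circ}, v_{+,\circ}]_{\frac{1}{2}} = \{v_{\circ,\circ}\}$ (the geodesic through the equidistant triple being the unique, hence minimizing, connection), so again $r^1 = r^2 = v_{\circ,\circ}$, whence $0 = \ds([v_{\circ,\circ}, v_{-,+}], [v_{+,-}, v_{\circ,\circ}])$ gives $v_{-,+} \in [v_{+,-}, v_{\circ,\circ}]_2$; combined with uniqueness of the geodesic joining $v_{\circ,\circ}$ and $v_{-,+}$, this makes $(v_{i+k,j-k})_k$ geodesic, as required.

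The main obstacle is the interplay between the Schild construction in $\dss$ and the possible non-uniqueness of length-minimizing geodesics — concretely, legitimizing the step ``$c^1 = v_{\circ,\circ}$'', i.e.\ that $v_{\circ,\circ}$ be the midpoint of a \emph{length-minimizing} geodesic from $v_{-,\circ}$ to $v_{+,\circ}$. In the converse direction this is provided by the uniqueness hypothesis; in the unconditional ``$u$ locally geodesic $\Rightarrow \STGV(u) = 0$'' direction one must instead argue that the concatenation of the two minimizing half-segments through $v_{\circ,\circ}$ is itself minimizing (or otherwise exhibit an admissible configuration making the inner $\ds$ vanish), and more generally establish the tangency-preservation property of the Schild transport along geodesics. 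A secondary technical point is that unwinding ``$\STGV(u) = 0$'' into vanishing of each individual term requires attainment of the minimum in \eqref{eq:tgv_bivariate_schilds}, which we take from Section~\ref{sec:existence_results}.
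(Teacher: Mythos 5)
Your proposal follows essentially the same route as the paper's proof: verify the hypotheses of Proposition~\ref{prop:mp1_to_mp4_hold_general_bivariate} for $\ds$ and $\dss$, compute the vector-space reduction of $\dss$ via $r^1 = \umc + \ycco - \ucc$ and $r^2 = \ucm + \ycct - \ucc$, and in the geodesic case take $c^1 = c^2 = r^1 = r^2 = u_{i,j}$ so that $\dss$ collapses to $\ds([u_{i,j},u_{i-1,j+1}],[u_{i+1,j-1},u_{i,j}])$, which the univariate argument of Theorem~\ref{thm:d_s_properties} handles on the anti-diagonal. The ``main obstacle'' you flag — that $u_{i,j}$ must lie in $[u_{i-1,j},u_{i+1,j}]_{\frac{1}{2}}$, i.e.\ be the midpoint of a \emph{length-minimizing} geodesic — is asserted without further justification in the paper's proof as well, so it is not a point of divergence from the published argument.
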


\noindent \textbf{Realization via parallel transport.}
Using parallel transport in order to realize a symmetrized gradient as described in Section \ref{sec:axomatic_extension_bivariate} yields a parallel transport version $\PTTGV$ of TGV for bivariate manifold-valued data case as follows.
We let
\begin{equation} \label{eq:tgv_bivariate_pt}
\begin{aligned}
\PTTGV(u)  
 = \min _{y^ 1_{i,j},y^ 2_{i,j}}  \alpha_1 \sum\nolimits _{i,j} & \big( d(u_{i+1,j},y^ 1_{i,j})^p + d(u_{i,j+1},y^ 2_{i,j})^p \Big)^{1/p} \\
  +\alpha _0 \sum\nolimits_{i,j} & \Big( \dpt{}\big([u_{i,j},y^1_{i,j}],[u_{i-1,j},y^1_{i-1,j}]\big)^p + \dpt{}\big([u_{i,j},y^2_{i,j}],[u_{i,j-1},y^2_{i,j-1}]\big)^p  \\
 & + 2^{1-p} \dpts \big([u_{i,j},y^1_{i,j}],[u_{i,j},y^2_{i,j}],[u_{i,j-1},y^1_{i,j-1}],[u_{i-1,j},y^2_{i-1,j}]\big)^p \Big)^{1/p}.
\end{aligned}
\end{equation}
Here $\dpt{}$ is given as in \eqref{eq:definition_transport_distance} and we define
\begin{align}
\dpts ([\ucc,\ycco],[\tilde{u}_{\circ,\circ},\ycct],[\ucm,\ycmo],[\umc,\ymct]) = 
 \min_{r^ 1,r^ 2} \dpt{}([r^1,\ymct],[\ycmo,r^ 2]) \label{eq:DefPtSym} & \\
 \text{s.t. } r^1 \in  \exp (\pt_{\umc}(w^1))  \text{ with } w^ 1 \in \log_{\ucc}(\ycco) \notag & \\
 \text{and }  r^2 \in  \exp (\pt_{\ucm}(w^ 2)) \text{ with } w^ 2 \in \log_{\tilde{u}_{\circ,\circ}}(\ycct) \notag &.
\end{align}
Note that, as for $\dss$, for the sake of a formal correctness, we have introduced $\tilde{u}_{\circ,\circ}$, but in fact we will always choose $\tilde{u}_{\circ,\circ} = \ucc$ since also $\dpts$ effectively depends only on seven variables.

Also, we note again that $\dpts$ realizes exactly the construction of $\dsym$ as discussed in Section~\ref{sec:axomatic_extension_bivariate} (see also Figure \ref{fig:dsym_motivation}), only that now the parallel transport is used to transport tangent tuples and the distance of two tangent-tuple-differences is again measured with $\dpt{}$. Due to our careful construction, the following result follows easily.

\begin{thm}\label{thm:pt_properties_multi} The $\PTTGV$ functional as in \eqref{eq:tgv_bivariate_pt} satisfies the properties \textbf{(M-P1')} to \textbf{(M-P3')}. If $u = (u_{i,j})_{i,j}$ is locally geodesic, then $\PTTGV(u) = 0$. If $u$ is such that the geodesic connecting each pair $u_{i,j}$ and $u_{i',j'}$ with $\max\{|i-i'|,|j-j'|\} \leq 2$ is unique and $\PTTGV(u) = 0$, then $u$ is geodesic.
\proof See Section \ref{sec:proofs_section2} in the Appendix. \qedhere
\end{thm}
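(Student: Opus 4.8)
The plan is to verify the hypotheses of Proposition \ref{prop:mp1_to_mp4_hold_general_bivariate} for the concrete choice $D = \dpt{}$ and $\dsym = \dpts$, since all four conclusions of the theorem are then immediate consequences of that proposition together with Lemma \ref{lem:locally_globally_geodesic_bivariate}. First I would recall that $\dpt{}$ already satisfies the hypotheses of Proposition \ref{prop:mp1_to_mp4_hold_general}: indeed $\dpt{}([x,x],[u,u]) = 0$ because the zero vector is parallel-transported to the zero vector, $\dpt{}([x,y],[x,y]) = 0$ trivially, and in the vector-space case $\M = \R^K$ parallel transport is the identity on tangent vectors, so $\dpt{}([x,y],[u,v]) = |\log_x(y) - \pt_x(\log_u(v))| = |(y-x) - (v-u)|$; this part is essentially the univariate computation underlying Theorem \ref{thm:pt_properties}. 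It remains to check the three conditions on $\dpts$.

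For the first condition, $\dpts([x,x],[x,x],[u,u],[u,u]) = 0$ holds because all the logarithms involved are zero vectors, hence all parallel transports yield zero vectors, hence $r^1 = \umc$ (in the role there) and $r^2 = \ucm$ reduce the inner $\dpt{}$ to $\dpt{}([x,x],[x,x]) = 0$. For the vector-space identity, I would substitute $\M = \R^K$ into \eqref{eq:DefPtSym}: parallel transport being trivial, $r^1$ is forced to be $u_{-,\circ} + (y^1_{\circ,\circ} - u_{\circ,\circ})$ and $r^2 = u_{\circ,-} + (y^2_{\circ,\circ} - u_{\circ,\circ})$, and the inner $\dpt{}$ then evaluates, using the already-established vector-space form of $\dpt{}$, to $|(y^2_{-,\circ} - r^1) - (r^2 - y^1_{\circ,-})|$, which after expanding the definitions of $r^1,r^2$ equals $|(y^1_{\circ,\circ} - u_{\circ,\circ}) - (y^1_{\circ,-} - u_{\circ,-}) + (y^2_{\circ,\circ} - u_{\circ,\circ}) - (y^2_{-,\circ} - u_{-,\circ})|$; this is exactly the required expression after the elementary rearrangement displayed in \eqref{eq:reformulations_of_symgrad}. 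This is a routine but slightly tedious bookkeeping step.

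The substantive part is the remaining two conditions, which concern the anti-diagonal geodesic behaviour. For the ``if'' direction, I would take a geodesic three-by-three signal $v = (v_{i,j})_{i,j}$: then each row and column is geodesic, so the relevant tangent tuples $[v_{\circ,\circ}, v_{\circ,+}]$ and $[v_{\circ,\circ}, v_{+,\circ}]$ correspond to vectors tangential to the column geodesic $v_{\circ,-},v_{\circ,\circ},v_{\circ,+}$ and the row geodesic $v_{-,\circ},v_{\circ,\circ},v_{+,\circ}$ respectively. Here I would invoke the standard fact (already advertised in the discussion preceding this theorem) that parallel transport of a vector tangent to a geodesic, along that geodesic, stays tangent to the geodesic: transporting $[v_{\circ,\circ},v_{\circ,+}]$ down to $v_{\circ,-}$ along the column geodesic yields the tuple $[v_{\circ,-}, v_{\circ,\circ}]$ (using in addition that the three points are at equal distance, so lengths match), and symmetrically transporting $[v_{\circ,\circ},v_{+,\circ}]$ left to $v_{-,\circ}$ yields $[v_{-,\circ}, v_{\circ,\circ}]$. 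Feeding these as $r^2, r^1$ into \eqref{eq:DefPtSym} — after identifying the $v$-indexed endpoints $y^1_{\circ,-} = v_{+,-}$, $y^2_{-,\circ} = v_{-,+}$ via the geodesic rows/columns through the shifted centers — the inner $\dpt{}$ becomes $\dpt{}([v_{\circ,\circ}, v_{-,+}],[v_{+,-}, v_{\circ,\circ}])$, which by the hypothesis that the anti-diagonal $(v_{i+k,j-k})_k$ is geodesic, equals zero (this last step uses the univariate assertion on $\dpt{}$ from Theorem \ref{thm:pt_properties}: a geodesic triple gives a vanishing $\dpt{}$). Conversely, given a signal whose rows and columns are geodesic with all nearby geodesics unique, the same transport computation forces $r^1 = v_{-,\circ}$-rooted tuple pointing at $v_{-,+}$ and $r^2 = v_{\circ,-}$-rooted tuple pointing at $v_{+,-}$, so $\dpts(\dots) = 0$ reads $\dpt{}([v_{\circ,\circ},v_{-,+}],[v_{+,-},v_{\circ,\circ}]) = 0$; invoking uniqueness of geodesics and the converse direction of the univariate $\dpt{}$ statement, $v_{+,-}, v_{\circ,\circ}, v_{-,+}$ lie on a common length-minimizing geodesic at equal distance, i.e.\ the anti-diagonal triple is geodesic. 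The main obstacle — and the place where care is genuinely needed — is justifying that the constrained minimization in \eqref{eq:DefPtSym} is actually \emph{attained} at these specific $r^1, r^2$ (rather than at some holonomy-rotated competitor giving a smaller value): this requires the tangency-under-transport lemma to pin down $r^1, r^2$ uniquely in the unique-geodesic regime, and in the non-unique regime one argues that \emph{some} admissible choice realizes the geodesic transport, which suffices for the ``if'' direction, while the uniqueness hypothesis in Proposition \ref{prop:mp1_to_mp4_hold_general_bivariate} handles the converse. Since these are precisely the ingredients assembled in the parallel proof of Theorem \ref{thm:d_s_properties_multi}, I would phrase the argument so as to reuse that structure, replacing the Schild's-ladder midpoint/reflection steps by parallel-transport steps throughout.
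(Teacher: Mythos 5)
Your proposal is correct and follows essentially the same route as the paper's proof: verify the hypotheses of Proposition \ref{prop:mp1_to_mp4_hold_general_bivariate}, do the vector-space bookkeeping for $\dpts$, and use the fact that parallel transport along a geodesic maps the tuple $[\uc,\up]$ to $[\um,\uc]$ (the paper isolates this as Lemma \ref{lem:pt_of_log}) to force $r^1=r^2=u_{i,j}$ and reduce $\dpts$ to $\dpt{}([u_{i,j},u_{i-1,j+1}],[u_{i+1,j-1},u_{i,j}])$. Your handling of the attainment issue (an admissible zero-valued choice suffices for the ``if'' direction; uniqueness of geodesics pins down $r^1,r^2$ for the converse) matches the paper exactly.
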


\section{Existence results for TGV for manifold-valued data} \label{sec:existence_results}

This section provides existence results for the minimization problem appearing the definition of $\MTGV$ as well as for variational $\MTGV$-regularized denoising. For the sake of brevity, we only consider the bivariate setting and note that the univariate counterpart follows as special case.
We first state the main existence results in a general setting, that builds on lower semi-continuity of the involved distance-type function. The latter is then proven in subsequent lemmata.
\begin{thm}\label{thm:existence_bivariate_general} Assume that $D:\M^2 \times \M^2 \rightarrow [0,\infty)$ and $\dsym:\M^ 2 \times \M^ 2 \times \M^ 2 \times M^ 2 \rightarrow [0,\infty)$ are distance-type functions that are lower semi-continuous. Then, for any $u = (u_{i,j})_{i,j}$ there exist $(y^1_{i,j})_{i,j}$, $(y^2_{i,j})_{i,j}$ in $\M$  that attain the minimum in the definition of $\MTGV$ as in Equation \eqref{eq:tgv_manifold_general_multi}.
Further, $\MTGV$ is lower semi-continuous and for any $(f_{i,j})_{i,j} $ in $\M$ there exists a solution to 
\begin{equation} \label{eq:bivariate_mtgv_denoising}
 \min_{u}\, \MTGV(u) + \sum_{i,j} d(u_{i,j},f_{i,j})^2. 
 \end{equation}
Also, all previously introduced distance-type functions $\ds$, $\dss$, $\dpt{}$, $\dpts{}$ are lower semi-continuous and a solution to \eqref{eq:bivariate_mtgv_denoising} exists if $\MTGV$ is replaced by $\STGV$ or $\PTTGV$. In particular, all the models proposed in this paper have a minimizer.
\begin{proof}
The existence results in the general setting follow by standard arguments and the Hopf-Rinow theorem and we only provide a sketch of proof. Regarding existence for the evaluation of $\MTGV$, we note that all involved terms are positive and, for fixed $u$, any infimizing sequence $((y^1)^n,(y^2)^n)_n$ is bounded due to the first two distance functions involved in $\MTGV$ as in Equation \eqref{eq:tgv_manifold_general_multi}. Hence it admits a convergent subsequence and from lower semi-continuity of all the involved terms, it follows that the limit is a minimizer for the evaluation of $\MTGV$. 

Now take any sequence $(u^n)_n$ converging to some $u$ for which, without loss of generality, we assume that $\liminf_n \MTGV(u^n) = \lim_n \MTGV(u^n)$. We can pick elements $((y^1)^n,(y^2)^n)$ for which the minimum in $\MTGV(u^n)$ is attained. Again by boundedness of $(u^n)_n$ and the first two distance functions in the definition of $\MTGV$, also $((y^1)^n,(y^2)^n)_n$ is bounded and we can extract a subsequence $((y^1)^{n_k},(y^2)^{n_k})_k$ converging to some $(\hat{y}^ 1,\hat{y}^ 2)$. Defining $E(\cdot)$ such that $\MTGV(u) = \inf_{y^1,y^2} E(u,y^1,y^ 2)$, we obtain from lower semi-continuity of all involved terms that
\begin{align*}
\MTGV(u) \leq E(u,\hat{y}^ 1,\hat{y}^ 2) \leq \liminf_k E(u^{n_k},(y^1)^{n_k},(y^2)^{n_k}) 
& = \liminf_k \MTGV(u^{n_k}) 
\end{align*}
and since $\liminf_k \MTGV(u^{n_k}) = \liminf_n \MTGV(u^n)$, $\MTGV$ is lower semi-continuous.

Given lower semi-continuity of $\MTGV$, existence for \eqref{eq:bivariate_mtgv_denoising} follows by similar arguments. Regarding the particular realizations $\STGV$ and $\PTTGV$, it suffices to show lower semi-continuity of the involved distance-type functions, which is done on the sequel.
\end{proof}
\end{thm}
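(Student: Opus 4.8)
The general assertions of the theorem — attainment of the minimum in the evaluation of $\MTGV$, lower semi-continuity of $\MTGV$, and existence for the denoising problem \eqref{eq:bivariate_mtgv_denoising} — are handled by the direct method exactly as in the sketch already given, so the remaining task, and the real content, is to establish lower semi-continuity of the four concrete distance-type functions $\ds$, $\dss$, $\dpt{}$, $\dpts{}$; the last claim then follows by invoking the general statement with $D \in \{\ds,\dpt{}\}$ and $\dsym \in \{\dss,\dpts{}\}$. The plan is to isolate a single geometric compactness lemma about minimizing geodesics and then run four essentially identical extraction arguments off it.

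The key lemma I would prove is the following. If $a_n \to a$, $b_n \to b$ in $\M$ and $\gamma_n:[0,1]\to\M$ are constant-speed length-minimizing geodesics from $a_n$ to $b_n$, then along a subsequence $\gamma_n$ converges, in $C^1([0,1])$, to a length-minimizing geodesic $\gamma$ from $a$ to $b$. Indeed $|\dot\gamma_n(0)|_{a_n}=d(a_n,b_n)\to d(a,b)$ is bounded, so — viewing tangent vectors inside $\R^K$ — a subsequence of $\dot\gamma_n(0)$ converges to some $\xi\in T_a\M$ with $|\xi|_a=d(a,b)$; by geodesic completeness (Hopf--Rinow) and continuous dependence of the geodesic flow on its initial data, $\gamma_n\to\gamma:=(t\mapsto\exp_a(t\xi))$, whence $\gamma(1)=b$ and $\mathrm{length}(\gamma)=|\xi|_a=d(a,b)$, so $\gamma$ is minimizing. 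Three consequences obtained by the same reasoning are: (i) for each fixed $t\in\R$ the set-valued map $(a,b)\mapsto[a,b]_t$ has closed graph and compact values; (ii) $(a,b)\mapsto\log_a(b)$ has closed graph and compact values; and (iii) if moreover $w_n\to w$ in $\R^K$ then $\pt^{\gamma_n}_{a_n,b_n}(w_n)\to\pt^\gamma_{a,b}(w)$, since parallel transport solves a linear ODE along $\gamma_n$ whose coefficients and initial value converge.

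With this in hand, lower semi-continuity of $\ds$ goes as follows. By (i) the feasible set $\{\,[u,c]_2 \st c\in[x,v]_{\frac12}\,\}$ in \eqref{eq:schilds_distance} is compact, so the minimum defining $\ds$ is attained. Given $(x_n,y_n,u_n,v_n)\to(x,y,u,v)$ with $\liminf_n\ds([x_n,y_n],[u_n,v_n])=\lim_n(\cdots)$, choose optimal $c_n\in[x_n,v_n]_{\frac12}$ and $y'_n\in[u_n,c_n]_2$; by (i) and a two-step extraction, along a subsequence $c_n\to c\in[x,v]_{\frac12}$ and then $y'_n\to y'\in[u,c]_2$, and continuity of $d$ gives $\ds([x,y],[u,v])\le d(y',y)=\lim_n d(y'_n,y_n)=\lim_n\ds([x_n,y_n],[u_n,v_n])$. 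For $\dpt{}$ the argument is verbatim the same with (i) replaced by (ii)--(iii): the feasible set for $(z_1,z_2)$ in \eqref{eq:definition_transport_distance} is compact, so the minimum is attained; along a convergent subsequence the optimal $z_1^n\in\log_{x_n}(y_n)$, $w^n\in\log_{u_n}(v_n)$ and the connecting minimizing geodesic from $u_n$ to $x_n$ converge, hence $z_2^n=\pt_{x_n}(w^n)$ converges to an admissible $z_2\in\pt_x(\log_u(v))$, and $\dpt{}([x,y],[u,v])\le|z_1-z_2|_x=\lim_n|z_1^n-z_2^n|_{x_n}$.

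Finally, $\dss$ and $\dpts{}$ are themselves minima over auxiliary variables $(r^1,r^2)$ constrained by the maps appearing in \eqref{eq:DefSchildSSym} and \eqref{eq:DefPtSym}, which are again closed-graph and compact-valued by the consequences above, of an expression that is $\ds$ (respectively $\dpt{}$) applied to tangent tuples built continuously from $(r^1,r^2)$ and the seven inputs; boundedness of the feasible $(r^1,r^2)$ follows from the triangle inequality together with the constraints pinning $r^1,r^2$ at bounded distance from the inputs. Hence the same scheme — pick optimal $(r^1_n,r^2_n)$ together with the intermediate midpoints/transports, extract a convergent subsequence, and pass to the limit using closedness of the constraints and the already-established lower semi-continuity of $\ds$ / $\dpt{}$ — yields lower semi-continuity of $\dss$ and $\dpts{}$ and, in particular, that the minima defining them are attained. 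I expect the main obstacle to be the careful formulation and proof of the geodesic compactness lemma together with its parallel-transport corollary (iii): one must handle tangent vectors living in varying tangent spaces and the set-valuedness caused by non-uniqueness of minimizing geodesics, whereas once that lemma is in place the four lower-semi-continuity proofs are routine instances of the direct method.
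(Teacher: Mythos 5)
Your proposal is correct and follows the same overall architecture as the paper: the general claims are dispatched by the direct method exactly as in the sketch, and the real work is reduced to lower semi-continuity of $\ds$, $\dss$, $\dpt{}$, $\dpts{}$, each obtained by choosing near-optimal auxiliary points/vectors, extracting convergent subsequences via a compactness statement for minimizing geodesics, and passing to the limit in the constraints. The one genuine divergence is in how the key geodesic stability lemma is proved. The paper (Lemma \ref{lem:stability_geodesics}) argues intrinsically: it subdivides $[0,1]$ into segments short enough that comparison-geometry constants guarantee uniqueness of minimizers and a uniform Lipschitz bound for geodesic variations, assembles the limit curve piecewise, and then bootstraps uniform convergence to larger intervals. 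You instead extract a convergent subsequence of the initial velocities $\dot\gamma_n(0)$ (using $|\dot\gamma_n(0)|=d(a_n,b_n)$ and the ambient embedding $\M\subset\R^K$ to compare tangent vectors at varying base points) and invoke smooth dependence of the geodesic flow on initial data together with completeness. Your route is shorter and delivers $C^1$ convergence on compact intervals for free, which in turn makes the convergence of parallel transports $\pt^{\gamma_n}_{a_n,b_n}(w_n)\to\pt^{\gamma}_{a,b}(w)$ an immediate consequence of continuous dependence for the linear transport ODE — a point the paper handles somewhat informally inside the proof of lower semi-continuity of $\dpt{}$ (the convergence $\varepsilon_r\to 0$ in that argument). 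What the paper's version buys is independence from the embedding and from identifying nearby tangent spaces, staying entirely with metric quantities; it also packages the four extraction arguments through the abstract closedness result (Proposition \ref{prop:existence_basis_general}), which you bypass by doing the extractions directly — a purely cosmetic difference. One point to make explicit if you write this up: in your treatment of $\dpt{}$ the norms $|\cdot|_{x_n}$ live at varying base points, so the final limit $|z_1^n-z_2^n|_{x_n}\to|z_1-z_2|_x$ uses smoothness of the Riemannian metric as a function on $\M$; this is true but should be stated.
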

We are now left to show lower semi-continuity of the proposed distance-type functions $\ds$, $\dpt{}$, $\dss$ and $\dpts$. To this aim, we exploit that they are all defined by minimizing a distance-type function subject to a constraint. Using this joint structure, we first provide a standard abstract lower semi-continuity result that covers this setting and reduces lower semi-continuity to a closedness condition on the constraint.

\begin{prop}\label{prop:existence_basis_general} Take $K,N \in \N$ and functions $G:\M^N\times \M^K \rightarrow [0,\infty)$ and $C:\M^N  \rightarrow \mathcal{P}(\M^K)$ such that $G$ is lower semi-continuous, $ C(x)  \neq \emptyset$ for all $x \in \M^N$ and for any bounded sequence $(x^n)_n $ with elements in $\M^N$ and $(y^n)_n $ such that $y^n \in C(x^n)$ for all $n$, there exist subsequences $(x^{n_k})_k$ and $(y^{n_k})_k$  converging to some $x \in \M^N$, $y \in \M^K$ such that $y \in C(x)$.
Define $S: \M^N \rightarrow [0,\infty)$ as
\[ S(x) = \inf_{y \in \M^K} G(x,y) \text{ such that }y \in C(x). \]
Then, for any $x \in \M^N$ there exists $y\in  C(x)$ such that $S(x) = G(x,y)$. Further, $S$ is lower semi-continuous.
\proof See Section \ref{sec:proofs_section3} in the Appendix. \qedhere
\end{prop}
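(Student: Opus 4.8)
The plan is a textbook application of the direct method, split into the attainment claim and the lower semi-continuity claim, both of which rely on the closedness hypothesis on $C$ to supply the compactness that such an argument would normally have to establish by hand.

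First, for attainment: fix $x \in \M^N$. Since $C(x) \neq \emptyset$ and $G$ is real-valued on $\M^N\times\M^K$, we have $S(x) \in [0,\infty)$, so there is a minimizing sequence $(y^n)_n$ with $y^n \in C(x)$ and $G(x,y^n) \to S(x)$. Apply the closedness hypothesis to the constant — hence bounded — sequence $x^n := x$ together with $(y^n)_n$ (which satisfies $y^n \in C(x) = C(x^n)$): this yields a subsequence $(y^{n_k})_k$ converging to some $y$ with $y \in C(x)$. Lower semi-continuity of $G$ gives $G(x,y) \le \liminf_k G(x,y^{n_k}) = S(x)$, while $y \in C(x)$ forces $G(x,y) \ge S(x)$; hence $S(x) = G(x,y)$.

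Second, for lower semi-continuity of $S$: let $x^n \to x$ in $\M^N$; we must show $S(x) \le \liminf_n S(x^n)$. Passing to a subsequence (still converging to $x$) we may assume $\liminf_n S(x^n) = \lim_n S(x^n) =: \ell$, and if $\ell = +\infty$ there is nothing to prove, so assume $\ell < \infty$. By the attainment part just proved, choose $y^n \in C(x^n)$ with $G(x^n,y^n) = S(x^n)$. The convergent sequence $(x^n)_n$ is bounded, so the closedness hypothesis yields a further subsequence, not relabelled, along which $x^{n_k} \to x'$ and $y^{n_k} \to y$ with $y \in C(x')$; by uniqueness of limits in the metric space $\M^N$ we get $x' = x$, hence $y \in C(x)$. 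Then lower semi-continuity of $G$ gives
\[
S(x) \le G(x,y) \le \liminf_k G(x^{n_k},y^{n_k}) = \liminf_k S(x^{n_k}) = \ell = \liminf_n S(x^n),
\]
which is the claim.

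The argument is entirely routine, so no real obstacle is expected: the hypotheses on $G$ are precisely the lower semi-continuity needed at the two places it is invoked, and all the compactness is built into the assumption on $C$. The only points requiring a little care are the subsequence bookkeeping in the second part — first reduce $\liminf$ to $\lim$, then extract the subsequence provided by the hypothesis, then identify its limit with $x$ via uniqueness of limits — together with disposing of the trivial case $\ell = +\infty$.
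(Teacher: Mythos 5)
Your proof is correct and follows essentially the same route as the paper's: a minimizing sequence plus the closedness hypothesis on $C$ (applied to the constant sequence $x^n = x$) for attainment, then attainment plus the same closedness hypothesis, uniqueness of limits, and lower semi-continuity of $G$ for the lower semi-continuity of $S$. The only cosmetic difference is your handling of the case $\ell = +\infty$, which never occurs since $G$ is finite-valued and $C(x)\neq\emptyset$ forces $S(x)<\infty$.
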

In the application of this result to the proposed distance-type functions, the constraint $y \in C(x)$ will correspond to constraining points to particular positions on distance minimizing geodesics. Consequently, in order to verify the closedness condition, the general result that the set of shortest geodesics connecting two points is closed w.r.t.~perturbation of these points will be necessary and is provided as follows.

\begin{lem} \label{lem:stability_geodesics}Let $\M$ be a (geodesically) complete Riemannian manifold. Let $(p^n)_n$ and $(q^n)_n$ be two sequences in $\M$ converging to $p$ and $q$, respectively. Let each $\gamma^ n :[0,1] \rightarrow \M$ be a distance-minimizing geodesic such that $\gamma^ n(0) = p^n$, $\gamma ^n(1) = q^n$. Then, there exists a distance-minimizing geodesic $\gamma:[0,1] \rightarrow \M$ such that $\gamma(0) = p$, $\gamma(1) = q$ and a subsequence $(\gamma^{n_k})_k$ such that $\gamma^{n_k} \rightarrow \gamma$ uniformly on $[0,1]$. Furthermore, extending the geodesics to any interval $[a,b]$ with $[0,1] \subset [a,b]$ we get that, up to subsequences, $(\gamma^{n_k})_{n_k}$ also converges uniformly to $\gamma$ on $[a,b]$.
\proof See Section \ref{sec:proofs_section3} in the Appendix. \qedhere
\end{lem}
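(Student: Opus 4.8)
The plan is to reduce the statement to compactness of bounded sets of initial data for geodesics (via Hopf--Rinow) together with continuity of the exponential map on the whole tangent bundle. Since each $\gamma^n$ is a geodesic it has constant speed, and since it is distance-minimizing on $[0,1]$ with $\gamma^n(0)=p^n$, $\gamma^n(1)=q^n$, this speed equals $d(p^n,q^n)$; hence $\gamma^n(t)=\exp_{p^n}(tv^n)$ with $v^n:=\tfrac{d}{dt}\gamma^n(0)\in T_{p^n}\M$ and $|v^n|_{p^n}=d(p^n,q^n)$. As $p^n\to p$, $q^n\to q$ and $d$ is continuous, $d(p^n,q^n)\to d(p,q)$, so the norms $|v^n|_{p^n}$ are bounded by some $R>0$.

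Next I would extract a convergent subsequence of initial conditions. By Hopf--Rinow the closed ball $\overline{B}(p,1)\subset\M$ is compact, and $p^n$ lies in it for all large $n$. The subset $\{(x,v)\in T\M \st x\in\overline{B}(p,1),\ |v|_x\le R\}$ is a closed, fibrewise bounded subset of the tangent bundle over a compact base, hence compact. Therefore $(p^n,v^n)_n$ has a subsequence, relabelled $(p^{n_k},v^{n_k})_k$, converging to some $(p,v)\in T\M$; passing to the limit in $|v^{n_k}|_{p^{n_k}}=d(p^{n_k},q^{n_k})$ yields $|v|_p=d(p,q)$.

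Now put $\gamma(t):=\exp_p(tv)$. By geodesic completeness of $\M$, $\exp$ is defined on all of $T\M$, so $\gamma$ is a geodesic on all of $\R$, and smoothness of $\exp$ makes $(x,w,t)\mapsto\exp_x(tw)$ continuous on $T\M\times\R$. Fixing any $[a,b]\supset[0,1]$ and restricting this map to the compact set $\{(x,w,t)\st x\in\overline{B}(p,1),\ |w|_x\le R,\ t\in[a,b]\}$, continuity there is uniform continuity, which promotes the convergence $\gamma^{n_k}(t)=\exp_{p^{n_k}}(tv^{n_k})\to\exp_p(tv)=\gamma(t)$ to uniform convergence on $[a,b]$; this gives both the main claim (case $[a,b]=[0,1]$) and the final ``extension'' assertion. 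Finally $\gamma(0)=p$ and $\gamma(1)=\lim_k\gamma^{n_k}(1)=\lim_k q^{n_k}=q$, while the length of $\gamma|_{[0,1]}$ equals $|v|_p=d(p,q)$, so $\gamma$ is a distance-minimizing geodesic from $p$ to $q$.

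I expect the only genuine subtlety to be obtaining \emph{uniform} (not merely pointwise) convergence together with the global definedness of the limit geodesic; both are delivered by completeness --- Hopf--Rinow gives the compactness of $\overline{B}(p,1)$ and the global definition of $\exp$ --- plus the smoothness of $\exp$ on all of $T\M$, so no separate Arzel\`a--Ascoli argument is needed. If one preferred to avoid invoking smooth dependence on initial data, an alternative is Arzel\`a--Ascoli applied to $(\gamma^{n_k})_k$, which is uniformly Lipschitz with constant $\le R$ and takes values in a fixed compact set, followed by identifying the uniform limit as a geodesic through the limiting initial conditions.
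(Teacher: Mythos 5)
Your proof is correct, but it takes a genuinely different route from the paper. You lift the problem to initial conditions: writing $\gamma^n(t)=\exp_{p^n}(tv^n)$ with $|v^n|_{p^n}=d(p^n,q^n)$ bounded, you get compactness of the relevant fibrewise-bounded portion of $T\M$ over the compact ball $\overline{B}(p,1)$ (Hopf--Rinow), extract a convergent subsequence of $(p^n,v^n)$, and conclude by continuity of the globally defined map $(x,w,t)\mapsto\exp_x(tw)$, which yields uniform convergence on any compact interval $[a,b]$ in one stroke and identifies the limit as a geodesic of length $d(p,q)$. The paper instead works entirely downstairs in $\M$: it fixes a bounded set $\mathcal{N}$ containing all the geodesic images, invokes a comparison constant $D$ so that geodesics in $\mathcal{N}$ of length $<D$ are unique minimizers, subdivides $[0,1]$ into sufficiently many pieces, extracts convergence at the grid points, builds the limit geodesic as a concatenation of the unique short geodesics between consecutive limit points (checking its total length is $\le d(p,q)$), and upgrades to uniform convergence via a Lipschitz estimate for geodesic variations; the extension to $[a,b]$ is then done by an $\varepsilon$-step induction. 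Your argument is shorter and more standard, resting on smooth dependence of geodesics on initial data and properness of closed balls; the paper's argument avoids working on the tangent bundle and instead develops local uniqueness and variation estimates, at the cost of a considerably longer subdivision-and-extension construction. One point worth making explicit if you write this up: uniform continuity of $(x,w,t)\mapsto\exp_x(tw)$ on the compact set gives uniform-in-$t$ convergence precisely because the perturbation occurs only in the $(x,w)$-variables, the $t$-variable being held equal in the two arguments being compared.
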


We are now in the position of showing closedness of the constraints appearing in the definitions of $\ds$ and $\dss$.
\begin{lem}\label{lem:shild_transport_stability} Let $((x^n,y^n,u^n,v^n))_n$ be a bounded sequence in $\M^4$ and take $((c^n,\tilde{y}^n))_n$ in $\M^2$ such that
\[ c^n \in [x^n,v^n]_{\frac{1}{2}} \quad \text{and}\quad \tilde{y}^n \in [u^n,c^n]_2. \]
Then there exist subsequences $((x^{n_k},y^{n_k},u^{n_k},v^{n_k}))_k$ and $((c^{n_k},\tilde{y}^{n_k}))_k$ converging to $(x,y,u,v)$ and $(c,\tilde{y})$, respectively, such that
\[ c \in [x,v]_{\frac{1}{2}} \quad \text{and}\quad \tilde{y} \in [u,c]_2. \]
In particular, $C:\M^4\rightarrow \mathcal{P}(\M^2)$ defined as $C(x,y,u,v):= \{ (c',y') \,|\, c' \in [x,v]_{\frac{1}{2}}, y' \in  [u,c']_2 \}$ satisfies the assumption of Proposition \ref{prop:existence_basis_general}.
\begin{proof}
Since each $c^n$ is on a length-minimizing geodesic between $x^n$ and $v^n$, we get that $d(x^n,c^n) \leq d(x^n,v^n)$ and hence $(c^n)_n$ is bounded. Also $d(c^n,\tilde{y}^n) = d(c^n,u^n)$ and hence $(\tilde{y}^n)_n$ is bounded. Consequently we can pick subsequences $((x^{n_k},y^{n_k},u^{n_k},v^{n_k}))_k$ and $((c^{n_k},\tilde{y}^{n_k}))_k$ converging to $(x,y,u,v)$ 
and $(c,\tilde{y})$. Now with $\gamma_c^{n_k}:[0,1]\rightarrow \M$ being a shortest geodesic connecting $x^{n_k}$ and $v^{n_k}$ such that $\gamma_c^{n_k} (1/2) = c^{n_k}$ we get by Lemma \ref{lem:stability_geodesics} that, up to a subsequence, $\gamma_c^{n_k} \rightarrow \gamma_c$ uniformly, where $\gamma_c$ is a again a shortest geodesic connecting $x$ and $v$. Consequently, $c = \gamma(1/2) \in [x,v]_{\frac{1}{2}}$. 

Now pick $\gamma_{\tilde{y}}^{n_k}:[0,1]\rightarrow \M$ to be a length minimizing geodesic between $\gamma_{\tilde{y}}^{n_k}(0) = u^{n_k}$ and $\gamma_{\tilde{y}}^{n_k}(1)= c^{n_k}$ such that $\gamma_{\tilde{y}}^{n_k}(2) = \tilde{y}^{n_k}$. By Lemma \ref{lem:stability_geodesics} we get that, up to subsequences, $\gamma^{n_k}_{\tilde{y}}$ converges uniformly to a geodesic $\gamma_{\tilde{y}}$ on $[0,2]$ such that $\gamma_{\tilde{y}}$ is length-minimizing between $\gamma_{\tilde{y}}(0) = u$ and $\gamma_{\tilde{y}}(1) = c$. By uniform convergence on $[0,2]$ we get that $y = \lim_{k}y^{n_k} = \lim_k \gamma^{n_k}_{\tilde{y}}(2) = \gamma_{\tilde{y}}(2)$ and the assertion follows.
\end{proof}
\end{lem}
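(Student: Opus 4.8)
The plan is to combine compactness of closed bounded sets in the complete manifold $\M$ (via Hopf--Rinow) with the geodesic stability result of Lemma~\ref{lem:stability_geodesics} to push the midpoint constraint and the reflection constraint to the limit. First I would verify that all sequences in play are bounded: $((x^n,y^n,u^n,v^n))_n$ is bounded by hypothesis; since $c^n$ lies at the midpoint of a distance-minimizing geodesic joining $x^n$ and $v^n$ we have $d(x^n,c^n)=\tfrac{1}{2}d(x^n,v^n)$, so $(c^n)_n$ is bounded; and since $\tilde y^n=\psi(2)$ for a distance-minimizing geodesic $\psi$ with $\psi(0)=u^n$, $\psi(1)=c^n$, the constant-speed property of geodesics gives $d(c^n,\tilde y^n)\le d(u^n,c^n)$, so $(\tilde y^n)_n$ is bounded. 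Passing to a subsequence (which I relabel) I may therefore assume $x^n\to x$, $y^n\to y$ (the $y^n$ merely coming along for the ride, as they do not enter the constraints), $u^n\to u$, $v^n\to v$, $c^n\to c$ and $\tilde y^n\to\tilde y$.

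Next I would transfer the constraint $c\in[x,v]_{\frac{1}{2}}$. Choose distance-minimizing geodesics $\gamma_c^n:[0,1]\to\M$ with $\gamma_c^n(0)=x^n$, $\gamma_c^n(1)=v^n$ and $\gamma_c^n(\tfrac{1}{2})=c^n$. By Lemma~\ref{lem:stability_geodesics}, after a further subsequence $\gamma_c^n\to\gamma_c$ uniformly on $[0,1]$, where $\gamma_c$ is distance-minimizing with $\gamma_c(0)=x$, $\gamma_c(1)=v$; hence $c=\lim_n\gamma_c^n(\tfrac{1}{2})=\gamma_c(\tfrac{1}{2})\in[x,v]_{\frac{1}{2}}$. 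For the second constraint I would choose distance-minimizing geodesics $\gamma_{\tilde y}^n:[0,1]\to\M$ with $\gamma_{\tilde y}^n(0)=u^n$, $\gamma_{\tilde y}^n(1)=c^n$, extended so that $\gamma_{\tilde y}^n(2)=\tilde y^n$; the extension clause of Lemma~\ref{lem:stability_geodesics} yields, along a further subsequence, uniform convergence on $[0,2]$ to a geodesic $\gamma_{\tilde y}$ that is distance-minimizing between $\gamma_{\tilde y}(0)=u$ and $\gamma_{\tilde y}(1)=c$, whence $\tilde y=\lim_n\gamma_{\tilde y}^n(2)=\gamma_{\tilde y}(2)\in[u,c]_2$. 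For the ``in particular'' statement, completeness of $\M$ gives $[x,v]_{\frac{1}{2}}\neq\emptyset$ and $[u,c']_2\neq\emptyset$ for every $c'\in[x,v]_{\frac{1}{2}}$, so $C(x,y,u,v)\neq\emptyset$; and the convergence just established is precisely the closedness-under-perturbation hypothesis of Proposition~\ref{prop:existence_basis_general} with $N=4$, $K=2$.

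I expect the only genuinely delicate point to be invoking Lemma~\ref{lem:stability_geodesics} for the \emph{extended} geodesic $\gamma_{\tilde y}^n$: one must observe that $\gamma_{\tilde y}^n|_{[0,1]}$ is distance-minimizing with endpoints converging to $u$ and $c$, so the lemma applies on $[0,1]$ and then propagates to $[0,2]$ via its extension clause. The remaining care is purely bookkeeping of the nested subsequences; everything else is a routine consequence of compactness of bounded sets in a complete Riemannian manifold.
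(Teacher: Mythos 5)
Your proof is correct and follows essentially the same route as the paper's: the same boundedness estimates, the same extraction of subsequences, and the same two applications of Lemma~\ref{lem:stability_geodesics} (including its extension clause on $[0,2]$ for the reflection point). Your inequality $d(c^n,\tilde y^n)\le d(u^n,c^n)$ is in fact the slightly more careful statement (the paper writes equality, which would require the extended segment to be minimizing), and your explicit verification of non-emptiness of $C(x,y,u,v)$ via completeness is a small but welcome addition.
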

Combining this with the general assertion of Proposition \ref{prop:existence_basis_general}, existence and lower semi-continuity results for $\ds$ and $\dss$ follow as direct consequences. 
\begin{lem} \label{lem:shild_ex_lsc}
The minimum in the definition of $\ds$ as in Equation \eqref{eq:schilds_distance} is attained
and $\ds$ is lower semi-continuous. Also, the minimum in the definition of $\dss$ as in Equation \eqref{eq:DefSchildSSym} is attained and $\dss$ is lower semi-continuous.
\proof See Section \ref{sec:proofs_section3} in the Appendix. \qedhere
\end{lem}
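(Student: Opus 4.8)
The plan is to reduce both assertions to the abstract lower semi-continuity principle of Proposition~\ref{prop:existence_basis_general}, exploiting that $\ds$ and $\dss$ are, by construction, infima of a continuous objective over a set constrained by distance-minimizing geodesics. I would treat $\ds$ first, since $\dss$ has $\ds$ built into its objective.

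For $\ds$ as in \eqref{eq:schilds_distance}, I view it as a function $S$ on $\M^N$ with $N=4$, the argument being the quadruple $(x,y,u,v)$ encoding the two tangent tuples $[x,y]$ and $[u,v]$. I take $K=2$ with auxiliary variable $(c',y')$, set $G\big((x,y,u,v),(c',y')\big)=d(y',y)$ -- which is continuous, hence lower semi-continuous -- and let $C(x,y,u,v)=\{(c',y')\mid c'\in[x,v]_{\frac12},\ y'\in[u,c']_2\}$ be exactly the constraint set of Lemma~\ref{lem:shild_transport_stability}. Nonemptiness of $C(x,y,u,v)$ follows from Hopf--Rinow: a length-minimizing geodesic from $x$ to $v$ exists, its midpoint lies in $[x,v]_{\frac12}$, and the length-minimizing geodesic from $u$ to such a midpoint can be prolonged to parameter $2$ by geodesic completeness. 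The closedness-under-bounded-sequences hypothesis of Proposition~\ref{prop:existence_basis_general} is precisely the content of Lemma~\ref{lem:shild_transport_stability}. Proposition~\ref{prop:existence_basis_general} then yields that the infimum in \eqref{eq:schilds_distance} is attained and that $\ds$ is lower semi-continuous as a function of $(x,y,u,v)$.

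For $\dss$ as in \eqref{eq:DefSchildSSym}, I take $N=8$ with argument the octuple $(\ucc,\ycco,\tilde{u}_{\circ,\circ},\ycct,\ucm,\ycmo,\umc,\ymct)$ and $K=4$ with auxiliary variable $(c^1,r^1,c^2,r^2)$. The objective is $G=\ds\big([r^1,\ymct],[\ycmo,r^2]\big)$, which depends on the coordinates $(r^1,\ymct,\ycmo,r^2)$ through the (continuous) coordinate projection from $\M^{N}\times\M^{K}$ followed by $\ds$; since $\ds$ is lower semi-continuous by the first part, so is $G$. The constraint set $C$ splits as a product of two blocks -- one constraining $(c^1,r^1)$ through $c^1\in[\umc,\ycco]_{\frac12}$, $r^1\in[\ucc,c^1]_2$, the other constraining $(c^2,r^2)$ through $c^2\in[\ucm,\ycct]_{\frac12}$, $r^2\in[\tilde{u}_{\circ,\circ},c^2]_2$ -- each block being a relabeled instance of the Schild constraint of Lemma~\ref{lem:shild_transport_stability} (the "$y$"-slot of that lemma being irrelevant to the constraint). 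Nonemptiness follows from Hopf--Rinow applied to each block as above. For the closedness hypothesis, given a bounded sequence of octuples with admissible auxiliary tuples, I would apply Lemma~\ref{lem:shild_transport_stability} to the first block to extract a subsequence along which the octuple and $(c^1,r^1)$ converge with admissible limit, then apply it again to the second block along that subsequence for $(c^2,r^2)$; passing to this nested subsequence verifies the hypothesis. Proposition~\ref{prop:existence_basis_general} then gives attainment of the minimum in \eqref{eq:DefSchildSSym} and lower semi-continuity of $\dss$.

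The only point that needs care -- rather than a genuine obstacle -- is bookkeeping: matching the product structure of the constraint in \eqref{eq:DefSchildSSym} to two copies of Lemma~\ref{lem:shild_transport_stability} under the correct relabeling of variables, and ensuring that the objective of $\dss$ inherits lower semi-continuity from the $\ds$ step, which is why the $\ds$ part must be established before the $\dss$ part.
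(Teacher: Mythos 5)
Your proposal is correct and follows essentially the same route as the paper: both parts are reduced to Proposition~\ref{prop:existence_basis_general} with exactly the choices of $G$ and $C$ the paper uses, with Lemma~\ref{lem:shild_transport_stability} supplying the closedness hypothesis and the lower semi-continuity of $\ds$ feeding into the objective for $\dss$. The paper's proof is just a terser version of yours; your added details (nonemptiness via Hopf--Rinow, the two-block nested subsequence extraction for $\dss$) are the correct way to fill in what the paper leaves implicit.
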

Using similar techniques, existence and lower semi-continuity results for the parallel transport variants can be established as follows.
\begin{lem} \label{lem:pt_ex_lsc}The minimum in the definition of $\dpt{}$ as in Equation \eqref{eq:definition_transport_distance} is attained and $\dpt{}$ is lower semi-continuous. Also, the minimum in the definition of $\dpts$ as in Equation  \eqref{eq:DefPtSym} is attained and $\dpts$ is lower semi-continuous.
\proof See Section \ref{sec:proofs_section3} in the Appendix. \qedhere
\end{lem}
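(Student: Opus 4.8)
The plan is to follow the same route as for the Schild variant: bring $\dpt{}$ and $\dpts$ into the abstract minimization-with-constraint form underlying Proposition~\ref{prop:existence_basis_general}, and verify the required closedness of the constraint by combining the stability of distance-minimizing geodesics (Lemma~\ref{lem:stability_geodesics}) with a stability statement for parallel transport that plays the role of Lemma~\ref{lem:shild_transport_stability}. For $\dpt{}$ as in \eqref{eq:definition_transport_distance}, the input datum is the quadruple $(x,y,u,v)\in\M^4$ and the free choices are a distance-minimizing geodesic from $x$ to $y$ (fixing $z_1\in\log_x(y)$), one from $u$ to $v$ (fixing $w\in\log_u(v)$), and one from $u$ to $x$ (along which $w$ is parallel-transported to give $z_2=\pt_x(w)$); the objective $|z_1-z_2|_x$ is continuous in all of these, since the norm $|\cdot|_x$ varies smoothly with $x$ as $\M\subset\R^K$, and nonemptiness of the constraint set is immediate from completeness via Hopf--Rinow. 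Since the auxiliary vectors $z_1,z_2,w$ live in $T\M\subset\R^{2K}$, one runs the compactness scheme underlying Proposition~\ref{prop:existence_basis_general} with these tangent vectors handled in the ambient $\R^{2K}$ (on which the metric norm is continuous), once the closedness condition below is established.

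The key lemma to prove is the following stability of parallel transport. Let $a^n\to a$, $b^n\to b$ in $\M$, let $\gamma^n:[0,1]\to\M$ be distance-minimizing geodesics with $\gamma^n(0)=a^n$, $\gamma^n(1)=b^n$, and let $w^n\in T_{a^n}\M$ with $w^n\to w\in T_a\M$. Then, up to a subsequence, $\gamma^n\to\gamma$ for some distance-minimizing geodesic $\gamma$ from $a$ to $b$ and $\pt^{\gamma^n}_{a^n,b^n}(w^n)\to\pt^\gamma_{a,b}(w)$. The uniform convergence $\gamma^n\to\gamma$ is Lemma~\ref{lem:stability_geodesics}; since the $\gamma^n$ have uniformly bounded length, the velocities $\dot\gamma^n(0)$ are bounded, so a further subsequence makes them converge and hence $\gamma^n\to\gamma$ in $C^1$ by continuous dependence of the geodesic ODE on initial data. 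The parallel transport of $w^n$ along $\gamma^n$ is the endpoint value of the solution $V^n$ of the linear ODE $\frac{D}{dt}V^n=0$, $V^n(0)=w^n$, whose coefficients depend smoothly on $(\gamma^n,\dot\gamma^n)$; $C^1$-convergence of $\gamma^n$ together with $w^n\to w$ then forces $V^n\to V$ uniformly, i.e.\ $\pt^{\gamma^n}_{a^n,b^n}(w^n)=V^n(1)\to V(1)=\pt^\gamma_{a,b}(w)$. Isometry of parallel transport, $|\pt^{\gamma^n}_{a^n,b^n}(w^n)|=|w^n|$, also supplies the boundedness needed to extract the relevant convergent subsequences.

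Granting this, existence and lower semi-continuity of $\dpt{}$ follow from Proposition~\ref{prop:existence_basis_general}: the constraint set is the collection of triples of distance-minimizing geodesics (for $x$ to $y$, $u$ to $v$, $u$ to $x$) together with the induced vectors $z_1,z_2$, which are bounded via $|z_1|=d(x,y)$ and $|z_2|=|w|=d(u,v)$, while closedness under perturbation of $(x,y,u,v)$ is precisely the stability statement above applied to each of the three geodesics, the last one carrying the transported vector. For $\dpts$ as in \eqref{eq:DefPtSym} one argues identically: the input is the seven points $(\ucc,\ycco,\ycct,\ycmo,\ymct,\ucm,\umc)$, the objective $G=\dpt{}([r^1,\ymct],[\ycmo,r^2])$ is lower semi-continuous by the first part, and the constraints $r^1\in\exp(\pt_{\umc}(w^1))$, $w^1\in\log_{\ucc}(\ycco)$, and $r^2\in\exp(\pt_{\ucm}(w^2))$, $w^2\in\log_{\tilde{u}_{\circ,\circ}}(\ycct)$, are closed under perturbation by the same parallel-transport stability composed with continuity of $\exp$; Proposition~\ref{prop:existence_basis_general} then gives attainment of the minimum and lower semi-continuity of $\dpts$.

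The main obstacle is the parallel-transport stability lemma, and within it the step upgrading the uniform convergence $\gamma^n\to\gamma$ from Lemma~\ref{lem:stability_geodesics} to $C^1$-convergence: this rests on boundedness of the initial velocities of distance-minimizing geodesics between points of a compact set together with uniqueness of the geodesic through a given initial condition, and then on continuous dependence of both the geodesic flow and the linear parallel-transport ODE on their data. A secondary, purely bookkeeping, difficulty is that $\log$ and $\pt$ are set-valued, so each passage to a limit must be phrased as ``there exist a subsequence and a limiting choice of distance-minimizing geodesic realizing the constraint'', which is exactly the hypothesis format of Proposition~\ref{prop:existence_basis_general}.
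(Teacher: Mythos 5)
Your argument is correct, and for the $\dpts$ half it coincides with the paper's: the paper also feeds the constraints $r^1\in\exp(\pt_{\umc}(w^1))$, $w^1\in\log_{\ucc}(\ycco)$ (and likewise for $r^2$) into Proposition~\ref{prop:existence_basis_general}, with closedness supplied by a parallel-transport stability statement (Lemma~\ref{lem:PtVersionLscBivAux}). Where you genuinely diverge is on $\dpt{}$ itself. The paper explicitly declines to route $\dpt{}$ through Proposition~\ref{prop:existence_basis_general}, precisely because that proposition is stated for constraint sets $C(x)\subset\M^K$, not for tangent-bundle-valued constraints; instead it gives a direct $\liminf$ argument (Lemma~\ref{lem:PtVersionLscUniv}) in which the tangent vectors $z^{n}, w^{n}$ at moving base points are pulled back to the fixed limit points by parallel transport along the (eventually unique) short geodesics, and the commutation error between ``transport along $\gamma^{n}$ then to $u$'' and ``to $x$ then transport along $\gamma$'' is shown to vanish by smooth dependence of parallel transport on the geodesic. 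You instead keep the tangent vectors in the ambient $\R^{K}$ and rerun the compactness scheme there, isolating as a standalone lemma the statement that $\pt^{\gamma^n}_{a^n,b^n}(w^n)\to\pt^{\gamma}_{a,b}(w)$ along a subsequence, proved by upgrading the uniform convergence of Lemma~\ref{lem:stability_geodesics} to $C^1$-convergence (bounded initial velocities plus continuous dependence of the geodesic flow, with the limit identified with $\gamma$ by uniqueness of limits) and then invoking stability of the linear parallel-transport ODE. Both routes rest on the same three ingredients --- stability of minimizing geodesics, continuity of parallel transport under perturbation of the geodesic and the vector, and isometry of transport for boundedness --- so the substance is equivalent. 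Your version buys a cleaner, reusable continuity lemma for parallel transport and a uniform treatment of both distance functions; the paper's direct proof buys freedom from having to restate the abstract proposition for $T\M$-valued constraints. The only point to flag is that you must indeed carry out the ambient-space compactness argument rather than cite Proposition~\ref{prop:existence_basis_general} verbatim for $\dpt{}$; you acknowledge this, so it is a presentational obligation rather than a gap.
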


\section{Algorithmic approach to TGV for manifold-valued data}\label{sec:algorithm}

In order to algorithmically approach denoising problems using TGV regularization in the manifold setting,
we employ the concept of cyclic proximal point algorithms (CPPAs). 
A reference for cyclic proximal point algorithms in vector spaces is \cite{Bertsekas2011in}.
In the context of Hadamard spaces, the concept of CPPAs was developed by \cite{bavcak2013computing}, where it is used to compute means and medians.
In the context of variational regularization methods for nonlinear, manifold-valued data,
they were first used in \cite{weinmann2014total}. 
More precisely, the reference \cite{weinmann2014total} deals with TV regularization 
as well as classical smooth methods for manifold-valued data.
The CPPA approach was later used for higher-order TV-type methods   
in \cite{bergmann2014second} for circle-valued data and in \cite{bavcak2016second} for manifold-valued data. 

\paragraph{Principle of a CPPA.}
The idea of a CPPAs is to compose a functional $f: \mathcal M \to \mathbb R$ into 
basic atoms $f_i$ and then to compute the proximal mappings of the $f_i$ in a cyclic, iterative way.
More precisely, assume that 
\begin{equation}\label{eq:Decomp4CPPA}
f = \sum\nolimits_{i=1}^n f_i
\end{equation}
and consider the proximal mappings \cite{moreau1962fonctions, ferreira2002proximal, azagra2005proximal} $\prox_{\lambda f_i}: \mathcal{M} \rightarrow \mathcal{M}$ given as
\begin{align} \label{eq:prox_mapping_abstract}
	\prox_{\lambda f_i} x = \argmin_y f_i(y) + \tfrac{1}{2 \lambda} d(x,y)^2.  
\end{align}
One cycle of a CPPA then consists of applying each proximal mapping $\prox_{\lambda f_i}$ once in a prescribed order, e.g., $\prox_{\lambda f_1},$ $\prox_{\lambda f_2},$ $\prox_{\lambda f_3}, \ldots,$ or, generally,
$\prox_{\lambda f_{\sigma(1)}},$ $\prox_{\lambda f_{\sigma(2)}},$ $\prox_{\lambda f_{\sigma(3)}},$ $\ldots,$ 
where the symbol $\sigma$ is employed to denote a permutation.
The cyclic nature is reflected in the fact that the output of  $\prox_{\lambda f_{\sigma(i)}}$ is used as input for $\prox_{\lambda f_{\sigma(i+1)}}.$ 
Since the $i$th update is immediately used for the $(i+1)$st step, it can be seen as a Gauss-Seidel-type scheme.
A CPPA now consists of iterating these cycles, i.e., in the $k$th cycle, we have
\begin{equation}\label{eq:iterationCPPA}
x_{i+1}^{(k)} = \prox_{\lambda_k f_{\sigma(i)}}x_i^{(k)}, 
\end{equation}
and $x_{1}^{(k+1)}$ is obtained by applying $\prox_{\lambda_k f_{\sigma(n)}}$ to $x_{n}^{(k)}.$ Here, $n$ denotes the number of elements in the cycle. 
During the iteration, the parameter $\lambda_k$ of the proximal mappings is successively decreased. 
In this way, the penalty for deviation from the previous iterate is successively increased.
It is chosen in a way such that the sequence $(\lambda^k)_k$ is square-summable but not summable.  
Provided that this condition on the stepsize parameters holds, the cyclic proximal point algorithm can be shown to converge to the optimal solution of the underlying minimization problem, at least in the context of Hadamard manifolds and convex $(f_i)_i$ \cite[Theorem 3.1]{bacak2014convex}.

For general manifolds, the proximal mappings \eqref{eq:prox_mapping_abstract} are not globally defined, and the minimizers are not unique, at least for general possibly far apart points; cf. \cite{ferreira2002proximal, azagra2005proximal}.
This is a general issue in the context of manifolds that are -- in a certain sense -- a local concept
involving objects that are often only locally well defined. In case of ambiguities, we hence consider the above objects as set-valued quantities. Furthermore, we cannot guarantee -- and in fact do not expect --- the second distance-type functions $D$ in the definition of the $\MTGV$ functional to be convex. Hence convergence of the CPPA algorithm to a globally optimal solution cannot be ensured. 
It thus should be seen as an approximative strategy.
Nevertheless, as will be seen in the numerical experiments section, we experience a good convergence behavior of the CPPA algorithm in practice. This was also observed in previous works, where the CPPA algorithm was employed to minimize second-order TV-type functionals \cite{bergmann2014second,bavcak2016second}, which are also non-convex.
In order to be precise, we further point out that we approximately compute the proximal mappings of the distance-type functions $D$; we hence use an inexact proximal point algorithm.

We also point out that a parallel proximal point algorithm has been proposed in 
\cite{weinmann2014total}. Here the proximal mappings of the $f_i$ are computed in parallel and then averaged 
using intrinsic means. In order to reduce the costs for averaging, an 
approximative variant of the parallel proximal point algorithm has been proposed in 
\cite{weinmann2014total} as well.
Principally, the cyclic proximal point algorithm actually applied in this paper might be replaced by this parallel strategy; the derivations in the following provide all information necessary.

\subsection{Algorithms for manifold-valued TGV}\label{sec:AlgBasicUni}

We employ a cyclic proximal point algorithm for the minimization of a TGV-regularized variational approach in the manifold context. For simplicity, we consider the univariate setting first; all aspects discussed in the univariate situation are prototypic and basic for the bivariate situation. Starting with the general version of TGV for manifolds as in \eqref{eq:tgv_manifold_general}, we aim to solve the denoising problem

\begin{equation} \label{eq:tgv_manifoldParallelUnivAlgo} 
 \min_{(u_i)_i,(y_i)_i } \quad \tfrac{1}{2} \sum _{i}  d(u_{i},h_i)^2 + \sum _{i} \alpha _1 d(u_{i+1},y_i)  
 + \sum _{i} \alpha _0 D( [u_i,y_i],[u_{i-1},y_{i-1}]),
\end{equation}
where $(h_i)_i$, being a finite sequence in $\M$, denotes the given data.

We decompose \eqref{eq:tgv_manifoldParallelUnivAlgo} into the atoms 
\begin{align} \label{eq:tgv_manifoldParallelUnivAlgoDecomposition}
\begin{split}
	&g_i(u) := \tfrac{1}{2} d(u_{i},h_i)^2; \qquad g'_i(u,y) := \alpha _1 d(u_{i+1},y_i); 
	\\
	&g''_i(u,y) := \alpha _0 D( [u_i,y_i],[u_{i-1},y_{i-1}]).
\end{split}	
\end{align}
Now we use this decomposition into the atoms $g_i,g'_i, g''_i$ for the decomposition \eqref{eq:Decomp4CPPA} for the CPPA. We apply the iteration \eqref{eq:iterationCPPA} to this decomposition.
We remark that the data terms  $g_i$ are not coupled w.r.t.\ the index $i$. The same applies to the $g'_i$. This allows the parallel computation of the proximal mappings of the $g_i$ for all $i$ and, separately, of the $g'_i$ for all $i$.
The $g''_i$ are actually coupled. However, grouping even and odd indices, we see that the $g''_i,$ $i$ even, are not coupled and that the $g''_i,$ $i$ odd, are not coupled. So we may compute their proximal mappings in parallel. Together, this results in a cycle of length four per iteration, one step for the  $g_i,$ one step for the $g'_i$ and two steps (even, odd) for the $g''_i.$

In the following, the task is to compute the proximal mappings of the atoms $g_i,g'_i, g''_i$ of \eqref{eq:tgv_manifoldParallelUnivAlgoDecomposition}. To this end, we will from now on always assume that the involved points are locally sufficiently close such that there exist unique length minimizing geodesics connecting them. 
We note that this restriction is actually not severe: in the general (non-local) situation we have to remove the cut points -- which are a set of measure zero -- to end up with the corresponding setup. 	

We remark that,
as we will see, an explicit computation of the proximal mapping is possible for $g_i$ and $g_i'$, but not for $g_i''$. We believe that it is an important feature of the proposed definition of manifold-TGV via point-tuples that the proximal mappings of the first term $g'_i$ is still explicit and hence only one part of the overall problem does not allow an explicit proximal mapping. Indeed, also existing generalizations of second-order TV to the manifold setting incorporate one part with non-explicit proximal mappings \cite{bavcak2016second}. Hence, from the algorithmic viewpoint, the step from second-order TV to our proposed version of TGV does not introduce essential additional computational effort.

For the data atoms $g_i,$ the proximal mappings $\prox_{\lambda g_i}$ are given by 
\begin{align}\label{eq:tgv_manifoldParallelUnivAlgoProxData}
(\prox_{\lambda g_i})_{j}(u) =  
\begin{cases}
 [u_{i},h_{i}]_t, & \text{ if } i=j,  \\
 u_{j}, & \text{ else, }  \\
 \end{cases}
 \qquad \text{ where }  t= \tfrac{\lambda}{1+\lambda}.
\end{align}
They have been derived in \cite{ferreira2002proximal}.
We recall that the symbol $[\cdot,\cdot]_t$ denotes the point reached after time $t$
on the (non unit speed) geodesic starting at the first argument reaching the second argument at time $1$.
Further, the proximal mappings of the distance terms  
$g'_i$ have a closed form representation as well (see \cite{weinmann2014total}), which is given by 
\begin{align}\label{eq:tgv_manifoldParallelUnivAlgoProxDist}
\prox_{\lambda g'_i } (u,y)=  
\begin{cases}
 [u_{i+1},y_i]_t,    &\text{at position } (i+1,1),  \\
 [y_i,u_{i+1}]_t,    &\text{at position } (i,2),    \\
 u_j  ,              &\text{at position } (j,1), \ j \neq i+1,  \\   
 y_j                 &\text{at position } (j,2), \  j \neq i,  \\   
\end{cases}
\end{align}
where $t = \lambda \alpha_1 /d(u_{i+1},y_i),$ if $\lambda \alpha_1 < \tfrac{1}{2},$  
and $t = 1/2$ else. Note that for determining the position, we view $u,y$ as column vectors of a matrix with two columns.

The next task is to compute the proximal mappings of the $g''_i.$
Unfortunately, no closed form for the proximal mappings of the $g''_i$ is available.
Instead, we use a subgradient descent scheme to compute these proximal mappings, i.e., to solve the problem 
\begin{equation} \label{eq:ProxMapForSubgr}
 \min_{u,y} \quad \tfrac{1}{2} \sum\nolimits_{j}  d(u_{j},u'_j)^2 + \tfrac{1}{2} \sum\nolimits_{j}  d(y_{j},y'_j)^2 +
\lambda \alpha _0 D( [u_i,y_i],[u_{i-1},y_{i-1}])
\end{equation} 
where the $u'_j,y'_j$ are the input of the proximal mapping. 
A subgradient descent scheme has already been used to compute proximal mappings in \cite{bavcak2016second}.
Looking at \eqref{eq:ProxMapForSubgr}, the optimal $u,y$ fulfill $u_j = u'_j, y_j = y'_j$ whenever $j \notin \{ i,i-1\}$.
Hence we may restrict ourselves to consider the four arguments $u_i,y_i,u_{i-1},y_{i-1}.$
Further the gradient of the mapping $u_{j} \mapsto \tfrac{1}{2} \sum _{j}  d(u_{j},u'_j)^2$ is given as $-\log_{u_{j}}u'_{j}.$
(For background information on (Riemannian) differential geometry, we refer to the books \cite{spivak1975differential,do1992riemannian}.)
So we have to compute the (sub)gradient of the mapping $D( [u_i,y_i],[u_{i-1},y_{i-1}])$ as a function of $u_i,y_i,u_{i-1},y_{i-1}$. To this end, we have to specify the setting to the two versions of $D$ as proposed in Section~\ref{sec:defModel}; the respective derivations are topics of Section~\ref{subsec:AlgSchildUniv}  and Section~\ref{subsec:AlgParUniv}.

We next consider the bivariate denoising problem.
We use $\MTGV$ regularization with the $\MTGV$ functional given in \eqref{eq:tgv_manifold_general_multi} where,
as pointed out, we focus on $p=1$. 
We obtain the 
denoising problem
\begin{align} \label{eq:tgv_manifoldSchildMultAlgo} 
\min_{(u_{i,j})_{i,j},(y_{i,j}^1)_{i,j},(y_{i,j}^2)_{i,j}} \quad & 
\tfrac{1}{2} \sum\nolimits_{i,j}  d(u_{ij},h_{ij})^2 +  \alpha_1 \sum\nolimits_{i,j}  d(u_{i+1,j},y^ 1_{i,j}) + d(u_{i,j+1},y^ 2_{i,j})\\
& +\alpha _0 \sum\nolimits_{i,j}  D\big([u_{i,j},y^1_{i,j}],[u_{i-1,j},y^1_{i-1,j}]\big) + D\big([u_{i,j},y^2_{i,j}],[u_{i,j-1},y^2_{i,j-1}]\big)  \notag\\
&  +\alpha _0 \sum\nolimits_{i,j} D^{\text{sym}} ([u_{i,j},y^1_{i,j}],[u_{i,j},y^2_{i,j}],[u_{i,j-1},y^1_{i,j-1}],[u_{i-1,j},y^2_{i-1,j}])\notag
\end{align}
where the $(h_{ij})_{ij}$ denotes the bivariate data.

We decompose the objective functional in \eqref{eq:tgv_manifoldSchildMultAlgo} into the atoms 
\begin{align} \label{eq:tgv_manifoldSchildMultAlgoDecomposition}
& g^{(1)}_{ij}(u,y^1,y^2) := \tfrac{1}{2} d(u_{i},h_i)^2; \notag \\
& g^{(2)}_{ij}(u,y^1,y^2) := \alpha _1 d(u_{i+1,j},y^ 1_{i,j});
\qquad g^{(3)}_i(u,y^1,y^2) := \alpha _1 d(u_{i,j+1},y^ 1_{i,j}); \notag\\
& g^{(4)}_{ij}(u,y^1,y^2) :=  \alpha _0 D\big([u_{i,j},y^1_{i,j}],[u_{i-1,j},y^1_{i-1,j}]\big) ;   \\
& g^{(5)}_{ij}(u,y^1,y^2) := \alpha _0  D\big([u_{i,j},y^2_{i,j}],[u_{i,j-1},y^2_{i,j-1}]\big) ;      \notag \\
&g^{(6)}_{ij}(u,y^1,y^2) := \alpha _0  D^{\text{sym}} ([u_{i,j},y^1_{i,j}],[u_{i,j},y^2_{i,j}],[u_{i,j-1},y^1_{i,j-1}],[u_{i-1,j},y^2_{i-1,j}]).\notag
\end{align}
We use the decomposition w.r.t.\ the atoms $g^{(k)}_{ij}$ as the decomposition \eqref{eq:Decomp4CPPA} for the CPPA,
and apply the iteration \eqref{eq:iterationCPPA} w.r.t. the derived decomposition.
The proximal mappings $\prox_{\lambda g^{(1)}_{ij}}$ are given by \eqref{eq:tgv_manifoldParallelUnivAlgoProxData},
and the proximal mappings $\prox_{\lambda g^{(2)}_{ij}},\prox_{\lambda g^{(3)}_{ij}}$ are given by \eqref{eq:tgv_manifoldParallelUnivAlgoProxDist}.
The proximal mappings $\prox_{\lambda g^{(4)}_{ij}},\prox_{\lambda g^{(5)}_{ij}}$ 
are computed as in the univariate case explained above.
It remains to compute the proximal mappings of the atoms $g^{(6)}_{ij}.$
As before, there is no explicit formula available and we use a subgradient descent for their computation as well.
The computation of the corresponding derivatives 
are topics of Section~\ref{subsec:AlgSchildUniv}  and Section~\ref{subsec:AlgParUniv}.

\subsection{Riemannian gradients for the Schild version}
\label{subsec:AlgSchildUniv}

We here derive the derivatives needed to compute the proximal mappings of the mappings $D$ and $D^{\text{sym}}$ 
in Section~\ref{sec:AlgBasicUni}
when specifying to the Schild's variant.
We start out to compute the derivative of the mapping $\ds$ given by
\begin{equation} \label{eq:DSAgain}
\ds(u_{i-1},y_{i-1},u_{i},y_{i}) = \ds([u_i,y_i],[u_{i-1},y_{i-1}]) = d([u_{i-1},[u_{i},y_{i-1}]_{\tfrac{1}{2}} ]_2,y_{i}).
\end{equation}
Recall that, for our computations, we assume that geodesics are unique, which is, as pointed out above, the case up to a set of measure zero. 

We directly see that $\ds$ is symmetric w.r.t. interchanging $y_{i-1},u_{i}$.
Hence we only have to compute the differential with respect to one of these variables $y_{i-1},u_{i}$.

Furthermore, the gradient $\nabla_{y_{i}} \ds$ w.r.t.\ the fourth argument $y_{i}$ of 
$\ds$ for $y_{i}$ with $y_{i}\neq [u_{i-1},[u_{i},y_{i-1}]_{\tfrac{1}{2}} ]_2$  is just the well-known gradient of the Riemannian distance function which is known to be given by \cite{do1992riemannian,afsari2011riemannian}
\begin{equation}
\nabla_{y_{i}} \ds = 
- \log_{y_{i}}S(u_{i-1},y_{i-1},u_{i})  /
{\big |\log_{y_{i}}S(u_{i-1},y_{i-1},u_{i})   \big |}
\end{equation}
where $|\cdot|$ is the norm in the tangent space associated with the point $y_{i}$, i.e., the square root of the Riemannian scalar product.
Here,
\begin{equation}\label{eq:SasFunctionOfUi}
 S(u_{i-1},y_{i-1},u_{i}) = [u_{i-1},[u_{i},y_{i-1}]_{\tfrac{1}{2}} ]_2
\end{equation}
denotes the result of applying the Schild's construction to the respective arguments.

In order to determine the gradients w.r.t.\ the other variables we have to apply the adjoint of the 
respective differentials to the gradient of the distance function w.r.t. the first argument. More precisely, we have to calculate 
\begin{align}\label{eq:InitDiscussion1}
\nabla_{u_{i-1}} \ds &=  
-T_1
\left(
{\log_{S(u_{i-1},y_{i-1},u_{i})} y_{i}}    
/
{\big | \log_{S(u_{i-1},y_{i-1},u_{i})} y_{i} \big |}
\right), \\
\nabla_{y_{i-1}} \ds &=  
-T_2
\left(
{\log_{S(u_{i-1},y_{i-1},u_{i})} y_{i}}    
/
{\big | \log_{S(u_{i-1},y_{i-1},u_{i})} y_{i} \big |}
\right),\label{eq:InitDiscussion2}
\end{align}
where $T_1$ is the adjoint of the differential of the mapping 
$u_{i-1} \mapsto 
[u_{i-1},[u_{i},y_{i-1}]_{\tfrac{1}{2}} ]_2$,
and where 
$T_2$ is the adjoint of the differential of the mapping 
$y_{i-1} \mapsto 
[u_{i-1},[u_{i},y_{i-1}]_{\tfrac{1}{2}} ]_2$.
The differential w.r.t. $u_{i}$ is obtained by symmetry as pointed out above.
In the following we derive these adjoint mappings in terms of  Jacobi fields.

To this aim, we first explain the notion of a Jacobi field and then point out the connection with geodesic variations. 
In a Riemannian manifold $\mathcal M,$ the Riemannian curvature tensor $R$ is given by $R(X,Y)Z = \nabla_X \nabla_Y Z - \nabla_Y \nabla_X Z - \nabla_{[X,Y]} Z$ 
where $X,Y,Z$ are vector fields and $\nabla$ denotes the Levi-Civita connection,
and where $[X,Y]=XY-YX$ denotes the Lie bracket of the vector fields $X,Y.$
A Jacobi field $Y$ along a geodesic $\gamma$ is the solution of the differential equation 
$\tfrac{D}{ds}\tfrac{D}{ds}Y + R(\tfrac{d}{ds}\gamma,Y)\tfrac{d}{ds}\gamma =0.$
The space of Jacobi fields is a $2N$-dimensional linear space where $N$ denotes the dimension of $\M.$
The connection to geodesic variations is as follows: the ``derivative'' vector field  
$Y(s) = \tfrac{d}{dt}|_{t=0}V(s,t)$ of a geodesic variation $V$ is a Jacobi field, and conversely, any Jacobi field is obtained from a geodesic variation. Further details may be found in the books \cite{cheeger1975comparison,spivak1975differential,do1992riemannian}. 

We also need the notion of  (Riemannian) symmetric spaces for the formulation of the next lemma.
A symmetric space is a Riemannian manifold for which its curvature tensor $R$ is invariant under parallel transport,
 $R(\pt_{x,y} X, \pt_{x,y} Y) \pt_{x,y} Z = \pt_{x,y} R(X,Y)Z$
 where $X,Y,Z$ are tangent vectors at $x$ and $\pt_{x,y}$ denotes the parallel transport from $x$ to $y$ along a curve $\gamma$
(here not reflected in the notation).
Then, in particular, the covariant derivative of the curvature tensor $R$ along any curve equals zero in a symmetric space.
A reference for symmetric spaces is \cite{cheeger1975comparison}.

\begin{lem} \label{lem:GradDSwrtui}
	The mapping $T_1$ of Equation \eqref{eq:InitDiscussion1} 
	can be computed using Jacobi fields. 
	In particular, we explicitly have for the class of symmetric spaces and points with $\ds \neq 0$ that 
	\begin{equation}\label{eq:NablaUISym}
	\nabla_{u_{i-1}} \ds =  \pt_{S(u_{i-1},y_{i-1},u_{i}),u_{i-1}}
	\left(
	{\log_{S(u_{i-1},y_{i-1},u_{i})} y_{i}}    
	/{\big | \log_{S(u_{i-1},y_{i-1},u_{i})} y_{i} \big |}
	\right),
	\end{equation}
	which means that the gradient of the distance function w.r.t.\ the second argument is 
	reflected at $[u_{i},y_{i-1}]_{\tfrac{1}{2}},$ or in other words, 
	parallel transported from the Schild point	$[u_{i-1},[u_{i},y_{i-1}]_{\tfrac{1}{2}} ]_2$ to $u_{i-1}$ and multiplied by $-1.$	
\end{lem}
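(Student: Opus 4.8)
The plan is to reduce the computation to the differential of a single map and then to treat the general and the symmetric case separately. Fix $c := [u_i,y_{i-1}]_{\frac12}$, so that by \eqref{eq:SasFunctionOfUi} we have $S(u_{i-1},y_{i-1},u_i) = F(u_{i-1})$ with $F\colon p \mapsto [p,c]_2$, and observe that $F$ is exactly the geodesic point reflection $s_c$ at $c$: if $\eta$ denotes the (unique, under the standing local-closeness assumption) minimizing geodesic with $\eta(0)=p$, $\eta(1)=c$, then $[p,c]_2 = \eta(2) = \exp_c(-\log_c p) = s_c(p)$. Write $\sigma$ for this geodesic extended, so that $\sigma(0)=u_{i-1}$, $\sigma(1)=c$, $\sigma(2)=S$, and call it the Schild geodesic. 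By \eqref{eq:InitDiscussion1}, $\nabla_{u_{i-1}}\ds = -T_1\bigl(\log_S y_i/|\log_S y_i|\bigr)$ with $T_1 = (\mathrm{D}F_{u_{i-1}})^\ast\colon T_S\M \to T_{u_{i-1}}\M$, so it remains to identify $\mathrm{D}F_{u_{i-1}}$ and its adjoint.

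For a general manifold I would compute $\mathrm{D}F_{u_{i-1}}$ via a geodesic variation: given $w\in T_{u_{i-1}}\M$, pick $t\mapsto p_t$ with $p_0=u_{i-1}$, $\dot p_0=w$, let $\sigma_t$ be the minimizing geodesic from $p_t$ to $c$, and set $V(s,t)=\sigma_t(s)$. This is a geodesic variation of $\sigma$, hence $J(s):=\partial_t V(s,t)|_{t=0}$ is the Jacobi field along $\sigma$ with $J(0)=w$, $J(1)=0$, and $\mathrm{D}F_{u_{i-1}}(w) = \partial_t\sigma_t(2)|_{t=0} = J(2)$. Splitting the Jacobi field at $s=1$ shows $\mathrm{D}F_{u_{i-1}} = \Phi_2\circ\Phi_0^{-1}$, where $\Phi_r\colon T_c\M\to T_{\sigma(r)}\M$ sends $v$ to $K_v(r)$ for $K_v$ the Jacobi field with $K_v(1)=0$, $\tfrac{D}{ds}K_v(1)=v$, the maps $\Phi_0,\Phi_2$ being invertible since the local-closeness assumption excludes conjugate points on the relevant segments. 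Using that the bilinear form $\langle\tfrac{D}{ds}J^{(1)},J^{(2)}\rangle - \langle J^{(1)},\tfrac{D}{ds}J^{(2)}\rangle$ is constant along $\sigma$ for any two Jacobi fields, one obtains $\Phi_r^\ast(\eta) = L_\eta(1)$, where $L_\eta$ is the Jacobi field with $L_\eta(r)=0$, $\tfrac{D}{ds}L_\eta(r)=-\eta$; hence $T_1 = (\Phi_0^\ast)^{-1}\Phi_2^\ast$ is expressed entirely through Jacobi fields along $\sigma$, which is the first assertion of the lemma.

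For a symmetric space I would instead use that $F = s_c$ is a global isometry, so $\mathrm{D}F_{u_{i-1}}$ is a linear isometry and $T_1 = (\mathrm{D}F_{u_{i-1}})^{-1}$. To identify it, recall that an isometry carries parallel fields to parallel fields; applying this to a parallel field $P$ along $\sigma$ and using $\mathrm{D}(s_c)_c = -\mathrm{id}$ at the fixed point $c=\sigma(1)$, the field $t\mapsto \mathrm{D}(s_c)_{\sigma(t)}P(t)$ is parallel along $t\mapsto s_c(\sigma(t)) = \sigma(2-t)$ and equals $-P(1)$ at $c$, which forces $\mathrm{D}(s_c)_{\sigma(t)}P(t) = -P(2-t)$; evaluating at $t=0$ gives $\mathrm{D}F_{u_{i-1}}(w) = -\pt^\sigma_{u_{i-1},S}(w)$. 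Taking adjoints, and using that the adjoint of an isometry is its inverse and that the inverse of parallel transport along $\sigma$ is parallel transport along $\sigma$ traversed backwards, yields $T_1 = -\pt^\sigma_{S,u_{i-1}}$. Plugging this into $\nabla_{u_{i-1}}\ds = -T_1\bigl(\log_S y_i/|\log_S y_i|\bigr)$, the two minus signs cancel and \eqref{eq:NablaUISym} follows, assuming $S\neq y_i$ (equivalently $\ds\neq 0$) so that the distance-function gradient $-\log_S y_i/|\log_S y_i|$ is defined; the geometric reading -- reflecting the gradient of $d(\cdot,y_i)$ at $S$ across the midpoint $[u_i,y_{i-1}]_{\frac12}$ -- is then just the identity $\mathrm{D}(s_c)_S = -\pt^\sigma_{S,u_{i-1}}$.

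The step I expect to be the main obstacle is the general-case adjoint: setting up the Jacobi fields with two-point (Dirichlet-type) boundary conditions, justifying invertibility of $\Phi_0$ and $\Phi_2$ through the absence of conjugate points, and extracting $\Phi_r^\ast$ from the constancy of the Wronskian with the correct signs. In the symmetric case the only delicate point is to ensure that the parallel transport in \eqref{eq:NablaUISym} is taken along the Schild geodesic $\sigma$ through $c$ -- rather than along some other, possibly also minimizing, geodesic joining $u_{i-1}$ and $S$ -- and to keep track of the signs coming from $\mathrm{D}(s_c)_c = -\mathrm{id}$ and from the gradient of the Riemannian distance.
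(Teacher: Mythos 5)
Your proof is correct, and while the skeleton is the same as the paper's -- fix the midpoint $c=[u_i,y_{i-1}]_{1/2}$, recognize $S(\cdot,y_{i-1},u_i)$ as evaluation at time $2$ of the geodesic from $u_{i-1}$ to $c$, and identify its differential with the map $J(0)\mapsto J(2)$ over Jacobi fields vanishing at $s=1$ -- you diverge from the paper in both halves, in ways worth noting. For the general-manifold claim the paper stops at the identification ``the adjoint of $J(0)\mapsto J(2)$ equals $T_1$,'' whereas you actually compute that adjoint by factoring through $T_c\M$ and invoking constancy of the Wronskian $\langle\tfrac{D}{ds}J^{(1)},J^{(2)}\rangle-\langle J^{(1)},\tfrac{D}{ds}J^{(2)}\rangle$; this is a genuine (and correct) addition, at the cost of having to justify invertibility of your propagators via absence of conjugate points. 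For the symmetric-space formula the paper expands $J(0)\mapsto J(2)$ in an orthonormal eigenbasis of the Jacobi operator, solves the scalar ODEs $x''=-\lambda_n d^2 x$ with $x(1)=0$, and observes that the coefficients at $s=0$ and $s=2$ agree, so the map is $-1$ times parallel transport; you instead use that the point reflection $s_c$ is a global isometry with $\mathrm{D}(s_c)_c=-\mathrm{id}$ and that isometries carry parallel fields along $\sigma$ to parallel fields along $\sigma(2-\cdot)$, which gives $\mathrm{D}F_{u_{i-1}}=-\pt^\sigma_{u_{i-1},S}$ without any ODE. Your route is more conceptual and sign-transparent (the adjoint of a linear isometry is its inverse, and the two minus signs cancel against \eqref{eq:InitDiscussion1} exactly as you say); the paper's computation, on the other hand, produces the explicit eigenvalue coefficients $f(\lambda_n)$ that it reuses verbatim for $T_3$ and $T_4$ in the subsequent lemma, which your argument does not supply. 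Your closing caveat -- that the transport in \eqref{eq:NablaUISym} must be taken along the Schild geodesic through $c$ -- is well placed and is implicitly covered by the paper's standing uniqueness assumption.
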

\proof The proof is given in  Section \ref{sec:proofs_section4}. \endproof
Regarding the fact that we consider points with $\ds \neq 0,$ 
	we remark that the points $u_i$, $u_{i-1}$, $y_i$, $y_{i-1}$ such that $\ds(u_i,u_{i-1},y_i,y_{i-1})=0$ 
	which corresponds to $y_i = [u_{i-1},[u_i,y_{i-1}]_{\frac{1}{2}}]_2$
	form a set of measure zero. On this zero set,
	for instance, the four-tuple consisting of the four zero-tangent vectors sitting in  $u_i,u_{i-1},[u_{i-1},[u_i,y_{i-1}]_{\frac{1}{2}}]_2,y_{i-1}$
	belong to the subgradient of $\ds.$
\begin{lem} \label{lem:GradDSwrtyi}
	The gradient of the function  $\ds$ for points with $\ds \neq 0$ given by \eqref{eq:DSAgain} w.r.t.\ the variable $y_{i-1}$
	is given by 	
	\begin{equation}\label{eq:diffDswrtyi}
	\nabla_{y_{i-1}} \ds =  
	-T_4 
	\left( T_3
	\left(
	{\log_{S(u_{i-1},y_{i-1},u_{i})} y_{i}}    
	/{\big | \log_{S(u_{i-1},y_{i-1},u_{i})} y_{i} \big |}
	\right)\right), \\			
	\end{equation}
	where $T_3$ is the adjoint of the derivative of the mapping 
	$
	m \mapsto  [u_{i-1},m]_2, 
	$
	and	where $T_4$ is the adjoint of the derivative of the mapping 
	$
	y_{i-1} \mapsto  [u_{i},y_{i-1}]_{\tfrac{1}{2}},
	$	
	that is, $T_2$ as in \eqref{eq:InitDiscussion2} is given as $T_2 = T_4 \circ T_3$. Further $T_3$ and $T_4$ can be computed using Jacobi fields.
\end{lem}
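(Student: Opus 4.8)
The plan is to recognise $\ds$, viewed as a function of $y_{i-1}$ alone (with $u_{i-1},u_i,y_i$ held fixed), as a composition of three maps and then to differentiate by the chain rule while tracking adjoints. Set $\Phi_4(y_{i-1}):=[u_i,y_{i-1}]_{\frac12}$, $\Phi_3(m):=[u_{i-1},m]_2$ and $D_1(p):=d(p,y_i)$. By \eqref{eq:SasFunctionOfUi} we have $S(u_{i-1},y_{i-1},u_i)=\Phi_3\big(\Phi_4(y_{i-1})\big)$, and hence, using \eqref{eq:DSAgain}, $\ds(u_{i-1},y_{i-1},u_i,y_i)=D_1\big(\Phi_3(\Phi_4(y_{i-1}))\big)$. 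Under the standing assumption that the points involved are close enough to be joined by unique length-minimising geodesics, $\Phi_3$ and $\Phi_4$ are smooth near the relevant arguments; and since we restrict to points with $\ds\neq 0$, i.e. $S(u_{i-1},y_{i-1},u_i)\neq y_i$, the distance function $D_1$ is smooth at $S(u_{i-1},y_{i-1},u_i)$ with Riemannian gradient $-\log_{S(u_{i-1},y_{i-1},u_i)}(y_i)/\big|\log_{S(u_{i-1},y_{i-1},u_i)}(y_i)\big|$, the classical formula for $\nabla d(\cdot,y_i)$ (cf.\ \cite{do1992riemannian,afsari2011riemannian}).

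Next I would apply the chain rule for Riemannian gradients: for $F=D_1\circ\Phi_3\circ\Phi_4$ one has $\nabla F(y_{i-1})=(\mathrm{D}\Phi_4)^{*}\Big((\mathrm{D}\Phi_3)^{*}\big(\nabla D_1\big)\Big)$, where $(\mathrm{D}\Phi_j)^{*}$ denotes the adjoint of the differential with respect to the Riemannian inner products on the tangent spaces in question. Putting $T_3:=(\mathrm{D}\Phi_3)^{*}$ and $T_4:=(\mathrm{D}\Phi_4)^{*}$ — so that indeed $T_2=(\mathrm{D}(\Phi_3\circ\Phi_4))^{*}=T_4\circ T_3$ in the notation of \eqref{eq:InitDiscussion2} — and substituting the expression for $\nabla D_1$ gives exactly \eqref{eq:diffDswrtyi}, the overall minus sign being the minus in $\nabla d(\cdot,y_i)$.

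It remains to argue that $T_3$ and $T_4$ are computable via Jacobi fields. Both $\Phi_3$ and $\Phi_4$ are of the same type: one endpoint $a$ of a geodesic is fixed and the other, $b$, varies, and the map returns $\gamma_{a,b}(\tau)$, the point at parameter $\tau$ on the geodesic $\gamma_{a,b}$ with $\gamma_{a,b}(0)=a$, $\gamma_{a,b}(1)=b$ (namely $a=u_{i-1}$, $\tau=2$ for $\Phi_3$; $a=u_i$, $\tau=\tfrac12$ for $\Phi_4$). Perturbing $b$ in a direction $v$ produces a geodesic variation of $\gamma_{a,b}$, whose variation field is, by the correspondence between geodesic variations and Jacobi fields recalled in Section~\ref{sec:notation}, the Jacobi field $J_v$ along $\gamma_{a,b}$ with $J_v(0)=0$, $J_v(1)=v$; hence the differential is $v\mapsto J_v(\tau)$. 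Writing $\mathcal{A}(t)$ for the linear operator sending $\eta\in T_a\M$ to the value at $t$ of the Jacobi field along $\gamma_{a,b}$ vanishing at $0$ with covariant derivative $\eta$ there, we have $J_v(\tau)=\mathcal{A}(\tau)\mathcal{A}(1)^{-1}v$ — the inverse existing precisely because uniqueness of the connecting geodesics excludes interior conjugate points — so the differential is $\mathcal{A}(\tau)\mathcal{A}(1)^{-1}$ and its adjoint is $(\mathcal{A}(1)^{*})^{-1}\mathcal{A}(\tau)^{*}$. Finally, since the Jacobi operator $X\mapsto \tfrac{D}{dt}\tfrac{D}{dt}X+R(\tfrac{d}{dt}\gamma,X)\tfrac{d}{dt}\gamma$ is formally self-adjoint, each $\mathcal{A}(t)^{*}$ is again expressible through Jacobi fields (adjoint Jacobi fields along $\gamma$); for symmetric spaces, where $R(\tfrac{d}{dt}\gamma,\cdot)\tfrac{d}{dt}\gamma$ is parallel and diagonalisable along $\gamma$, this yields closed forms in terms of $\sin$/$\sinh$ of the eigenvalues of the curvature operator, exactly as in Lemma~\ref{lem:GradDSwrtui}.

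The routine parts are the composition bookkeeping $T_2=T_4\circ T_3$, the chain rule, and the handling of the excluded set $\{\ds=0\}$ (where, as noted after Lemma~\ref{lem:GradDSwrtui}, the tuple of zero tangent vectors at $u_{i-1},u_i,S,y_{i-1}$ lies in the subgradient). I expect the real work to be in the last paragraph: verifying smooth dependence of $\gamma_{a,b}$ on its endpoints and invertibility of $\mathcal{A}(1)$ — and of $\mathcal{A}(\tau)$ for the extrapolation parameter $\tau=2$ occurring in $\Phi_3$ — from the standing uniqueness-of-geodesics hypothesis, and identifying the adjoint solution operator $\mathcal{A}(t)^{*}$ with an explicitly solvable Jacobi boundary value problem, essentially reusing for the inner map $m\mapsto[u_{i-1},m]_2$ the same Jacobi-field computation already carried out for $T_1$ in the proof of Lemma~\ref{lem:GradDSwrtui}.
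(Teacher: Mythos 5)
Your proposal is correct and follows essentially the same route as the paper: factor $\ds(\cdot)$ as the distance-to-$y_i$ composed with $m\mapsto[u_{i-1},m]_2$ and $y_{i-1}\mapsto[u_i,y_{i-1}]_{\frac12}$, apply the chain rule with adjoints to get $T_2=T_4\circ T_3$, and identify each differential with the Jacobi-field maps $J(1)\mapsto J(2)$ (with $J(0)=0$) and $J(0)\mapsto J(\tfrac12)$ (with the field vanishing at the fixed endpoint), your $\mathcal{A}(\tau)\mathcal{A}(1)^{-1}$ formulation being just a more explicit rendering of the same solution operators. The only small slip is the closing remark that $T_3$ reuses the $T_1$ computation: $T_1$ varies the \emph{initial} point with $m$ fixed (boundary condition $J(1)=0$, map $J(0)\mapsto J(2)$), whereas $T_3$ varies $m$ with $u_{i-1}$ fixed ($J(0)=0$, map $J(1)\mapsto J(2)$), so the boundary conditions differ even though the technique is identical.
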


\proof The proof is given in  Section \ref{sec:proofs_section4}. \endproof

For the class of symmetric spaces, we make the mappings $T_3,T_4$ more explicit. 

\begin{lem} \label{lem:T3T4explicit}
Let $\M$ be a symmetric space.	
Let $\gamma:[0,1]\rightarrow \M$ denote the geodesic connecting $\gamma(0)=u_{i-1}$ and $\gamma(1)=m=[u_{i},y_{i-1}]_{\tfrac{1}{2}}.$ 	
We consider an orthonormal basis $v_n$ of eigenvectors of the self-adjoint Jacobi operator 
$J \mapsto R(\tfrac{\gamma'(1)}{ |\gamma'(1) |} ,J) \tfrac{\gamma'(1)}{ |\gamma'(1) |}$  with corresponding eigenvalues $\lambda_n,$ 
and $v_1$ tangent to $\gamma.$ W.r.t.\ this basis, the mapping $T_3$ is given by
\begin{equation}\label{eq:JacobiSymZentrStr}
T_3: \ \sum\nolimits_n \alpha_n \pt_{[u_{i},y_{i-1}]_{\frac{1}{2}},S(u_{i-1},y_{i-1},u_{i})} v_n \mapsto 
\sum\nolimits_n \alpha_n f(\lambda_n) v_n,
\end{equation}
where, using the symbol $d$ for the distance between $u_{i-1}$ and $[u_{i},y_{i-1}]_{\frac{1}{2}},$
\begin{equation}\label{eq:f4T3}
f(\lambda_n) = 
\begin{cases}
2,   & \quad \text{if} \ \lambda_n = 0, \\
{\sin (2 \sqrt{\lambda_n} d)}/{\sin (\sqrt{\lambda_n} d)},    & \quad \text{if} \ \lambda_n > 0,   \quad  d < \pi/\sqrt{\lambda_n},\\
{\sinh (2 \sqrt{-\lambda_n} d)}/{\sinh (\sqrt{-\lambda_n} d)},  &  \quad \text{if} \ \lambda_n < 0.
\end{cases}
\end{equation}
Here, the $\alpha_n$ are the coefficients of the corresponding basis representation. 

Further, let $\xi:[0,1]\rightarrow \M$ be the geodesic connecting
$y_{i-1}=\xi(0)$ and $u_{i}=\xi(1)$ and 
$w_n$ be an orthonormal basis of eigenvectors of  
$J \mapsto R(\frac{\xi'(0)}{ |\xi'(0)|} ,J) \frac{\xi'(0)}{ |\xi'(0) |}$  with eigenvalues  $\mu_n,$ 
and $w_1$ tangent to $\xi.$
Then, w.r.t.\ this basis, the mapping $T_4$ is given by
\begin{equation}\label{eq:JacobiSymZentrStr2}
T_4: \ \sum\nolimits_n \beta_n \pt_{y_{i-1},[u_{i},y_{i-1}]_{\frac{1}{2}}}  w_n \mapsto 
\sum\nolimits_n \beta_n g(\lambda_n) w_n,
\end{equation}
where, using the symbol $d'$ for the distance between $y_{i-1}$ and $u_{i},$
\begin{equation}
g(\lambda_n) = 
\begin{cases}
1/2,   & \quad \text{if} \ \lambda_n = 0, \\
{\sin (\frac{1}{2} \sqrt{\lambda_n} d')}/{\sin (\sqrt{\lambda_n} d')},    & \quad \text{if} \ \lambda_n > 0,
\quad  d < \pi/\sqrt{\lambda_n},\\
{\sinh (\frac{1}{2} \sqrt{-\lambda_n} d')}/{\sinh (\sqrt{-\lambda_n} d')},  &  \quad \text{if} \ \lambda_n < 0.
\end{cases}
\end{equation}
Here, the $\beta_n$ are the coefficients of the corresponding basis representation. 
\end{lem}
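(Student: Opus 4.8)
The plan is to realise the differentials of the two geodesic maps underlying $T_3$ and $T_4$ as Jacobi fields with prescribed boundary values, to solve the Jacobi equation explicitly in the symmetric-space case, and then to pass to adjoints.

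\textbf{Step 1: differentials as Jacobi fields.} Under the standing assumption that the involved points are joined by unique minimising geodesics, the connecting geodesic depends smoothly on its endpoints. Fix the geodesic $\gamma\colon[0,2]\to\M$ with $\gamma(0)=u_{i-1}$ and $\gamma(1)=m:=[u_i,y_{i-1}]_{\tfrac{1}{2}}$, so that $[u_{i-1},m]_2=\gamma(2)=S(u_{i-1},y_{i-1},u_i)$. A smooth curve through $m$ induces a geodesic variation of $\gamma$ keeping $\gamma(0)$ fixed; its variation field $Y$ is the Jacobi field along $\gamma$ with $Y(0)=0$, and the derivative of $m\mapsto[u_{i-1},m]_2$ is then the linear isomorphism $Y(1)\mapsto Y(2)$ from $T_m\M$ to $T_S\M$. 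Likewise, for the geodesic $\xi\colon[0,1]\to\M$ with $\xi(0)=y_{i-1}$, $\xi(1)=u_i$ one has $[u_i,y_{i-1}]_{\tfrac{1}{2}}=\xi(\tfrac{1}{2})$, and varying $y_{i-1}$ while fixing $u_i$ produces a Jacobi field $Y$ along $\xi$ with $Y(1)=0$, so the differential of $y_{i-1}\mapsto[u_i,y_{i-1}]_{\tfrac{1}{2}}$ is $Y(0)\mapsto Y(\tfrac{1}{2})$.

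\textbf{Step 2: solving the Jacobi equation on a symmetric space.} Since $\M$ is symmetric, $R$ is parallel, so parallel transport along $\gamma$ carries the orthonormal eigenbasis $(v_n)$ of $J\mapsto R(\tfrac{\gamma'(1)}{|\gamma'(1)|},J)\tfrac{\gamma'(1)}{|\gamma'(1)|}$ at $m$ (with eigenvalues $\lambda_n$, and $v_1\parallel\gamma'$, hence $\lambda_1=0$) into an orthonormal eigenbasis with the same eigenvalues at every $\gamma(t)$. Writing $Y(t)=\sum_n c_n(t)\,\pt_{m,\gamma(t)}v_n$ and using that $|\gamma'|\equiv d:=d(u_{i-1},m)$, the Jacobi equation decouples into the scalar ODEs $c_n''+d^2\lambda_n c_n=0$. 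Solving with $c_n(0)=0$ and normalising by $c_n(1)$ gives $c_n(2)/c_n(1)=f(\lambda_n)$ with $f$ as in \eqref{eq:f4T3}: for $\lambda_n>0$ one has $c_n\propto\sin(\sqrt{\lambda_n}\,dt)$ and the hypothesis $d<\pi/\sqrt{\lambda_n}$ guarantees $c_n(1)\neq0$ (no conjugate point, hence the differential is invertible); for $\lambda_n=0$ one has $c_n\propto t$ and the ratio equals $2$; for $\lambda_n<0$ one has $c_n\propto\sinh(\sqrt{-\lambda_n}\,dt)$. The identical computation along $\xi$, now with the eigenbasis $(w_n)$ at $\xi(0)=y_{i-1}$, eigenvalues $\mu_n$, speed $d':=d(y_{i-1},u_i)$, and boundary condition $c_n(1)=0$, yields $c_n(\tfrac{1}{2})/c_n(0)=g(\mu_n)$ with $g$ as stated.

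\textbf{Step 3: passing to adjoints.} Combining Steps 1 and 2, the differential of $m\mapsto[u_{i-1},m]_2$ sends $\sum_n\alpha_n v_n\in T_m\M$ to $\sum_n\alpha_n f(\lambda_n)\,\pt_{m,S}v_n\in T_S\M$, where $S=S(u_{i-1},y_{i-1},u_i)$. Parallel transport is an isometry, so $(v_n)$ and $(\pt_{m,S}v_n)$ are both orthonormal and the adjoint sends $\sum_n\alpha_n\,\pt_{m,S}v_n$ to $\sum_n\alpha_n f(\lambda_n)v_n$, which is exactly $T_3$ as in \eqref{eq:JacobiSymZentrStr}. Replacing $(\gamma,m,S,(v_n),f)$ by $(\xi,y_{i-1},m,(w_n),g)$ and the boundary map $Y(1)\mapsto Y(2)$ by $Y(0)\mapsto Y(\tfrac{1}{2})$ gives $T_4$ as in \eqref{eq:JacobiSymZentrStr2}. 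I expect the main obstacle to be the bookkeeping of Step 1 — rigorously justifying that a one-endpoint variation yields a geodesic variation whose Jacobi field vanishes at the fixed endpoint (this is where local uniqueness and smooth dependence of geodesics on their endpoints enter) — together with carefully tracking the constant-speed (rather than unit-speed) parametrisation, which is what produces the factors $d$ and $d'$ inside the trigonometric and hyperbolic expressions and forces the conjugate-point bound $d<\pi/\sqrt{\lambda_n}$ for $f$ to be well defined.
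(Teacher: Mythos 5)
Your proposal is correct and follows essentially the same route as the paper: identify the differentials of $m\mapsto[u_{i-1},m]_2$ and $y_{i-1}\mapsto[u_i,y_{i-1}]_{\frac{1}{2}}$ with the boundary maps $J(1)\mapsto J(2)$ (for $J(0)=0$) and $J(0)\mapsto J(\tfrac{1}{2})$ (for $J(1)=0$) of Jacobi fields, use parallelism of the curvature tensor to decouple the Jacobi equation into the scalar ODEs $c_n''+d^2\lambda_n c_n=0$ in the parallel eigenbasis, and read off $f$ and $g$ from the normalised solutions before passing to adjoints via the isometry of parallel transport. The only cosmetic difference is that the paper delegates the $T_4$ computation to a citation whereas you carry it out, and your normalisation $c_n(2)/c_n(1)$ is the correct reading of the paper's (typo-ridden) boundary condition, as it reproduces the stated $f(\lambda_n)$.
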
	

\proof The proof is given in  Section \ref{sec:proofs_section4}. \endproof

Next, we consider the Riemannian gradients for the Schild version $D^{\text{sym}}=\dss$ as given in \eqref{eq:DefSchildSSym}
which are needed for the CPPA for the bivariate TGV functional.
{
To this end, we that note that $\dss$ can be expressed in terms of the function $S$ of \eqref{eq:SasFunctionOfUi} by
\begin{align*}
\dss ([u_{i,j},y_{i,j}^ 1],[u_{i,j},y^ 2_{i,j}],[u_{i,j-1},y^ 1_{i,j-1}],[u_{i-1,j},y^2_{i-1,j}])  
 = d( S(y_{i,j-1}^1,r^2,r^1),  y_{i-1,j}^2) \\
 \text{s.t. }  r ^1 = S(u_{i,j},y^1_{i,j},u_{i-1,j}), \, r ^2 = S(u_{i,j},y^2_{i,j},u_{i,j-1}). \notag
\end{align*}
}
Hence, the mapping $\dss$ (which depends on seven arguments) is an expression of
the Riemannian distance function and three realizations of $S.$ We have seen how to differentiate $S$
in Lemma~\ref{lem:GradDSwrtui} and Lemma~\ref{lem:GradDSwrtyi}. 
We derive the gradient of 
$\dss$ by iterated application of these mappings and the concepts of these lemmata. 
We point out the symmetry of $\dss$ with respect to $u_{i-1,j},y^1_{i,j},$
and  with respect to $u_{i,j-1}, y^2_{i,j}$.
This reduces the task to actually considering five different arguments. 
The derivative of $\dss$ is provided in the following proposition. Its proof is a direct consequence of the previous results and the decomposition of $\dss$ into decompositions of the function $S$ and the distance function $d$ as above.

\begin{prop} \label{lem:GradDSym}
	We consider constellations of points with $\dss \neq 0.$ 
	Then, the gradient of $\dss$  w.r.t.\ the variable $y^1_{i,j}$
	is given by 	
	\begin{equation}\label{eq:dSymDer3}
	\nabla_{y^1_{i,j}} \dss =  
	- T_4 \circ T_3 \circ \tilde{T}_4 \circ \tilde{T}_3 
	\left(
	{\log_{S(y_{i,j-1}^1,r^2,r^1)} y_{i-1,j}^2}/   
	{\big |\log_{S(y_{i,j-1}^1,r^2,r^1)} y_{i-1,j}^2 \big |}
	\right), \\			
	\end{equation}
	with the adjoint operators $T_3$, $T_4$ given as in Lemma~\ref{lem:GradDSwrtyi} formed w.r.t.\ the points $u_{i,j},u_{i-1,j},y^1_{i,j}$ and the adjoint operators $\tilde{T}_3$, $\tilde{T}_4$ given as in Lemma~\ref{lem:GradDSwrtyi} formed w.r.t.\ the points $y_{i,j-1}^1,r^2,r^1.$
	The gradient of $\dss$ w.r.t.\ $u_{i-1,j}$
	has the same form by symmetry.
	
	Similarly, the gradient of $\dss$  w.r.t.\ the variable $y^2_{i,j}$
	is given by 	
	\begin{equation}\label{eq:dSymDer1}
	\nabla_{y^2_{i,j}} \dss =  
	- T_4 \circ T_3 \circ \tilde{T}_4 \circ \tilde{T}_3 
	\left(
	{\log_{S(y_{i,j-1}^1,r^2,r^1)} y_{i-1,j}^2}/    
	{\big |\log_{S(y_{i,j-1}^1,r^2,r^1)} y_{i-1,j}^2 \big |}
	\right), \\			
	\end{equation}   
	with the adjoint operators $T_3$, $T_4$ as in Lemma~\ref{lem:GradDSwrtyi} formed w.r.t.\ the points $u_{i,j},u_{i,j-1},y^2_{i,j}$ and the adjoint operators $\tilde{T}_3$, $\tilde{T}_4$ as in Lemma~\ref{lem:GradDSwrtyi} formed w.r.t.\ the points $y_{i,j-1}^1,r^2,r^1$.
	The gradient of $\dss$ w.r.t.\ $u_{i,j-1}$ 
	has the same form by symmetry.
	
	The gradient of $\dss$  w.r.t.\ the variable $y_{i,j-1}^ 1$
	is given by 	
	\begin{equation}\label{eq:dSymDer2}
	\nabla_{y_{i,j-1}^ 1} \dss =  
	- T_1 
	\left(
	{\log_{S(y_{i,j-1}^1,r^2,r^1)} y_{i-1,j}^2}/    
	{\big |\log_{S(y_{i,j-1}^1,r^2,r^1)} y_{i-1,j}^2 \big |}
	\right), \\			
	\end{equation}   
	with the adjoint operator $T_1$ given as in Lemma~\ref{lem:GradDSwrtui} formed w.r.t.\ the points $y_{i,j-1}^1,r^2,r^1$.
	The gradient with respect to the variable $y_{i-1,j}^ 2$ is simply given by
	\begin{equation}
	\nabla_{y^2_{i-1,j}} \dss =  
	-	{\log_{y_{i-1,j}^2} S(y_{i,j-1}^1,r^2,r^1)}/    
	{\big |\log_{y_{i-1,j}^2} S(y_{i,j-1}^1,r^2,r^1) \big |}. \\			
	\end{equation}
	Finally, the gradient of $\dss$  w.r.t.\ the variable $u_{i,j}$
	is given by 	
	\begin{equation}\label{eq:dSymDer4}
	\nabla_{u_{i,j}} \dss =  
	-T_1 \circ T_4 \circ T_3 
	\left(
	\tfrac{\log_{S(y_{i,j-1}^1,r^2,r^1)} y_{i-1,j}^2}    
	{\big |\log_{S(y_{i,j-1}^1,r^2,r^1)} y_{i-1,j}^2 \big |}
	\right) \\			
	-
	\tilde{T}_1\circ T_4 \circ T_3 
	\left(
	\tfrac{\log_{S(y_{i,j-1}^1,r^2,r^1)} y_{i-1,j}^2}    
	{\big |\log_{S(y_{i,j-1}^1,r^2,r^1)} y_{i-1,j}^2 \big |}
	\right).
	\end{equation}	
	Here the adjoint operators $T_3$ and $T_4$ are given as in Lemma~\ref{lem:GradDSwrtyi} formed w.r.t.\ the points $y_{i,j-1}^1,r^2,r^1$, $T_1$  is given as in Lemma~\ref{lem:GradDSwrtui} formed w.r.t.\ the points $u_{i,j},y_{i,j}^1,u_{i-1,j}$ and $\tilde{T}_1$ is given as in Lemma~\ref{lem:GradDSwrtui} formed w.r.t.\ the points $u_{i,j},y_{i,j}^2,u_{i,j-1}$.
	
\end{prop}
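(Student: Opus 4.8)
The plan is to treat $\dss$ as a composition of the Schild map $S$ of \eqref{eq:SasFunctionOfUi} and the Riemannian distance, and then to differentiate by the chain rule, reading off the building blocks $T_1$ and $T_4\circ T_3$ from Lemma~\ref{lem:GradDSwrtui} and Lemma~\ref{lem:GradDSwrtyi}. First I would record the composition identity. Unwinding \eqref{eq:DSAgain} gives $\ds([x,y],[u,v]) = d\big(S(u,v,x),y\big)$, and under our standing assumption that nearby points are joined by unique minimizing geodesics, the minima in \eqref{eq:DefSchildSSym} and in $\ds$ are attained at the unique Schild constructions (Lemma~\ref{lem:shild_ex_lsc}); hence the constraints in \eqref{eq:DefSchildSSym} pin down $r^1,r^2$ and the outer construction, so that
\[
\dss(\dots) = d\big(S(y^1_{i,j-1},r^2,r^1),\,y^2_{i-1,j}\big),\qquad
r^1 = S(u_{i,j},y^1_{i,j},u_{i-1,j}),\quad r^2 = S(u_{i,j},y^2_{i,j},u_{i,j-1}),
\]
which is the decomposition stated before the proposition. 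I would also record the two symmetries exploited: since the midpoint $[r,q]_{\frac12} = [q,r]_{\frac12}$ is symmetric, $S(p,q,r)$ is symmetric in its last two arguments, which makes $\dss$ symmetric under $u_{i-1,j}\leftrightarrow y^1_{i,j}$ (the last two arguments of the inner map defining $r^1$) and under $u_{i,j-1}\leftrightarrow y^2_{i,j}$; hence it suffices to compute five of the seven gradients.

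Next I would differentiate the outermost distance. Wherever $\dss\neq0$ --- which, exactly as in the remark following Lemma~\ref{lem:GradDSwrtui}, is the generic situation and fails only on a set of measure zero --- the gradient of $q\mapsto d(q,y^2_{i-1,j})$ at $q$ is $-\log_q(y^2_{i-1,j})/\big|\log_q(y^2_{i-1,j})\big|$, and the gradient with respect to the second slot is $-\log_{y^2_{i-1,j}}(q)/\big|\log_{y^2_{i-1,j}}(q)\big|$. From here the chain rule takes over: for each variable one follows the arrows in the decomposition and composes the corresponding adjoint operators. By Lemma~\ref{lem:GradDSwrtui} the adjoint of the derivative of $S$ with respect to its \emph{first} argument is $T_1$; by Lemma~\ref{lem:GradDSwrtyi} the adjoint with respect to the \emph{second} argument --- equivalently, by the midpoint symmetry, the \emph{third} --- is $T_4\circ T_3$; in both cases these operators are formed with respect to the triple of points at which that copy of $S$ is evaluated, and Lemma~\ref{lem:GradDSwrtyi} (and Lemma~\ref{lem:T3T4explicit} for symmetric spaces) expresses them through Jacobi fields, so the resulting formulas are computable.

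Carrying this out case by case: $y^2_{i-1,j}$ occurs only as the free second slot of the outer distance, which gives the bare expression $-\log_{y^2_{i-1,j}}S(\dots)/|\cdot|$; $y^1_{i,j-1}$ occurs only as the first argument of the outer $S$, so its gradient is $-T_1$ applied to $\nabla_1 d$, which is \eqref{eq:dSymDer2}; $y^1_{i,j}$ (resp.\ $y^2_{i,j}$) occurs only inside $r^1$ (resp.\ $r^2$) as a second argument of the corresponding inner $S$, and $r^1$ (resp.\ $r^2$) in turn occurs as a non-first argument of the outer $S$, so composing the adjoint $\tilde{T}_4\circ\tilde{T}_3$ of the outer-$S$ derivative with the adjoint $T_4\circ T_3$ of the inner-$S$ derivative and prepending $-\nabla_1 d$ yields \eqref{eq:dSymDer3}, resp.\ \eqref{eq:dSymDer1}; and $u_{i,j}$ occurs as the first argument of \emph{both} $r^1$ and $r^2$, so the sum rule produces the two-term expression \eqref{eq:dSymDer4}, each term being the adjoint of an outer-$S$ derivative ($T_4\circ T_3$) composed with the adjoint of a first-argument derivative of $r^1$ or $r^2$ ($T_1$, resp.\ $\tilde{T}_1$). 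The gradients with respect to $u_{i-1,j}$ and $u_{i,j-1}$ then coincide with those with respect to $y^1_{i,j}$ and $y^2_{i,j}$ by the symmetry noted above.

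The substance of the argument is entirely in Lemmas~\ref{lem:GradDSwrtui}--\ref{lem:T3T4explicit}; once those are granted, the present proposition is a bookkeeping exercise with the chain and sum rules. Accordingly, the only point that needs genuine care --- and the place where a mistake is easiest to make --- is the slot bookkeeping for the outer $S$: because $S(p,q,r) = S(p,r,q)$, one must fix once and for all which of $r^1,r^2$ plays the role of the second and which the role of the third argument, match that consistently with the operand ordering used in the definitions of $T_1,T_3,T_4$, and keep it consistent across all five computations; one must also check that the endpoints $y^1_{i,j},y^2_{i,j},y^1_{i,j-1},y^2_{i-1,j}$ genuinely enter only where claimed, which is immediate from the decomposition. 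A minor additional point is that the closed-form gradients above are valid only off the measure-zero set where $\dss=0$ or where a minimizing geodesic fails to be unique; there one replaces the gradient by an element of the subgradient, exactly as in the remark after Lemma~\ref{lem:GradDSwrtui}, which is all that the subgradient-descent scheme of Section~\ref{sec:AlgBasicUni} requires.
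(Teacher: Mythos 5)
Your proposal is correct and follows exactly the route the paper takes: it decomposes $\dss$ into the outer Riemannian distance and three evaluations of the Schild map $S$, invokes the midpoint symmetry of $S$ in its last two arguments to reduce to five gradients, and then reads off each gradient by the chain and sum rules from the adjoints $T_1$ and $T_4\circ T_3$ of Lemmas~\ref{lem:GradDSwrtui} and~\ref{lem:GradDSwrtyi}. In fact your case-by-case bookkeeping (including the two-term sum for $u_{i,j}$ and the measure-zero caveat) is more explicit than the paper's own one-sentence justification.
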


\subsection{Riemannian gradients for the parallel transport version}
\label{subsec:AlgParUniv}

We here derive the derivatives needed to compute the proximal mappings 
in Section~\ref{sec:AlgBasicUni}
when specifying to the parallel transport variant.
We use the short-hand notation
\begin{align}\label{eq:DefF}
F(u_i,u_{i-1},y_i,y_{i-1}) = \dpt([u_i,y_i],[u_{i-1},y_{i-1}]).
\end{align}
For the following computations recall that, by our assumption of uniqueness of length-minimizing geodesics, the $\log$ mapping, initially defined as set-valued, always maps to single points in the tangent space.
\begin{lem} \label{FSymAndImplications4Der}
	The function $F$ given by \eqref{eq:DefF} is symmetric with respect to interchanging $(u_i,y_i)$ with $(u_{i-1},y_{i-1}).$ 
	In particular, for points with $F \neq 0,$ the gradient of  $F$ w.r.t.\ the third variable $y_{i},$
	is given by 
	$
	\nabla_{y_{i}} F(u_i,u_{i-1},y_i,y_{i-1})  = 
	\nabla_{y_{i-1}} F(u_{i-1}, u_i,y_{i-1},  y_i).
	$
	Further, again for points with $F \neq 0$, the gradient of the function $F$ w.r.t.\ the first component variable $u_{i},$
	is given by 
	$
	\nabla_{u_{i}} F(u_i,u_{i-1},y_i,y_{i-1})  = 
	\nabla_{u_{i-1}} F(u_{i-1}, u_i,y_{i-1},  y_i).
	$	
\end{lem}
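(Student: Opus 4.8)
The plan is to deduce both the symmetry of $F$ and the two gradient identities from a single fact: parallel transport is an isometry of tangent spaces. Under the standing assumption that length-minimizing geodesics are unique, $\log$ and $\pt$ are single-valued, so unfolding the definition \eqref{eq:DefDpt},
\[
F(u_i,u_{i-1},y_i,y_{i-1}) = \bigl|\,\log_{u_i}(y_i) - \pt_{u_{i-1},u_i}\bigl(\log_{u_{i-1}}(y_{i-1})\bigr)\,\bigr|_{u_i},
\]
where $\pt_{u_{i-1},u_i}$ is parallel transport along the unique minimizing geodesic $\gamma$ from $u_{i-1}$ to $u_i$. I would first record two standard facts about the Levi-Civita parallel transport: (i) it is a linear isometry between the tangent spaces at the endpoints (metric compatibility of the connection); and (ii) parallel transport from $u_{i-1}$ to $u_i$ along $\gamma$ and parallel transport from $u_i$ to $u_{i-1}$ along the reversed curve $t\mapsto\gamma(1-t)$ are mutually inverse, which holds because $t\mapsto V(1-t)$ solves the parallel-transport ODE along the reversed curve whenever $V$ solves it along $\gamma$.

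Abbreviating $a:=\log_{u_i}(y_i)\in T_{u_i}\M$, $b:=\log_{u_{i-1}}(y_{i-1})\in T_{u_{i-1}}\M$ and $P:=\pt_{u_{i-1},u_i}$, the display above reads $F(u_i,u_{i-1},y_i,y_{i-1})=|a-Pb|_{u_i}$. Interchanging $(u_i,y_i)$ with $(u_{i-1},y_{i-1})$ in the definition of $F$ and invoking fact (ii) to identify $\pt_{u_i,u_{i-1}}$ with $P^{-1}$ gives $F(u_{i-1},u_i,y_{i-1},y_i)=|b-P^{-1}a|_{u_{i-1}}$. Applying the isometry $P$ from fact (i),
\[
|b-P^{-1}a|_{u_{i-1}} = |Pb-a|_{u_i} = |a-Pb|_{u_i},
\]
which yields the asserted symmetry $F(u_i,u_{i-1},y_i,y_{i-1})=F(u_{i-1},u_i,y_{i-1},y_i)$.

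For the gradient identities, note that at points with $F\neq0$ the argument of the norm is a nonzero tangent vector, and $\log$, $\pt$ and $v\mapsto|v|$ (away from $0$) depend smoothly on their data, so $F$ is differentiable there. Differentiating the identity $F(u_i,u_{i-1},y_i,y_{i-1})=F(u_{i-1},u_i,y_{i-1},y_i)$ with respect to $y_i$ and observing that on the right-hand side $y_i$ occupies the fourth argument slot of $F$ — the slot denoted $y_{i-1}$ in the fixed argument order — the chain rule gives $\nabla_{y_i}F(u_i,u_{i-1},y_i,y_{i-1})=\nabla_{y_{i-1}}F(u_{i-1},u_i,y_{i-1},y_i)$; differentiating instead with respect to $u_i$, which occupies the second slot on the right, gives $\nabla_{u_i}F(u_i,u_{i-1},y_i,y_{i-1})=\nabla_{u_{i-1}}F(u_{i-1},u_i,y_{i-1},y_i)$. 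The only point demanding care is the bookkeeping around fact (ii) and keeping track of which tangent space each vector lives in when the isometry $P$ is applied to move the norm from $T_{u_{i-1}}\M$ to $T_{u_i}\M$; beyond that the argument is routine.
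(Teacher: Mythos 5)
Your proof is correct and follows essentially the same route as the paper's: both arguments reduce the symmetry of $F$ to the isometry of parallel transport along the connecting geodesic (together with the fact that transport along the reversed geodesic is the inverse map) and then obtain the two gradient identities by differentiating the resulting symmetry relation. The paper phrases this via an auxiliary family $F_t$, transporting both vectors to an arbitrary point $[u_i,u_{i-1}]_t$ and showing independence of $t$, but restricted to $t\in\{0,1\}$ that is exactly your computation $|b-P^{-1}a|_{u_{i-1}}=|a-Pb|_{u_i}$.
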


\proof The proof is given in Section \ref{sec:proofs_section4} by specifying $F=F_0$ in Lemma~\ref{FtSymAndImplications4Der}. 
\endproof
Regarding the fact that we consider points with $F \neq 0$ in the above statement,
we remark that the points $u_i,u_{i-1},y_i,y_{i-1}$ such that $F(u_i,u_{i-1},y_i,y_{i-1})=0$ form a set of measure zero.
Only on this zero set, $F$ is not differentiable.
Further, in this case, the four-tuple consisting of the four zero-tangent vectors sitting in  $u_i,u_{i-1},y_i,y_{i-1}$
belong to the subgradient of $F.$
We note that Lemma~\ref{FSymAndImplications4Der} tells us that, in order to compute the gradient of the second order type difference $F$, we only need to compute the respective gradient of $F$ w.r.t. $u_{i-1}$ and $y_{i-1}.$ This is done in the following. 

\begin{lem} \label{lem:GradF1wrtyip1}
	The gradient of the function $F$ for points with $F \neq 0$ w.r.t.\ the variable $y_{i-1}$
	is given by 
	\begin{equation}
		\nabla_{y_{i-1}} F = T  
		\left(\left({\log_{u_{i-1}} y_{i-1} - \pt_{u_{i},u_{i-1}} \log_{u_i} y_i}\right)/{\big | \log_{u_{i-1}} y_{i-1} - \pt_{u_{i},u_{i-1}} \log_{u_i} y_i  \big |}	\right),	
	\end{equation}
	where $T = (d_{y_{i-1}} \log_{u_{i-1}})^\ast$ is the adjoint of the (Fr\'echet) derivative (denoted by the symbol $d_{y_{i-1}}$) of the $\log$ mapping w.r.t. the variable
	$y_{i-1}.$ $T$ can be computed using Jacobi fields. 
\end{lem}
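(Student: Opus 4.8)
The plan is to reduce the statement to a single application of the chain rule, after first rewriting $F$ so that its base point sits at $u_{i-1}$. By Lemma~\ref{FSymAndImplications4Der} — or, directly, by the isometry of parallel transport along the (by our standing assumption, unique) minimizing geodesic joining $u_i$ and $u_{i-1}$ — one has
\[
F(u_i,u_{i-1},y_i,y_{i-1}) = \dpt\big([u_{i-1},y_{i-1}],[u_i,y_i]\big) = \big|\log_{u_{i-1}}(y_{i-1}) - \pt_{u_i,u_{i-1}}\log_{u_i}(y_i)\big|_{u_{i-1}}.
\]
In this representation the vector $c := \pt_{u_i,u_{i-1}}\log_{u_i}(y_i)\in T_{u_{i-1}}\M$ is independent of $y_{i-1}$, so, viewed as a function of $y_{i-1}$ alone, $F$ is the composition of $\Phi:y_{i-1}\mapsto\log_{u_{i-1}}(y_{i-1})$, the translation $v\mapsto v-c$ on $T_{u_{i-1}}\M$, and the norm $|\cdot|_{u_{i-1}}$.

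Next I would differentiate this composition. The map $\Phi$ is smooth near $y_{i-1}$ and has invertible differential there, because by the standing assumption $u_{i-1}$ and $y_{i-1}$ lie inside a neighbourhood on which $\exp_{u_{i-1}}$ is a diffeomorphism; the translation has derivative the identity (hence identity adjoint); and the norm is differentiable away from the origin, with gradient $w\mapsto w/|w|$, the origin being excluded precisely because we restrict to points with $F\neq 0$, so that $w := \log_{u_{i-1}}(y_{i-1}) - c \neq 0$. The chain rule then yields
\[
\nabla_{y_{i-1}}F = \big(d_{y_{i-1}}\log_{u_{i-1}}\big)^{*}\!\left(\frac{w}{|w|}\right),\qquad w = \log_{u_{i-1}}(y_{i-1}) - \pt_{u_i,u_{i-1}}\log_{u_i}(y_i),
\]
which is exactly the asserted formula with $T = (d_{y_{i-1}}\log_{u_{i-1}})^{*}$.

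It then remains to argue that $T$ can be evaluated via Jacobi fields. Since $\log_{u_{i-1}} = (\exp_{u_{i-1}})^{-1}$ near $y_{i-1}$, the differential $d_{y_{i-1}}\log_{u_{i-1}}$ is the inverse of $d\exp_{u_{i-1}}$ at $v = \log_{u_{i-1}}(y_{i-1})$, and the latter is the classical Jacobi-field map: for $\eta\in T_{u_{i-1}}\M$ one has $d\exp_{u_{i-1}}|_v(\eta) = J(1)$, where $J$ is the Jacobi field along $t\mapsto\exp_{u_{i-1}}(tv)$ with $J(0)=0$, $\tfrac{D}{dt}J(0)=\eta$. Identifying $T_{y_{i-1}}\M$ with $T_{u_{i-1}}\M$ along the connecting geodesic, the adjoint $T$ is then obtained in exactly the same way as the adjoint maps $T_1,T_3,T_4$ in Lemmas~\ref{lem:GradDSwrtui}--\ref{lem:T3T4explicit}, by expanding in an orthonormal eigenbasis of the Jacobi operator $J\mapsto R(\dot\gamma,J)\dot\gamma$ along the geodesic $\gamma$ from $u_{i-1}$ to $y_{i-1}$; for symmetric spaces this yields the usual closed form with $\sin/\sinh$ coefficients.

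I expect this last point to be the only real work — expressing the adjoint of the derivative of the logarithm through Jacobi fields — but it is entirely analogous to (and in fact more classical than) the Schild-ladder constructions already treated in Section~\ref{subsec:AlgSchildUniv}, so I would handle it by reduction to those computations and the standard references \cite{do1992riemannian,cheeger1975comparison} rather than redoing it in detail.
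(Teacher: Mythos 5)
Your proposal is correct and follows essentially the same route as the paper: rewrite $F$ as $F_1$ (i.e.\ transport everything to $u_{i-1}$, justified by the isometry of parallel transport as in Lemma~\ref{lem:secDiffIndependentOft}), decompose into $\log_{u_{i-1}}$ followed by the norm of the difference with the fixed vector $z=\pt_{u_i,u_{i-1}}\log_{u_i}y_i$, apply the chain rule, and identify the adjoint of $d_{y_{i-1}}\log_{u_{i-1}}$ via Jacobi fields. The paper phrases the differential of the log as the map $J(1)\mapsto \frac{D}{dt}J(0)$ for Jacobi fields vanishing at $u_{i-1}$, which is exactly the inverse of the $d\exp$ map you describe.
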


\proof The proof is given in  Section \ref{sec:proofs_section4}. 
\endproof

In Riemannian symmetric spaces the above mapping $T$ can be made more explicit.

\begin{lem} \label{lem:Texplicit}
	Let $\M$ be a symmetric space.
	Consider the geodesic $\gamma:[0,1]\rightarrow \M$ connecting $\gamma(0)=u_{i-1}$ and $\gamma(1)=y_{i-1},$ 
	and let $d$ denote the distance between $u_{i-1}$ and $y_{i-1}.$
	Let $(v_n)_n$ be an orthonormal basis of eigenvectors of the self-adjoint Jacobi operator 
	$J \mapsto R(\frac{\gamma'(0)}{ |\gamma'(0) |},J)\frac{\gamma'(0)}{ |\gamma'(0) |}$
	with $v_1$ tangent to $\gamma,$ and denote, 
	for each $n$, the eigenvalue associated with $v_n$ by $\lambda_n.$ 
	W.r.t.\ this basis, the operator $T$ of Lemma~\ref{lem:GradF1wrtyip1} 
	can be represented by 
	\begin{equation}\label{eq:DiffOfLogInSym}
	T: \  \sum_n \alpha_n  v_n \mapsto \sum_n \alpha_n f(\lambda_n) \pt_{u_{i-1},y_{i-1}} v_n,
	\end{equation}
	where the $\alpha_n$ are the coefficients of the corresponding basis representation and 
	the function $f,$ depending on the sign of $\lambda_n$ is given by 
	\begin{equation}\label{eq:functionalCalculusDiffOfLogInSym}
	f(\lambda_n) = 
	\begin{cases}
	1,   & \quad \text{if} \ \lambda_n = 0, \\
	{\sqrt{\lambda_n} d}/{\sin (\sqrt{\lambda_n} d)},    & \quad \text{if} \ \lambda_n > 0, \quad  d < \pi/\sqrt{\lambda_n},\\
	{\sqrt{-\lambda_n} d}/{\sinh (\sqrt{-\lambda_n} d)},  &  \quad \text{if} \ \lambda_n < 0.
	\end{cases}
	\end{equation}
\end{lem}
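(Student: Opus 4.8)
The plan is to reduce the statement to the standard description of the differential of the exponential map via Jacobi fields, and then to use the symmetric-space structure to solve the resulting Jacobi equation in closed form -- precisely the computation already carried out for the operators $T_3$ and $T_4$ in Lemma~\ref{lem:T3T4explicit}. Recall that in Lemma~\ref{lem:GradF1wrtyip1} the operator $T$ is defined as $(d_{y_{i-1}}\log_{u_{i-1}})^{\ast}$. Write $v = \log_{u_{i-1}} y_{i-1} \in T_{u_{i-1}}\M$, so that $\gamma(t) = \exp_{u_{i-1}}(tv)$ and $|v| = d$. Under our standing assumption that $u_{i-1}$ and $y_{i-1}$ are joined by a unique length-minimizing geodesic -- and since the hypothesis $d < \pi/\sqrt{\lambda_n}$ in the positive-curvature directions rules out conjugate points along $\gamma$ -- the map $\exp_{u_{i-1}}$ is a diffeomorphism from a neighborhood of $v$ onto a neighborhood of $y_{i-1}$ with $\log_{u_{i-1}}$ as its smooth inverse, so that $d_{y_{i-1}}\log_{u_{i-1}} = \big(d(\exp_{u_{i-1}})_v\big)^{-1}$ and hence $T = \big((d(\exp_{u_{i-1}})_v)^{\ast}\big)^{-1}$. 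Recall further (see e.g.\ \cite{do1992riemannian}) that for $w \in T_{u_{i-1}}\M \cong T_v(T_{u_{i-1}}\M)$ one has $d(\exp_{u_{i-1}})_v(w) = J_w(1)$, where $J_w$ is the Jacobi field along $\gamma$ determined by $J_w(0) = 0$ and $\tfrac{D}{dt}J_w(0) = w$.

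Next I would solve the Jacobi equation along $\gamma$ in the eigenbasis of the Jacobi operator. Since $\M$ is symmetric, its curvature tensor is parallel, so the parallel frame $E_n(t) := \pt_{u_{i-1},\gamma(t)} v_n$ consists of eigenvectors of $J \mapsto R(\gamma'(t)/|\gamma'(t)|,J)\gamma'(t)/|\gamma'(t)|$ with the same eigenvalues $\lambda_n$ for every $t$ (note that $\gamma'(t)/|\gamma'(t)| = \pt_{u_{i-1},\gamma(t)}(\gamma'(0)/|\gamma'(0)|)$ and $|\gamma'(t)| \equiv d$). Expanding $J_w(t) = \sum_n h_n(t) E_n(t)$, the Jacobi equation $\tfrac{D^2}{dt^2}J_w + R(\gamma',J_w)\gamma' = 0$ decouples into the scalar linear ODEs $\ddot h_n + d^2\lambda_n h_n = 0$ with $h_n(0) = 0$ and $\dot h_n(0) = \langle w, v_n\rangle$. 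Solving these and evaluating at $t = 1$ gives $h_n(1) = s(\lambda_n)\langle w, v_n\rangle$, where $s(\lambda_n)$ equals $1$, $\sin(\sqrt{\lambda_n}\,d)/(\sqrt{\lambda_n}\,d)$, or $\sinh(\sqrt{-\lambda_n}\,d)/(\sqrt{-\lambda_n}\,d)$ according to the sign of $\lambda_n$; the hypothesis $d < \pi/\sqrt{\lambda_n}$ for $\lambda_n > 0$ guarantees $s(\lambda_n) \neq 0$. Thus $d(\exp_{u_{i-1}})_v$ maps the orthonormal basis $(v_n)$ of $T_{u_{i-1}}\M$ to $(s(\lambda_n)E_n(1))$, and $(E_n(1))_n = (\pt_{u_{i-1},y_{i-1}}v_n)_n$ is orthonormal in $T_{y_{i-1}}\M$ because parallel transport is an isometry.

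Finally I would read off the adjoint. In the orthonormal bases $(v_n)$ and $(\pt_{u_{i-1},y_{i-1}}v_n)$, the operator $d(\exp_{u_{i-1}})_v$ is diagonal with entries $s(\lambda_n)$; hence its inverse $d_{y_{i-1}}\log_{u_{i-1}}$ sends $\pt_{u_{i-1},y_{i-1}}v_n \mapsto s(\lambda_n)^{-1}v_n$, and the adjoint $T$ of this inverse is again diagonal, now mapping $v_n \mapsto s(\lambda_n)^{-1}\pt_{u_{i-1},y_{i-1}}v_n$. Setting $f(\lambda_n) = s(\lambda_n)^{-1}$ and checking that this matches the three cases of \eqref{eq:functionalCalculusDiffOfLogInSym} gives exactly the representation \eqref{eq:DiffOfLogInSym}, which proves the lemma. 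The only points that need a little care -- and so the mild obstacle here -- are the canonical identification $T_v(T_{u_{i-1}}\M) \cong T_{u_{i-1}}\M$ together with the bookkeeping of the speed factor $d = |v|$ in the Jacobi equation, and the verification that $\exp_{u_{i-1}}$ is invertible near $v$ (equivalently, that $\gamma$ contains no conjugate point), which is precisely what $d < \pi/\sqrt{\lambda_n}$ ensures; both are routine and run in complete parallel to the argument already used for $T_3$ and $T_4$ in Lemma~\ref{lem:T3T4explicit}.
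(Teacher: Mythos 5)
Your proof is correct and follows essentially the same route as the paper: both exploit the parallel curvature tensor to decouple the Jacobi equation along $\gamma$ into the scalar ODEs $\ddot h_n + d^2\lambda_n h_n = 0$ in the parallel eigenframe and read off the diagonal representation of $T$. The only (immaterial) difference is bookkeeping: you compute $d(\exp_{u_{i-1}})_v$ from the initial-value problem $J(0)=0$, $J'(0)=w$ and then invert and take adjoints, whereas the paper uses the representation $J(1)\mapsto J'(0)$ from Lemma~\ref{lem:GradF1wrtyip1} directly and solves the boundary-value problem $x(0)=0$, $x(1)=1$ to obtain $f(\lambda_n)=x'(0)$.
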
	
\proof The proof is given in  Section \ref{sec:proofs_section4}. 
\endproof

Finally, we consider the gradient of $F$  w.r.t.\ the variable $u_{i-1}.$
\begin{lem} \label{lem:GradF1wrtuip1}
	The gradient of the function $F$ for points with $F \neq 0$  w.r.t.\ the variable $u_{i-1}$
	is given by 
	\begin{equation}\label{eq:ExpandGradinONB}
	\nabla_{u_{i-1}} F_1 = \sum \alpha_n  v_n,	
	\end{equation}
	where the $(v_n)_n$ form an orthonormal basis of the tangent space at $u_{i-1},$ 
	and the coefficients $\alpha_n$ are given by 
	\begin{align}\label{eqref:defAlphaN}
	\alpha_n = 
	\frac{d}{dt}|_{t=0} \big | L^n_t - B^n_t \big | 
	& =  \left\langle   \tfrac{L(u_{i-1})- B(u_{i-1})}{\left| L(u_{i-1})- B(u_{i-1}) \right |} ,  
	\tfrac{D}{dt}|_{t=0} L_t^n - \tfrac{D}{dt}|_{t=0} B_t^n  \right\rangle. 
	\end{align}
	Here $L_t^n,B_t^n$ denote the vector fields $L,B$ 
	(defined by  
		$
		L: u_{i-1} \mapsto \log_{u_{i-1}}y_{i-1} 
		$
		and 
		$
		B: u_{i-1} \mapsto \pt_{u_{i},u_{i-1}} z,$ where 
		$z = \log_{u_{i}}y_{i},$)	
	along the (specific) geodesic $t \mapsto \exp_{u_{i-1}}tv_n,$ $t \in [0,1],$
	determined by $v_n,$ i.e.,
	\begin{align}\label{eqref:defAlphaNVecF}
	L^n_t = \log_{\exp_{u_{i-1}}tv_n}y_{i-1} \quad\text{and}\quad
	B^n_t = \pt_{u_{i},\exp_{u_{i-1}}tv_n} z,
	\end{align} 
	with $z = \log_{u_{i}}y_{i}.$
\end{lem}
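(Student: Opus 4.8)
The plan is to reduce the computation of $\nabla_{u_{i-1}} F$ to differentiating the Riemannian norm of a single vector field along a geodesic emanating from $u_{i-1}$. First I would rewrite $F$ in a form in which the norm is taken in $T_{u_{i-1}}\M$: by definition $F = \big|\log_{u_i}(y_i) - \pt_{u_{i-1},u_i}(\log_{u_{i-1}}(y_{i-1}))\big|_{u_i}$, and since, under our standing local-uniqueness assumption, parallel transport along the minimizing geodesic between $u_{i-1}$ and $u_i$ is an isometry with inverse $\pt_{u_i,u_{i-1}}$, applying $\pt_{u_i,u_{i-1}}$ inside the norm gives $F = \big|L(u_{i-1}) - B(u_{i-1})\big|_{u_{i-1}}$ with $L$, $B$ the vector fields from \eqref{eqref:defAlphaNVecF}; this is exactly the symmetry content of Lemma~\ref{FSymAndImplications4Der}. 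In the considered local regime $L$ and $B$ are smooth in their argument (smoothness of $\exp$, $\log$ and parallel transport), and $L(u_{i-1}) - B(u_{i-1}) \neq 0$ because $F \neq 0$; hence $p \mapsto |L(p) - B(p)|_p$ is differentiable at $u_{i-1}$.

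Next I would fix an arbitrary orthonormal basis $(v_n)_n$ of $T_{u_{i-1}}\M$, set $c_n(t) = \exp_{u_{i-1}}(tv_n)$, and let $W_n(t) = L^n_t - B^n_t$ be the resulting smooth vector field along $c_n$, with $W_n(0) = L(u_{i-1}) - B(u_{i-1}) \neq 0$. Since the Riemannian gradient satisfies $\langle \nabla_{u_{i-1}} F, v_n\rangle = \frac{d}{dt}\big|_{t=0} F\big|_{u_{i-1}\mapsto c_n(t)} = \frac{d}{dt}\big|_{t=0} |W_n(t)|$ and $(v_n)_n$ is an orthonormal basis, we get $\nabla_{u_{i-1}} F = \sum_n \alpha_n v_n$ with $\alpha_n = \frac{d}{dt}\big|_{t=0} |W_n(t)|$, which is the first of the claimed identities in \eqref{eqref:defAlphaN}.

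For the second identity I would invoke metric compatibility of the Levi-Civita connection: for $t$ near $0$ we have $|W_n(t)| \neq 0$, so differentiating $|W_n|^2 = \langle W_n, W_n\rangle$ and using $\frac{d}{dt}\langle X,Y\rangle = \langle \tfrac{D}{dt}X, Y\rangle + \langle X, \tfrac{D}{dt}Y\rangle$ yields $\frac{d}{dt}|W_n(t)| = \frac{1}{|W_n(t)|}\langle \tfrac{D}{dt}W_n(t), W_n(t)\rangle$. Evaluating at $t=0$ and splitting $W_n = L^n - B^n$ by linearity of $\tfrac{D}{dt}$ gives precisely \eqref{eqref:defAlphaN}.

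The computation is essentially routine; the only points needing care are (i) the passage to the representation $F = |L(u_{i-1}) - B(u_{i-1})|_{u_{i-1}}$, which relies on the isometry of parallel transport together with the local-uniqueness standing assumption, and (ii) the observation that $W_n$ lives in the varying tangent spaces $T_{c_n(t)}\M$, so that the object appearing in the derivative of the norm is the covariant derivative $\tfrac{D}{dt}$, not an ordinary derivative — this is exactly why the covariant derivatives $\tfrac{D}{dt}\big|_{t=0}L^n_t$ and $\tfrac{D}{dt}\big|_{t=0}B^n_t$ remain in the final formula, to be made explicit via Jacobi fields in the subsequent lemmas.
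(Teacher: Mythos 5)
Your proposal is correct and follows essentially the same route as the paper's proof: reduce $F=F_0$ to $F_1=|L(u_{i-1})-B(u_{i-1})|$ via the isometry of parallel transport (this is the content of Lemma~\ref{lem:secDiffIndependentOft}), read off the gradient coefficients as directional derivatives along $t\mapsto\exp_{u_{i-1}}(tv_n)$, and differentiate the norm using metric compatibility of the Levi-Civita connection, which forces the covariant derivatives $\tfrac{D}{dt}|_{t=0}L^n_t$ and $\tfrac{D}{dt}|_{t=0}B^n_t$ into the final formula. No gaps.
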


\proof The proof is given in  Section \ref{sec:proofs_section4}. 
\endproof

The precise computation of $\frac{D}{dt}|_{t=0} L_t^n$ in symmetric spaces 
is topic of Lemma~\ref{lem:GradLn}.
Further, the computation of $\frac{D}{dt}|_{t=0} B_t^n$ is carried out for 
	the manifolds explicitly considered in this paper: this is done for 
	the sphere in Lemma~\ref{lem:GradBnSphere},
and for the space of positive matrices in Lemma~\ref{lem:GradBnPos}.    
Lemma~\ref{lem:GradBnPos} and its proof may serve as a prototypic guide for deriving similar expressions for other symmetric spaces such as the rotation groups or the Grassmannians for instance.
We note that the approach is by no means restricted to the two considered classes of spaces and might serve as a guide for other manifolds; we only did not derive a more explicit representation on the general level of symmetric spaces. We further note that numerical differentiation of the particular term is a second option as well. 	

\begin{lem} \label{lem:GradLn}	
	Assume that the manifold $\mathcal M$ is a symmetric space.  
	Let $(v_n)_n$ be an orthonormal basis of eigenvectors of the self-adjoint Jacobi operator 
	$J \mapsto R(\frac{\gamma'(0)}{ |\gamma'(0) |} ,J) \frac{\gamma'(0)}{ |\gamma'(0) |}$ where the (constant speed) geodesic $\gamma:[0,1]\rightarrow \M$ connects $u_{i-1} = \gamma (0)$
	and $y_{i-1} = \gamma (1),$ and 
	where $R$ denotes the Riemannian curvature tensor.
	For each $n$, we denote by $\lambda_n$ the eigenvalue associated with $v_n.$
	 
	The covariate derivatives $\frac{D}{dt}|_{t=0} L_t^n,$ 
	of the vector fields
	$L^n_t = \log_{\exp_{u_{i-1}}tv_n}y_{i-1}$ at $t=0$  	
	can be computed jointly for all $n $ using Jacobi fields as follows:
	\begin{equation}\label{eq:ExplicitDiffOfL}
	     \frac{D}{dt}|_{t=0} L_t^n = 
	     \begin{cases}
	             -v_n,   &\text{if} \quad \lambda_n = 0,  \\
	             - d \sqrt{\lambda_n} \ \frac{  \cos( \sqrt{\lambda_n} d)}{\sin( \sqrt{\lambda_n} d)} \ v_n,  
	             & \text{if} \quad \lambda_n > 0, \quad  d < \pi/\sqrt{\lambda_n}, \\
	             - d \sqrt{-\lambda_n} \ \frac{  \cosh( \sqrt{-\lambda_n} d)}{\sinh( \sqrt{-\lambda_n} d)} \ v_n, 
	             & \text{if} \quad \lambda_n < 0.
	     \end{cases}	     
	\end{equation} 
	Here $d = d(u_{i-1},y_{i-1})$ denotes the length of the geodesic 
	connecting $u_{i-1},y_{i-1}.$
	(If the term $\sqrt{\lambda_n} d = 0$ in the denominators of the second line in \eqref {eq:ExplicitDiffOfL},
	then $u_{i-1} = y_{i-1},$ and the formula is still valid since we are facing a removable singularity then.)
\end{lem}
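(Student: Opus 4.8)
The plan is to exhibit $\frac{D}{dt}\big|_{t=0} L_t^n$ as the covariant derivative at an endpoint of a Jacobi field with prescribed boundary values, and then to solve the Jacobi equation explicitly using that the curvature tensor of a symmetric space is parallel.

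First I would fix $n$ and consider the geodesic variation
\[
V(s,t) = \exp_{c(t)}\!\big(s\,L_t^n\big),\qquad s\in[0,1],\quad c(t) := \exp_{u_{i-1}}(t v_n),
\]
so that, for each fixed $t$, the curve $s\mapsto V(s,t)$ is the (unique, by our standing assumption) length-minimizing geodesic from $c(t)$ to $y_{i-1}$; this also guarantees that $V$ is smooth in $(s,t)$ near $t=0$. Since $L_0^n=\log_{u_{i-1}}(y_{i-1})$ and $\gamma$ is the constant-speed geodesic from $u_{i-1}$ to $y_{i-1}$ on $[0,1]$, we have $V(\cdot,0)=\gamma$, together with $V(0,t)=c(t)$ and $V(1,t)=y_{i-1}$ for all $t$. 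Hence $J(s):=\frac{\partial}{\partial t}\big|_{t=0}V(s,t)$ is a Jacobi field along $\gamma$ with $J(0)=c'(0)=v_n$ and $J(1)=0$. Because $L_t^n=\frac{\partial}{\partial s}\big|_{s=0}V(s,t)$, the symmetry of the Levi-Civita connection gives $\frac{D}{dt}\frac{\partial V}{\partial s}=\frac{D}{ds}\frac{\partial V}{\partial t}$, and evaluating at $(s,t)=(0,0)$ yields $\frac{D}{dt}\big|_{t=0}L_t^n=\frac{D}{ds}\big|_{s=0}J=:J'(0)$. It thus remains to compute $J'(0)$ for the Jacobi field determined by $J(0)=v_n$, $J(1)=0$.

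Next I would solve the Jacobi equation in the parallel frame obtained by parallel-transporting the basis $(v_m)_m$ along $\gamma$, say $E_m(s)$. Since $\M$ is symmetric, $R$ is parallel along $\gamma$, so each $E_m(s)$ remains an eigenvector of $J\mapsto R(\gamma'(s),J)\gamma'(s)$ with eigenvalue $d^2\lambda_m$, where the factor $d^2=|\gamma'|^2$ comes from parametrizing $\gamma$ on $[0,1]$ rather than by arclength, and the $E_m(s)$ stay orthonormal. Writing $J(s)=\sum_m a_m(s)E_m(s)$, the Jacobi equation decouples into $a_m''+d^2\lambda_m a_m=0$; since both boundary data lie along $v_n=E_n(0)$, a short uniqueness argument for the scalar two-point problems (using $d<\pi/\sqrt{\lambda_m}$ when $\lambda_m>0$, which is the absence of a conjugate point guaranteed by uniqueness of $\gamma$) forces $a_m\equiv0$ for $m\neq n$, so $J(s)=a_n(s)E_n(s)$ with $a_n(0)=1$, $a_n(1)=0$. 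Solving this scalar boundary value problem gives $a_n(s)=1-s$ for $\lambda_n=0$, $a_n(s)=\cos(\sqrt{\lambda_n}\,d\,s)-\cot(\sqrt{\lambda_n}\,d)\sin(\sqrt{\lambda_n}\,d\,s)$ for $\lambda_n>0$, and the analogous hyperbolic expression for $\lambda_n<0$; differentiating at $s=0$ produces $a_n'(0)=-1$, $a_n'(0)=-d\sqrt{\lambda_n}\,\cos(\sqrt{\lambda_n}d)/\sin(\sqrt{\lambda_n}d)$, and $a_n'(0)=-d\sqrt{-\lambda_n}\,\cosh(\sqrt{-\lambda_n}d)/\sinh(\sqrt{-\lambda_n}d)$, respectively. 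Since $E_n$ is parallel, $J'(0)=a_n'(0)E_n(0)=a_n'(0)v_n$, which is precisely \eqref{eq:ExplicitDiffOfL}. The removable-singularity remark follows since $u_{i-1}=y_{i-1}$ forces $d=0$ and $\lim_{d\to0}d\sqrt{\lambda_n}\cos(\sqrt{\lambda_n}d)/\sin(\sqrt{\lambda_n}d)=1$ matches the $\lambda_n=0$ value.

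I do not anticipate a real obstacle: the only delicate points are the smoothness and well-posedness of $V$ near $t=0$ and of the scalar boundary value problems, both of which are supplied by the standing assumption that length-minimizing geodesics are unique (which in particular rules out conjugate points and hence keeps $\sin(\sqrt{\lambda_n}d)\neq0$), and the clean bookkeeping of the constant-speed factor $d$ entering the trigonometric arguments.
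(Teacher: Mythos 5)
Your proposal is correct and follows essentially the same route as the paper's proof: identify $\frac{D}{dt}\big|_{t=0}L_t^n$ with $J'(0)$ for the Jacobi field $J$ along $\gamma$ arising from the variation $V(s,t)=\exp_{c(t)}(sL_t^n)$ with boundary data $J(0)=v_n$, $J(1)=0$, and then solve the resulting scalar boundary value problem $a''+d^2\lambda_n a=0$ in a parallel eigenframe, which is available because the curvature tensor of a symmetric space is parallel. The extra details you supply (the decoupling argument showing $a_m\equiv 0$ for $m\neq n$ and the explicit form of $a_n$) are consistent with, and slightly more explicit than, the paper's presentation.
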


\proof The proof is given in  Section \ref{sec:proofs_section4}. 
\endproof

\begin{lem} \label{lem:GradBnSphere}	
	Consider the unit sphere $S^2$ embedded into euclidean space $\mathbb R^3$. 
	For $u_i, u_{i-1}$ with $u_i \neq u_{i-1}$, the differential $\frac{D}{dt}|_{t=0} B_t^n$ is given by 
	\begin{align}\label{eq:StatementParTransS2}
	\tfrac{D}{dt}|_{t=0} B_t^n = 	\tfrac{D}{dt}|_{t=0} \pt_{u_{i},\exp_{u_{i-1}}tv_n} z  = 
	\begin{cases}
	 \ 0 \quad  & \text{ for } v_n  \ \|  \log_{u_{i-1}}u_i, \\
	 \  {\mathrm L}_\omega \pt_{u_{i},u_{i-1}} z,
	 \qquad  & \text{ for } v_n \perp \log_{u_{i-1}}u_i, \  |v_n |=1,	 
	 \end{cases}
	 \end{align}
	 and $v_n$ to the left of $\log_{u_{i-1}}u_i$ (otherwise multiplied by $-1$ accounting for the change of orientation).
	 Here the skew-symmetric matrix $
	 {\mathrm L}_\omega = 
	 \begin{pmatrix}
	 0 & \omega  \\ -\omega  & 0
	 \end{pmatrix}
	 $ 
	 is taken w.r.t. the basis 
	 $\{\log_{u_{i-1}}u_i,$ $(\log_{u_{i-1}}u_i)^\perp\}$  of the tangent space $T_{u_{i-1}},$ 
	 and $\omega$ is given by	 
	 $
	  \omega     =\tfrac{1}{\sin d} - \tfrac{1}{\tan d},  
	  \quad\text{where } d = d(u_{i},u_{i-1}).
	 $
For general $v_n,$ 
    \begin{align}\label{eq:StatementParTransS2GeneralVn}
    \tfrac{D}{dt}|_{t=0} B_t^n = \tfrac{D}{dt}|_{t=0} \pt_{u_{i},\exp_{u_{i-1}}tv_n} z  = 
     \big\langle  v_n, w    \big\rangle  \  {\mathrm L}_\omega \pt_{u_{i},u_{i-1}} z,
    \end{align}
where $w$ is the vector determined by  $w \perp \log_{u_{i-1}}u_i, \  |w|=1,$ and $w$ is to the left of $\log_{u_{i-1}}u_i.$
In other words, 
we have to multiply the second line of \eqref{eq:StatementParTransS2} with the signed length of the projection of $v_n$ to the normalized vector $(\log_{u_{i-1}}u_i)^\perp$.

If  $u_i = u_{i-1},$ then the differential $\frac{D}{dt}|_{t=0} B_t^n = 0$ (which is consistent with letting $d \to 0$ in the above formulae.)	 
\end{lem}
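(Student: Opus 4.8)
The plan is to compute the covariant derivative directly in the ambient space $\mathbb{R}^3$, reducing to two elementary directions by linearity. Fix $d=d(u_i,u_{i-1})\in(0,\pi)$, set $e_1=\log_{u_{i-1}}u_i/d$ and, in the basic perpendicular case, $e_2=v_n$, so that $\{u_{i-1},e_1,e_2\}$ is an orthonormal basis of $\mathbb{R}^3$ with $u_i=\cos d\,u_{i-1}+\sin d\,e_1$ and $c(t):=\exp_{u_{i-1}}(tv_n)=\cos t\,u_{i-1}+\sin t\,e_2$; for $t$ small, $c(t)$ stays away from the antipode of $u_i$, so $\pt_{u_i,c(t)}$ depends smoothly on $t$. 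The structural fact I would use is that parallel transport along a minimizing great-circle geodesic on $S^2$ preserves both the unit velocity field of the geodesic and the (constant ambient) unit normal to the plane of that great circle, the latter being tangent to $S^2$ at every point of the circle. Writing $n(t)$ for the unit normal to $\vspan(u_i,c(t))$, $\tau(t)=\log_{u_i}c(t)/|\log_{u_i}c(t)|$ for the unit velocity at $u_i$ of the geodesic towards $c(t)$, and $\tilde\tau(t)=n(t)\times c(t)$ for the unit velocity at $c(t)$, and noting that $\{\tau(t),n(t)\}$ is an orthonormal basis of $T_{u_i}S^2$, one obtains the closed form
\[ B_t^n=\pt_{u_i,c(t)}z=\langle z,\tau(t)\rangle\,\tilde\tau(t)+\langle z,n(t)\rangle\,n(t). \]

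I would then differentiate at $t=0$ via the embedding identity $\tfrac{D}{dt}V=P_{c(t)}\tfrac{d}{dt}V$, with $P_{c(t)}$ the orthogonal projection onto $T_{c(t)}S^2$. The derivatives $n'(0)$ and $\tilde\tau'(0)$ are elementary cross-product computations from $n(t)=(u_i\times c(t))/|u_i\times c(t)|$ and $\tilde\tau(t)=n(t)\times c(t)$. The one nonroutine derivative is $\tau'(0)$, which needs the differential of $\log_{u_i}$ at $u_{i-1}$ in direction $c'(0)=v_n$; since $v_n\perp\log_{u_{i-1}}u_i$ this direction is orthogonal to the geodesic from $u_i$ to $u_{i-1}$, and the standard Jacobi-field formula for the sphere ($d_q\log_p$ is the identity on the radial direction and $\tfrac{d(p,q)}{\sin d(p,q)}$ times parallel transport on its orthogonal complement) gives $\tau'(0)=e_2/\sin d$. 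Collecting the four terms of $\tfrac{d}{dt}|_0B_t^n$, re-expressing in the frame $\{e_1,e_2\}$ using $u_i\perp z$, and applying $P_{u_{i-1}}$ to drop the $u_{i-1}$-component, the coefficients of $e_1$ and $e_2$ come out equal to $\pm\omega$ with $\omega=\tfrac1{\sin d}-\tfrac1{\tan d}$, i.e.\ $\tfrac{D}{dt}|_0B_t^n=\pm{\mathrm L}_\omega\pt_{u_i,u_{i-1}}z$ in the basis $\{\log_{u_{i-1}}u_i,(\log_{u_{i-1}}u_i)^\perp\}$, the sign being fixed by the ``to the left'' orientation convention; this is \eqref{eq:StatementParTransS2}. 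The same computation is the infinitesimal Gauss--Bonnet statement and serves as a cross-check for the magnitude: $\pt_{u_i,c(t)}$ and $\pt_{u_{i-1},c(t)}\circ\pt_{u_i,u_{i-1}}$ differ by the holonomy around the geodesic triangle $u_i,u_{i-1},c(t)$, which is a rotation by the enclosed area (since $K\equiv1$ on $S^2$); expanding that spherical triangle (its angle at $u_{i-1}$ is exactly $\pi/2$ here) shows the area is $\omega t+o(t)$, and since $t\mapsto\pt_{u_{i-1},c(t)}(\pt_{u_i,u_{i-1}}z)$ is parallel along $c$, its covariant derivative at $0$ vanishes, leaving only the rotation term.

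The case $v_n\parallel\log_{u_{i-1}}u_i$ is immediate: then $c(t)$ stays on the great circle through $u_i$ and $u_{i-1}$, the minimizing geodesic from $u_i$ to $c(t)$ is (for small $t$) a subarc of that fixed great circle, so by the composition law for parallel transport along the common great circle $B_t^n=\pt_{u_{i-1},c(t)}(\pt_{u_i,u_{i-1}}z)$ is the parallel field along $c$ with value $\pt_{u_i,u_{i-1}}z$ at $t=0$; hence $\tfrac{D}{dt}|_0B_t^n=0$. For general $v_n$ I would use linearity: $\xi\mapsto\pt_{u_i,\exp_{u_{i-1}}(\xi)}z$ is a smooth $\mathbb{R}^3$-valued map near $0\in T_{u_{i-1}}S^2$ whose value at $\xi=0$ is $\pt_{u_i,u_{i-1}}z$ independently of direction, so $w\mapsto\tfrac{D}{dt}|_{t=0}\pt_{u_i,\exp_{u_{i-1}}(tw)}z$ (the projection onto $T_{u_{i-1}}S^2$ of its derivative at $0$) is linear in $w$. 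Decomposing $v_n$ into its components along $\log_{u_{i-1}}u_i$ and along the unit vector $w$ perpendicular to $\log_{u_{i-1}}u_i$ to the left, and combining the two basic cases, gives $\langle v_n,w\rangle\,{\mathrm L}_\omega\pt_{u_i,u_{i-1}}z$, which is \eqref{eq:StatementParTransS2GeneralVn}. Finally, if $u_i=u_{i-1}$ then nearby parallel transport is the identity and the derivative is $0$, consistent with the formulae since $\omega\to0$ as $d\to0$.

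The step I expect to be the main obstacle is the last mile of the perpendicular case: producing the coefficient $\omega=\tfrac1{\sin d}-\tfrac1{\tan d}$ together with the correct sign. This hinges on the right Jacobi-field normalization entering $\tau'(0)$ and on matching $\pm\omega$ times the generator of $90^\circ$-rotations with the matrix ${\mathrm L}_\omega$ under the stated orientation convention; the Gauss--Bonnet picture is a convenient sanity check for the magnitude. The remaining ingredients --- the cross-product derivatives, the linearity reduction, and the parallel and degenerate cases --- are routine once the orthonormal frame is in place.
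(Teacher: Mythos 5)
Your argument is correct, and your primary route is genuinely different from the paper's. The paper never sets up an ambient orthonormal frame or differentiates a closed-form transport formula: it first converts the covariant derivative into an ordinary derivative in the fixed tangent space $T_{u_{i-1}}\M$ by considering $P^n_t z=\pt_{\exp_{u_{i-1}}tv_n,\,u_{i-1}}\,\pt_{u_{i},\exp_{u_{i-1}}tv_n}z$, observes that $P^n_t-P^n_0$ is exactly the holonomy rotation around the geodesic triangle with vertices $u_i$, $u_{i-1}$, $\exp_{u_{i-1}}(tv_n)$, identifies the rotation angle $\alpha_t$ with the spherical excess $A_t+B_t+C_t-\pi$, and extracts $\omega=\lim_{t\to0}\alpha_t/t=\tfrac{1}{\sin d}-\tfrac{1}{\tan d}$ from the right-triangle identities $A_t=\arctan(\tan t/\sin d)$, $B_t=\arctan(\tan d/\sin t)$, $C_t=\pi/2$. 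In other words, the paper's proof is precisely your ``Gauss--Bonnet cross-check'' (excess equals area since $K\equiv1$) promoted to the main argument and made quantitative; since the pulled-back field differs from the constant vector $\pt_{u_i,u_{i-1}}z$ only by a rotation with $\alpha_0=0$, just $\dot\alpha_0$ survives, which neatly sidesteps the sign- and normalization-bookkeeping you correctly flag as the main obstacle in your frame computation. Your extrinsic route (the closed-form transport in the frame $\{\tau(t),n(t)\}$, the cross-product derivatives, the Jacobi-field value $\tau'(0)=e_2/\sin d$, then projecting) is sound in all its ingredients and would also succeed, but the decisive step of collecting the four terms into $\pm\omega$ times the $90^\circ$-rotation generator is exactly the work the holonomy identity does for free. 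Your handling of the parallel case, the linearity reduction for general $v_n$, and the degenerate case $u_i=u_{i-1}$ coincides with the paper's.
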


\proof The proof is given in  Section \ref{sec:proofs_section4}. 
\endproof

\begin{lem} \label{lem:GradBnPos} 	
	Let $\M$ be the space of symmetric positive definite matrices. 
	Then, the covariate derivative $\frac{D}{dt}|_{t=0} B_t^n$ (which is a tangent vector sitting in $u_{i-1}$) is given by the following sum of matrices
	\begin{equation}
	\tfrac{D}{dt}|_{t=0} B_t^n = (T-\tfrac{1}{2}S) + (T-\tfrac{1}{2}S)^\top,
	\end{equation}
	where $(T-\tfrac{1}{2}S)^\top$ denotes the transpose of the matrix $T-\tfrac{1}{2}S.$ 
    The matrix $S$ is determined in terms of elementary matrix operations 
	of the data (by \eqref{eq:defMatSpos} in the proof of the statement). 
	The matrix $T$ is determined in terms of elementary matrix operations and the solution of a Sylvester equation  
	(by \eqref{eq:defTandX} in the proof of the statement 
	with the Sylvester equation given by \eqref{eq:SyvEqToSolve} there).	 
\end{lem}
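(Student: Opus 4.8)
The plan is to exploit the explicit algebraic form that the Riemannian exponential, logarithm and parallel transport take on the space $\M$ of symmetric positive definite matrices equipped with the affine-invariant (Fisher--Rao) metric $\langle X,Y\rangle_P = \tr(P^{-1}XP^{-1}Y)$: I would differentiate the resulting expression for $B^n_t$ as a map into the ambient Euclidean space of symmetric matrices, and only at the end convert the ambient derivative into the covariant one via the Christoffel operator of the metric. Write $p(t) = \exp_{u_{i-1}}(tv_n)$ for the geodesic determined by $v_n$, so that $p(0) = u_{i-1}$ and $\dot p(0) = v_n$, and recall $z = \log_{u_i}(y_i) \in T_{u_i}\M$. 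Since parallel transport of $W \in T_P\M$ along the (unique) geodesic from $P$ to $Q$ on $\M$ is given by $W \mapsto (QP^{-1})^{1/2}\,W\,\bigl((QP^{-1})^{1/2}\bigr)^\top$ with the principal matrix square root (well defined since $QP^{-1}$ is similar to the positive definite matrix $P^{-1/2}QP^{-1/2}$), one has
\begin{equation}\label{eq:BtnExplicitPos}
B^n_t = E(t)\,z\,E(t)^\top, \qquad E(t) := \bigl(p(t)\,u_i^{-1}\bigr)^{1/2},
\end{equation}
and in particular $B^n_0 = \pt_{u_i,u_{i-1}}(z) = E(0)\,z\,E(0)^\top$ with $E(0) = (u_{i-1}u_i^{-1})^{1/2}$.

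First I would compute the ordinary $t$-derivative of \eqref{eq:BtnExplicitPos} at $t=0$. Differentiating the identity $E(t)^2 = p(t)\,u_i^{-1}$ and using $\dot p(0) = v_n$ shows that $\dot E(0)$ is the unique solution of the Sylvester equation
\begin{equation}\label{eq:SyvEqToSolve}
\dot E(0)\,E(0) + E(0)\,\dot E(0) = v_n\,u_i^{-1},
\end{equation}
which is uniquely solvable because $E(0) = (u_{i-1}u_i^{-1})^{1/2}$ is similar to a positive definite matrix and hence has strictly positive real spectrum. Setting
\begin{equation}\label{eq:defTandX}
T := \dot E(0)\,z\,E(0)^\top,
\end{equation}
the product rule together with $z = z^\top$ and the fact that transposition commutes with the principal square root gives $\tfrac{d}{dt}\big|_{t=0}B^n_t = \dot E(0)\,z\,E(0)^\top + E(0)\,z\,\dot E(0)^\top = T + T^\top$. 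I would also record the matrix
\begin{equation}\label{eq:defMatSpos}
S := v_n\,u_{i-1}^{-1}\,\pt_{u_i,u_{i-1}}(z),
\end{equation}
which involves only products, inverses and one principal square root of the given data, and note that $S^\top = \pt_{u_i,u_{i-1}}(z)\,u_{i-1}^{-1}\,v_n$ since $v_n$, $u_{i-1}$ and $B^n_0$ are symmetric.

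Next I would pass to the covariant derivative along the base curve $p$. For the affine-invariant metric the Levi-Civita connection is $\nabla_X Y = \partial_X Y - \tfrac12\bigl(XP^{-1}Y + YP^{-1}X\bigr)$, so for a vector field $V$ along $p$ one has $\tfrac{D}{dt}V = \dot V - \tfrac12\bigl(\dot p\,p^{-1}V + V p^{-1}\dot p\bigr)$. Evaluating at $t=0$ with $V = B^n_\cdot$, $p(0) = u_{i-1}$, $\dot p(0) = v_n$ and inserting the expressions obtained above gives
\begin{equation}\label{eq:DtBnPosConclusion}
\frac{D}{dt}\Big|_{t=0}B^n_\cdot = (T + T^\top) - \tfrac12\bigl(S + S^\top\bigr) = \bigl(T - \tfrac12 S\bigr) + \bigl(T - \tfrac12 S\bigr)^\top,
\end{equation}
which is the claimed formula, with $S$ determined by \eqref{eq:defMatSpos} and $T$ by \eqref{eq:defTandX} via the Sylvester equation \eqref{eq:SyvEqToSolve}.

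The work here is largely bookkeeping rather than conceptual, and the main obstacle I anticipate is keeping the several conventions mutually consistent: one must fix compatible sign and ordering conventions for the parallel transport map and for the connection, and then verify carefully that the repeated manipulations of the form ``a matrix plus its transpose'' are legitimate, which relies on the symmetry of $z$, $v_n$, $u_{i-1}$ and $B^n_0$ and on transposition commuting with the principal square root. A secondary point to be checked rigorously is the unique solvability of \eqref{eq:SyvEqToSolve}, which is ensured by $E(0)$ having strictly positive real spectrum; once this is in place, the derivation is a short sequence of matrix identities.
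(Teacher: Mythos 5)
Your proof is correct and follows essentially the same route as the paper: an explicit matrix formula for the parallel transport, an ordinary product-rule differentiation in the ambient space with the derivative of the matrix square root obtained from a Sylvester equation, and finally the correction term $-\tfrac12(v_nu_{i-1}^{-1}B^n_0 + B^n_0u_{i-1}^{-1}v_n)$ coming from the Levi-Civita connection of the affine-invariant metric. The only difference is cosmetic — you parametrize the transport as $E(t)\,z\,E(t)^\top$ with $E(t)=(p(t)u_i^{-1})^{1/2}$ rather than the paper's whitened form $u_i^{1/2}\bar\gamma_t^{1/2}\bar z\,\bar\gamma_t^{1/2}u_i^{1/2}$; the two are conjugate by $u_i^{\pm 1/2}$, your Sylvester equation and your $T$ map onto the paper's under this conjugation, and your explicit remark on unique solvability (positive real spectrum of $E(0)$) is a small point the paper leaves implicit.
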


\proof The proof is given in  Section \ref{sec:proofs_section4}. 
\endproof
Summing up, we have computed the derivatives of all building blocks necessary to compute the derivative of 
$F(u_i,u_{i-1},y_i,y_{i-1}) = \dpt([u_i,y_i],[u_{i-1},y_{i-1}])$ for the non-degenerate case $F \neq 0.$ 	
	
\begin{rem} For the bivariate version of the parallel transport based $\MTGV$ realization of Section \ref{sec:axomatic_extension_bivariate},
 we can use the analogue of the decomposition \eqref{eq:tgv_manifoldSchildMultAlgoDecomposition} with $\ds, \dss$ replaced 
 by the corresponding parallel transport versions \eqref{eq:DefDpt} and \eqref{eq:DefPtSym}.
 Then we can use this analogue of the decomposition \eqref{eq:tgv_manifoldSchildMultAlgoDecomposition}
 and apply the CPPA iteration \eqref{eq:iterationCPPA} to this decomposition.
 The proximal mappings $\prox_{\lambda g^{(1)}_{ij}}$ are given by \eqref{eq:tgv_manifoldParallelUnivAlgoProxData}, $\prox_{\lambda g^{(2)}_{ij}},\prox_{\lambda g^{(3)}_{ij}}$ are given by \eqref{eq:tgv_manifoldParallelUnivAlgoProxDist} as for the Schild variant above. The proximal mappings $\prox_{\lambda g^{(4)}_{ij}},\prox_{\lambda g^{(5)}_{ij}}$ 
 are computed as in the univariate case considered in Section~\ref{subsec:AlgParUniv}.
 In order to compute the proximal mappings of the atoms $g^{(6)}_{ij}$ using a subgradient descent,
 it is necessary to differentiate the mapping $\dpts$ of \eqref{eq:DefPtSym} (which is the analogue of $\dss$ for the Schild case) w.r.t. its seven arguments. As in the Schild case, it is possible to decompose the mapping into simpler functions
 which we have already considered in Section~\ref{subsec:AlgParUniv}. We do not carry out the rather space consuming derivation here. 
\end{rem}

\section{Numerical results}\label{sec:numericalResults}

This section provides numerical results for synthetic and real
signals and images. We first describe the experimental setup.

We carry out experiments for $S^1$, $S^2$ and $\Pos_3$ (the manifold of symmetric positive definite $3 \times 3$ matrices equipped with the Fisher-Rao metric) valued data. $S^1$ data is visualized by the phase angle, and color-coded as hue value in the HSV color space when displaying image data. We visualize $S^2$ data
either by a scatter plot on the sphere as in Figure~\ref{fig:PTvsSchild}, or 
by a color coding as in Figure~\ref{fig:S2_synth}.
Data on the $\Pos_3$ manifold is visualized
by the isosurfaces of the corresponding quadratic forms. More precisely, the ellipse visualizing
the point $f_p$ at voxel $p$ are the points $x$ fulfilling $(x-p)^\top f_p (x-p) = c,$ for some $c>0.$

To quantitatively measure the  quality of a reconstruction,
we use the manifold variant of the \emph{signal-to-noise ratio improvement} 
\[
	\deltaSNR = 10 \log_{10} \left(  \frac{\sum_{ij} d(g_{ij}, f_{ij})^2   }{\sum_{ij} d(g_{ij}, u_{ij})^2}\right) \dB,
\]
see \cite{unser2014introduction, weinmann2014total}.
Here $f$ is the noisy data, $g$ the ground truth, and $u$ a regularized  restoration.
A higher $\deltaSNR$ value  means better reconstruction quality.

For adjusting 
the model parameters  $\alpha_0,\alpha_1$ of $\MTGV$,
it is convenient to parametrize them by
\begin{equation} \label{eq:model_parameter_definition}
	\alpha_0 =  r \frac{(1 - s)}{\min(s, 1-s)}, \quad \text{and}\quad \alpha_1 =  r \frac{s}{\min(s, 1-s)},
\end{equation}
so that  $r \in (0, \infty)$ controls the overall regularization strength 
and $s \in (0,1)$ the balance of the two TGV penalties.
For $s \to 0$ we get $\alpha_0 \rightarrow \infty$ and $\alpha_1 = r,$
so that $\TGV$ minimization approximates the minimization of $\TV$ modulo a linear term which can be added at no cost.  For $s \to 1$ we have $\alpha_0 = r$ and $\alpha_1 \rightarrow \infty$ which corresponds to pure second-order TV regularization. 
One may think of $r$ as the parameter mainly depending  on the noise level,
and of $s$ as the parameter mainly depending on the geometry of the underlying image.
Figure~\ref{fig:S1_synth_parameters} illustrates the influence on $r$ and $s$ 
for a synthetic $S^1$-valued image corrupted by von Mises noise with concentration parameter $\kappa = 5.$
There and in most of the following experiments, we observed  satisfactory results for the fixed value $s = 0.3.$
Based on these observations, we suggest to use $s = 0.3$ as a starting point,
to decrease or increase it if the image is dominated by edges or smooth parts, respectively.

We have implemented the presented methods in Matlab 2016b.
We use the toolbox MVIRT \cite{bavcak2016second} for the basic manifold operations such as log, exp and parallel transport, and for parts of the visualization\footnote{Implementation available at \url{https://github.com/kellertuer/MVIRT}}.
We used 100000 iterations for all experiments with univariate data and 1000 iterations for the image data.
The cooling sequence $(\lambda^k)_{k \in\N}$ used as stepsize in the gradient descent for computing the non-explicit proximal mappings was chosen as $\lambda^k = \lambda^0 k^{-\tau}$
with $\tau = 0.55.$
For the univariate spherical data we observed faster convergence
when using a stagewise cooling, i.e., letting the sequence fixed to $\lambda_0$ for $500$ iterations in the first stage, use the moderate cooling $\tau = 0.35$ in the second stage until iteration $1000$ and then the cooling $\tau = 0.55$ afterwards.

 \begin{figure}[!p]
\def\figfolder{experiments/parameters_syntheticImage/}
\begin{subfigure}[c]{3ex}
\rotatebox[origin=c]{90}{Original and noisy}
\end{subfigure}
\def\figurewidth{0.22\textwidth}
\def\hs{\hspace{1ex}}
\begin{subfigure}[c]{\figurewidth}
\includegraphics[width=1\textwidth]{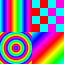}
\end{subfigure}
\hs
\begin{subfigure}[c]{\figurewidth}
\includegraphics[width=1\textwidth]{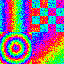}
\end{subfigure}
\\[3ex]
\foreach \j in {1,...,4}{
	\begin{subfigure}[c]{3ex}
	\rotatebox[origin=c]{90}{$s = \protect\input{\figfolder TGV_s-i1-j\j.txt}$}
	\end{subfigure}
	\foreach \i in {1,...,4}{
	\hs
	\begin{subfigure}[c]{\figurewidth}
	\centering
	\includegraphics[width=1\textwidth]{\figfolder exp_synthImage_TGVM-i\i-j\j.png}
	\end{subfigure}
	}
	\\[1ex]
}
\begin{subfigure}[c]{3ex}
\phantom{x}
\end{subfigure}
\foreach \i in {1,...,4}{
		\hs
		\begin{subfigure}[c]{\figurewidth}
		\centering
		$r = \protect\input{\figfolder TGV_r-i\i-j1.txt}$ 
	\end{subfigure}
}
\caption{
Effect of the model parameters of S-TGV using the parametrization by $r >0 $ and $s \in (0,1)$ according to \eqref{eq:model_parameter_definition} for an $S^1$-valued image.
A higher value of $r$ results in a stronger smoothing.
For small values of $s,$ the edges are well-preserved but some staircasing effects appear. For high values of $s,$ the linear trends are recovered but the edges are smoothed out. When using an intermediate value such as $s =0.3,$ we observe a combination of  positive effects of $\TV$ and $\TV^2$ regularization: rather sharp edges and good recovery of linear trends.
}
\label{fig:S1_synth_parameters}
\end{figure}

\subsection{Numerical evaluation of the proposed models}

\begin{figure}[!t]
\centering
\def\figfolderA{experiments/compare_Shild_PT_S1/}
\def\figfolderB{experiments/compare_Shild_PT_S2_2/}
\def\figfolderC{experiments/compare_Shild_PT_Pos3_4/}
\def\hs{\hfill}
\def\vs{\vspace{0.03\textwidth}}
\def\figurewidth{0.32\textwidth}
\def\figurewidthB{0.25\textwidth}
\def\figurewidthC{0.3\textwidth}
\begin{subfigure}[t]{\textwidth}
\centering
\includegraphics[width=\figurewidth]{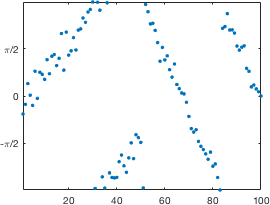}
\hs
\includegraphics[width=\figurewidth]{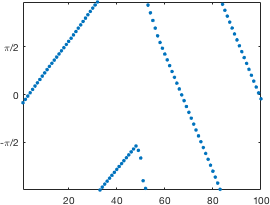} 
\hs
\includegraphics[width=\figurewidth]{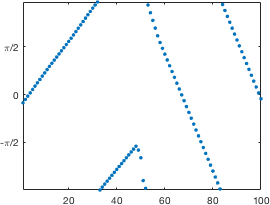}
\caption{$S^1$-valued signal, visualized by phase angle.}
\end{subfigure}
\\[0ex]
\begin{subfigure}[t]{\textwidth}
\centering
\includegraphics[width=\figurewidthB, trim={5cm 2cm 5cm 1cm}, clip]{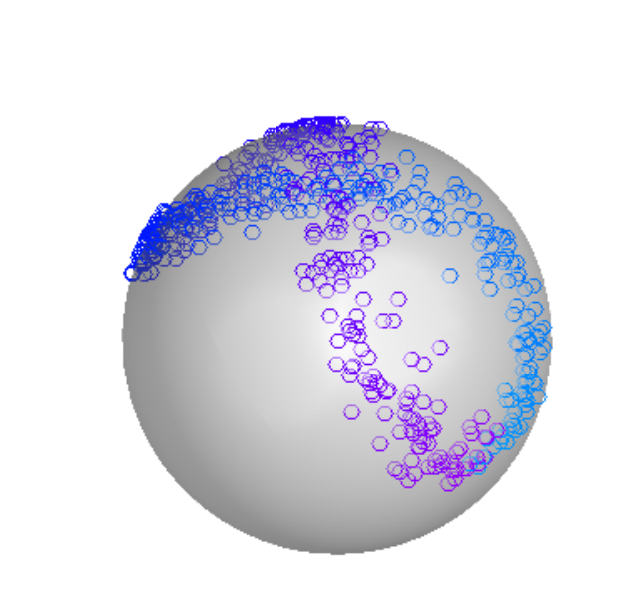}
\hs
\includegraphics[width=\figurewidthB, trim={5cm 2cm 5cm 1cm}, clip]{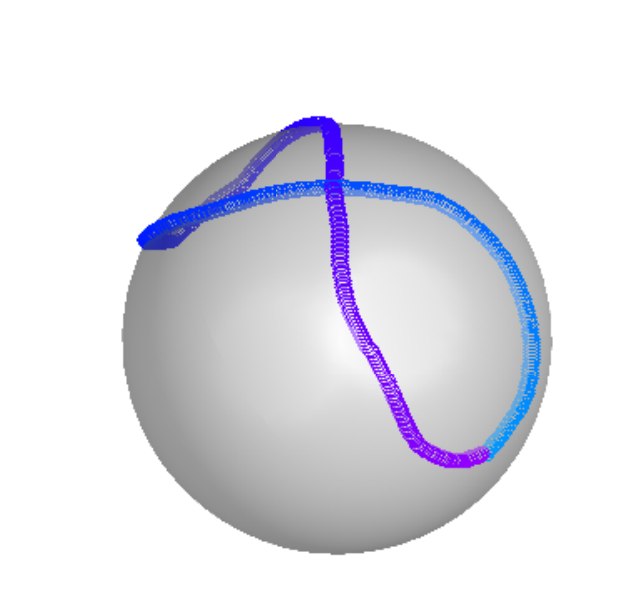} 
\hs
\includegraphics[width=\figurewidthB, trim={5cm 2cm 5cm 2cm}, clip]{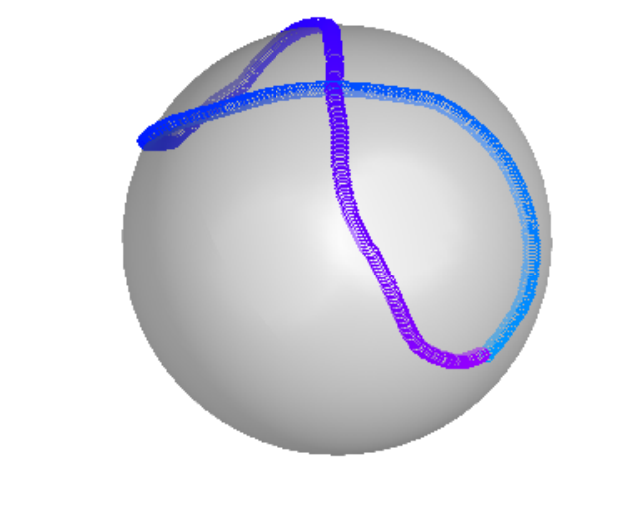}
\caption{$S^2$-valued signal, visualized as scatter plot on the sphere.} 
\end{subfigure}\\[8ex]
\begin{subfigure}[t]{\textwidth}
\centering
\includegraphics[width=\figurewidthC]{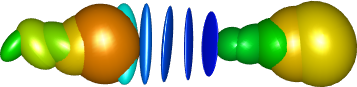}
\hs
\includegraphics[width=\figurewidthC]{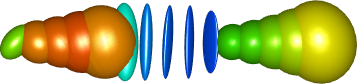}
\hs
\includegraphics[width=\figurewidthC]{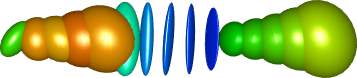}\\[2ex]
\caption{$\Pos_3$-valued signal, visualized as ellipsoids. 
}
\end{subfigure}
\\[2ex]
\begin{subfigure}[t]{\textwidth}
\end{subfigure}
	\caption{
	Comparison of parallel transport and Schild variant of $\MTGV$ for various data spaces.
	The subfigures show the noisy data  \emph{(left)}, 
	the result of the Schild variant S-TGV \emph{(center)},
	and the result of the parallel transport variant PT-TGV \emph{(right)}.}
\label{fig:PTvsSchild}
\end{figure}

\begin{figure}
\def\figurewidth{1\textwidth}
\tikzstyle{myspy}=[spy using outlines={gray,lens={scale=2.5},width=0.15\textwidth, height=0.35\textwidth, connect spies, every spy on node/.append style={thick}}]

\centering
\def\subfigwidth{0.48\textwidth}
\def\nodeSpy{(2.8, -2.3)}
\def\nodeWindow{(0.5,-1.3)}
\def\nodeSpyB{(-0.8, 2.1)}
\def\nodeWindowB{(-1.7,1.7)}

\begin{subfigure}[t]{\subfigwidth}
	\begin{tikzpicture}[myspy]
	\node{\includegraphics[width=\figurewidth]{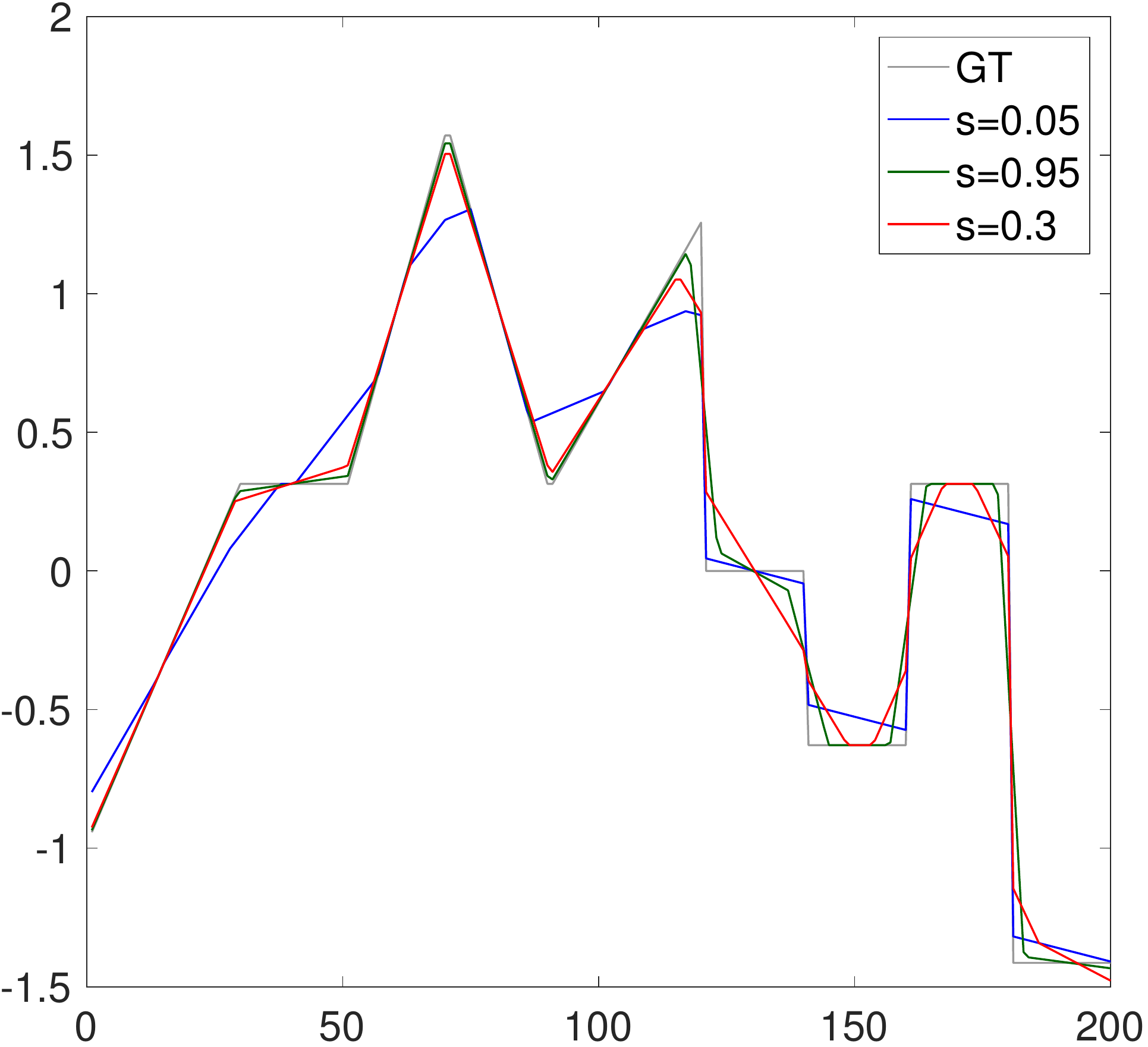}};
	\spy on \nodeSpy in node [left] at \nodeWindow;
	\spy on \nodeSpyB in node [left] at \nodeWindowB;
\end{tikzpicture}
\end{subfigure}
\hfill
\begin{subfigure}[t]{\subfigwidth}
	\begin{tikzpicture}[myspy]
	\node{\includegraphics[width=\figurewidth]{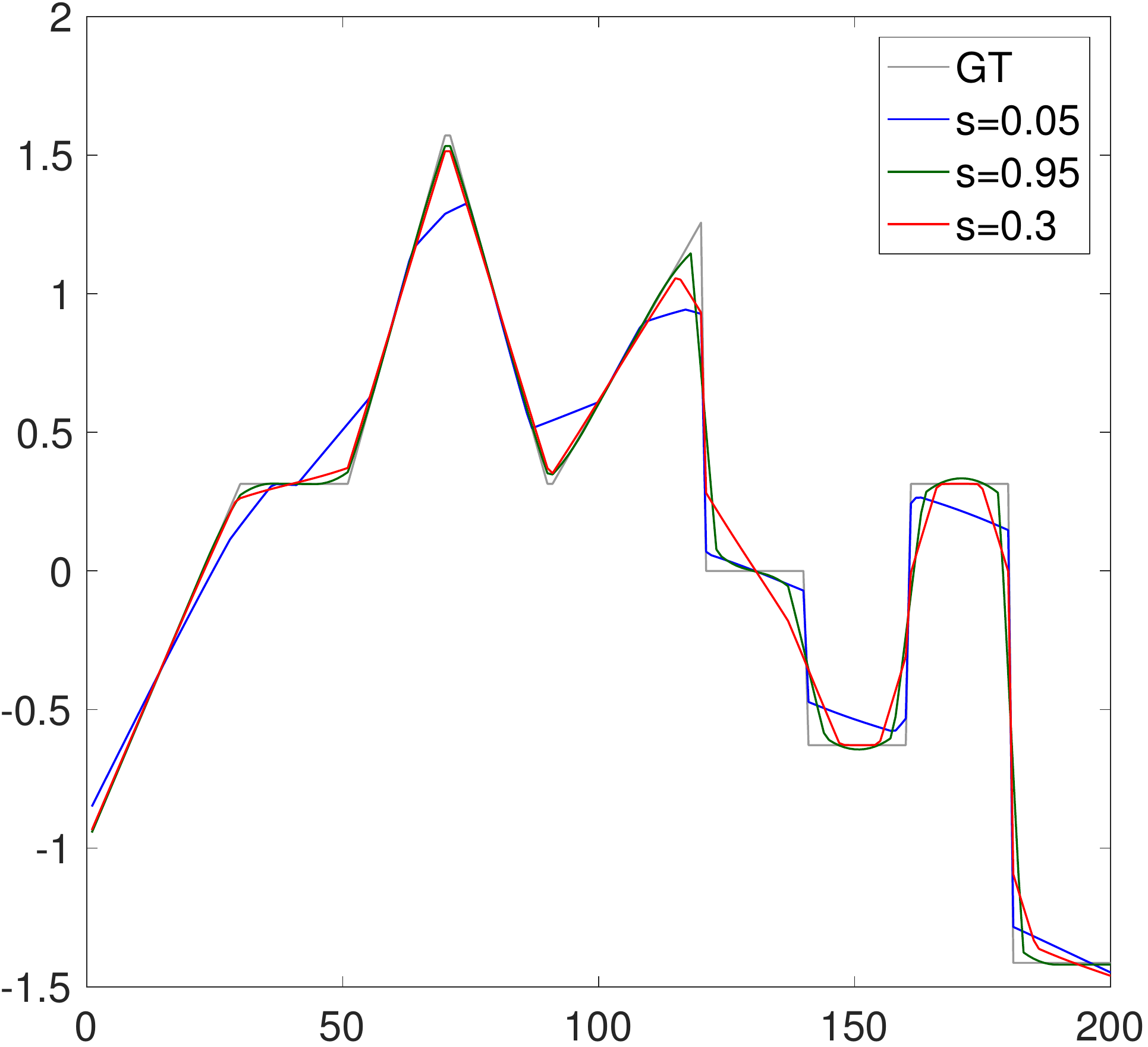}};
	\spy on \nodeSpy in node [left] at \nodeWindow;
	\spy on \nodeSpyB in node [left] at \nodeWindowB;
\end{tikzpicture}
\end{subfigure}
\caption{\label{fig:man_vec_comparison} Comparison of the approximate vector-space ground truth and the result obtained with the manifold TGV implementation for univariate $S^ 1$ data and different ratios of $s$. \emph{Left:} Approximate ground truth obtained with reference vector space implementation. \emph{Right:} Result with TGV manifold code. 
}
\end{figure}

\paragraph{Comparison between Schild variant and parallel transport variant.} First we compare the two proposed realizations of manifold TGV: the parallel transport variant
and the Schild variant.
Figure~\ref{fig:PTvsSchild} shows the results with both variants
for some typical univariate signals for the $S^1,$ the $S^2$ and the $\Pos_3$ manifold.
(The $S^1$-valued signal was
corrupted by von Mises noise with concentration parameter $\kappa = 10.$
The $S^2$-valued signal was taken from \cite{bavcak2016second}
and corrupted by applying the exponential mapping of Gaussian distributed tangent vectors as in \cite{bavcak2016second}
with $\sigma = 0.1.$ The $\Pos_3$-valued signal was corrupted
by applying the exponential
mapping of Gaussian noise distributed on the tangent vectors with $\sigma = 0.2.$
The $r$-parameters were chosen as $r= 1$ for the spherical signals and as $r = 0.2$
for the $\Pos_3$ signal. In all cases $s = 0.3$.)
The results for the spherical data appear very similar.
For the $\Pos_3$ manifolds, 
there are slight visible differences at some points, e.g. near the discontinuity of the second to fourth position of the  signal and at the last position.
In summary, we observe a qualitatively similar result
in the sense that the geodesic parts are well reconstructed and that the edges are preserved.
In the following, we focus on the Schild variant.

\paragraph{Comparison of manifold TGV with vector-space TGV.}
In order to validate both our generalizations of TGV to the manifold setting as well as numerical feasibility of our optimization algorithms, we carry out a comparison to the vector-space case. This is done for $S^1$, the unit sphere in $\R^2$. By generating a signal with values that are strictly contained in one hemisphere, we can unroll the signal and compare to vector-space TGV-denoising in a way that the same results can be expected. We carried out this comparison for synthetic data without noise and different values of the balancing parameter $s$. 
We tested the setting of $s=0.3$, which comprises a good balance between first and second order terms, as well as the rather extreme settings of $s=0.05$ and $s = 0.95$. The regularization parameter $r$ was fixed to $r=1$.
In order to approximate the ground-truth solution of second-order TGV denoising in the vector space setting, we implemented the Chambolle-Pock algorithm \cite{Chambolle11} for this situation. To ensure a close approximation of the ground truth for all parameter settings, we carried out a dedicate stepsize tuning to accelerate convergence of the algorithm, computed $2\times 10^{5}$ iterations and ensured optimality by measuring the duality gap.

The result of this evaluation can be found in Figure \ref{fig:man_vec_comparison}. It can be observed that the qualitative properties of the numerical solution obtained with the manifold-TGV code are similar to the ones of the approximate ground-truth of the vector space setting, confirming overall feasibility of our model and implementation.

For the case $s = 0.05$, one can see in particular in the right part of the bottom plot (starting with the first plateau after 180 on the x-axis), that the solution is piecewise constant up to a linear part, which is approximately the same for all four plateaus. This is what one would expect for the extreme case $s\rightarrow 0$, as in this case, $\TGV$ minimization mimics TV minimization up to a globally linear term. On the other hand, for the case $s = 0.95$, one can see that the solution is piecewise linear with no jumps. This can be expected from second-order TV minimization, which coincides with second order $\TGV$ minimization for $s \rightarrow 1$. In particular, the piecewise linear part on the left is approximated well, while the plateaus on the right are not well captured. The case $s=0.3$ provides a good compromise here: The linear parts on the left are still well captured, but the solution still admits jumps on the plateau parts, as can be seen in particular outermost right jump after point. 

We also note that there are still some differences of the solution obtained with the manifold code and the Chambolle-Pock result interpreted as approximate ground truth of the vector space case, in particular for the cases $s=0.05$ and $s=0.95$. We believe that this is mostly an issue of the algorithmic realization rather than the model itself caused by the numerical solutions obtained for the more extreme cases $s=0.05$ and $s=0.95$.

\paragraph{Basic situations.} The aim of this experiment is to investigate the performance of the proposed manifold-TGV model for different basic situations and noise levels. The experiment is carried out on a univariate signal on the two-sphere $S^2$, that comprises jumps in the signal as well as its derivative and is composed of piecewise constant and geodesic signals.

The results of the experiment for different noise levels (no noise, intermediate noise, strong noise), a fixed value of $r=1$ and different values of $s$, namely $s \in \{ 0.05,0.95,0.3\}$, can be found in Figure \ref{fig:s2_basic_situations}. As can be seen there, the manifold-TGV functional regularizes the data quite well and is able to achieve good results, even in the case with relatively strong noise where it is hard to see any structure in the data. As in the previous experiment, the choice $s=0.05$ promotes piecewise constant solutions, which is naturally best for regions where the signal is piecewise constant, but leads to ``staircasing'' in smooth regions, as can be particularly seen in the geodesic parts of the strong-noise case. The choice $s=0.95$ approximates geodesics well, but does not allow for jumps and also produces some oscillations in the smooth parts of the case with strong noise. Again the choice $s=0.3$ is a good compromise. Even though it does not reconstruct jumps as well as the TV-like version, it still allows for jumps and reconstructs the geodesic parts rather accurately. 

\begin{figure} 
\centering
\includegraphics[width=0.24\linewidth]{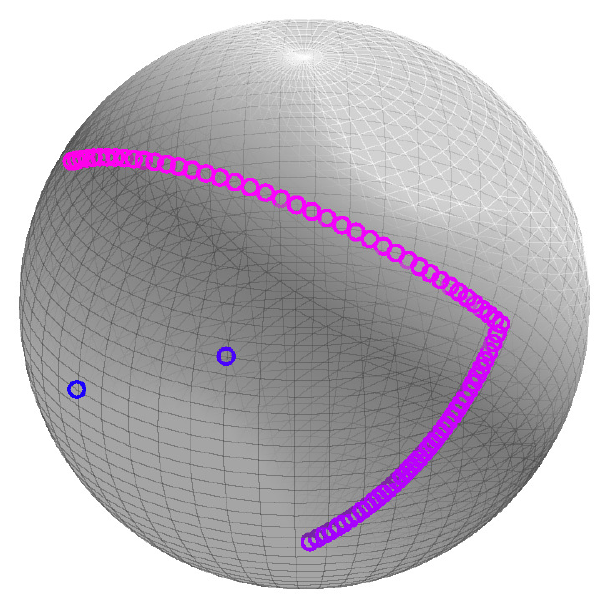}
\includegraphics[width=0.24\linewidth]{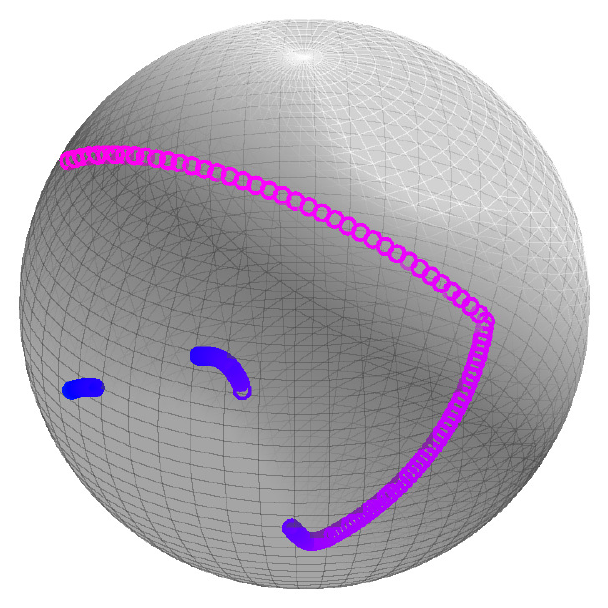}
\includegraphics[width=0.24\linewidth]{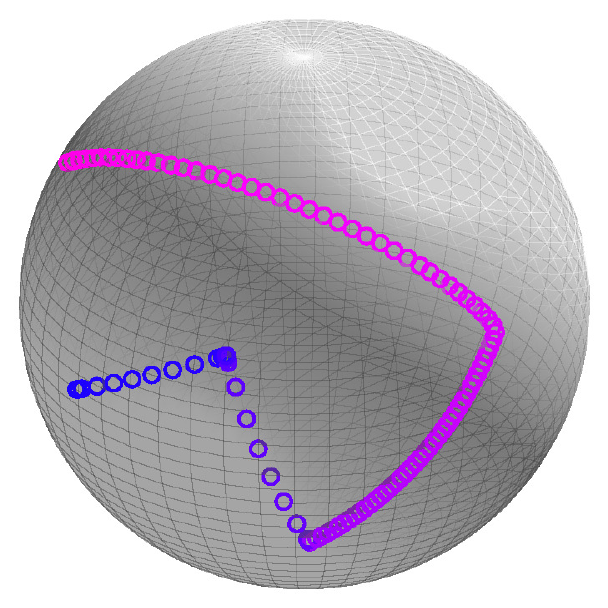}
\includegraphics[width=0.24\linewidth]{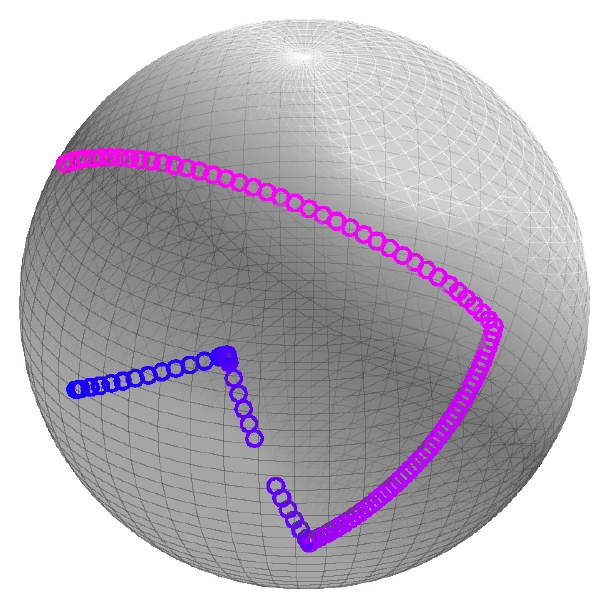}
 
\includegraphics[width=0.24\linewidth]{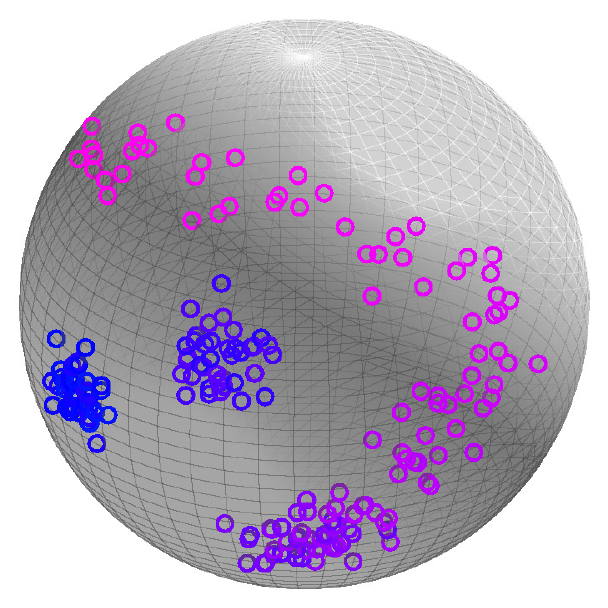}
\includegraphics[width=0.24\linewidth]{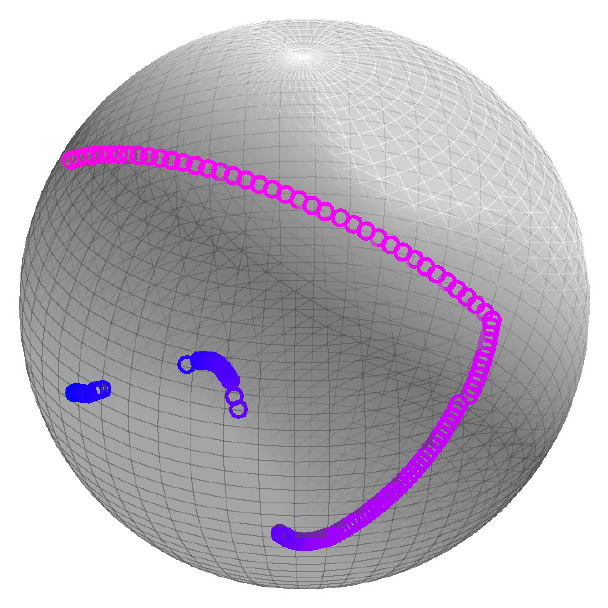}
\includegraphics[width=0.24\linewidth]{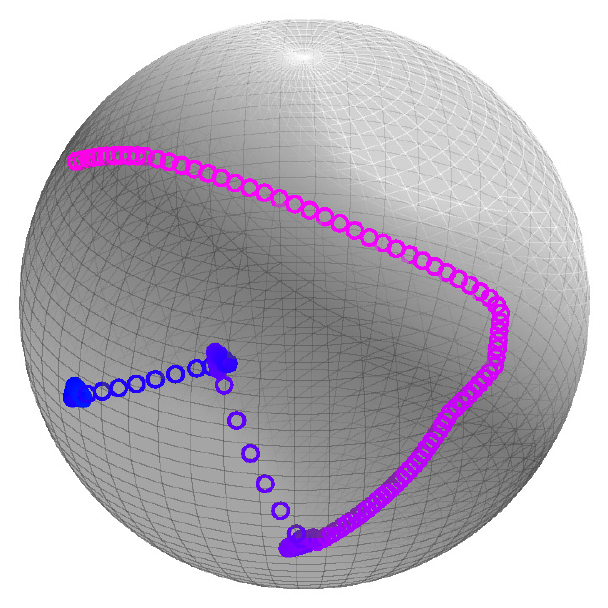}
\includegraphics[width=0.24\linewidth]{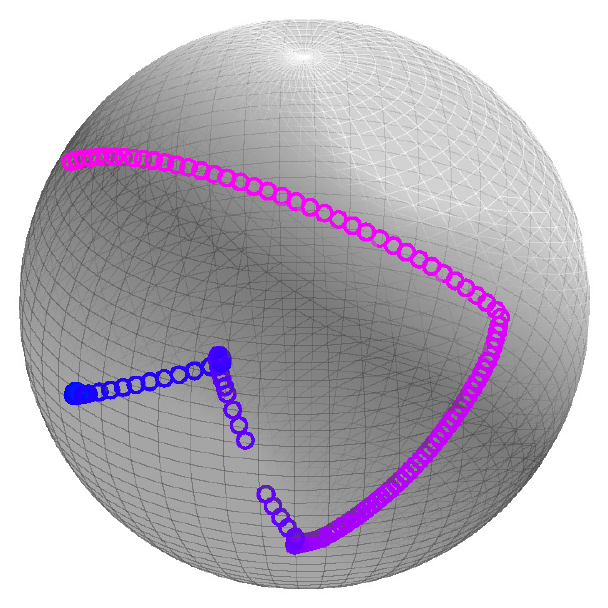}
 
\includegraphics[width=0.24\linewidth]{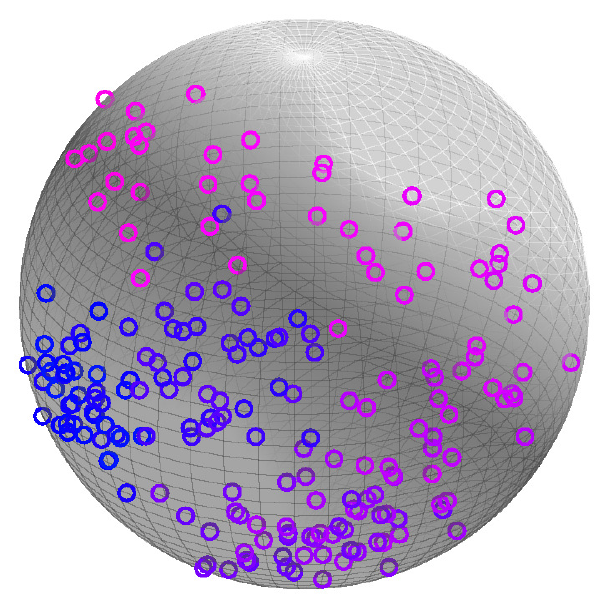}
\includegraphics[width=0.24\linewidth]{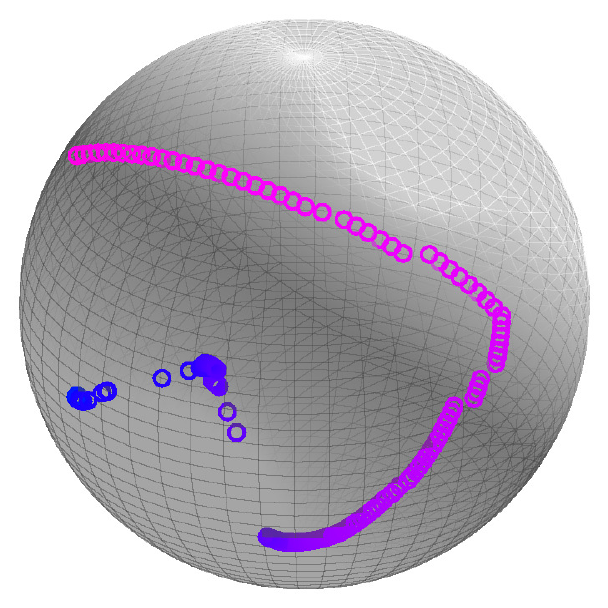}
\includegraphics[width=0.24\linewidth]{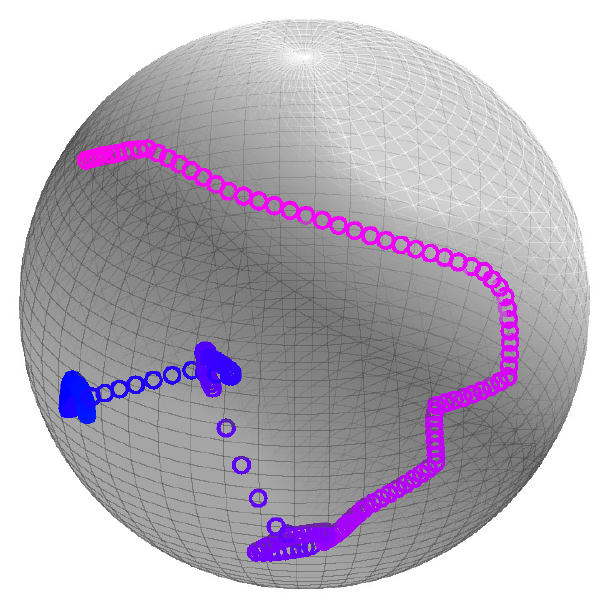}
\includegraphics[width=0.24\linewidth]{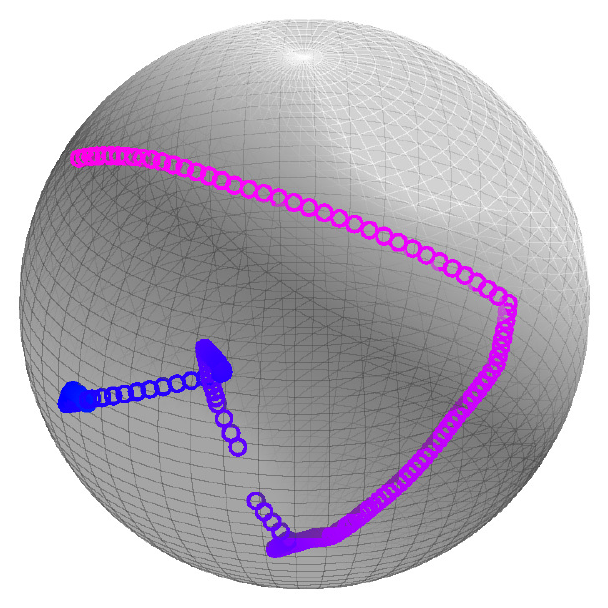}
\caption{\label{fig:s2_basic_situations} Results for univariate $S^2$ data. \emph{Top}: Ground truth data and reconstruction. \emph{Middle}: Data with intermediate noise and reconstruction. \emph{Bottom}: Data with strong noise and reconstruction. \emph{From left to right}: Data, $\STGV$ regularized reconstruction for $s = 0.05$, $s = 0.95$, $s=0.3$, respectively. The color gradation indicates the ordering of the signal.}
\end{figure}

\subsection{Results for synthetic  data}

Next we investigate the denoising performance of $\MTGV$
on synthetic images.
We compare the results of the proposed algorithm with
the results of manifold TV regularization \cite{weinmann2014total}, 
and with the result of second-order manifold TV, denoted by $\TV^2$ \cite{bavcak2016second}.
The model parameter of first-order TV is denoted by $\alpha$
and that of $\TV^2$ by $\beta.$
For all algorithms, $1000$ iterations were used.

First we look at $S^2$-valued images.
As in~\cite{bavcak2016second}, the noisy data $f$ were created from the original image $g$ by
$f_{ij} = \exp_{g_{ij}} \eta_{ij}$ 
where  $\eta_{ij}$ is a tangent vector at $g_{ij}$
and both its  components are Gaussian distributed
with standard deviation $\sigma = \frac{4}{45}\pi.$
For comparison with first order TV 
we scanned the parameter range $\alpha = 0.1, 0.2, \ldots, 1$
and the special parameters $\alpha = 0.22$ and $\alpha = 3.5 \cdot 10^{-2}$
given in~\cite{bavcak2016second} and the corresponding implementation.
Similarly, for $\TV^2$ we scanned the parameter range $\beta = 1, 5, 10, 15 \ldots, 30,$
and the  special parameters $\beta = 8.6$ and $\beta = 29.5$ suggested in~\cite{bavcak2016second}. 
As no beneficial effect of combining first and second-order TV was observed
in \cite{bavcak2016second}, we used pure $\TV$ and $\TV^2$ regularization.
For the proposed method, we fixed $s=0.3$ and report 
the best result among the six parameters $r = 0.05, 0.1, 0.15, \ldots, 0.3.$ 
The results in  Figure~\ref{fig:S2_synth} show that the second-order methods
 give a significantly smoother result than first order TV
 and that they do not suffer from staircasing effects.
On the flipside, $\TV^2$ regularization results in an undesired smoothing effect of the edges between the tiles,
best seen at the bottom left tile.
The proposed TGV regularization  preserves these edges
which results in an improved reconstruction quality.

 \begin{figure}
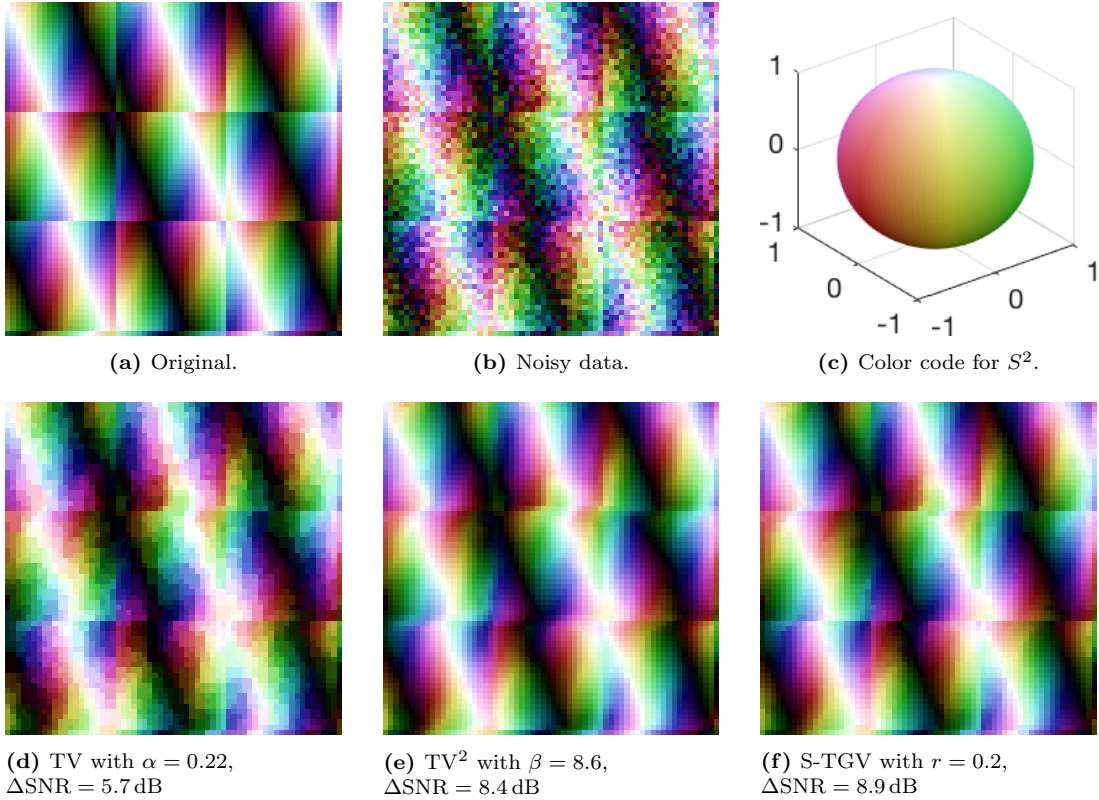

\centering
\def\figfolder{experiments/compare_with_TV_2D_S2_5/}
\def\figurewidth{0.3\textwidth}
\def\hs{\hspace{0.03\textwidth}}
\begin{subfigure}[t]{\figurewidth}
\includegraphics[width=1\textwidth]{\figfolder exp_synth_S2_original}
\caption{Original.}
\end{subfigure}
\hs 
\begin{subfigure}[t]{\figurewidth}
\includegraphics[width=1\textwidth]{\figfolder exp_synth_S2_noisy}
\caption{Noisy data.}
\end{subfigure}
\hs 
\begin{subfigure}[t]{\figurewidth}
\includegraphics[width=1\textwidth]{\figfolder exp_synth_S2_colormap}
\caption{Color code for $S^2$.}
\end{subfigure}
\\[2ex]
\begin{subfigure}[t]{\figurewidth}
\includegraphics[width=1\textwidth]{\figfolder exp_synth_S2_TV}
\caption{TV with $\alpha = \protect\input{\figfolder TV_alpha.txt},$ \\
$\deltaSNR = \protect\input{\figfolder deltaSNR_TV.txt} \dB$ }
\end{subfigure} 
\hs
\begin{subfigure}[t]{\figurewidth}
\includegraphics[width=1\textwidth]{\figfolder exp_synth_S2_TV2}
\caption{$\TV^2$ with 
$\beta = \protect\input{\figfolder TV2_beta.txt},$ \\
$\deltaSNR = \protect\input{\figfolder deltaSNR_TV2.txt} \dB$ }
\end{subfigure} 
\hs
\begin{subfigure}[t]{\figurewidth}
\includegraphics[width=1\textwidth]{\figfolder exp_synth_S2_TGVM}
\caption{S-TGV with 
$r = \protect\input{\figfolder TGV_r.txt},$ \\
$\deltaSNR = \protect\input{\figfolder deltaSNR_TGV.txt} \dB$}
\end{subfigure}
	\caption{Comparison of S-TGV with first and second-order total variation 
	on an $S^2$-valued image from~\cite{bavcak2016second}. The spherical values are visualized according 
	to the scheme (c) which means that the north pole is white, the south pole is black 
	and the equator gets hue values according to its longitude.
	}
\label{fig:S2_synth}
\end{figure}

Next we look at $\Pos_3$-valued images.
Such images appear naturally in diffusion tensor imaging (DTI),
 so we briefly describe the setup.
 DTI is a medical imaging modality
 based on diffusion weighted magnetic resonance images (DWI)
which for example allows to reconstruct fiber tract orientations \cite{basser1994mr}.
A DWI captures the diffusivity of water molecules with respect to a direction $v \in \R^3.$
The relation between a diffusion tensor $f(p)$ and the DWIs $D_v(p)$ at the voxel $p$ is given by the Stejskal-Tanner equation
\begin{equation} \label{eq:StejTan}
   D_v(p) =  A_0 e^{- b \ v^\top f(p) v} 
\end{equation}
with  constants $b,A_0>0.$
A standard noise model for the DWIs is the Rician model
which assumes a complex-valued Gaussian noise in the original frequency domain measurements \cite{basu2006rician}.
This means that assuming the model \eqref{eq:StejTan} the actual measurement in direction $v$ at pixel $p$ is given by 
$
D'_v(p) = ((X+ D_v(p))^2 + Y^2)^{1/2},$ with the Gaussian random variables $ X,Y \sim N(0,\sigma^2).$ 
Building on this model, we impose the noise as follows.
From the synthetic tensor image, we  compute  DWIs according to \eqref{eq:StejTan} with respect to $21$ different directions.
Then we impose Rician noise to all derived DWIs,
and we estimate the corresponding diffusion tensor image $f$
using a least squares approach on \eqref{eq:StejTan}.
Due to the noise, some of the fitted tensors might not be positive definite
and thus have no meaningful interpretation.
To handle such invalid tensor
we exclude them from the data fidelity term.
On the other hand, we keep the corresponding pixels 
in the regularizing term so that we achieve a reasonable inpainting.
In Figure~\ref{fig:Pos3_synth}, we illustrate the  effects of the regularization for a synthetic $\Pos_3$-valued image. 
For comparison with (combined) first and second-order TV,
we scanned all combinations of the parameters $\alpha = 0,   0.01, 0.035,    0.1, 0.3125,  0.375,$
and $\beta =   0.02, 0.05, 0.125,  0.625.$
As before, this comprises the parameters used in \cite{bavcak2016second} and the corresponding implementation for that manifold.
For the proposed method, we report the best result among the  possible  combinations of the parameters $r = 0.06, 0.08,0.09, 0.1, 0.12$ and $s = 0.3, 0.35, 0.4, 0.45.$
In the example we observe that combined $\TV$/$\TV^2$ regularization yields a similar reconstruction quality as pure first order $\TV,$
whereas S-TGV gives a significantly higher reconstruction quality.

\begin{figure}
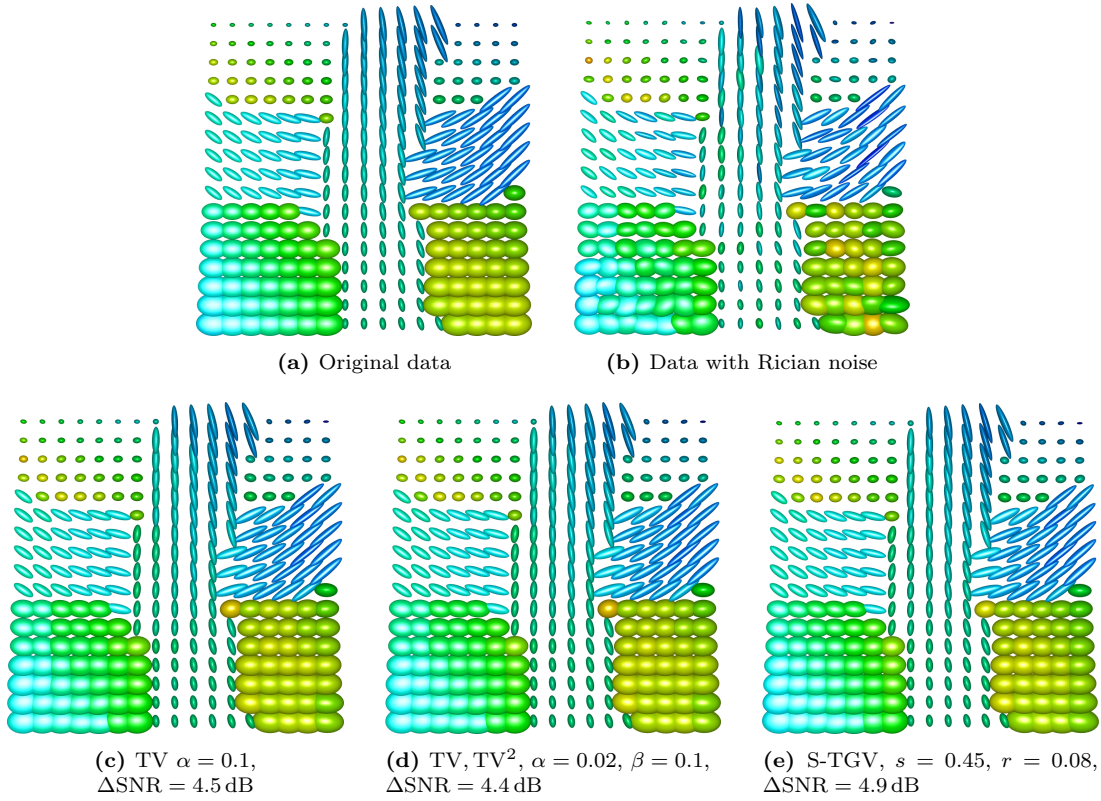

\centering
\def\figfolder{experiments/compare_with_TV_2D_Pos3_BZ4_2/}
\def\figurewidth{0.3\textwidth}
\def\hs{\hspace{0.03\textwidth}}
\begin{subfigure}[t]{\figurewidth}
\includegraphics[width=1\textwidth]{\figfolder exp_synth_pos3_original}
\caption{Original data}
\end{subfigure}
\hs 
\begin{subfigure}[t]{\figurewidth}
\includegraphics[width=1\textwidth]{\figfolder exp_synth_pos3_noisy}
\caption{Data with Rician noise}
\end{subfigure}
\\[2ex]
\begin{subfigure}[t]{\figurewidth}
\includegraphics[width=1\textwidth]{\figfolder exp_synth_pos3_TV}
\caption{$\TV$ 
$\alpha = \protect\input{\figfolder TV_alpha.txt},$ \\
$\deltaSNR = \protect\input{\figfolder deltaSNR_TV.txt} \dB$ }
\end{subfigure} 
\hs
\begin{subfigure}[t]{\figurewidth}
\includegraphics[width=1\textwidth]{\figfolder exp_synth_pos3_TV_TV2}
\caption{$\TV, \TV^2,$  
$\alpha = \protect\input{\figfolder TV_TV2_alpha.txt},$ 
$\beta = \protect\input{\figfolder TV_TV2_beta.txt},$ \\
$\deltaSNR = \protect\input{\figfolder deltaSNR_TV_TV2.txt} \dB$ }
\end{subfigure} 
\hs
\begin{subfigure}[t]{\figurewidth}
\includegraphics[width=1\textwidth]{\figfolder exp_synth_pos3_TGVM}
\caption{$\operatorname{S-TGV},$ $s = \protect\input{\figfolder TGV_s.txt},$ $r = \protect\input{\figfolder TGV_r.txt},$
$\deltaSNR = \protect\input{\figfolder deltaSNR_TGV.txt} \dB$}
\end{subfigure}
	\caption{%
	Denoising results for a synthetic $\Pos_3$-valued image corrupted by Rician noise.
	Combined $\TV/\TV^2$-regularization yields a similar reconstruction quality as first order $\TV,$
	whereas S-TGV gives a significantly higher reconstruction quality.
	}
\label{fig:Pos3_synth}
\end{figure}

\subsection{Results for real data}

We illustrate the effects of TGV regularization on real manifold-valued signals and images.

First we look at smoothing a time series of wind directions.
The natural data space for a signal of orientations is the unit circle $S^1.$
The present data shows wind directions 
recorded every hour in 2016 by the weather station SAUF1, St.~Augustine, FL.; see Figure~\ref{fig:wind}\footnote{Data available at \url{http://www.ndbc.noaa.gov/historical_data.shtml}.}.
We observe that the proposed method smoothes the orientations 
while respecting the phase jump from $-\pi$ to $\pi$
and preserving linear trends in the data.

\begin{figure}
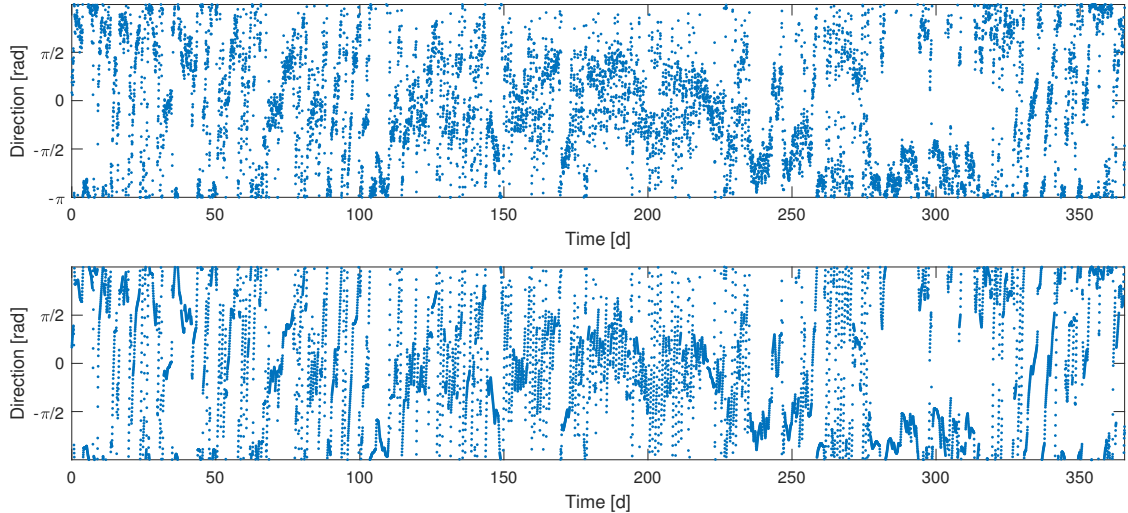

\centering
	\def\figfolder{experiments/S1_wind_directions_rev/}
		\def\figurewidth{1\textwidth}
\def\plottitle{}
\includegraphics[width=\figurewidth]{\figfolder expTVCircWindData} \\[1ex]
\includegraphics[width=\figurewidth]{\figfolder expTVCircWindResult}\\[-1ex] 
	\caption{\emph{Top:} Hourly wind directions 
	at weather station SAUF1 (St.~Augustine, FL) in the year  2016.
	\emph{Bottom:} S-TGV ($s = \protect\input{\figfolder TGV_s.txt},$ $r = \protect\input{\figfolder TGV_r.txt}$)
	smoothes the signal while not over-smoothing jump discontinuities and properly dealing with the phase jump from $-\pi$ to $\pi$
 and	preserving linear trends in the data.
	}
\label{fig:wind}
\end{figure}

Next we look at smoothing of interferometric synthetic aperture radar (InSAR) images.
Synthetic aperture radar (SAR) is a radar technique for
sensing the earth's surface from measurements taken by aircrafts or satellites. 
InSAR images consist of the phase difference between two SAR images,
recording a region of interest either from two different angles of view or at two different points in time.
Important  applications of InSAR are the creation of accurate digital elevation models and the detection of terrain changes; cf.~\cite{massonnet1998radar,rocca1997overview}.
As InSAR data consists of phase values 
that are defined modulo $2\pi,$ their natural data space is the unit circle.
In Figure  \ref{fig:InSAR}, we illustrate the effect of S-TGV to an InSAR image taken from 
\cite{thiel1997ers}\footnote{Data available at \url{https://earth.esa.int/workshops/ers97/papers/thiel/index-2.html}.}.
We observe a clear smoothing effect while sudden phase changes are preserved.

\begin{figure}
\centering
\def\figfolder{experiments/InSAR_3/}

\centering
\def\figurewidth{0.7\textwidth}

\tikzstyle{myspy}=[spy using outlines={black,lens={scale=5},width=0.35\textwidth, height=0.31\textwidth, connect spies, every spy on node/.append style={thick}}]
\centering
\def\subfigwidth{0.45\textwidth}
\def\nodeSpy{(0.4, 0.75)}
\def\nodeSpyB{(0.5, -0.7)}
\def\nodeWindow{(4.8,1.15)}
\def\nodeWindowB{(4.8,-1.15)}

\begin{subfigure}[t]{\subfigwidth}
	\begin{tikzpicture}[myspy]
	\node {\includegraphics[interpolate=false,width=\figurewidth]{\figfolder exp_InSAR_original}};
	\spy on \nodeSpy in node [left] at \nodeWindow;
	\spy on \nodeSpyB in node [left] at \nodeWindowB;
\end{tikzpicture}
\end{subfigure}
\hfill
\begin{subfigure}[t]{\subfigwidth}
	\begin{tikzpicture}[myspy]
	\node {\includegraphics[interpolate=false,width=\figurewidth]{\figfolder exp_InSAR_TGVM}};
	\spy  on \nodeSpy in node [left] at \nodeWindow;
	\spy on \nodeSpyB in node [left] at \nodeWindowB;
	\end{tikzpicture}
\end{subfigure}
	\caption{
	\emph{Left:} InSAR image from \cite{thiel1997ers}. 
	\emph{Right:} Result of S-TGV with $r = \protect\input{\figfolder TGV_r.txt},$ $s = \protect\input{\figfolder TGV_s.txt}.$
	The image is smoothed and sharp edges are preserved.
	}
\label{fig:InSAR}
\end{figure}

At last we consider real DTI data
which was taken from the Camino project \cite{cook2006camino}\footnote{Data available at \url{http://camino.cs.ucl.ac.uk/}}.
In Figure~\ref{fig:camino}, we see an axial slice  of a human brain
{\color{blue} (slice 20 of the Camino data).}
We also display a zoom to the corpus callosum
which connects the right and the left hemisphere.
The original tensors were computed from the diffusion weighted images by a least squares method
based on the Stejskal-Tanner equation.
We observe that the  proposed method 
smoothes the tensors in the corpus callosum 
while it preserves the sharp edges to the 
adjacent tissue.
Also observe the inpainting of the invalid tensors.

\begin{figure}
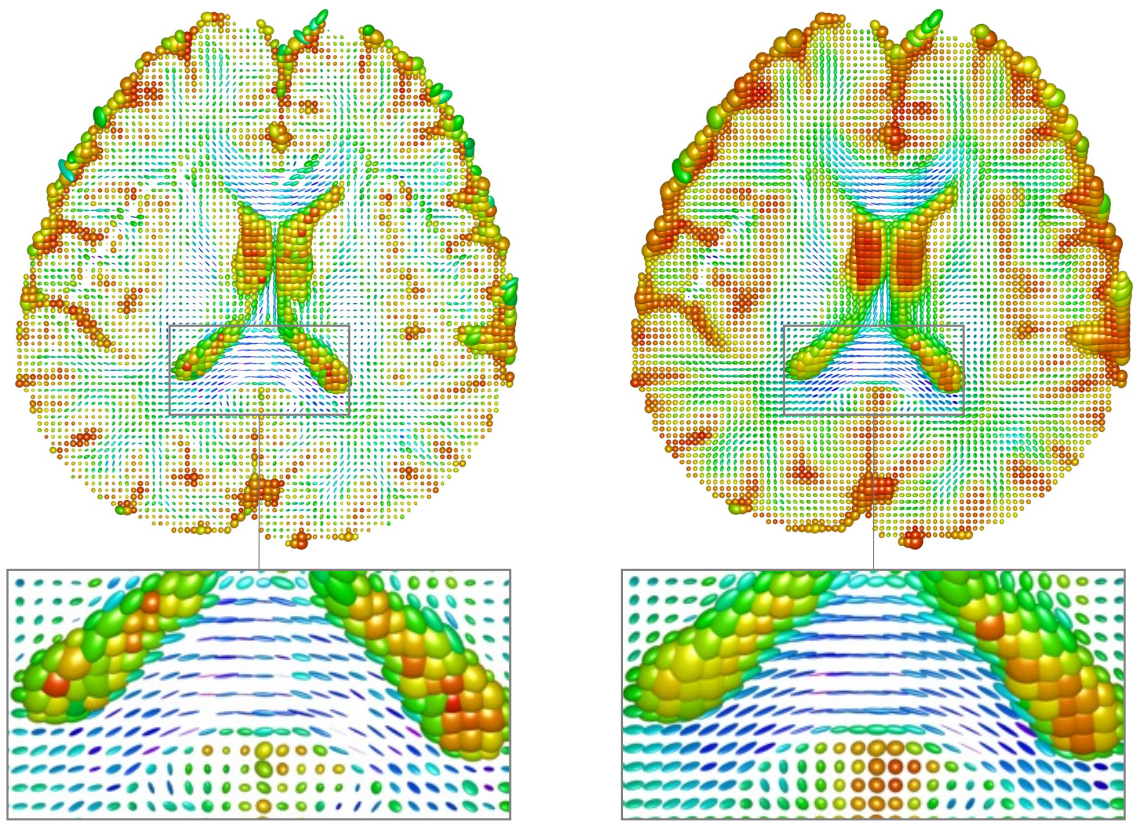

\centering
\def\figfolder{experiments/pos3_camino_data_5/}
\def\figurewidth{0.45\textwidth}

\tikzstyle{myspy}=[spy using outlines={gray,lens={scale=2.8},width=1\textwidth, height=0.5\textwidth, connect spies, every spy on node/.append style={thick}}]
\centering
\def\figwidth{0.45\columnwidth}
\def\nodeSpy{(-0.1, -1.2)}
\def\nodeWindow{(3.2,-5.5)}
\begin{subfigure}[t]{\figwidth}
	\begin{tikzpicture}[myspy]
	\node {\includegraphics[width=\textwidth]{\figfolder exp_pos3_camino_data}};
	\spy on \nodeSpy in node [left] at \nodeWindow;
\end{tikzpicture}
\end{subfigure}
\hfill
\begin{subfigure}[t]{\figwidth}
	\begin{tikzpicture}[myspy]
	\node {\includegraphics[interpolate=true,width=\textwidth]{\figfolder exp_pos3_camino_TGVM}};
	\spy  on \nodeSpy in node [left] at \nodeWindow;
	\end{tikzpicture}
\end{subfigure}

	\caption{
		\emph{Left:} Diffusion tensor image of a human brain (axial slice). 
		The magnified image shows the corpus callosum. Note the missing (invalid) tensors at several voxels.  \emph{Right:} Result of S-TGV regularization with $r = \protect\input{\figfolder TGV_r.txt},$
		$s = \protect\input{\figfolder TGV_s.txt}.$
		The streamlines are smoothed whereas sharp transitions are preserved.
		Invalid voxels are reasonably inpainted.
	}
\label{fig:camino}
\end{figure}

\section{Conclusion}\label{sec:Conclusion}

In this work, we have  introduced and explored a notion of second-order total generalized variation (TGV) regularization for manifold-valued data. 
First, we have derived a variational model for total generalized variation for manifold-valued data.
For this purpose, we have used an axiomatic approach. 
We have first formalized reasonable fundamental properties of vector-valued TGV that should be conserved in the manifold setting. 
Then we have proposed two realizations which we have shown to fulfill the required axioms. The realization based on parallel transport  is rather natural -- although not straight forward -- from the point of view of differential geometry. The realization based on the Schild's ladder may be seen as an approximation of the parallel transport variant. It requires less differential geometric concepts and it is easier to realize numerically while yielding comparable numerical results as shown in the experimental section. Existence results for $\MTGV$-based denoising have been obtained for the proposed variants.
Next, we have derived an algorithm for the proposed model. To this end we
built on the well-established concept of a cyclic proximal algorithm. As main contribution, we have performed  the challenging task to derive all quantities necessary to compute the proximal mappings of the involved atoms. 
Finally, we have conducted a numerical study of the proposed scheme. 
The experiments revealed that M-TGV regularization reliably removes noise
while it preserves edges and smooth trends.
A quantitative comparison on images with groundtruth indicates that TGV
regularization improves upon first and second-order TV for manifold-valued data.

An interesting topic of future research are improved numerics
 such as earlier stopping criteria.

\section*{Acknowledgment}
Kristian Bredies and Martin Holler acknowledge support by the Austrian Science Fund (FWF) (Grant P 29192).
Martin Storath acknowledges support by the German Research Foundation DFG (Grant STO1126/2-1).
Andreas Weinmann acknowledges support by the German Research Foundation DFG (Grants WE5886/4-1, WE5886/3-1).

{\small
	\bibliography{lit_dat}

\begin{thebibliography}{10}

\bibitem{absil2004riemannian}
P.-A. Absil, R.~Mahony, and R.~Sepulchre.
\newblock Riemannian geometry of {G}rassmann manifolds with a view on
  algorithmic computation.
\newblock {\em Acta Applicandae Mathematicae}, 80(2):199--220, 2004.

\bibitem{absil2009optimization}
P.-A. Absil, R.~Mahony, and R.~Sepulchre.
\newblock {\em Optimization algorithms on matrix manifolds}.
\newblock Princeton University Press, 2009.

\bibitem{adato2011polar}
Y.~Adato, T.~Zickler, and O.~Ben-Shahar.
\newblock A polar representation of motion and implications for optical flow.
\newblock In {\em IEEE Conference on Computer Vision and Pattern Recognition
  (CVPR)}, pages 1145--1152, 2011.

\bibitem{afsari2011riemannian}
B.~Afsari.
\newblock Riemannian {$L^p$} center of mass: existence, uniqueness, and
  convexity.
\newblock {\em Proceedings of the American Mathematical Society},
  139(2):655--673, 2011.

\bibitem{alexander2007diffusion}
A.~Alexander, J.~Lee, M.~Lazar, R.~Boudos, M.~DuBray, T.~Oakes, J.~Miller,
  J.~Lu, E.-K. Jeong, W.~McMahon, et~al.
\newblock Diffusion tensor imaging of the corpus callosum in autism.
\newblock {\em Neuroimage}, 34(1):61--73, 2007.

\bibitem{azagra2005proximal}
D.~Azagra and J.~Ferrera.
\newblock Proximal calculus on {R}iemannian manifolds.
\newblock {\em Mediterranean Journal of Mathematics}, 2:437--450, 2005.

\bibitem{bavcak2013proximal}
M.~Ba{\v{c}}{\'a}k.
\newblock The proximal point algorithm in metric spaces.
\newblock {\em Israel Journal of Mathematics}, 194(2):689--701, 2013.

\bibitem{bavcak2013computing}
M.~Ba{\v{c}}{\'a}k.
\newblock Computing medians and means in {H}adamard spaces.
\newblock {\em SIAM Journal on Optimization}, 24(3):1542--1566, 2014.

\bibitem{bacak2014convex}
M.~Ba{\v{c}}{\'a}k.
\newblock {\em Convex analysis and optimization in Hadamard spaces}.
\newblock de Gruyter, 2014.

\bibitem{bavcak2016second}
M.~Ba{\v{c}}{\'a}k, R.~Bergmann, G.~Steidl, and A.~Weinmann.
\newblock A second order non-smooth variational model for restoring
  manifold-valued images.
\newblock {\em SIAM Journal on Scientific Computing}, 38(1):A567--A597, 2016.

\bibitem{basser1994mr}
P.~Basser, J.~Mattiello, and D.~LeBihan.
\newblock {M}{R} diffusion tensor spectroscopy and imaging.
\newblock {\em Biophysical Journal}, 66(1):259--267, 1994.

\bibitem{basu2006rician}
S.~Basu, T.~Fletcher, and R.~Whitaker.
\newblock Rician noise removal in diffusion tensor {MRI}.
\newblock In {\em Medical Image Computing and Computer-Assisted Intervention
  2006}, pages 117--125. Springer, 2006.

\bibitem{baust2015total}
M.~Baust, L.~Demaret, M.~Storath, N.~Navab, and A.~Weinmann.
\newblock Total variation regularization of shape signals.
\newblock In {\em IEEE Conference on Computer Vision and Pattern Recognition
  (CVPR)}, pages 2075--2083, 2015.

\bibitem{baust2016combined}
M.~Baust, A.~Weinmann, M.~Wieczorek, T.~Lasser, M.~Storath, and N.~Navab.
\newblock Combined tensor fitting and {TV} regularization in diffusion tensor
  imaging based on a {R}iemannian manifold approach.
\newblock {\em IEEE Transactions on Medical Imaging}, 35(8):1972--1989, 2016.

\bibitem{steidl16half_quadratic}
R.~Bergmann, R.~H. Chan, R.~Hielscher, J.~Persch, and G.~Steidl.
\newblock Restoration of manifold-valued images by half-quadratic minimization.
\newblock {\em Inverse Problems and Imaging}, 10:281–304, 2016.

\bibitem{bergmann2017priors}
R.~Bergmann, J.~Fitschen, J.~Persch, and G.~Steidl.
\newblock Priors with coupled first and second order differences for
  manifold-valued image processing.
\newblock {\em arXiv preprint arXiv:1709.01343}, 2017.

\bibitem{Steidl17_infcon_manifold}
R.~Bergmann, J.~H. Fitschen, J.~Persch, and G.~Steidl.
\newblock Infimal convolution type coupling of first and second order
  differences on manifold-valued images.
\newblock {\em Scale Space and Variational Methods in Computer Vision 2017},
  pages 447--459, 2017.

\bibitem{bergmann2014second}
R.~Bergmann, F.~Laus, G.~Steidl, and A.~Weinmann.
\newblock Second order differences of cyclic data and applications in
  variational denoising.
\newblock {\em SIAM Journal on Imaging Sciences}, 7(4):2916--2953, 2014.

\bibitem{Steidl16dr_manifold}
R.~Bergmann, J.~Persch, and G.~Steidl.
\newblock A parallel {D}ouglas-{R}achford algorithm for restoring images with
  values in symmetric hadamard manifolds.
\newblock {\em SIAM Journal on Imaging Sciences}, 9:901--937, 2016.

\bibitem{bergmann2016second}
R.~Bergmann and A.~Weinmann.
\newblock A second-order {TV}-type approach for inpainting and denoising higher
  dimensional combined cyclic and vector space data.
\newblock {\em Journal of Mathematical Imaging and Vision}, 55(3):401--427,
  2016.

\bibitem{Bergounioux15infcon_analysis}
M.~Bergounioux.
\newblock Mathematical analysis of a inf-convolution model for image
  processing.
\newblock {\em J. Optim. Theory Appl.}, pages 1--21, 2015.

\bibitem{Bertsekas2011in}
D.~Bertsekas.
\newblock Incremental proximal methods for large scale convex optimization.
\newblock {\em Mathematical Programming}, 129:163--195, 2011.

\bibitem{bhattacharya2003large}
R.~Bhattacharya and V.~Patrangenaru.
\newblock Large sample theory of intrinsic and extrinsic sample means on
  manifolds {I}.
\newblock {\em Annals of Statistics}, 31(1):1--29, 2003.

\bibitem{bhattacharya2005large}
R.~Bhattacharya and V.~Patrangenaru.
\newblock Large sample theory of intrinsic and extrinsic sample means on
  manifolds {II}.
\newblock {\em Annals of Statistics}, 33(3):1225--1259, 2005.

\bibitem{BH_inverse}
K.~Bredies and M.~Holler.
\newblock Regularization of linear inverse problems with total generalized
  variation.
\newblock {\em Journal of Inverse and Ill-posed Problems}, 22(6):871--913,
  2014.

\bibitem{bredies2015decompressiontgv1}
K.~Bredies and M.~Holler.
\newblock A {TGV}-based framework for variational image decompression, zooming
  and reconstruction. {P}art {I}: {A}nalytics.
\newblock {\em SIAM J. Imaging Sci.}, 8(4):2814--2850, 2015.

\bibitem{bredies2015decompressiontgv2}
K.~Bredies and M.~Holler.
\newblock A {TGV}-based framework for variational image decompression, zooming
  and reconstruction. {P}art {II}: {N}umerics.
\newblock {\em SIAM J. Imaging Sci.}, 8(4):2851--2886, 2015.

\bibitem{bredies2017total}
K.~Bredies, M.~Holler, M.~Storath, and A.~Weinmann.
\newblock Total generalized variation for manifold-valued data.
\newblock {\em arXiv preprint arXiv:1709.01616}, 2017.

\bibitem{Bredies10}
K.~Bredies, K.~Kunisch, and T.~Pock.
\newblock {T}otal generalized variation.
\newblock {\em SIAM Journal on Imaging Sciences}, 3(3):492--526, 2010.

\bibitem{Bredies11_inverse}
K.~Bredies and T.~Valkonen.
\newblock Inverse problems with second-order total generalized variation
  constraints.
\newblock In {\em Proceedings of Samp{TA} 2011 - 9th {I}nternational
  {C}onference on {S}ampling {T}heory and {A}pplications}, Singapore, 2011.

\bibitem{Chambolle97}
A.~Chambolle and P.-L. Lions.
\newblock Image recovery via total variation minimization and related problems.
\newblock {\em Numerische Mathematik}, 76(2):167--188, 1997.

\bibitem{Chambolle11}
A.~Chambolle and T.~Pock.
\newblock A first-order primal-dual algorithm for convex problems with
  applications to imaging.
\newblock {\em Journal of Mathematical Imaging and Vision}, 40:120--145, 2011.

\bibitem{chan2001total}
T.~Chan, S.~Kang, and J.~Shen.
\newblock Total variation denoising and enhancement of color images based on
  the {CB} and {HSV} color models.
\newblock {\em Journal of Visual Communication and Image Representation},
  12:422--435, 2001.

\bibitem{cheeger1975comparison}
J.~Cheeger and D.~Ebin.
\newblock {\em Comparison theorems in Riemannian geometry}, volume~9.
\newblock North-Holland, 1975.

\bibitem{chefd2004regularizing}
C.~Chefd'Hotel, D.~Tschumperl{\'e}, R.~Deriche, and O.~Faugeras.
\newblock Regularizing flows for constrained matrix-valued images.
\newblock {\em Journal of Mathematical Imaging and Vision}, 20(1-2):147--162,
  2004.

\bibitem{cook2006camino}
P.~Cook, Y.~Bai, S.~Nedjati-Gilani, K.~Seunarine, M.~Hall, G.~Parker, and
  D.~Alexander.
\newblock Camino: Open-source diffusion-{MRI} reconstruction and processing.
\newblock In {\em 14th Scientific Meeting of the International Society for
  Magnetic Resonance in Medicine}, page 2759, 2006.

\bibitem{CS13}
D.~Cremers and E.~Strekalovskiy.
\newblock Total cyclic variation and generalizations.
\newblock {\em Journal of Mathematical Imaging and Vision}, 47(3):258--277,
  2013.

\bibitem{do1992riemannian}
M.~do~Carmo.
\newblock {\em Riemannian geometry.}
\newblock Birkhauser, 1992.

\bibitem{schild72_ladder}
J.~Elhers, F.~A.~E. Pirani, and A.~Schild.
\newblock The geometry of free fall and light propagation.
\newblock In O'Raifeartaigh, editor, {\em General Relativity}. Oxford
  University Press, 1972.

\bibitem{eschenburg1997lecture}
J.~Eschenburg.
\newblock Lecture notes on symmetric spaces.
\newblock {\em preprint}, 1997.

\bibitem{ferreira2002proximal}
O.~Ferreira and P.~Oliveira.
\newblock Proximal point algorithm on {R}iemannian manifolds.
\newblock {\em Optimization}, 51:257--270, 2002.

\bibitem{fletcher2007riemannian}
P.~Fletcher and S.~Joshi.
\newblock Riemannian geometry for the statistical analysis of diffusion tensor
  data.
\newblock {\em Signal Processing}, 87(2):250--262, 2007.

\bibitem{fletcher2004principal}
P.~Fletcher, C.~Lu, S.~Pizer, and S.~Joshi.
\newblock Principal geodesic analysis for the study of nonlinear statistics of
  shape.
\newblock {\em IEEE Transactions on Medical Imaging}, 23(8):995--1005, 2004.

\bibitem{foong2000neuropathological}
J.~Foong, M.~Maier, C.~Clark, G.~Barker, D.~Miller, and M.~Ron.
\newblock Neuropathological abnormalities of the corpus callosum in
  schizophrenia: a diffusion tensor imaging study.
\newblock {\em Journal of Neurology, Neurosurgery \& Psychiatry},
  68(2):242--244, 2000.

\bibitem{GMS93}
M.~Giaquinta, G.~Modica, and J.~Sou{\v c}ek.
\newblock Variational problems for maps of bounded variation with values in
  {$S^1$}.
\newblock {\em Calculus of Variations and Partial Differential Equations},
  1(1):87--121, 1993.

\bibitem{GM06}
M.~Giaquinta and D.~Mucci.
\newblock The {BV}-energy of maps into a manifold: relaxation and density
  results.
\newblock {\em Annali della Scuola Normale Superiore di Pisa-Classe di
  Scienze}, 5(4):483--548, 2006.

\bibitem{GM07}
M.~Giaquinta and D.~Mucci.
\newblock Maps of bounded variation with values into a manifold: total
  variation and relaxed energy.
\newblock {\em Pure and Applied Mathematics Quarterly}, 3(2):513--538, 2007.

\bibitem{grohs2013optimal}
P.~Grohs, H.~Hardering, and O.~Sander.
\newblock Optimal a priori discretization error bounds for geodesic finite
  elements.
\newblock {\em Foundations of Computational Mathematics}, 15(6):1357--1411,
  2015.

\bibitem{grohs2016varepsilon}
P.~Grohs and S.~Hosseini.
\newblock $\varepsilon$-subgradient algorithms for locally {L}ipschitz
  functions on {R}iemannian manifolds.
\newblock {\em Advances in Computational Mathematics}, 42(2):333--360, 2016.

\bibitem{grohs2016total}
P~Grohs and M.~Sprecher.
\newblock Total variation regularization on riemannian manifolds by iteratively
  reweighted minimization.
\newblock {\em Information and Inference}, 5(4):353--378, 2016.

\bibitem{grohs2009interpolatory}
P.~Grohs and J.~Wallner.
\newblock Interpolatory wavelets for manifold-valued data.
\newblock {\em Applied and Computational Harmonic Analysis}, 27(3):325--333,
  2009.

\bibitem{hawe2013separable}
S.~Hawe, M.~Seibert, and M.~Kleinsteuber.
\newblock Separable dictionary learning.
\newblock In {\em Proceedings of the IEEE Conference on Computer Vision and
  Pattern Recognition}, pages 438--445, 2013.

\bibitem{itoh1998dimension}
J.~Itoh and M.~Tanaka.
\newblock The dimension of a cut locus on a smooth {R}iemannian manifold.
\newblock {\em Tohoku Mathematical Journal, Second Series}, 50(4):571--575,
  1998.

\bibitem{johansen2009diffusion}
H.~Johansen-Berg and T.~Behrens.
\newblock {\em Diffusion MRI: From quantitative measurement to in-vivo
  neuroanatomy}.
\newblock Academic Press, London, 2009.

\bibitem{kheyfets2000schild}
A.~Kheyfets, W.~A. Miller, and G.~A. Newton.
\newblock Schild's ladder parallel transport procedure for an arbitrary
  connection.
\newblock {\em Internat. J. Theoret. Phys.}, 39(12):2891--2898, 2000.

\bibitem{kimmel2002orientation}
R.~Kimmel and N.~Sochen.
\newblock Orientation diffusion or how to comb a porcupine.
\newblock {\em Journal of Visual Communication and Image Representation},
  13(1):238--248, 2002.

\bibitem{knoll2011tgv2mri}
F.~Knoll, K.~Bredies, T.~Pock, and R.~Stollberger.
\newblock Second order total generalized variation ({TGV}) for {MRI}.
\newblock {\em Magn. Reson. Med.}, 65(2):480--491, 2011.

\bibitem{holler16mri_pet}
F.~Knoll, M.~Holler, T.~Koesters, R.~Otazo, K.~Bredies, and D.~Sodickson.
\newblock Joint {MR}-{PET} reconstruction using a multi-channel image
  regularizer.
\newblock {\em {IEEE} Trans. Med. Imag.}, 2016.
\newblock In press.

\bibitem{kobayashi1969foundations}
S.~Kobayashi and K.~Nomizu.
\newblock {\em Foundations of differential geometry}, volume~2.
\newblock Wiley New York, 1969.

\bibitem{lai2014splitting}
R.~Lai and S.~Osher.
\newblock A splitting method for orthogonality constrained problems.
\newblock {\em Journal of Scientific Computing}, 58(2):431--449, 2014.

\bibitem{LSKC13}
J.~Lellmann, E.~Strekalovskiy, S.~Koetter, and D.~Cremers.
\newblock Total variation regularization for functions with values in a
  manifold.
\newblock In {\em International Conference on Computer Vision (ICCV)}, pages
  2944--2951, 2013.

\bibitem{massonnet1998radar}
D.~Massonnet and K.~Feigl.
\newblock Radar interferometry and its application to changes in the earth's
  surface.
\newblock {\em Reviews of Geophysics}, 36(4):441--500, 1998.

\bibitem{Michor07}
P.~Michor and D.~Mumford.
\newblock An overview of the {R}iemannian metrics on spaces of curves using the
  {H}amiltonian approach.
\newblock {\em Applied and Computational Harmonic Analysis}, 23(1):74 -- 113,
  2007.

\bibitem{moreau1962fonctions}
J.-J. Moreau.
\newblock Fonctions convexes duales et points proximaux dans un espace
  hilbertien.
\newblock {\em Comptes Rendus de l'Académie des Sciences. Series A
  Mathematics.}, 255:2897--2899, 1962.

\bibitem{niu2014sparse}
S.~Niu, Y.~Gao, Z.~Bian, J.~Huang, W.~Chen, G.~Yu, Z.~Liang, and J.~Ma.
\newblock Sparse-view x-ray ct reconstruction via total generalized variation
  regularization.
\newblock {\em Physics in medicine and biology}, 59(12):2997, 2014.

\bibitem{oller1995intrinsic}
J.~Oller and J.~Corcuera.
\newblock Intrinsic analysis of statistical estimation.
\newblock {\em Annals of Statistics}, 23(5):1562--1581, 1995.

\bibitem{Papafitsoros2015}
K.~Papafitsoros and T.~Valkonen.
\newblock Asymptotic behaviour of total generalised variation.
\newblock In {\em 5th International Conference on Scale Space and Variational
  Methods in Computer Vision:}, pages 702--714. Springer, 2015.

\bibitem{pennec2006intrinsic}
X.~Pennec.
\newblock Intrinsic statistics on {R}iemannian manifolds: {B}asic tools for
  geometric measurements.
\newblock {\em Journal of Mathematical Imaging and Vision}, 25(1):127--154,
  2006.

\bibitem{pennec2006riemannian}
X.~Pennec, P.~Fillard, and N.~Ayache.
\newblock A {R}iemannian framework for tensor computing.
\newblock {\em International Journal of Computer Vision}, 66(1):41--66, 2006.

\bibitem{radhakrishna1945information}
C.~Rao.
\newblock Information and accuracy attainable in the estimation of statistical
  parameters.
\newblock {\em Bulletin of the Calcutta Mathematical Society}, 37(3):81--91,
  1945.

\bibitem{rocca1997overview}
F.~Rocca, C.~Prati, and A.~Ferretti.
\newblock An overview of {SAR} interferometry.
\newblock In {\em Proceedings of the $3$rd {ERS} Symposium on Space at the
  Service of our Environment, Florence}, 1997.

\bibitem{rosman2012group}
G.~Rosman, M.~Bronstein, A.~Bronstein, A.~Wolf, and R.~Kimmel.
\newblock Group-valued regularization framework for motion segmentation of
  dynamic non-rigid shapes.
\newblock In {\em Scale Space and Variational Methods in Computer Vision},
  pages 725--736. Springer, 2012.

\bibitem{rudin1992nonlinear}
L.~Rudin, S.~Osher, and E.~Fatemi.
\newblock Nonlinear total variation based noise removal algorithms.
\newblock {\em Physica D: Nonlinear Phenomena}, 60(1):259--268, 1992.

\bibitem{schultz1990circular}
H.~Schultz.
\newblock A circular median filter approach for resolving directional
  ambiguities in wind fields retrieved from spaceborne scatterometer data.
\newblock {\em Journal of Geophysical Research: Oceans}, 95(C4):5291--5303,
  1990.

\bibitem{setzer2011infimal}
S.~Setzer, G.~Steidl, and T.~Teuber.
\newblock Infimal convolution regularizations with discrete l1-type
  functionals.
\newblock {\em Comm. Math. Sci}, 9(3):797--872, 2011.

\bibitem{spivak1975differential}
M.~Spivak.
\newblock {\em Differential Geometry}, volume 1-5.
\newblock Publish or Perish, Berkeley, 1975.

\bibitem{sra2015conic}
S.~Sra and R.~Hosseini.
\newblock Conic geometric optimization on the manifold of positive definite
  matrices.
\newblock {\em SIAM Journal on Optimization}, 25(1):713--739, 2015.

\bibitem{stefanoiu2016joint}
A.~Stefanoiu, A.~Weinmann, M.~Storath, N.~Navab, and M.~Baust.
\newblock Joint segmentation and shape regularization with a generalized
  forward--backward algorithm.
\newblock {\em IEEE Transactions on Image Processing}, 25(7):3384--3394, 2016.

\bibitem{storath2017fastMedian}
M.~Storath and A.~Weinmann.
\newblock Fast median filtering for phase or orientation data.
\newblock {\em IEEE Transactions on Pattern Analysis and Machine Intelligence},
  to appear.

\bibitem{storath2016exact}
M.~Storath, A.~Weinmann, and M.~Unser.
\newblock Exact algorithms for {$L^1$}-{TV} regularization of real-valued or
  circle-valued signals.
\newblock {\em SIAM Journal on Scientific Computing}, 38(1):A614--A630, 2016.

\bibitem{storath2017jump}
M.~Storath, A.~Weinmann, and M.~Unser.
\newblock Jump-penalized least absolute values estimation of scalar or
  circle-valued signals.
\newblock {\em Information and Inference}, 2017.

\bibitem{SC11}
E.~Strekalovskiy and D.~Cremers.
\newblock {Total variation for cyclic structures: Convex relaxation and
  efficient minimization}.
\newblock In {\em IEEE Conference on Computer Vision and Pattern Recognition
  (CVPR)}, pages 1905--1911, 2011.

\bibitem{thiel1997ers}
K.~Thiel, X.~Wu, and P.~Hartl.
\newblock {ERS}-tandem-interferometric observation of volcanic activities in
  {I}celand.
\newblock {\em ESA SP}, pages 475--480, 1997.

\bibitem{todhunter1863spherical}
I.~Todhunter.
\newblock {\em Spherical trigonometry, for the use of colleges and schools:
  with numerous examples}.
\newblock Macmillan, 1863.

\bibitem{tschumperle2001diffusion}
D.~Tschumperl{\'e} and R.~Deriche.
\newblock Diffusion tensor regularization with constraints preservation.
\newblock In {\em IEEE Conference on Computer Vision and Pattern Recognition
  (CVPR)}, pages I948--I953, 2001.

\bibitem{unser2014introduction}
M.~Unser and P.~Tafti.
\newblock {\em An introduction to sparse stochastic processes}.
\newblock Cambridge University Press, 2014.

\bibitem{rahman2005multiscale}
I.~Ur~Rahman, V.~Drori, I.and~Stodden, D.~Donoho, and P.~Schr{\"o}der.
\newblock Multiscale representations for manifold-valued data.
\newblock {\em Multiscale Modeling \& Simulation}, 4(4):1201--1232, 2005.

\bibitem{Valkonen12}
T.~Valkonen, K.~Bredies, and F.~Knoll.
\newblock Total generalized variation in diffusion tensor imaging.
\newblock {\em SIAM Journal on Imaging Sciences}, 6(1):487--525, 2013.

\bibitem{vese2002numerical}
L.~Vese and S.~Osher.
\newblock Numerical methods for p-harmonic flows and applications to image
  processing.
\newblock {\em SIAM Journal on Numerical Analysis}, 40:2085--2104, 2002.

\bibitem{wallner2011convergence}
J.~Wallner, E.~Yazdani, and A.~Weinmann.
\newblock Convergence and smoothness analysis of subdivision rules in
  {R}iemannian and symmetric spaces.
\newblock {\em Advances in Computational Mathematics}, 34(2):201--218, 2011.

\bibitem{weinmannConstrApprox}
A.~Weinmann.
\newblock Nonlinear subdivision schemes on irregular meshes.
\newblock {\em Constructive Approximation}, 31(3):395--415, 2010.

\bibitem{weinmann2012interpolatory}
A.~Weinmann.
\newblock Interpolatory multiscale representation for functions between
  manifolds.
\newblock {\em SIAM Journal on Mathematical Analysis}, 44:162--191, 2012.

\bibitem{weinmann2014total}
A.~Weinmann, L.~Demaret, and M.~Storath.
\newblock Total variation regularization for manifold-valued data.
\newblock {\em SIAM Journal on Imaging Sciences}, 7(4):2226--2257, 2014.

\bibitem{weinmann2015mumford}
A.~Weinmann, L.~Demaret, and M.~Storath.
\newblock {M}umford--{S}hah and {P}otts regularization for manifold-valued
  data.
\newblock {\em Journal of Mathematical Imaging and Vision}, 55(3):428--445,
  2016.

\end{thebibliography}
	\bibliographystyle{plain}
}

\appendix
\section{Appendix}
\subsection{Proofs for Section 2} \label{sec:proofs_section2}

\begin{proof}[Proof of Proposition \ref{prop:mp1_to_mp4_hold_general_bivariate}]
Given that $D$ satisfies the assumptions of Proposition \ref{prop:mp1_to_mp4_hold_general} and the particular form of $\dsym$ in the vector-space case, it is immediate that $\MTGV$ reduces to vector-space TGV, hence \textbf{(M-P1')} holds. 

 Now in case the minimum in \eqref{eq:tgv_manifold_general_multi} is attained at $y_{i,j}^1 = y_{i,j}^2  = u_{i,j}$ we get, 
since 
\[\dsym([u_{i,j},u_{i,j}],[u_{i,j},u_{i,j}],[u_{i,j-1},u_{i,j-1}],[u_{i-1,j},u_{i-1,j}]) = 0 \]
 for all $i,j$, that
 $\MTGV(u) = \alpha_1 \sum_{i,j} \Big( d(u_{i+1,j},u_{i,j})^p + d(u_{i,j+1},u_{i,j})^p \Big)^{1/p} = \alpha_1 \TV(u). $

On the other hand, in case the minimum is attained at $y_{i,j}^1 =  u_{i+1,j}$ and $y_{i,j}^2   = u_{i,j+1}$ we get, in the vector space situation, that
\begin{multline*}
 \MTGV(u)  =\alpha_0 \sum _{i,j}   \Big( |  u_{i+1,j}- 2u_{i,j} + u_{i-1,j}|^p+ |  u_{i,j+1}- 2u_{i,j} + u_{i,j-1}|^p \\
 + 2^{1-p} | (u_{i+1,j}-  u_{i,j}) - (u_{i+1,j-1} - u_{i,j-1})  + (u_{i,j+1}-  u_{i,j}) - (u_{i-1,j+1} - u_{i-1,j}) |^p \Big)^{1/p}
\end{multline*}
which coincides with $\alpha_0 \TV^2$ and $\TV^2$ as in Definition \ref{def:discreete_tv_tv2}.

Now suppose that $(u_{i,j})_{i,j}$ is locally geodesic. Choosing $y_{i,j}^1 = u_{i+1,j}$ and $y_{i,j}^2 = u_{i,j+1}$ for all $i,j$ we get, as in the proof of Theorem \ref{thm:d_s_properties}, that $\MTGV$ is bounded above by
\[ \alpha_0 \sum_{i,j}\dsym ([u_{i,j},u_{i+1,j}],[u_{i,j},u_{i,j+1}],[u_{i,j-1},u_{i+1,j-1}],[u_{i-1,j},u_{i-1,j+1}]) ,\]
which is zero by assumption. 

Now conversely, assume that $\MTGV(u) = 0$ and any two points in $u_{i,j}$, $u_{i',j'}$ with $\max\{|i-i'|,|j-j'|\} \leq 2$ are connected by unique geodesics. As in the proof of Proposition \ref{prop:mp1_to_mp4_hold_general} this implies that both $(u_{i,j})_i$ and $(u_{i,j})_j$ are locally geodesic as univariate signals. Also we get that
\[ 0 = \sum_{i,j} \dsym ([u_{i,j},u_{i+1,j}],[u_{i,j},u_{i,j+1}],[u_{i,j-1},u_{i+1,j-1}],[u_{i-1,j},u_{i-1,j+1}]), \]
which implies by assumption, that $(u_{i+k,j-k})_k$ is locally geodesic. Hence, by definition, $(u_{i,j})_{i,j}$ is locally geodesic as bivariate signal. Finally, by Lemma \ref{lem:locally_globally_geodesic_bivariate}, $u$ is geodesic.
\end{proof}

\begin{proof}[Proof of Theorem \ref{thm:d_s_properties_multi}]
It suffices to verify the assumptions of Proposition \ref{prop:mp1_to_mp4_hold_general_bivariate}. 
It is easy to see that, since $\ds([x,x],[u,u])= 0$, also $\dss([x,x],[x,x],[u,u],[u,u]) = 0$. 
Further, in the vector space setting, we get that $c^1 = \frac{\umc + \ycco}{2}$ and $c^2 = \frac{\ucm + \ycct}{2}$ as well as $r^1 = \umc + \ycco - \ucc$ and $r^2 =  \ucm + \ycct - \ucc$. Consequently,
\begin{align*}
 \ds([r^ 1,\ymct],[\ycmo,r^ 2]) & = | \ymct- \big( \ucm + \ycco - \ucc\big)   -\big( \ucm + \ycct - \ucc - \ycmo \big) | \\
 & = \big | \ycco - \ucc - (\ycmo - \ucm) + \ycct - \ucc - (\ymct - \umc)\big | 
 \end{align*}
and, using Proposition \ref{prop:mp1_to_mp4_hold_general_bivariate}, \textbf{(M-P1')} to \textbf{(M-P3')} follow.

Now suppose that $(u_{i,j})_{i,j}$ is a geodesic three-by-three signal. Hence, with the notation as in the definition of $\dss$, $c^1 = u_{i,j} \in [u_{i-1,j},u_{i+1,j}]_{\frac{1}{2}}$ and we can choose $r^1 = u_{i,j} \in [u_{i,j},u_{i,j}]_2$. Similar, $c^2 = u_{i,j} \in [u_{i,j-1},u_{i,j+1}]_{\frac{1}{2}}$ and we can again choose $r^2 = u_{i,j} \in [u_{i,j},u_{i,j}]_2$. Consequently, 
\begin{multline*}
\dss ([u_{i,j},u_{i+1,j}],[u_{i,j},u_{i,j+1}],[u_{i,j-1},u_{i+1,j-1}],[u_{i-1,j},u_{i-1,j+1}]) 
\\ \leq \ds([u_{i,j},u_{i-1,j+1}],[u_{i+1,j-1},u_{i,j}]).
\end{multline*} But, as shown in the proof of Theorem \ref{thm:d_s_properties}, the right-hand-side vanishes since $(u_{i-k,j+k})_k$ is geodesic as univariate signal.

Now conversely, assume that $u=(u_{i,j})_{i,j}$ is a three-by-three signal such that both $(u_{i,})_i$ and $(u_{i,j})_j$ are locally geodesic and that
\[ 0 = \dss ([u_{i,j},u_{i+1,j}],[u_{i,j},u_{i,j+1}],[u_{i,j-1},u_{i+1,j-1}],[u_{i-1,j},u_{i-1,j+1}]). \]
By the assumption on uniqueness of geodesics we get, again with the notation as in the definition of $\dss$, that $c^1 = u_{i,j}$ and hence $r^1 = u_{i,j}$ and also that $c^2 = u_{i,j}$ and hence $r^2 = u_{i,j}$. Consequently, $\dss(\ldots)$ reduces to $\ds([u_{i,j},u_{i-1,j+1}],[u_{i+1,j-1},u_{i,j}])$. But the latter being zero implies that $u_{i-1,j+1} \in [u_{i+1,j-1},u_{i,j}]_2$ and, consequently, $u$ is geodesic.

Hence also the remaining assumptions of Proposition \ref{prop:mp1_to_mp4_hold_general_bivariate} are satisfied and the assertion follows.
\end{proof}

The following well-known fact on the parallel transport in manifolds will be required in the proof of Theorem \ref{thm:pt_properties_multi}. For the sake of completeness, we provide a short proof.
\begin{lem}\label{lem:pt_of_log} Let $\um$, $\uc$ and $\up$ be three points in a manifold such that there exists a pairwise distance minimizing geodesic $\gamma:[0,1]\rightarrow \M $ such that $\gamma(0) = \um$, $\gamma (1/2) = \uc$, $\gamma (1) = \up$. Then $\log_\um^\gamma (\uc) = \pt^\gamma_\um (\log^\gamma_\uc (\up))$.
\proof For $t \in [0,1]$, set $w(t) = (1/2)\frac{d}{dt}\gamma(\frac{t}{2})$. Then $w$ satisfies $\frac{D}{dt}w(t) = 0$ on $[0,1]$, $w(0) = \log_\um^\gamma (\uc)$. By definition of the parallel transport, $\pt_{\uc}^ \gamma (\log_\um^\gamma (\uc)) = w(1) = (1/2)\frac{d}{dt}\gamma(1/2) =  \log^\gamma_\uc (\up).$ Applying $\pt^\gamma_{\um}$ on both sides, the result follows.
\end{lem}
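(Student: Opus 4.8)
The plan is to reduce the whole statement to the single elementary fact that the velocity field of a geodesic is parallel along that geodesic. First I would unwind the notation introduced in Section~\ref{sec:notation}. Since $\gamma(0)=\um$ and $\gamma(1/2)=\uc$, the geodesic-specific logarithm is $\log^\gamma_\um(\uc) = (1/2-0)\tfrac{d}{dt}\gamma(0) = \tfrac{1}{2}\gamma'(0)$, and since $\gamma(1/2)=\uc$ and $\gamma(1)=\up$ we get $\log^\gamma_\uc(\up) = (1-1/2)\tfrac{d}{dt}\gamma(1/2) = \tfrac{1}{2}\gamma'(1/2)$. Hence the claimed identity $\log^\gamma_\um(\uc) = \pt^\gamma_{\uc,\um}(\log^\gamma_\uc(\up))$ is equivalent to the assertion that parallel-transporting $\tfrac{1}{2}\gamma'(1/2)\in T_\uc\M$ backwards along $\gamma$ to $\um$ yields $\tfrac{1}{2}\gamma'(0)$.

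To prove this, I would invoke the geodesic equation from Section~\ref{sec:notation}, namely $\tfrac{D}{dt}\tfrac{d}{dt}\gamma = 0$, which says precisely that $t\mapsto\gamma'(t)$ is a parallel vector field along $\gamma$. A clean way to package the reparametrization bookkeeping built into the definition of $\pt^\gamma$ (affine reparametrization of subsegments to $[0,1]$) is to introduce $w(t) = \tfrac{1}{2}\gamma'(t/2)$, defined along the reparametrized half-geodesic $s\mapsto\gamma(s/2)$, $s\in[0,1]$, which joins $\um$ to $\uc$. The chain rule together with $\tfrac{D}{ds}\gamma'=0$ gives $\tfrac{D}{dt}w = 0$, so $w$ is the parallel field along this segment, with endpoint values $w(0)=\tfrac{1}{2}\gamma'(0)=\log^\gamma_\um(\uc)$ and $w(1)=\tfrac{1}{2}\gamma'(1/2)=\log^\gamma_\uc(\up)$. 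By the definition of parallel transport this means $\pt^\gamma_{\um,\uc}\big(\log^\gamma_\um(\uc)\big) = \log^\gamma_\uc(\up)$; note the factor $\tfrac{1}{2}$ passes through by the $C^\infty$-linearity of the transport map. Applying the reverse transport $\pt^\gamma_{\uc,\um}$ to both sides, and using that forward-then-backward parallel transport along the same curve is the identity (parallel transport is an isometric isomorphism whose inverse is the reverse transport), the left-hand side collapses to $\log^\gamma_\um(\uc)$ and the stated equality follows.

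The computations here are entirely routine, so there is no genuine mathematical obstacle; the argument rests solely on parallelism of the velocity field and linearity and invertibility of parallel transport. The only point requiring care, and hence the main thing I would watch, is the notational bookkeeping: correctly tracking the scaling factors $(t_1-t_0)$ in the definition of $\log^\gamma$, confirming that parallel transport is invariant under the affine reparametrization of subsegments to $[0,1]$ so that the half-interval transport matches the definition of $\pt^\gamma$, and matching the direction of transport (from $\uc$ to $\um$, versus from $\um$ to $\uc$) to the correct side of the identity via the invertibility of transport. Since all the objects involved are the geodesic-specific, single-valued versions $\log^\gamma$ and $\pt^\gamma$ attached to the fixed geodesic $\gamma$, none of the set-valued ambiguities arising from non-unique minimizing geodesics enter, so no further case distinctions are needed.
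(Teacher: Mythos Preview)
Your proposal is correct and mirrors the paper's proof essentially verbatim: you introduce the same vector field $w(t)=\tfrac{1}{2}\gamma'(t/2)$, observe it is parallel by the geodesic equation, identify its endpoint values with $\log^\gamma_\um(\uc)$ and $\log^\gamma_\uc(\up)$, and finish by inverting the transport. The only difference is that you spell out the notational bookkeeping more carefully than the paper does.
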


\begin{proof}[Proof of Theorem \ref{thm:pt_properties_multi}]
It suffices to verify the assumptions of Proposition \ref{prop:mp1_to_mp4_hold_general_bivariate}.
Since the parallel transport is isometric, reduces to the identity if starting- and endpoint coincide, and since $\big|\log_x(x)\big|_x = 0$ for all $x \in \M$, it is easy to see that $\dpts([x,x],[x,x],[u,u],[u,u]) = 0$. 
In the vector space setting, with the notation as in the definition of $\dpts$, we get that $w^ 1 = \ycco - \ucc$ and $r^ 1 = \umc +\ycco - \ucc $ as well as $w^ 2 = \ycct - \ucc $ and $r^ 2 = \ucm +\ycct - \ucc $. Consequently, from the properties of $\dpt{}$ it follows that
\begin{align*}
\dpts ([\ucc,\ycco],[\ucc,\ycct],[\ucm,& \ycmo],[\umc ,\ymct]) = \\
&= \dpt{}([\umc +\ycco - \ucc,\ymct],[\ycmo,\ucm +\ycct - \ucc])  \\ 
&= \big |\ymct - \big( \umc +\ycco - \ucc) - \big( \ucm +\ycct - \ucc  - \ycmo  \big) \big|  \\
&= \big | \ycco - \ucc - (\ycmo - \ucm) + \ycct - \ucc - (\ymct - \umc)\big |
\end{align*}
and from Proposition \ref{prop:mp1_to_mp4_hold_general_bivariate} it follows that \textbf{(M-P1')} to \textbf{(M-P3')} holds.

Now let $u = (u_{i,j})_{i,j}$ be a three-by-three geodesic signal and choose $y^1_{i,j} = u_{i+1,j}$ and $y^2_{i,j} = u_{i,j+1}$ for each $(i,j)$. Denoting by $\gamma^ 1$ and $\gamma^ 2$ distance minimizing geodesics such that $\gamma^ 1(0) = u_{i-1,j}$, $\gamma^ 1(1/2) = u_{i,j}$, $\gamma^ 1(1) = u_{i+1,j}$ and $\gamma^ 2(0) = u_{i,j-1}$, $\gamma^ 2(1/2) = u_{i,j}$, $\gamma^ 2(1) = u_{i,j-1}$, respectively, Lemma \ref{lem:pt_of_log} implies that $\log_{u_{i-1,j}}^ {\gamma^ 1}(u_{i,j}) = \pt_{u_{i-1,j}}(\log^ {\gamma^ 1}_{u_{i,j}}(u_{i+1,j}))$  as well as $\log_{u_{i,j-1}}^ {\gamma^ 2}(u_{i,j}) = \pt_{u_{i,j-1}}(\log^ {\gamma^ 2}_{u_{i,j}}(u_{i,j+1}))$. Hence we can choose $r^ 1 = r^ 2 = u_{i,j}$ and
\begin{multline*}
\dpts ([u_{i,j},u_{i+1,j}],[u_{i,j},u_{i,j+1}],[u_{i,j-1},u_{i+1,j-1}],[u_{i-1,j},u_{i-1,j+1}]) 
\\ \leq \dpt{}([u_{i,j},u_{i-1,j+1}],[u_{i+1,j-1},u_{i,j}]) = 0,
\end{multline*}
where the last term is zero since $(u_{i-k,j+k})_k$ is locally geodesic as univariate signal.

Now conversely, assume that $u = (u_{i,j})_{i,j}$ is a three-by-three signal such that the geodesics connecting each pair of points are unique and such that $(u_{i,j})_i$ and $(u_{i,j})_j$ are locally geodesic.
By Lemma \ref{lem:pt_of_log} and uniqueness of geodesics we get $\log_{u_{i-1,j}}(u_{i,j}) = \pt_{u_{i-1,j}}(\log_{u_{i,j}}(u_{i+1,j}))$  as well as $\log_{u_{i,j-1}}(u_{i,j}) = \pt_{u_{i,j-1}}(\log_{u_{i,j}}(u_{i,j+1}))$. Again by uniqueness of geodesics, with the notation as in the definition of $\dpts$, we conclude that $r^ 1 = r^ 2 = u_{i,j}$ hence,
\begin{multline*}
0 = \dpts ([u_{i,j},u_{i+1,j}],[u_{i,j},u_{i,j+1}],[u_{i,j-1},u_{i+1,j-1}],[u_{i-1,j},u_{i-1,j+1}]) 
\\ = \dpt{}([u_{i,j},u_{i-1,j+1}],[u_{i+1,j-1},u_{i,j}]).
\end{multline*}
From the properties of $\dpt{}$ it hence follows that the points $u_{i+1,j-1}$, $u_{i,j}$, $u_{i-1,j+1}$ are on a joint, length minimizing geodesic at equal distance. Hence $u$ is geodesic. This ensures that all assumptions of Proposition \ref{prop:mp1_to_mp4_hold_general_bivariate} are fulfilled and hence the assertion follows.
\end{proof}

\subsection{Proofs for Section 3} \label{sec:proofs_section3}

\begin{proof}[Proof of Proposition \ref{prop:existence_basis_general}]
Take $(y^n)_n$ such that $y^n \in C(x) $ and $S(x) = \lim_{n} G(x,y^n)$. By assumption $(y^n)_n$ admits a subsequence $(y^{n_k})_k$ converging to some $y \in C(x)$. By lower semi-continuity of $G$ it follows that $S(x) = G(x,y)$.

For lower semi-continuity of $S$ now take $(x^n)_n$ in $\M^N$ converging to some $x \in \M^N$ for which, without loss of generality, we assume that $\liminf_n S(x^n) = \lim _n S(x^n)$.
Pick $y^n \in C(x^n)$ such that $S(x^n) = G(x^n,y^n)$. By assumption and uniqueness of limits we can obtain a subsequences $(x^{n_k})_k$ and $(y^{n_k})_k$ converging to $x$ and $y$, respectively, such that $y \in C(x)$. We conclude that
\[ S(x) \leq G(x,y) \leq \liminf_k G(x^{n_k},y^{n_k}) = \liminf_k S(x^{n_k}) = \liminf _n S(x^n) \] and the assertion follows.
\end{proof}

\begin{proof}[Proof of Lemma \ref{lem:stability_geodesics}]
The proof relies on the following fact: for any bounded subset $\mathcal{N}$ of $\M$, there is a constant $D$ such that, if the length of any geodesic $\psi:[r,s] \rightarrow \M$ with $\psi([r,s]) \subset \mathcal{N}$ is smaller than $D$, then $\psi $ is a unique distance minimizing geodesic between $\psi(r)$ and $\psi(s),$ and the Jacobi fields have no zero along this geodesic $\psi;$
see for instance \cite[Chapter 1.6/1.7]{cheeger1975comparison}.
In addition, there is a constant $L$ which is uniform for all such geodesics such that, with $f:[r,s] \times [0,1] \rightarrow \M$ a geodesic variation of $\psi$ with $f(\cdot,0) = \psi$, we have
\[ d(\psi(t),f(t,\tau)) \leq L \max\{ d(\psi(r),f(r,\tau)), d(\psi(s),f(s,\tau))\} \]
for all $\tau \in [0,1]$ and all $t \in [r,s]$.

In order to obtain uniform convergence of $(\gamma^n)_n$, our approach is now to use compactness arguments and subdivide the curves into small segments where these assertions hold.

At first we define the bounded set $\mathcal{N}$. To this aim, define $B= \{(p^n,q^n)\,|\, n\in \N\} \cup \{(p,q)\}$, $K = \sup \{ d(p',q') \,|\, (p',q') \in B\}< \infty$ and $\mathcal{N}$ to be the union of all images of shortest geodesics connecting two points $p'$ and $q'$ with $(p',q') \in B$. Then, for any $z \in \mathcal{N}$ there is a shortest geodesic $\psi:[r,s] \rightarrow \M$ such that $(\psi(r),\psi(s)) \in  B$ and $z \in \psi([r,s])$, and we get that $d(p,z) \leq d(p,\psi(r)) + d(\psi(r),z) \leq d(p,\psi(r))  + d(\psi(r),\psi(s)) \leq  \sup_{n \in \N}d(p,p^n) + K$, hence $\mathcal{N}$ is bounded.

Further, by construction, $\gamma^n([0,1]) \subset \mathcal{N}$ for any $n$. With this choice of $\mathcal{N}$, we now choose the constant $D$ as above and we choose $l \in \N$ large enough such that $l \geq 2\frac{d(p,q)}{D}$ and subdivide the interval $[0,1]$ into the $l+1$ points $0,\frac{1}{l},\frac{2}{l},\ldots,1$. Then, by compactness, we can find a subsequence $(\gamma^{n_k})_k$ such that $\gamma^{n_k} (t)$ converges for all $t \in \{ 0,\frac{1}{l},\ldots,1\}$. We set $z^{t}:= \lim_{k\rightarrow \infty} \gamma^{n_k}(t)$. Then
\[ d(z^{t},z^{t+1/l}) = \lim_{k \rightarrow \infty} d(\gamma^{n_k}(t),\gamma^{n_k}(t+1/l)) \leq  \frac{1}{l}\lim_{k \rightarrow \infty}d(p^{n_k},q^{n_k}) = \frac{d(p,q)}{l} < D. \]
Hence, for each $l$, there is a unique shortest geodesic connecting $z^{t}$ and $z^{t+1/l}$ and we define the curve $\gamma:[0,1] \rightarrow \M$ as the concatenation of these geodesics, parametrized proportionally. Then $\gamma(0) = p$, $\gamma(1) = q$ and the length of $\gamma$ is given as
\[ \sum_{t\in \{ 0,\frac{1}{l},\ldots,1-\frac{1}{l}\}} d(z^{t},z^{t+1/l}) \leq \frac{d(p,q)}{l}l = d(p,q). \]
Hence the length of $\gamma$ is less or equal to the distance of its two endpoints which implies that $\gamma$ is a shortest geodesic connecting $p$ and $q$; see for instance \cite{do1992riemannian}. Defining $\psi^t = \gamma|_{[z_{t},z_{t+1/l}]}$ we get that the image of $\psi^t$ is in $\mathcal{N}$ and its length is less than $D$. Defining the geodesic variation $f^{t,k}:[t,t+1/l] \times [0,1]$ as 
\[f^{t,k}(\eta,\tau) =  \big[ [z^{t},\gamma^{n_k}(t)]_\tau,[z^{t+1/l},\gamma^{n_k}(t+\frac{1}{l})]_\tau \big ] _\theta,
\]
with $\theta $ chosen such that $(1-\theta) t + \theta (t+\frac{1}{l}) = \eta$,
we get, for sufficiently large $k$ (such that the brackets are single-valued), that
\[ d(\gamma(\eta),\gamma^{n_k}(\eta)) = d(\psi^{t}(\eta),f^{t,k}(\eta,1)) \leq L \max \{ d(z^{t},\gamma^{n_k}(t)) , d(z^{t+1/l},\gamma^{n_k}(t+\frac{1}{l})) \} .\]
Hence $\gamma ^{n_k} \rightarrow \gamma$ uniformly on $[0,1]$. 

Now consider an arbitrary interval $[a,b]\supset [0,1]$. First we uniquely extend the geodesics $(\gamma^{n_k})_{k}$, $\gamma$ to this interval. Then, $\tilde{\mathcal{N}}:=\{\gamma^{n_k}([a,b])\, |\, k \in \N\} \cup \gamma([a,b])$ is a bounded set and we can again pick a constant $\tilde{D}$ such that any geodesic in $\tilde{\mathcal{N}}$ with length smaller than $\tilde{D}$ is a unique length minimizing geodesic between its start- and endpoint.
Now since geodesics have constant speed, we note that, for any $a',b'$ such that $[a',b'] \subset [a,b]$, the length of $\gamma^{n_k}|_{[a',b']}$ equals $|b'-a'|$ times the length of $\gamma^{n_k}|_{[0,1]}$ which in turn is equal to $|b'-a'|d(p^{n_k},q^{n_k})$. But the latter is uniformly bounded by convergence of $(p^{n_k})_k$, $(q^{n_k})_k$, and hence we can pick a uniform $\epsilon>0$ such that the length of $\gamma^{n_k}|_{[a',b']}$
(and of $\gamma|_{[a',b']}$) is less than $\tilde{D}$ whenever $|b'-a'| \leq 2\epsilon$.

Our approach is now to show that, whenever $\gamma^{n_k}$ converges uniformly to $\gamma$ on an interval $I$ with $[0,1] \subset I \subset [a,b]$, uniform convergence (up to subsequences) still holds if we extend the interval by $\epsilon$ on both sides (up to the boundary of $[a,b]$). By induction, the claimed assertion then follows.
We show this extension result by considering the extension from $[0,1]$ to $[0,1+\epsilon]$, the general case follows similarly and by symmetry.

For this purpose, we observe that for each $k$, $\gamma^{n_k}|_{[1-\epsilon,1+\epsilon]}$ is a unique length minimizing geodesic between $\gamma^{n_k}(1-\epsilon)$ and $\gamma^{n_k}(1+\epsilon)$, both of which, up to subsequences, converge to some limit points $q_{-\epsilon}$ and $q_{\epsilon}$, respectively. Hence, employing the first result of this lemma, we obtain that, again up to subsequences, $\gamma^{n_k}|_{[1-\epsilon,1+\epsilon]}$ converges uniformly to some $\psi:[1-\epsilon,1+\epsilon]\rightarrow \M$ with $\psi(1-\epsilon) = q_{-\epsilon}$, $\psi(1+\epsilon) = q_\epsilon$ being a length minimizing geodesics between those points. But by uniform convergence of $\gamma^{n_k}$ on $[0,1]$ and uniqueness of geodesics, we get that $\psi|_{[1-\epsilon,1]} = \gamma|_{[1-\epsilon,1]}$. Hence they coincide also on $[1-\epsilon,1+\epsilon]$ and the assertion follows.
\end{proof}

\begin{proof}[Proof of Lemma \ref{lem:shild_ex_lsc}]
For the assertion on $\ds$, apply Proposition \ref{prop:existence_basis_general} with $C:\M^4 \rightarrow \mathcal{P}(\M^2)$, $C(x,y,u,v):= \{ (c',y') \,|\, c' \in [x,v]_{\frac{1}{2}}, y' \in  [u,c']_2 \}$ and $G((x,y,u,v),(c,\tilde{y})) = d(\tilde{y},y)$. For the assertion on $\dss$, apply Proposition \ref{prop:existence_basis_general} with $C:\M^8 \rightarrow \mathcal{P}(\M^4)$,
$C(u^1,v^1,u^2,v^2,x^1,y^1,x^2,y^2):=$
$\big\{(\tilde{c}^1,\tilde{r}^1,\tilde{c}^2,\tilde{r}^2) \,|$ $(\tilde{c}^1,\tilde{r}^1,\tilde{c}^2,\tilde{r}^2) \in [x^2,v^1]_{\frac{1}{2}} \times[u^1,\tilde{c}^1]_2$ $\times [x^1,v^2]_{\frac{1}{2}} \times  [u^2,\tilde{c}^2]_2 \big \}$ 
and $G:\M^ 8 \times \M^ 4 \rightarrow \R$ according to
$G((u^1,v^1,u^2,v^2,x^1,y^1,x^2,y^2),(c^1,r^1,c^2,r^2)):=\ds([r^1,y^2],[y^1,r^2]).$ \qedhere 
\end{proof}

Our next goal is to provide the proof of Lemma \ref{lem:pt_ex_lsc}, stating existence and lower semi-continuity results for the parallel-transport-based distance-type functions. Regarding $\dpt{}$, we note that a proof
based on a statement similar to Proposition~\ref{prop:existence_basis_general} (as done for the Schild's variant) 
seems possible. However, since this would require a generalization of Proposition~\ref{prop:existence_basis_general} that involves metrics on the tangent bundle we chose to work out the proof for $\dpt{}$ directly in order to avoid introducing additional technicalities and notation. 
\begin{lem}\label{lem:PtVersionLscUniv}
  The parallel-transport-based distance-type functional $\dpt$
  is lower semi-continuous.  In particular, the minimum in
  \eqref{eq:definition_transport_distance} is attained for any
  $x,y,u,v \in \M.$
\end{lem}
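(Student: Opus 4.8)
The goal is to show that $\dpt:\M^2\times\M^2\to[0,\infty)$ defined by
\[
\dpt([x,y],[u,v])=\min_{\substack{z_1\in\log_x(y)\\ z_2\in\pt_x(w),\,w\in\log_u(v)}}|z_1-z_2|_x
\]
is lower semi-continuous and that the minimum is attained. The plan is to take a sequence $(x^n,y^n,u^n,v^n)_n\to(x,y,u,v)$ in $\M^4$ and, passing to a subsequence realizing $\liminf_n\dpt([x^n,y^n],[u^n,v^n])$, to extract further subsequences along which all the geometric data — the distance-minimizing geodesics defining the logarithms and the transport path — converge, so that a limiting admissible configuration for $([x,y],[u,v])$ is produced with cost no larger than the liminf. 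Existence of the minimum is then the special case of a constant sequence.

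First I would reformulate the constrained minimum in terms of geodesics rather than tangent vectors. An admissible choice for $\dpt([x,y],[u,v])$ is: a distance-minimizing geodesic $\sigma:[0,1]\to\M$ with $\sigma(0)=x$, $\sigma(1)=y$ (giving $z_1=\dot\sigma(0)$); a distance-minimizing geodesic $\tau:[0,1]\to\M$ with $\tau(0)=u$, $\tau(1)=v$ (giving $w=\dot\tau(0)$); and a distance-minimizing geodesic $\rho:[0,1]\to\M$ with $\rho(0)=u$, $\rho(1)=x$ along which $w$ is parallel-transported to obtain $z_2=\pt^\rho_{u,x}(w)\in T_x\M$. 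The cost is $|z_1-z_2|_x$. Now pick, for each $n$, an admissible triple $(\sigma^n,\tau^n,\rho^n)$ for the data $(x^n,y^n,u^n,v^n)$ that is $\varepsilon_n$-optimal with $\varepsilon_n\to 0$ (or exactly optimal once the minimum is known to exist; to avoid circularity I would simply argue with an infimizing sequence). Since $(x^n,y^n,u^n,v^n)_n$ is convergent, it is bounded, so by Lemma~\ref{lem:stability_geodesics} applied to each of the three families — with endpoint pairs $(x^n,y^n)$, $(u^n,v^n)$, $(u^n,x^n)$ respectively — there is a common subsequence (indexed $n_k$) along which $\sigma^{n_k}\to\sigma$, $\tau^{n_k}\to\tau$, $\rho^{n_k}\to\rho$ uniformly on $[0,1]$, where $\sigma,\tau,\rho$ are distance-minimizing geodesics joining $(x,y)$, $(u,v)$, $(u,x)$. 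Moreover, Lemma~\ref{lem:stability_geodesics} also gives uniform convergence on a slightly larger interval, which upgrades the $C^0$ convergence of geodesics to convergence of their initial velocities: $\dot\sigma^{n_k}(0)\to\dot\sigma(0)$ and $\dot\tau^{n_k}(0)\to\dot\tau(0)$ (a geodesic on $[0,1]$ is determined by its restriction to $[-\delta,\delta]$, and uniform convergence there forces convergence of the velocity at $0$, e.g.\ via the geodesic flow being the flow of a smooth vector field on $T\M$).

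The remaining step is continuity of the parallel transport term under this joint convergence: one needs $\pt^{\rho^{n_k}}_{u^{n_k},x^{n_k}}(\dot\tau^{n_k}(0))\to\pt^{\rho}_{u,x}(\dot\tau(0))$ in the appropriate sense (as vectors in $T_{x^{n_k}}\M\subset\R^K$ converging to a vector in $T_x\M$). This follows because parallel transport along a curve is obtained by solving a linear ODE whose coefficients depend smoothly on the curve: uniform $C^0$-convergence of the $\rho^{n_k}$ together with the fact that they are geodesics (hence $C^\infty$, with $\dot\rho^{n_k}$ also converging by the velocity argument above) gives continuous dependence of the transport operator, and $\dot\tau^{n_k}(0)\to\dot\tau(0)$ handles the initial data. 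Then, writing $z_1^{n_k}=\dot\sigma^{n_k}(0)$, $z_2^{n_k}=\pt^{\rho^{n_k}}_{u^{n_k},x^{n_k}}(\dot\tau^{n_k}(0))$ and their limits $z_1,z_2$, continuity of the ambient norm (all tangent vectors live in $\R^K$) gives
\[
|z_1-z_2|_x=\lim_k |z_1^{n_k}-z_2^{n_k}|_{x^{n_k}}\ge\liminf_k\big(\dpt([x^{n_k},y^{n_k}],[u^{n_k},v^{n_k}])-\varepsilon_{n_k}\big)=\liminf_n\dpt(\dots),
\]
while $(\sigma,\tau,\rho)$ is admissible for $([x,y],[u,v])$, so $\dpt([x,y],[u,v])\le|z_1-z_2|_x$. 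This chains to $\dpt([x,y],[u,v])\le\liminf_n\dpt([x^n,y^n],[u^n,v^n])$, which is lower semi-continuity; taking a constant sequence and an exact infimizing sequence of triples shows the infimum is attained, so it is a minimum.

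I expect the main obstacle to be the bookkeeping around velocities and parallel transport: Lemma~\ref{lem:stability_geodesics} is stated at the level of uniform convergence of the geodesic curves, and I must be careful to invoke its ``larger interval'' clause to get convergence of the initial velocities $\dot\sigma^{n_k}(0),\dot\tau^{n_k}(0),\dot\rho^{n_k}(0)$, and then to argue that parallel transport along the $\rho^{n_k}$ depends continuously on this data. This is standard ODE-dependence-on-parameters but requires phrasing convergence of tangent vectors sitting at moving base points $x^{n_k}\to x$ correctly — most cleanly done using the embedding $\M\subset\R^K$ so that everything is compared in a fixed ambient space. None of this is deep, but it is the part where one could slip; the rest is a routine compactness-plus-lower-semicontinuity argument modeled exactly on the proof of Lemma~\ref{lem:shild_transport_stability}.
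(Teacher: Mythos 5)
Your proposal is correct and follows essentially the same route as the paper's proof: extract a subsequence realizing the liminf, choose near-optimal admissible data, apply Lemma~\ref{lem:stability_geodesics} to get convergent distance-minimizing geodesics, and conclude via continuity of the exponential map and of parallel transport along converging geodesics. The only difference is bookkeeping — you track convergence of initial velocities of the geodesics in the ambient $\R^K$, whereas the paper parallel-transports the tangent vectors to the fixed limit base points and extracts convergent subsequences there — but the underlying compactness-plus-continuity argument is identical.
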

\begin{proof}
We consider sequences $x^n \to x,$  $y^n \to y, $ $u^n \to u,$ and $v^n \to v,$ and show that
\begin{equation}\displaystyle \label{eq:toShowExPar}
	\dpt([x,y],[u,v]) \leq \liminf\nolimits_n \dpt([x^n,y^n],[u^n,v^n]).
\end{equation}
Here, by \eqref{eq:definition_transport_distance}	
\begin{equation}\displaystyle \label{eq:RecModDef4Existence}
\dpt([x,y],[u,v])=  \inf\nolimits_{z,w,\gamma} \big|   z -  \pt^\gamma_{\gamma(1),\gamma(0)}  w  \big| = \inf\nolimits_{z,w,\gamma} \big| \pt^\gamma_{\gamma(0),\gamma(1)}   z -  w  \big|, 
\end{equation}
where 
$ z \in \log_x(y),$ 
$ w \in \log_u(v),$
and $\gamma:[0,1]\rightarrow \M$ is a shortest geodesic connecting $\gamma(0)=x$ and $\gamma(1)=u.$
We note that the second equation in \eqref{eq:RecModDef4Existence} is true since the parallel transport is an isometry in a Riemannian manifold.
We start out choosing subsequences $x^{n_k} \to x,$  $y^{n_k} \to y,$ $u^{n_k} \to u,$ $v^{n_k} \to v,$
such that 
\begin{equation} \label{eq:ChoseFirstSubsequ}\displaystyle 
\lim\nolimits_{k \to \infty} \dpt([x^{n_k},y^{n_k}],[u^{n_k},v^{n_k}]) = \liminf\nolimits_n \dpt([x^n,y^n],[u^n,v^n]).
\end{equation}
Then, for each $k \in \mathbb N,$ we choose tangent vectors $z^{n_k},w^{n_k},$ and shortest geodesics $\gamma^{n_k}$ 
such that 
\begin{equation}\displaystyle  \label{eq:OurChoiceOfzwgamma}
     \left| \pt^{\gamma^{n_k}}_{\gamma^{n_k}(0),\gamma^{n_k}(1)}   z^{n_k} -  w^{n_k}  \right|  \leq \dpt([x^{n_k},y^{n_k}],[u^{n_k},v^{n_k}]) + \tfrac{1}{n_k}.
\end{equation}
Here, $z^{n_k}$ is sitting in the tangent space at the point $x^{n_k},$ 
$w^{n_k}$  is sitting in the tangent space at the point $u^{n_k},$ 
and $\gamma^{n_k}$ is one of the (potentially non-unique) shortest geodesics 
connecting the points $\gamma^{n_k}(0)= x^{n_k}$ and $\gamma^{n_k}(1)= u^{n_k}.$ 
The parallel transport is understood along $\gamma^{n_k}.$
By Lemma \ref{lem:stability_geodesics} there is a subsequence $\gamma^{n_l}$ of $\gamma^{n_k}$
(for convenience, we suppress iterated indices and write  $\gamma^{n_l}$ instead of $\gamma^{n_{k_l}}$ in the following)
such that $\gamma^{n_l} \to \gamma$ uniformly on $[0,1],$
for some shortest geodesic $\gamma:[0,1]\rightarrow \M$ connecting $\gamma(0)=x$ and $\gamma(1)=u.$

Since $x^{n_l}$ converges to $x,$ the geodesic connecting $x^{n_l}$ and $x$ is unique for sufficiently large $l$.
The same is true for $u^{n_l}$ and $u.$ So we may identify the tangent vectors  $z^{n_l}$ sitting at  $x^{n_l},$
and the tangent vectors $w^{n_l}$ sitting at  $u^{n_l},$
with their parallel transported versions 
\begin{equation}\displaystyle \label{eq:z_bar_pt_z}
\bar{z}^{n_l} = \pt_{x_{n_l},x} z_{n_l}, \qquad 
\bar{w}^{n_l} = \pt_{u_{n_l},u} w_{n_l},
\end{equation}  
along the corresponding unique geodesic. 
Note that the $\bar{z}^{n_l}$ are sitting in the common point $x,$  
and that the $\bar{w}^{n_l}$ are sitting in the common point $u.$
The sequences  $\bar{z}^{n_l}$  is bounded since parallel transport is an isometry and 
$ z^{n_l} \in \log_{x^{n_l}}(y^{n_l}),$ 
where both $x^{n_l}$ and  $y^{n_l}$ converge.  
The analogous statements hold for $\bar{w}^{n_l}$ with the same argument.
So, by going to subsequences $\bar{z}^{n_r}$ of $\bar{z}^{n_l},$
and $\bar{w}^{n_r}$ of $\bar{w}^{n_l},$ we get that
\begin{equation}\displaystyle \label{eq:BarsConverge}
\bar{z}^{n_r} \to z, \qquad 
\bar{w}^{n_r} \to w,
\end{equation}  
for a tangent vector $z$ sitting in $x$ and a tangent vector $w$ sitting in $u.$
We have that the  limit $ z \in \log_x(y),$ 
and that $ w \in \log_u(v),$ since the exponential map from $T\M \to \M $ is differentiable in a Riemannian manifold
which implies $\exp_x z = \lim_{r \to \infty} \exp_{x^{n_r}} z^{n_r} = \lim_{r \to \infty} y^{n_r} = y, $
and $\exp_u w = v.$

We are now prepared to estimate $| \pt^\gamma_{\gamma(0),\gamma(1)}   z -  w |$ which allows us to bound
$\dpt([x,y],[u,v])$ from above; see \eqref{eq:RecModDef4Existence}.
Since the parallel transport is an isometry, we have 
\begin{equation}\displaystyle \label{eq:MoveToUnr}
\big| \pt^ \gamma_{\gamma(0),\gamma(1)}   z -  w  \big| = \big| \pt_{u,u^{n_r}} \pt^\gamma_{\gamma(0),\gamma(1)}   z -  \pt_{u,u^{n_r}} w  \big|.
\end{equation}
We further have that
\begin{equation}\displaystyle \label{eq:ParCommu}
\varepsilon_r := \big| \pt_{u,u_{n_r}} \pt^\gamma_{\gamma(0),\gamma(1)} z - \pt^{\gamma^{n_r}}_{\gamma^{n_r}(0),\gamma_{n_r}(1)} \pt_{x,x^{n_r}} z \big| 
\to 0 \qquad \text{ as } \quad r \to \infty 
\end{equation}
which is a consequence of the parallel transport along nearby geodesics being differentiably dependent on the variation of the geodesics.
Using \eqref{eq:ParCommu} together with \eqref{eq:MoveToUnr}, we have 
\begin{align} \label{eq:EstExPar1}
\big| \pt^\gamma_{\gamma(0),\gamma(1)}   z -  w  \big| 
& \leq \big| \pt^{\gamma^{n_r}}_{\gamma^{n_r}(0),\gamma^{n_r}(1)} \pt_{x,x^{n_r}}  z -  \pt_{u,u^{n_r}} w  \big| + \varepsilon_r \\
& \leq \big| \pt^{\gamma^{n_r}}_{\gamma^{n_r}(0),\gamma^{n_r}(1)} \pt_{x,x^{n_r}}  \bar{z}^{n_r}  -  \pt_{u,u^{n_r}} \bar{w}_{n_r}  \big| 
+ |\bar{z}^{n_r} - z | + |\bar{w}^{n_r} - w |+ \varepsilon_r. \notag 
\end{align}
The second inequality is a consequence of the triangle inequality together with the fact that parallel transport is an isometry.
We take the limit w.r.t.\ $r$ on the right hand side of \eqref{eq:EstExPar1}: by \eqref{eq:BarsConverge},
we have that $\bar{z}^{n_r} \to z,$ and that $\bar{w}^{n_r} \to w $ and by \eqref{eq:ParCommu} that $\varepsilon_r \to 0$
which implies 
\begin{align}\displaystyle \label{eq:EstExPar2}
\big| \pt^\gamma_{\gamma(0),\gamma(1)}   z -  w  \big| 
& \leq \lim_{r \to \infty} \big| \pt^{\gamma^{n_r}}_{\gamma^{n_r}(0),\gamma^{n_r}(1)} \pt_{x,x^{n_r}}  \bar{z}^{n_r}  -  \pt_{u,u^{n_r}} \bar{w} ^{n_r} \big|\\
& \leq \lim_{r \to \infty} \left( \dpt([x^{n_r},y^{n_r}],[u^{n_r},v^{n_r}]) + \tfrac{1}{n_r} \right) 
 =  \liminf_n \dpt([x^n,y^n],[u^n,v^n]).   \notag     
\end{align}
The second inequality in \eqref{eq:EstExPar2} is a consequence of our choice of $z^{n_k},w^{n_k},$ and $\gamma^{n_k}$ in \eqref{eq:OurChoiceOfzwgamma} and the definition of $\bar{z}^{n_l},\bar{w}^{n_l}$ as in \eqref{eq:z_bar_pt_z}, and the equality in the last line follows by our choice of subsequences in \eqref{eq:ChoseFirstSubsequ}. Then passing to the infimum according to \eqref{eq:RecModDef4Existence}
shows \eqref{eq:toShowExPar} which shows the first assertion.

To see that the infimum in \eqref{eq:definition_transport_distance} is attained for any $x,y,u,v \in \M$,
we choose the constant sequences $x^n:= x,$  $y^n := y, $ $u^n := u,$ and $v^n := v,$ and apply the previous result to these sequences. This shows the second assertion and completes the proof.
\end{proof}
Having shown Lemma~\ref{lem:PtVersionLscUniv} we can now employ  
Proposition~\ref{prop:existence_basis_general} to show existence and lower semi-continuity results for $\dpts$.
As a preparation we need the following lemma.

\begin{lem} \label{lem:PtVersionLscBivAux}
Let $u^n \to u,$  $\tilde u^n \to \tilde u,$ and $y^n \to y$ in the complete manifold $\mathcal M.$ 
We consider a sequence $(r^n)_n$ with
\begin{equation}\label{eq:TheFlesh4BivPt1}
   r^n \in  \exp (\pt_{\tilde u^n}(w^n)) \quad  \text{ with } \quad  w^n \in \log_{u^n}y^n.	
\end{equation}
Then there is a subsequence $(r^{n_k})_k$ which converges and a limit $r = \lim_{k \to \infty} r^{n_k}$ such that $r$ fulfills
\begin{equation}\label{eq:TheFlesh4BivPt2}
r \in  \exp (\pt_{\tilde u}(w)) \quad  \text{ for some } \quad  w \in \log_{u}y.	
\end{equation}
\end{lem}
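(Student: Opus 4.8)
The plan is to proceed by a compactness argument that mirrors the one used in the proof of Lemma~\ref{lem:PtVersionLscUniv}: we extract successive subsequences along which the base points, the transported vectors, and the connecting geodesics all converge, and then we check that the (set-valued) constraints defining the right-hand side of \eqref{eq:TheFlesh4BivPt1} pass to the limit. First I would unwind the constraint: for each $n$ fix a tangent vector $w^n \in \log_{u^n}(y^n) \subset T_{u^n}\M$ and a length-minimizing geodesic $\sigma^n:[0,1]\to\M$ with $\sigma^n(0)=u^n$, $\sigma^n(1)=\tilde u^n$ such that, writing $\tilde w^n := \pt^{\sigma^n}_{u^n,\tilde u^n}(w^n) \in T_{\tilde u^n}\M$, one has $r^n = \exp_{\tilde u^n}(\tilde w^n)$. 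Since $w^n \in \log_{u^n}(y^n)$ we have $|w^n|_{u^n} = d(u^n,y^n) \to d(u,y)$, so $(w^n)_n$, viewed as a sequence in $\R^K$ via $\M \subset \R^K$, is bounded; by the isometry property of parallel transport, $(\tilde w^n)_n$ is bounded as well, and $d(u^n,\tilde u^n) \to d(u,\tilde u)$ bounds the lengths of the $\sigma^n$ uniformly.

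Next I would extract subsequences (not relabeled) so that $w^{n_k} \to w$ in $\R^K$; by Lemma~\ref{lem:stability_geodesics} applied to $p^n = u^n \to u$ and $q^n = \tilde u^n \to \tilde u$, $\sigma^{n_k} \to \sigma$ uniformly on $[0,1]$ for some length-minimizing geodesic $\sigma$ from $u$ to $\tilde u$; and $\tilde w^{n_k} \to \tilde w$ in $\R^K$. Since the tangent spaces vary continuously with the base point, $w \in T_u\M$ and $\tilde w \in T_{\tilde u}\M$; and by joint continuity of the exponential map on $T\M$, $r^{n_k} = \exp_{\tilde u^{n_k}}(\tilde w^{n_k}) \to \exp_{\tilde u}(\tilde w) =: r$, so the limit exists. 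It then remains to verify (i) $w \in \log_u(y)$ and (ii) $\tilde w = \pt^\sigma_{u,\tilde u}(w)$.

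For (i) I would argue exactly as in the proof of Lemma~\ref{lem:PtVersionLscUniv}: joint continuity of $\exp$ gives $\exp_u(w) = \lim_k \exp_{u^{n_k}}(w^{n_k}) = \lim_k y^{n_k} = y$, while the length of $t\mapsto\exp_u(tw)$ equals $|w|_u = \lim_k |w^{n_k}|_{u^{n_k}} = \lim_k d(u^{n_k},y^{n_k}) = d(u,y)$, so this geodesic is length-minimizing and $w \in \log_u(y)$. For (ii) I would invoke the differentiable dependence of parallel transport on the underlying curve and on the transported vector --- the same fact used in \eqref{eq:ParCommu} of the proof of Lemma~\ref{lem:PtVersionLscUniv}: since $\sigma^{n_k}\to\sigma$ uniformly and, being geodesics with convergent initial data $\bigl(\sigma^{n_k}(0),\tfrac{d}{dt}\sigma^{n_k}(0)\bigr)$, also in $C^\infty$, and since $w^{n_k}\to w$, the solution of the parallel-transport ODE along $\sigma^{n_k}$ with initial value $w^{n_k}$ converges at time $1$ to the solution along $\sigma$ with initial value $w$, i.e. $\tilde w^{n_k} \to \pt^\sigma_{u,\tilde u}(w)$; by uniqueness of limits $\tilde w = \pt^\sigma_{u,\tilde u}(w)$. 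Since $\sigma$ is a length-minimizing geodesic from $u$ to $\tilde u$ we conclude $\tilde w \in \pt_{\tilde u}(w)$, hence $r = \exp_{\tilde u}(\tilde w) \in \exp(\pt_{\tilde u}(w))$ with $w \in \log_u(y)$, which is exactly \eqref{eq:TheFlesh4BivPt2}.

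The main obstacle I anticipate is the bookkeeping of the set-valued maps $\log$ and $\pt$ together with the stability statement (ii): one has to ensure that the particular length-minimizing geodesic $\sigma$ produced by Lemma~\ref{lem:stability_geodesics} is the one along which the limiting parallel transport is taken, which forces the order of subsequence extractions (first $\sigma^n$, then $\tilde w^n$ along it). The continuity of parallel transport under simultaneous perturbation of the curve and the initial vector is standard ODE theory but should be stated explicitly, referring to the analogous use in the proof of Lemma~\ref{lem:PtVersionLscUniv}; everything else is routine continuity of $\exp$ and of the Riemannian metric.
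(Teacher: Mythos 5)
Your proposal is correct and follows essentially the same route as the paper: extract convergent subsequences of the tangent vectors and of the connecting geodesics via Lemma~\ref{lem:stability_geodesics}, then pass to the limit using the continuous dependence of parallel transport on the underlying curve and the continuity of $\exp$. The only differences are cosmetic --- you track convergence of tangent vectors through the ambient embedding $\M\subset\R^K$ where the paper first parallel-transports them to the fixed limit base point $u$, and you additionally verify the length-minimizing property needed for $w\in\log_u(y)$, a point the paper leaves implicit.
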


\begin{proof}
	The present proof essentially employs the techniques already used 
	in the proof of Lemma~\ref{lem:PtVersionLscUniv}. For this reason we keep the following arguments rather short.
	The sequence $\bar w^n := \pt_{u}(w^n)$ is bounded since parallel transport is an isometry. 
	So, by going to a subsequence $\bar{w}^{n_l}$ of $\bar{w}^{n},$
    we get that $\bar{w}^{n_l} \to w$ for a tangent vector $w$ sitting in $u.$
	We have that the  limit  $ w \in \log_u(y),$ since the exponential map $T\M \to \M $ is differentiable 
	which implies $\exp_u w = \lim_{l \to \infty} \exp_{u^{n_l}} w^{n_l} = \lim_{l \to \infty} y^{n_l} = y.$
	For each $l$ we choose a distance-minimizing geodesic $\gamma^{n_l}$ connecting 
	$\gamma^{n_l}(0)$ $= u^{n_l}$ with $\gamma^{n_l}(1)$ $= \tilde u^{n_l}.$   
	Then we use Lemma~\ref{lem:stability_geodesics} to choose a subsequence of geodesics $\gamma^{n_k}$ of $\gamma^{n_l}$ 
	(suppressing iterated subindices) which uniformly converges to a geodesic $\gamma:[0,1]\rightarrow \M$ connecting $\gamma(0) = u$ with 
	$\gamma(1)=\tilde u$.
    Then, since parallel transport along nearby geodesics is differentiably dependent on the variation of the geodesics and by an argument similar to the one used for the convergence in Equation \eqref{eq:ParCommu}, 
	$\pt_{\tilde u^{n_k} ,\tilde u} \pt^{\gamma^{n_k}}_{u^{n_k},\tilde u^{n_k}} w^{n_k}$ $\to \pt^ \gamma_{u,\tilde u} w$
	 as $k \to \infty.$ 	
	Then, by the continuity of the exponential map,  
	$r = \lim_{k \to \infty} r^{n_k}$ $= \lim_{k \to \infty} \exp (\pt^{\gamma^{n_k}}_{u^{n_k},\tilde u^{n_k}}(w^{n_k})) =  \exp (\pt^ \gamma_{u,\tilde u}(w)),$
	i.e.,  $r^{n_k}$ converges and \eqref{eq:TheFlesh4BivPt2} holds true which was the assertion to show.
\end{proof}
Finally, the proof of Lemma \ref{lem:pt_ex_lsc} follows as consequence.

\begin{proof}[Proof of Lemma \ref{lem:pt_ex_lsc}]
For $\dpt{}$, this is the assertion of Lemma \ref{lem:PtVersionLscUniv}. For $\dpts$, we apply Proposition \ref{prop:existence_basis_general} with $C:\M^8\rightarrow \mathcal{P}(\M^2)$,
$C(u^1,v^1,u^2,v^2,x^1,y^1,x^2,y^2):=$ 
$\{\exp(\pt_{x^2}w) \,|\, w \in \log_{u^1}(v^1)\}$ $\times \{\exp(\pt_{x^1}w) \, |\, w \in \log_{u^2}(v^2)\} ,
$
and $G:\M^ 8 \times \M^ 2\rightarrow \R$ according to
$G((u^1,v^1,u^2,v^2,x^1,$ $y^1,x^2,y^2),(r^1,r^2)):=\dpt{}([r^1,y^2],[y^1,r^2]) .$
The Lemmata  \ref{lem:PtVersionLscUniv} and \ref{lem:PtVersionLscBivAux} ensure that $C$ and $G$ satisfy the assumption of Proposition \ref{prop:existence_basis_general}. 
\end{proof}

\subsection{Proofs for Section 4}\label{sec:proofs_section4}

\begin{proof}[Proof of Lemma~\ref{lem:GradDSwrtui}]
	Let us consider the Schild's ladder mapping 
	$ 
	u_{i-1} \mapsto S(u_{i-1},y_{i-1},u_{i}) = [u_{i-1},[u_{i},y_{i-1}]_{\frac{1}{2}}]_2
	$
	as a function of $u_{i-1}$.
	Since the points $y_{i-1},u_{i}$ are fixed, so is their midpoint $m=[u_{i},y_{i-1}]_{\frac{1}{2}}.$
	Now $S(u_{i-1},y_{i-1},u_{i})$ is obtained by evaluating the geodesic $\gamma:[0,1]\rightarrow \M$ connecting
	$u_{i-1}=\gamma(0)$ and $m=\gamma(1)$ at time $2,$ thus considering $\gamma(2).$
	Hence, the differential $T_1$ of $S$ w.r.t. $u_{i-1}$  is related to the Jacobi fields along $\gamma$ 
	as follows: consider those Jacobi fields $\mathcal J$ along $\gamma$ 
	for which $J(1)=0$ which means that they belong to geodesic variations leaving $m$ fixed. 
	Then the adjoint of the mapping 
	\begin{equation}\label{eq:DiffInTermJFDsui}
	J(0) \mapsto J(2),\qquad J \in \mathcal J
	\end{equation}
	equals $T_1$.
	
	If the manifold is a Riemannian symmetric space, then the mapping 
	$ 
	u_{i-1} \mapsto S(u_{i-1},y_{i-1},u_{i}) = [u_{i-1},[u_{i},y_{i-1}]_{\frac{1}{2}}]_2
	$
	is a geodesic reflection, see, e.g., \cite{eschenburg1997lecture}.
	We consider an orthonormal basis $(v_n)_n$ of eigenvectors of the self-adjoint Jacobi operator 
	$J \mapsto R(\frac{\gamma'(1)}{ |\gamma'(1) |} ,J) \frac{\gamma'(1)}{|\gamma'(1) |}$  with corresponding eigenvalues associated $\lambda_n,$ 
	and $v_1$ tangent to $\gamma'(1).$
	Then the mapping
	$J(0) \mapsto J(2)$, $ J \in \mathcal J,$
	in \eqref{eq:DiffInTermJFDsui} can be written as 
	\begin{equation}\displaystyle \label{eq:MapWIsIdentity}
	\sum\nolimits_n \alpha_n f(\lambda_n) \pt_{m,u_{i-1}}  v_n \mapsto 
	- \sum\nolimits_n \alpha_n f(\lambda_n) \pt_{m,S(u_{i-1},y_{i-1},u_{i})} v_n,
	\end{equation}
	where the $\alpha_n$ are the coefficients of the corresponding basis representation and 
	the function $f,$ depending on the sign of $\lambda_n$, is given 
	by $f(\lambda_n) = d,$  if $\lambda_n = 0,$
	by $f(\lambda_n) = (\sqrt{\lambda_n} )^{-1} \sin (\sqrt{\lambda_n} d) ,$	if $\lambda_n > 0$ and 
	by $f(\lambda_n) = (\sqrt{-\lambda_n} )^{-1} \sinh (\sqrt{-\lambda_n} d) ,$	if $\lambda_n < 0,$
	where $d$ is the distance between $m$ and $u_{i-1}$
	(which equals the distance between $m$ and $S(u_{i-1},y_{i-1},u_{i}).$)
	This results from the fact that $f(\lambda_n) = x(0),$ the evaluation at $0$ of the solution $x(t)$ of the scalar initial value problem $x'' = - \lambda_n d^2 x, \quad x(1) = 0, x'(1)= -d.$
	We observe that \eqref{eq:MapWIsIdentity} states that the mapping
	$J(0) \mapsto J(2)$, $ J \in \mathcal J,$ equals the identity multiplied with $-1$
	(up to parallel transport).
	(We note that this can also be concluded from the derivations in  \cite{eschenburg1997lecture} near Theorem 1.)
	Hence the adjoint of the mapping $J(0)\mapsto J(2)$ coincides with the parallel transport multiplied by $-1$ as in \eqref{eq:NablaUISym}, in particular, it is an isometry, and the proof is complete.
\end{proof}

\begin{proof}[Proof of Lemma~\ref{lem:GradDSwrtyi}]
	We observe that the concatenation of the mappings $m \mapsto  [u_{i-1},m]_2$ and 
	$y_{i-1} \mapsto  [u_{i},y_{i-1}]_{\frac{1}{2}}$ equals the mapping $y_{i-1} \mapsto [u_{i-1},[u_{i},y_{i-1}]_{\frac{1}{2}}]_2$.
	Hence, we conclude from the discussion leading to \eqref{eq:InitDiscussion2} 
	in connection with the chain rule the validity of
	\eqref{eq:diffDswrtyi}.
	
	It remains to express $T_3,T_4$ in terms of Jacobi fields.
	Concerning $T_3$ we consider the  geodesic $\gamma$ connecting
	$u_{i-1}=\gamma(0)$ and $m=\gamma(1).$ 
	$T_3$ is related to the Jacobi fields along $\gamma$ 
	as follows: consider those Jacobi fields $\mathcal J$ along $\gamma$ 
	for which $J(0)=0$ which means that they belong to geodesic variations leaving $u_{i-1}$ fixed. 
	Then, the adjoint of the mapping 
	\begin{equation}\displaystyle \label{eq:DiffInTermJFDsyi1}
	J(1) \mapsto J(2),\qquad J \in \mathcal J
	\end{equation}
	equals $T_3.$ 
	Concerning $T_4,$ we consider the  geodesic $\xi$ connecting
	$y_{i-1}=\xi(0)$ and $u_{i}=\xi(1).$ 
	$T_4$ is related to the Jacobi fields $\mathcal J'$ along $\xi$ 
	for which $J(1)=0$ by 
	the mapping 
	\begin{equation}\displaystyle \label{eq:DiffInTermJFDsyi2}
	J(0) \mapsto J(\tfrac{1}{2}),\qquad J \in \mathcal J'.
	\end{equation}	
	
\end{proof}

\begin{proof}[Proof of Lemma~\ref{lem:T3T4explicit}]
	We consider the assertion of Lemma~\ref{lem:T3T4explicit} for the mapping $T_3.$ 
	By the proof of Lemma~\ref{lem:GradDSwrtyi}, we have to
	determine the adjoint of the mapping given by \eqref{eq:DiffInTermJFDsyi1} more explicitely.
	Since the covariant derivative of the curvature tensor equals zero in a symmetric space,
	the differential equation for the Jacobi fields $J$ in terms of the scalar coefficient $x$ of the vector field
	$\pt_{m,[m,u_{i-1}]_t}v_n$ obtained by parallel transport of the eigenvector $v_n$ along $\gamma$
	reads  
	$x'' = - \lambda_n d^2 x, \quad x(0) = 0.$
	Solving this scalar ODE shows \eqref{eq:JacobiSymZentrStr}.
	Then, $f(\lambda_n)$ corresponds to the value $x(2)$ of the solution of the scalar ODE of the previous line with 
	additional boundary condition $x'(1)=1$.
	Solving this scalar ODE yields \eqref{eq:f4T3}.
We have already derived the corresponding statement for $T_4$ in \cite{bavcak2016second}.  
\end{proof}

Our next goal is to show Lemma~\ref{FSymAndImplications4Der}.
To this end, we introduce the mapping  $F_t$ which is a slight extension of  $\dpt{}$ for different parameters as follows.
For $t \in [0,1],$ we consider the mapping $F_t: \M \times \M \times \M \times \M \to [0,\infty),$ given by 
\begin{align}\displaystyle \label{eq:DefFt}
F_t(u_i,u_{i-1},y_i,y_{i-1}) = \big | \pt_{0,t} \log_{u_i}y_i - \pt_{1,t} \ \log_{u_{i-1}}y_{i-1} \big |,	
\end{align}
where, for the definition of $F_t$ and in the following Lemma, we use the shorthand notation $\pt_{s,t}$ to denote the parallel transport from $[u_i,u_{i-1}]_s$ to $[u_i,u_{i-1}]_t$ and the norm on the right hand side is the one introduced by the Riemannian scalar product in the point $[u_i,u_{i-1}]_t$.
Note that $F_0(u_i,u_{i-1},y_i,y_{i-1}) = \dpt([u_i,y_i],[u_{i-1},y_{i-1}])$, so in order to obtain the derivative of $\dpt{}$ it suffices to differentiate $F_0$ w.r.t. its four arguments. The following lemma shows that we can also consider $F_t$ instead.

\begin{lem} \label{lem:secDiffIndependentOft}
	The function $F_t$ given by \eqref{eq:DefFt} is independent of $t.$ 
\end{lem}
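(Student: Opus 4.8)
The plan is to use the two basic structural properties of parallel transport along a fixed curve: it is \emph{linear}, it is a linear \emph{isometry} between the tangent spaces it connects, and parallel transports along one and the same curve \emph{compose}, i.e.\ $\pt_{s,t}\circ\pt_{r,s}=\pt_{r,t}$. Concretely, fix $u_i,u_{i-1},y_i,y_{i-1}$, let $\gamma:[0,1]\to\M$ be the length-minimizing geodesic with $\gamma(0)=u_i$, $\gamma(1)=u_{i-1}$ (unique by our standing assumption), so that $[u_i,u_{i-1}]_t=\gamma(t)$, and abbreviate $v:=\log_{u_i}y_i\in T_{u_i}\M$ and $w:=\log_{u_{i-1}}y_{i-1}\in T_{u_{i-1}}\M$, so that $F_t(u_i,u_{i-1},y_i,y_{i-1})=\bigl|\pt_{0,t}v-\pt_{1,t}w\bigr|_{\gamma(t)}$.

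First I would record, for arbitrary $s,t\in[0,1]$, the identities $\pt_{0,t}v=\pt_{s,t}(\pt_{0,s}v)$ and $\pt_{1,t}w=\pt_{s,t}(\pt_{1,s}w)$, which are just the composition law for parallel transport along $\gamma$. Using linearity of $\pt_{s,t}$ these combine to the single identity
\[
 \pt_{0,t}v-\pt_{1,t}w=\pt_{s,t}\bigl(\pt_{0,s}v-\pt_{1,s}w\bigr)
\]
of tangent vectors at $\gamma(t)$.

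Second I would apply $|\cdot|_{\gamma(t)}$ to this identity. Since the Levi-Civita connection is metric, parallel transport preserves the Riemannian inner product; in particular $\pt_{s,t}$ is norm-preserving, whence
\[
 F_t(u_i,u_{i-1},y_i,y_{i-1})=\bigl|\pt_{0,t}v-\pt_{1,t}w\bigr|_{\gamma(t)}=\bigl|\pt_{0,s}v-\pt_{1,s}w\bigr|_{\gamma(s)}=F_s(u_i,u_{i-1},y_i,y_{i-1}).
\]
As $s,t$ are arbitrary, $F_t$ is independent of $t$.

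There is essentially no obstacle; the only thing to be careful about is that all of $\pt_{0,\cdot},\pt_{1,\cdot},\pt_{s,\cdot}$ refer to transport along the single geodesic $\gamma$, which is what makes the composition law available — this is ensured by fixing $\gamma$ once $u_i,u_{i-1}$ are given (using uniqueness of minimizing geodesics). The practical consequence, which is what the lemma is used for, is that $F_0=F_t$ for every $t\in[0,1]$, so one may differentiate $F_t$ at any convenient value of $t$ (for instance $t=\tfrac12$) in place of $F_0=\dpt([u_i,y_i],[u_{i-1},y_{i-1}])$.
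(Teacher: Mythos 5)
Your proof is correct and is essentially the paper's own argument: both rest on the composition law for parallel transport along the fixed geodesic joining $u_i$ and $u_{i-1}$, its linearity, and the fact that it is an isometry, giving $\pt_{0,t}v-\pt_{1,t}w=\pt_{s,t}(\pt_{0,s}v-\pt_{1,s}w)$ and hence equality of the norms. No gaps.
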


\begin{proof}
	The Riemannian scalar product is invariant under parallel transport,
	i.e., for any tangent vectors $v,v'$ sitting in the arbitrary point $x,$ and the parallel transport along any curve $\gamma$ with any points $y,z$ sitting on that curve,
	we have 
	$	
	\langle \pt_{x,y}v , \pt_{x,y} v' \rangle_y  = \langle \pt_{x,z}v , \pt_{x,z} v' \rangle_z.
	$
	Hence, for all $s,t \in [0,1],$
	\begin{align}\displaystyle \label{eq:EqParL41}
	\big | \pt_{0,s} \log_{u_i}y_i - \pt_{1,s} \ \log_{u_{i-1}}y_{i-1} \big |^2 
	&=		\big | \pt_{t,s}  \pt_{0,t} \log_{u_i}y_i - \pt_{t,s} \pt_{1,t} \ \log_{u_{i-1}}y_{i-1} \big |^2 \notag	\\
	&=	     \big | \pt_{0,t} \log_{u_i}y_i - \pt_{1,t} \ \log_{u_{i-1}}y_{i-1} \big |^2.
	\end{align}
	Since the first expression in \eqref{eq:EqParL41} equals the square of $F_s,$
	and the last expression equals the square of $F_t,$ this shows that  $F_s=F_t$ for all $s,t \in [0,1].$	 
\end{proof}

As pointed out below Lemma~\ref{FSymAndImplications4Der}, Lemma~\ref{FSymAndImplications4Der} is obtained by specifying $F=F_0$ in the following lemma.

\begin{lem} \label{FtSymAndImplications4Der}
	The function $F_t$ given by \eqref{eq:DefFt} is symmetric with respect to interchanging $(u_i,y_i)$ with $(u_{i-1},y_{i-1})$ i.e.,
	$
	F_t(u_i,u_{i-1},y_i,y_{i-1})  = F_t(u_{i-1}, u_i,y_{i-1},  y_i). 
	$
	In particular, for points with $F_t \neq 0,$ the gradient of the function $F_t$ w.r.t.\ the third variable $y_{i},$
	is given by 
	$
	\nabla_{y_{i}} F_t(u_i,u_{i-1},y_i,$ $y_{i-1})  = 
	\nabla_{y_{i-1}} F_t(u_{i-1}, u_i,y_{i-1},  y_i),
	$
	where $\nabla_{y_{i-1}} F_t$ denotes the derivative of $F_t$ w.r.t. the fourth argument.
	Further, again for points with $F_t \neq 0$, the gradient of the function $F_t$ w.r.t.\ the first component variable $u_{i},$
	is given by 
	$
	\nabla_{u_{i}} F_t(u_i,u_{i-1},y_i,y_{i-1})  = 
	\nabla_{u_{i-1}} F_t(u_{i-1}, u_i,y_{i-1},  y_i),
	$
	where $\nabla_{u_{i-1}} F_t$ denotes the derivative of $F_t$ w.r.t. the second argument.   	 
\end{lem}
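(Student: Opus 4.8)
The plan is to read off the symmetry of $F_t$ from two facts: interchanging the pairs $(u_i,y_i)$ and $(u_{i-1},y_{i-1})$ reverses the orientation of the geodesic joining the two base points, and $F_t$ does not depend on $t$ by Lemma~\ref{lem:secDiffIndependentOft}. Concretely, write $\gamma := [u_i,u_{i-1}]$ for the (by the standing assumption of this section, unique) minimizing geodesic with $\gamma(0)=u_i$, $\gamma(1)=u_{i-1}$, and recall that in \eqref{eq:DefFt} the symbol $\pt_{s,t}$ denotes parallel transport along $\gamma$ from $\gamma(s)$ to $\gamma(t)$. The geodesic relevant to $F_t(u_{i-1},u_i,y_{i-1},y_i)$ is $[u_{i-1},u_i]$, which by uniqueness equals $s\mapsto\gamma(1-s)$; since parallel transport along a curve is independent of the direction of traversal, the transport along $[u_{i-1},u_i]$ from its parameter-$s$ point to its parameter-$t$ point is exactly $\pt_{1-s,1-t}$. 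I would spell this out and conclude
\[
F_t(u_{i-1},u_i,y_{i-1},y_i)=\bigl|\pt_{1,1-t}\log_{u_{i-1}}y_{i-1}-\pt_{0,1-t}\log_{u_i}y_i\bigr|=F_{1-t}(u_i,u_{i-1},y_i,y_{i-1}),
\]
using $|a-b|=|b-a|$. Invoking Lemma~\ref{lem:secDiffIndependentOft} (so $F_{1-t}=F_t$) then yields $F_t(u_{i-1},u_i,y_{i-1},y_i)=F_t(u_i,u_{i-1},y_i,y_{i-1})$, which is the asserted symmetry.

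For the gradient identities I would view $F_t$ as a function $\Phi$ of four arguments on the relevant open subset of $\M^4$ and phrase the symmetry as the functional identity $\Phi(a,b,c,d)=\Phi(b,a,d,c)$. Differentiating this identity with respect to the third argument $c$ (with $a,b,d$ fixed) gives $\nabla_3\Phi(a,b,c,d)=\nabla_4\Phi(b,a,d,c)$, and differentiating with respect to the first argument $a$ gives $\nabla_1\Phi(a,b,c,d)=\nabla_2\Phi(b,a,d,c)$. Specializing to $(a,b,c,d)=(u_i,u_{i-1},y_i,y_{i-1})$ yields precisely
$\nabla_{y_i}F_t(u_i,u_{i-1},y_i,y_{i-1})=\nabla_{y_{i-1}}F_t(u_{i-1},u_i,y_{i-1},y_i)$ and
$\nabla_{u_i}F_t(u_i,u_{i-1},y_i,y_{i-1})=\nabla_{u_{i-1}}F_t(u_{i-1},u_i,y_{i-1},y_i)$, with $\nabla_{y_{i-1}}$ and $\nabla_{u_{i-1}}$ on the right-hand sides denoting differentiation in the fourth and second slot, respectively. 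This step uses that $F_t$ is differentiable at the constellation considered: since $F_t\neq 0$ there, the Euclidean norm is smooth at the relevant argument, and under the standing uniqueness assumption the $\log$ map and parallel transport depend smoothly on their data, so $F_t$ is differentiable and the chain rule above applies.

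I do not expect a genuine obstacle; the content is bookkeeping. The one point worth stating cleanly is the reparametrization relation $\widetilde{\pt}_{s,t}=\pt_{1-s,1-t}$ between parallel transport along $[u_{i-1},u_i]$ and along $[u_i,u_{i-1}]$, which rests on uniqueness of the connecting geodesic and on parallel transport being orientation-independent. After that, the $t$-independence from Lemma~\ref{lem:secDiffIndependentOft} closes the loop, and the gradient identities are a formal consequence of the symmetry together with smoothness of $F_t$ away from its zero set.
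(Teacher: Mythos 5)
Your proposal is correct and follows essentially the same route as the paper: both establish $F_t(u_{i-1},u_i,y_{i-1},y_i)=F_{1-t}(u_i,u_{i-1},y_i,y_{i-1})$ from the definition (you just make the reparametrization $\widetilde{\pt}_{s,t}=\pt_{1-s,1-t}$ explicit), invoke Lemma~\ref{lem:secDiffIndependentOft} to remove the $t$-dependence, and obtain the gradient identities by differentiating the resulting symmetry.
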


\begin{proof}
	By the definition of $F_t$ in \eqref{eq:DefFt}, we have 
	$F_t(u_i,u_{i-1},y_i,y_{i-1})  = F_{1-t}(u_{i-1}, u_i,y_{i-1},  y_i).$	
	By Lemma~\ref{lem:secDiffIndependentOft}, $F_t$ is independent of $t.$
	Together, this implies the identity 
	$F_t(u_i,u_{i-1},y_i,y_{i-1})  = F_t(u_{i-1}, u_i,y_{i-1},  y_i)$
	which is the first assertion of the lemma.
	The following two assertions 
	on the gradients are then an immediate consequence of 
	this symmetry property.
\end{proof}

\begin{proof} [Proof of Lemma~\ref{lem:GradF1wrtyip1}]
	We note that $F$ agrees with the function $F_0$ with $F_t$ as in Equation \eqref{eq:DefFt} by definition and, consequently, also with $F_1$ by Lemma \ref{lem:secDiffIndependentOft}.
	For fixed $u_i$, $u_{i-1}$ and $y_i$, we decompose the mapping $y_{i-1} \mapsto F_1(u_i,u_{i-1},y_i,y_{i-1})$ into the mappings $G,H$, i.e., $F_1 = H \circ G,$ where 
	\begin{equation}\displaystyle \label{eq:DefG}
	G(y_{i-1}) = \log_{u_{i-1}} y_{i-1} 
	\end{equation}  
	locally maps the manifold $\M$ to the tangent space at $u_{i-1},$  and 
	\begin{equation}
	H(w) = \left| w - z\right|, \qquad \text{ where } z = \pt_{u_{i},u_{i-1}} \log_{u_i} y_i,
	\end{equation} 
	maps from the tangent space at $u_{i-1}$ to the positive real numbers.
	The differential of $H$ (as a map defined on the tangent bundle) is given by 
	$d_\xi H (\eta) = \langle \frac{\xi - z }{|\xi - z |} ,  \eta  \rangle, $ 
	for $\xi \neq z.$ Here $\xi$ is a point in the tangent space at $u_{i-1},$
	and $\eta$ is the direction in the tangent space with respect to which the differentiation is performed.
	Hence, the gradient of $H$ at $\xi$ equals $\frac{\xi - z }{ |\xi - z |}.$
	So the gradient of $H$ at the point $\xi = \log_{u_{i-1}} y_{i-1}$ equals 
	\begin{equation}
	\nabla H (\log_{u_{i-1}} y_{i-1})= {\left(\log_{u_{i-1}} y_{i-1}-z \right)}/{\left| \log_{u_{i-1}} y_{i-1} - z \right|}. 
	\end{equation} 
	In order to get the gradient of $F_1,$ we have to multiply $\nabla H (\log_{u_{i-1}} y_{i-1})$
	with the adjoint of the differential of $G.$
	The mapping $G$ is related to the Jacobi fields along the geodesic $\gamma:[0,1]\rightarrow \M$ connecting the points $\gamma(0)= u_{i-1}$ and $\gamma(1) = y_{i-1}$
	as follows. Consider the collection $\mathcal J$ of Jacobi fields $J$ along $\gamma$ 
	for which $J(0)=0$, which means that they belong to geodesic variations leaving $u_{i-1}$ fixed. 
	Then the mapping 
	\begin{equation}\label{eq:JFforLog}
	J(1) \mapsto J'(0),\quad  J \in  \mathcal J.   
	\end{equation} 
	equals the differential of $G.$ 
\end{proof}

\begin{proof}[Proof of Lemma~\ref{lem:Texplicit}]
	We use the basis $\{v_n\}$ to express the operator $T$ of Lemma~\ref{lem:GradF1wrtyip1} 
	(which is given as the adjoint of the derivative of $G$ defined by \eqref{eq:DefG} evaluated at $y_{i-1}$). 
	We use the expression 
	\eqref{eq:JFforLog} for the derivative of $G.$
	Since the covariant derivative of the curvature tensor equals zero in a symmetric space,
	the differential equation for the Jacobi fields $J$ in terms of the scalar coefficient $x$ of the vector field
	$\pt_{u_{i-1},[u_{i-1},y_{i-1}]_t}v_n$ obtained by parallel transport of the eigenvector $v_n$ along $\gamma$
	reads  
    $$x'' = - \lambda_n d^2 x, \quad x(0) = 0.$$
    This shows \eqref{eq:DiffOfLogInSym}.
	Further, $f(\lambda_n)$ corresponds to the value $x'(0)$ of the solution of the scalar ODE of the previous line with 
	additional boundary condition $x(1) = 1$.
    Solving this scalar ODE yields \eqref{eq:functionalCalculusDiffOfLogInSym}.	
\end{proof}

\begin{proof} [Proof of Lemma~\ref{lem:GradF1wrtuip1}]
	We again note that $F$ agrees with the function $F_1$ introduced in Equation~\eqref{eq:DefFt}.
	So our task is to determine the gradient of the function $F_1$ for points with $F_1 \neq 0$ given by \eqref{eq:DefFt} w.r.t.\ the variable $u_{i-1}.$
	We analyze the structure of $F_1$ as a function of $u_{i-1}.$
	To this end, we consider the two vector fields 
	$
	L: u_{i-1} \mapsto \log_{u_{i-1}}y_{i-1} 
	$
	and 
	$
	B: u_{i-1} \mapsto \pt_{u_{i},u_{i-1}} z,$ where 
	$z = \log_{u_{i}}y_{i}$ introduced above.
	Remember that $\pt_{u_{i},u_{i-1}}$ denotes the parallel transport from the point $u_{i}$ to the (varying) point $u_{i-1}$
	along a shortest geodesic connecting these points. We note that the parallel transport here depends on the varying end point $u_{i-1}$. Further note that $z = \log_{u_{i}}y_{i}$ does not depend on $u_{i-1}$ and is therefore fixed.
	Using this notation we may write \eqref{eq:DefFt} as 
	\begin{align} \label{eq:F1asFuncOfuip1Rewriten}
	F_1 (u_i,\cdot,y_i,y_{i-1}): u_{i-1} \mapsto \big |L(u_{i-1}) - B (u_{i-1}) \big | .
	\end{align}
	In order to differentiate \eqref{eq:F1asFuncOfuip1Rewriten} w.r.t.\ $u_{i-1}$
	we need some more preparation.
	Recall that the Levi-Civita connection is metric. 
	Hence, for any two vector fields $V_t,W_t$ along a geodesic $\gamma,$
	\begin{equation}\label{eqref:LeviChevitaIsMetric}
	\tfrac{d}{dt}\langle V_t, W_t \rangle = \langle \tfrac{D}{dt} V_t, W_t  \rangle + \langle  V_t, \tfrac{D}{dt} W_t \rangle.
	\end{equation}
	Recall that we use the symbol $\frac{D}{dt}$ to denote the covariant derivative of the corresponding vector field along the curve $\gamma.$
	Thus, for any two vector fields $V_t,W_t$ with $V_t \neq W_t,$ we have 
	\begin{align}\tfrac{d}{dt} \big | V_t - W_t \big | &= \tfrac{1}{ 2 \big | V_t - W_t \big |} \cdot 
	2 \langle  V_t- W_t,  \tfrac{D}{dt} V - \tfrac{D}{dt} W  \rangle  \notag\\
	& =  \langle   \tfrac{V_t- W_t}{\big | V_t - W_t \big |} ,  \tfrac{D}{dt} V - \tfrac{D}{dt} W  \rangle.
	\label{eqref:DifferentiateDifferenceOfVectorFields}
	\end{align}

	Since we have chosen the $v_n$ to be an orthogonal basis of the tangent space at $u_{i-1},$
	the coordinate representation of the gradient in this basis is given as the directional derivatives
	w.r.t. the basis vectors. The curves $t \mapsto \exp_{u_{i-1}}tv_n$ precisely yield these tangent vectors. This explains \eqref{eq:ExpandGradinONB}, \eqref{eqref:defAlphaNVecF},
	as well as the first identity in \eqref{eqref:defAlphaN}.
	The second identity in \eqref{eqref:defAlphaN} is a consequence of \eqref{eqref:DifferentiateDifferenceOfVectorFields}.		 
\end{proof}

\begin{proof} [Proof of Lemma~\ref{lem:GradLn}]	
	We notice that the proof of this lemma uses well-known facts on the connection of Jacobi fields and the exponential map (see for instance the books \cite{spivak1975differential,do1992riemannian}) which is the reason why we kept it rather short.
	We consider the Jacobi field $J^n$ associated with the following geodesic variation 
	\begin{equation}\label{eq:conJFandDerL}
	f^n(s,t) = \exp_{c^n(t)} \left(s \log_{c^n(t)}y_{i-1}\right), \quad \text{ where }c^n(t) = \exp_{u_{i-1}}tv_n.  	  
	\end{equation}
	Then the desired covariant derivative is connected with the Jacobi field $J^n$ associated with the geodesic variation $f^n$ by
	\begin{equation}\label{eq:theJacFToComp}
	\tfrac{D}{dt}|_{t=0} L_t^n = \tfrac{D}{ds}|_{s=0} J^n(s), \text{ where }
	\quad J^n(s) = \tfrac{d}{dt}|_{t=0}f^n(s,t).  	
	\end{equation}
	This identity can be seen as follows. By the definition of $f^n,$ we have
	$L_t^n =  \log_{c(t)}y_{i-1} = \tfrac{d}{ds}|_{s=0} f^n(s,t).$
	Hence, 
	\begin{align}
	\tfrac{D}{dt}|_{t=0} L_t^n &= \tfrac{D}{dt}|_{t=0} \tfrac{d}{ds}|_{s=0} f^n(s,t)\notag \\
	& =  	\tfrac{D}{ds}|_{s=0} \tfrac{d}{dt}|_{t=0} f^n(s,t) = \tfrac{D}{ds}|_{s=0} J^n(s),
	\end{align} 
	which shows \eqref{eq:theJacFToComp}.
	We further notice that 
	\begin{equation}\label{eq:JforLat0}
	J^n(0) = v_n,  \quad  J^n(1) = 0.  	
	\end{equation}
	The first part of \eqref{eq:JforLat0} is a consequence of
	\begin{align}
	J^n(0) = \tfrac{d}{dt}|_{t=0} f^n(0,t) = \tfrac{d}{dt}|_{t=0}c^n(t) = v_n.
	\end{align}
	The second equality of \eqref{eq:JforLat0} is a consequence of the mapping $t \mapsto f^n(1,t)=y_{i-1},$
	being constant.
	
	We notice that \eqref{eq:JforLat0} determines $J^n$ uniquely which, in turn, 
	yields the desired derivative of $L^n$ via \eqref{eq:conJFandDerL} as being equal to $(J^n)'(0) := \frac{D}{ds}|_{s=0} J^n$. So it only remains to determine this uniquely determined Jacobi field $J^n$.
	
	Since $\mathcal M$ is a symmetric space, and thus the curvature tensor is parallel, and since $v_n$ is an eigenvector of the Jacobi operator,
	we end up with the problem of determining $x'(0)$ where $x$ is the solution of 
	the scalar boundary value problem 
	\begin{equation}\label{eq:JforLat0scalar}
	x''(s) + d^2 \lambda_n x(s) = 0, \quad           x(0) = 1, \   x(1) = 0,  
	\end{equation}
	where $d=d(y_{i-1},u_{i-1})= |\gamma'(t)|$ for all $t \in [0,1].$  	
	Here, $\lambda_n$ is the corresponding eigenvalue of the Jacobi operator. 
	Depending on the sign of $\lambda_n,$ the solution of \eqref{eq:JforLat0scalar} is given as follows.
	If $\lambda_n = 0,$ $x(s)= 1 - s,$ and so $x'(0) = -1.$ 
	If $\lambda_n > 0,$ 
	the general solution of the ODE is 
	$x(s)=$ $\alpha \cos( d \sqrt{\lambda_n} s ) + $ $\beta \sin( d \sqrt{\lambda_n} s ) $ 
	with real parameters $\alpha,\beta.$
	Then $x(0) = 1$ implies $\alpha=1$ which, in turn, yields using $0 =x(1) =$
	$ \cos( d \sqrt{\lambda_n} ) + $ $\beta \sin( d \sqrt{\lambda_n})$ that 
	$\beta = - \cos( d \sqrt{\lambda_n} )/\sin( d \sqrt{\lambda_n}).$
	Hence, $x'(0) =  - \cos( d \sqrt{\lambda_n} )/\sin( d \sqrt{\lambda_n})$  $\cdot \ d \sqrt{\lambda_n}.$  
	If $\lambda_n < 0,$  an analogous calculation replacing the trigonometric polynomials by their hyperbolic analogues yields 
	that  $x'(0) =  - \cosh( d \sqrt{-\lambda_n} )/$ $\sinh( d \sqrt{-\lambda_n})$  $\cdot \ d \sqrt{-\lambda_n}.$ 
	This shows \eqref{eq:ExplicitDiffOfL} and completes the proof.
\end{proof}

\begin{proof}[Proof of Lemma~\ref{lem:GradBnSphere}]
	In order to covariantly differentiate the mapping 	
	\begin{equation}
	t \mapsto B^n_t = \pt_{u_{i},\exp_{u_{i-1}}tv_n} z,
	\end{equation}
	we differentiate the mapping 
	\begin{equation}\label{eq:toDiffHol}
	t \mapsto  P^n_t z:= \pt_{\exp_{u_{i-1}}tv_n,u_{i-1}}  \pt_{u_{i},\exp_{u_{i-1}}tv_n} z
	\end{equation}
	in the tangent space  $T_{u_{i-1}}\mathcal M$ of $u_{i-1}.$ This follows from the relation between parallel transport and the covariant derivative, see for instance \cite{spivak1975differential}.
	
	If $v_n$ is parallel to $ \log_{u_{i-1}}u_i$, then the mapping in \eqref{eq:toDiffHol}
	is constant, and therefore, its derivative is $0$ which shows the first statement in 
	\eqref{eq:StatementParTransS2}.
	
	We show the second part of \eqref{eq:StatementParTransS2}. We may assume that 
	$v_n $ is orthogonal to $ \exp^{-1}_{u_{i-1}}u_i.$
	We have to differentiate the mapping in \eqref{eq:toDiffHol}, which means calculating 
	$\lim_{t \to 0} \frac{1}{t}(P^n_t-P^n_0).$
	Since the parallel transport is an isometry, the differential of \eqref{eq:toDiffHol} is an infinitesimal rotation (up to the identity) applied to $z.$
	We start out to calculate  $P^n_t-P^n_0$  in the basis of $T_{u_{i-1}}\mathcal M.$
	We note that $P^n_t-P^n_0$ corresponds to the holonomy along the (spherical) triangle $\Delta$ connecting the points $u_{i},$ $\exp_{u_{i-1}}tv_n$ and $u_{i-1}.$ 
	We observe that the rotation angle $\alpha_t$ of the rotation $P^n_t-P^n_0$ equals the spherical excess $A_t+B_t+C_t-\pi$
	of the triangle $\Delta_t,$ where $A_t$ is the angle at $u_{i},$ 
	$C_t$ is the angle at $u_{i-1}$ and $B_t$ is the angle at $\exp_{u_{i-1}}tv_n$ of the  triangle $\Delta_t.$
	Hence, 
	\begin{equation}\label{eq:intermedPtSphere}
	\lim_{t \to 0} \frac{1}{t}(P^n_t-P^n_0) =  
	\begin{pmatrix}
	0 & \lim_{t \to 0} \frac{\alpha_t}{t}  \\ -\lim_{t \to 0} \frac{\alpha_t}{t}  & 0
	\end{pmatrix}\pt_{u_i,u_{i-1}},   \quad \text{where } \
	\alpha_t = A_t+B_t+C_t-\pi.
	\end{equation}
	Here, the first identity is a consequence of the chain rule combined with $\alpha_0=0.$
	Since $\frac{\sin t}{t} = 1+ o(1),$ and since $C_t = \pi/2$ by the orthogonality of $v_n $ and $\log_{u_{i-1}}u_i,$ we get
	\begin{equation}\label{eq:secifyOmega}
	\omega =
	\lim_{t \to 0} \frac{\alpha_t}{t} =
	\lim_{t \to 0} \tfrac{A_t+B_t+C_t-\pi}{\sin t} 
	= \lim_{t \to 0}  \left( \tfrac{A_t}{\sin t} + \tfrac{B_t-\pi/2}{\sin t} \right).
	\end{equation}
	We recall that $d = d(u_{i},u_{i-1})$
	and use the following identities for spherical triangles with an angle of $\pi/2$ (cf. \cite{todhunter1863spherical})
	\begin{equation}
	A_t = \arctan\left(\tfrac{\tan t}{\sin d}\right), \quad
	B_t = \arctan\left(\tfrac{\tan d}{\sin t}\right).
	\end{equation}
	Using the Taylor expansion of the $\arctan$ function w.r.t. $0$ we obtain that
	\begin{equation}\label{eq:diffAt}
	\lim_{t \to 0}  \frac{A_t}{\sin t} = 
	\lim_{t \to 0} \frac{\frac{\tan t}{\sin d} + O\left(\left(\frac{\tan t}{\sin d} \right)^3\right)}{\sin t} =
	\lim_{t \to 0} \frac{1}{\cos t\sin d} =  \frac{1}{\sin d}.
	\end{equation}
	Further, by  L'Hospital's rule,
	\begin{align}\notag
	\lim_{t \to 0} \frac{B_t-\pi/2}{\sin t} &=
	\lim_{t \to 0} \frac{\arctan\left(\frac{\tan d}{\sin t}\right)-\pi/2}{\sin t} 
	=\lim_{t \to 0} \frac{-\frac{\tan d}{\sin^2 t} \cdot \cos t }{1+\left(\frac{\tan d}{\sin t}\right)^2} \cdot \frac{1}{\cos t}\\
	&= \lim_{t \to 0} \left(-\tfrac{1}{\tan d}\right) \cdot 
	\left( 1- \frac{1}{1+\left(\tfrac{\tan d}{\sin t}\right)^2} \right) = -\tfrac{1}{\tan d}.
	\label{eq:diffBt}
	\end{align}
	Now, we combine \eqref{eq:diffAt} with \eqref{eq:diffBt} and conclude, using \eqref{eq:secifyOmega},
	that $\omega   =\tfrac{1}{\sin d} - \tfrac{1}{\tan d}.$  
	Together with \eqref{eq:intermedPtSphere}, this shows \eqref{eq:StatementParTransS2}.
	To see \eqref{eq:StatementParTransS2GeneralVn}, we notice that the connection is linear w.r.t.\ the direction of differentiation.
	Therefore, \eqref{eq:StatementParTransS2GeneralVn} is a consequence of \eqref{eq:StatementParTransS2} and the 
	expression $\langle  v_n, w \rangle$ equals the coefficient of the corresponding linear combination. 
	
	If  $u_i = u_{i-1},$ the mapping in \eqref{eq:toDiffHol} is constant; hence differentiation of this mapping yields zero 
	which implies that the differential $\frac{D}{dt}|_{t=0} B_t^n = 0.$ 
	This shows the last assertion and completes the proof.
\end{proof}

\begin{proof}  [Proof of Lemma~\ref{lem:GradBnPos}] 
	Since the space of positive matrices is a Riemannian symmetric space representable as quotient of matrix Lie groups there are explicit formulae for the objects of Riemannian geometry such as the $\exp$ mapping and the parallel transport available. The corresponding formulae may be found in, e.g., \cite{sra2015conic}.
	Our first task is to explicitly express the mapping $B^n_t$ in the space of positive matrices which form a symmetric space.
	We use the notation $\gamma_t:[0,1]\rightarrow \M$ to denote the geodesic starting in $u_{i-1}$ with direction $v,$ i.e.,
	\begin{equation}\displaystyle
	\gamma_t := \exp_{u_{i-1}}tv = 
	u_{i-1}^{\tfrac{1}{2}}
	\mathrm{Exp} \left( u_{i-1}^{-\tfrac{1}{2}}  \ tv \  u_{i-1}^{-\tfrac{1}{2}}   \right)
	u_{i-1}^{\tfrac{1}{2}}.
	\end{equation}
	Here, $\mathrm{Exp}$ denotes the ordinary matrix exponential. 
	Then, $B^n_t$ may be expressed in the space of positive matrices by 
	\begin{equation}\displaystyle \label{eq:BntExplicitInPos}
	B^n_t = \pt_{u_i,\gamma_t} z = 
	u_{i}^{\tfrac{1}{2}} \
	\bar{\gamma_t}^{\tfrac{1}{2}}\
	\bar{z}\
	\bar{\gamma_t}^{\tfrac{1}{2}}\
	u_{i}^{\tfrac{1}{2}}
	\end{equation}
	where
	\begin{equation}\displaystyle
	\bar{\gamma_t} =  u_{i}^{-\tfrac{1}{2}}  \ \gamma_t \  u_{i}^{-\tfrac{1}{2}},
	\quad \text{and}\quad 
	\bar{z} =  u_{i}^{-\tfrac{1}{2}}  \ z \  u_{i}^{-\tfrac{1}{2}}.
	\end{equation}
	(We refer for instance to \cite{sra2015conic} for the corresponding formulae for the parallel transport.) 
	The covariant derivative in the space of positive matrices may be expressed in terms of the ordinary derivative of a curve in the vector space of matrices plus some ``correction terms''(as for instance explained in \cite{sra2015conic}). 
	In our situation, we have
	\begin{equation}\displaystyle\label{eq:CovDerForulaPosMat}
	\tfrac{D}{dt}|_{t=0} B_t^n =  \tfrac{d}{dt}|_{t=0} B_t^n 
	- \tfrac{1}{2} \left(v u_{i-1}^{-1}   \pt_{u_i,u_{i-1}} z +  
	\pt_{u_i,u_{i-1}} z u_{i-1}^{-1}  v \right).    
	\end{equation}
	We denote the terms in brackets in \eqref{eq:CovDerForulaPosMat} by
	\begin{equation}\displaystyle \label{eq:writeRasSSt}
	S + S^\top,  \qquad  S :=  v u_{i-1}^{-1}  \pt_{u_i,u_{i-1}} z.   
	\end{equation} 
	We further have that, by \eqref{eq:BntExplicitInPos},
	\begin{equation}\displaystyle 
	\pt_{u_i,u_{i-1}} z = B^n_0 = 	u_{i}^{\tfrac{1}{2}} \
	\bar u_{i-1}^{\tfrac{1}{2}}\
	\bar{z}\
	\bar u_{i-1}^{\tfrac{1}{2}}\
	u_{i}^{\tfrac{1}{2}}, \quad \text{ where} \quad 
	\bar u_{i-1} = u_{i}^{-\tfrac{1}{2}}  \ u_{i-1} \  u_{i}^{-\tfrac{1}{2}},
	\end{equation}
	and so we may explicitly express $S$ in terms of matrix multiplications by 
	\begin{equation}\displaystyle\label{eq:defMatSpos}
	S :=  v u_{i-1}^{-1} 
	u_{i}^{\tfrac{1}{2}} \
	\bar u_{i-1}^{\tfrac{1}{2}}\
	\bar{z}\
	\bar u_{i-1}^{\tfrac{1}{2}}\
	u_{i}^{\tfrac{1}{2}}.
	\end{equation}   
	In view of \eqref{eq:writeRasSSt} and \eqref{eq:CovDerForulaPosMat}, we have to compute 
	$\frac{d}{dt}|_{t=0} B_t^n$ in order to derive an explicit representation of 
	$\frac{D}{dt}|_{t=0} B_t^n$ in terms of matrices. Differentiating \eqref{eq:BntExplicitInPos}, we have that  
	\begin{equation}\displaystyle\label{eq:OrdDerInPos}
	\tfrac{d}{dt}|_{t=0} B_t^n = 
	u_{i}^{\tfrac{1}{2}} \
	\left(\tfrac{d}{dt}|_{t=0} \bar{\gamma_t}^{\tfrac{1}{2}}\right)\
	\bar{z}\
	\bar u_{i-1}^{\tfrac{1}{2}}\ 
	u_{i}^{\tfrac{1}{2}} 
	+
	u_{i}^{\tfrac{1}{2}} \
	\bar u_{i-1}^{\tfrac{1}{2}}\
	\bar{z}\
	\left(\tfrac{d}{dt}|_{t=0} \bar{\gamma_t}^{\tfrac{1}{2}}\right)\ 
	u_{i}^{\tfrac{1}{2}}.   
	\end{equation} 
	Introducing the notation 
	\begin{equation}\displaystyle\label{eq:defTandX}
	X := \tfrac{d}{dt}|_{t=0} \bar{\gamma_t}^{\tfrac{1}{2}},\quad 
	T:=  u_{i}^{\tfrac{1}{2}} \ X \bar{z}\ \bar u_{i-1}^{\tfrac{1}{2}} \ u_{i}^{\tfrac{1}{2}}, 
	\end{equation}
	the derivative $\frac{d}{dt}|_{t=0} B_t^n$ in \eqref{eq:OrdDerInPos} may be rewritten as 
	\begin{equation}
	\tfrac{d}{dt}|_{t=0} B_t^n =  T + T^\top.
	\end{equation}
	We express $X$ more explicitly now. 
	To that end, let $f(A)= A^{1/2}$ be the matrix square root function (which is unambiguously defined for positive matrices). We have, by the inverse function theorem, that
	$\mathrm{d}f_{A^2}(Z) = Y,$  where $A Y+Y A = Z,$
	meaning that, at the point $A$, the directional derivative of $f$ in direction $Z$
	is given by the solution $Y$ of the right-hand Sylvester equation.
	Hence, in order to get $\frac{d}{dt}|_{t=0} \bar{\gamma_t}^{\tfrac{1}{2}},$   
	we have to solve the following Sylvester equation for $X$,
	\begin{equation}
	\bar{\gamma_0}^{\tfrac{1}{2}} X + X \bar{\gamma_0}^{\tfrac{1}{2}} = \bar v 
	\quad \text{where} \quad \bar v := u_{i}^{-\tfrac{1}{2}} v u_{i}^{-\tfrac{1}{2}}. 
	\end{equation}      
	Summing up, 
	\begin{equation}\label{eq:SyvEqToSolve}
	X = \tfrac{d}{dt}|_{t=0} \bar{\gamma_t}^{\tfrac{1}{2}} 
	\quad \text{ is the solution of  } \quad
	\bar u_{i-1}^{\tfrac{1}{2}} X + X \bar u_{i-1}^{\tfrac{1}{2}} = \bar v. 
	\end{equation}   
	Plugging \eqref{eq:SyvEqToSolve} and \eqref{eq:defTandX} together with \eqref{eq:defMatSpos}  into \eqref{eq:CovDerForulaPosMat} shows that
	$
	\frac{D}{dt}|_{t=0} B_t^n = (S-\tfrac{1}{2}T) + (S-\tfrac{1}{2}T)^\top
	$
	which completes the proof.
\end{proof}

\end{document}